\documentclass[11pt]{amsart}

\usepackage{evearticle}

\DeclareMathOperator\inradius{inradius}
\DeclareMathOperator\outradius{outradius}

\begin{document}

\title[The conformal dimension of the Brownian sphere is two]{The conformal dimension of the Brownian sphere is two}
\author[Jason Miller and Yi Tian]{Jason Miller and Yi Tian}
\date{\today}

\begin{abstract}
The conformal dimension of a metric space $(X, d)$ is equal to the infimum of the Hausdorff dimensions among all metric spaces quasisymmetric to $(X, d)$.  It is an important quasisymmetric invariant which lies non-strictly between the topological and Hausdorff dimensions of $(X, d)$.  We consider the conformal dimension of the Brownian sphere (a.k.a.~the Brownian map), whose law can be thought of as the uniform measure on metric measure spaces homeomorphic to the standard sphere $\BS^2$ with unit area.  Since the Hausdorff dimension of the Brownian sphere is $4$, its conformal dimension lies in $[2, 4]$.  Our main result is that its conformal dimension is equal to $2$, its topological dimension.
\end{abstract}

\maketitle

\setcounter{tocdepth}{1}
\tableofcontents

\setlength{\parindent}{0pt}
\setlength{\parskip}{0.5\baselineskip plus 1pt minus 1pt}

\section{Introduction}

The main object of study in this paper is the \emph{Brownian sphere} (a.k.a.~the \emph{Brownian map}), whose law is the ``uniform measure'' on metric measure spaces homeomorphic to the two-dimensional sphere $\BS^2$ with unit area.  This interpretation arises as it was shown in independent works of Le Gall \cite{MR3112934} and Miermont \cite{MR3070569} that it is the Gromov--Hausdorff--Prokhorov scaling limit of the natural discrete uniform measures on surfaces homeomorphic to $\BS^2$ coming from the theory of random planar maps.  Recall that a \emph{planar map} is a proper embedding of a connected planar graph into the two-dimensional sphere, considered up to orientation-preserving homeomorphisms. A planar map is called a \emph{$p$-angulation} if every face is incident to exactly $p$ edges (i.e., has degree $p$).  Planar maps can be thought of as surfaces by associating with each face a copy of a regular Euclidean polygon with the same number of sides as the degree of the face.  The work \cite{MR3070569} proved the convergence of random quadrangulations to the Brownian sphere while \cite{MR3112934} proved the convergence for $p$-angulations with $p = 3$ or $p \geq 4$ even.  A number of subsequent works showed that other families of random planar maps also converge to the Brownian sphere; see \cite{MR3256874,MR3498001,MR3706731,MR4315765}. The Brownian sphere is also the scaling limit of random hyperbolic surfaces with genus zero and many cusps \cite{2508.18792}. Furthermore, \cite{MR2336042,MR2438999} proved that the Brownian sphere is almost surely homeomorphic to $\BS^2$ (non-trivial since being homeomorphic to $\BS^2$ is not preserved by Gromov--Hausdorff--Prokhorov limits; see \cite{MR2399286} for another proof) and has Hausdorff dimension $4$.

The Brownian sphere can equivalently be described as the $\sqrt{8/3}$-\emph{Liouville quantum gravity (LQG)} sphere \cite{MR3572845,MR4050102,MR4225028,MR4348679,MR4242633}.  As a consequence, these works show that if $(\SCS, D_\SCS, \SM_\SCS)$ is a Brownian sphere, there is a natural homeomorphism $\SCS \to \BS^2$ that is almost surely determined by $(\SCS, D_\SCS, \SM_\SCS)$ which is well-defined up to post-composition with conformal automorphisms of $\BS^2$. This gives the Brownian sphere a natural conformal structure, allowing it to be viewed (in some sense) as a random two-dimensional Riemannian manifold.

Let $(X, d_X)$ and $(Y, d_Y)$ be metric spaces. A homeomorphism $f \colon (X, d_X) \to (Y, d_Y)$ is \emph{quasisymmetric} if there exists an increasing homeomorphism $\eta \colon \BR_{\ge 0} \to \BR_{\ge 0}$ such that 
\begin{equation*}
    \frac{d_Y(f(x), f(z))}{d_Y(f(y), f(z))} \le \eta\!\left(\frac{d_X(x, z)}{d_X(y, z)}\right), \quad \forall \text{ distinct } x, y, z \in X. 
\end{equation*}
Two metric spaces $(X, d_X)$ and $(Y, d_Y)$ are \emph{quasisymmetrically equivalent} if there exists a quasisymmetric mapping between them.

Quasisymmetric mappings were introduced by \cite{MR595180} as a generalization of quasiconformal mappings. In turn, quasiconformal mappings --- which extend the classical theory of conformal mappings --- were first considered in \cite{MR4321180,MR4321181} and formally named by \cite{MR1555403}. For a comprehensive introduction, we refer the reader to \cite{MR188434,MR753330,MR1323982,MR1800917,MR2241787,MR2268390}. Both quasisymmetric and quasiconformal mappings have many applications across mathematics which we will not attempt to summarize here.

The \emph{conformal dimension}, introduced by Pansu \cite{MR1024425}, is an important quasisymmetric invariant. The conformal dimension of a metric space is the infimum of the Hausdorff dimensions of all metric spaces quasisymmetrically equivalent to it. By definition, it is bounded below by the topological dimension and above by the Hausdorff dimension.  A metric space is \emph{minimal} for conformal dimension if its conformal dimension equals its Hausdorff dimension.  In particular, the conformal dimension of the Brownian sphere lies in the interval $[2,4]$.  Our main result is that its conformal dimension matches its topological dimension.

\begin{theorem}
\label{thm:main}
   Almost surely, the Brownian sphere has conformal dimension $2$.
\end{theorem}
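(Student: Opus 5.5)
The lower bound is automatic: conformal dimension is at least the topological dimension, and the Brownian sphere is almost surely homeomorphic to $\BS^2$. So the content is the upper bound. For every $\epsilon>0$ we must produce, almost surely, a metric $d_\epsilon$ on $\SCS$ that is quasisymmetric to $D_\SCS$ and has Hausdorff dimension at most $2+\epsilon$.

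The plan is to use a conformal-gauge construction. We fix a base metric and build a new one by deforming it with a weight function. For $\rho:\SCS\to(0,\infty)$ define $d_\rho(x,y)=\inf_\gamma \int_\gamma \rho\, ds$, where the infimum is over paths $\gamma$ from $x$ to $y$ and $ds$ is measured in the base metric. The weight should be chosen so that $d_\rho$ is comparable to a ``metric of area'' type.

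The cleanest route seems to be the discrete criterion of Bonk--Kleiner and Carrasco (Keith--Kleiner). For a compact, doubling, uniformly perfect metric space, the Ahlfors-regular conformal dimension can be computed from combinatorial moduli of curve families on a sequence of coverings at scales $2^{-n}$. Here $\mathrm{dim}_{AR}\le p$ as soon as the combinatorial $p$-modulus of curves joining complementary sides of annuli tends to zero uniformly, for every $p>2$. The doubling property of $D_\SCS$ would follow from the known estimates on volumes of metric balls: the volume of a ball of radius $r$ is $r^{4+o(1)}$ uniformly, and this gives a doubling property up to logarithmic corrections. That is not literally doubling, so one has to be careful.

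Instead I would use an alternative that avoids doubling. Take the $\sqrt{8/3}$-LQG embedding $\phi:\SCS\to\BS^2$. Define the dyadic-square cover of $\BS^2$ restricted to Brownian metric balls, i.e.\ for each scale $n$ take the cover of $\SCS$ by $D_\SCS$-balls of radius $2^{-n}$. The aim is to show the combinatorial $p$-modulus $\mathrm{Mod}_p$ of curves crossing any annulus $B(x,2r)\setminus B(x,r)$ goes to $0$ as the number of scales grows, for each $p>2$.

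To bound modulus it suffices to exhibit admissible weights with small $p$-mass. I would take the weight on a cell to be its Euclidean diameter under $\phi$ (suitably normalized). Admissibility means every crossing path has weighted length at least $1$. This should follow from Euclidean path length exceeding the Euclidean distance between the boundaries of $\phi(B(x,r))$ and $\phi(B(x,2r))$, together with the fact that the Euclidean images of metric balls are ``quasi-round.'' Concretely, we need that $\phi(B(x,r))$ contains and is contained in Euclidean balls of comparable radii, up to $r^{-o(1)}$ factors.

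The $p$-mass is then $\sum \mathrm{diam}^p$ over the cells meeting the annulus. For quasi-round cells this sum is bounded by a power of the Euclidean area times a count of cells of bad eccentricity. Since $\sum \mathrm{diam}^2\approx$ area and $p>2$, the total decays like $2^{-n(p-2)+o(n)}$. This would yield the upper bound $p$ for every $p>2$, and Carrasco's theorem then gives conformal dimension $\le 2$. (An alternative ending is Keith--Laakso/Bourdon--Kleiner, giving a metric of Hausdorff dimension $\le p$ directly.)

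The main obstacle is the quasi-roundness with subpolynomial error, uniformly over all balls of all small radii. The LQG metric is only Hölder with respect to the Euclidean metric, so images of Brownian balls can be very irregular. I would attack this with the multiscale independence of the GFF: at each scale, the event that a ball is ``good'' (eccentricity bounded by a constant) has probability close to $1$ and is roughly independent across scales. A percolation or multiscale argument should then show that every annulus contains many good scales, within $o(n)$ of all of them. Those good scales alone should suffice for admissibility, with the bad scales contributing negligibly. Making this uniform over all centers requires a union bound over $2^{4n+o(n)}$ balls, which is why exponential-in-scales tail estimates are needed.
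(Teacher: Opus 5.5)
Your lower bound is fine, but the upper-bound argument has two genuine gaps.

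\textbf{The endpoint theorem does not apply.} Carrasco's theorem, and likewise the Keith--Kleiner, Keith--Laakso and Bourdon--Kleiner results you mention, concerns the \emph{Ahlfors-regular} conformal dimension of a compact \emph{doubling} space. The Brownian sphere has infinite Assouad dimension, so it is not doubling, and its Ahlfors-regular conformal dimension is in fact infinite. No modulus estimate can therefore yield the bound $2$ through these theorems. You notice the doubling problem, but your ``alternative that avoids doubling'' only changes the cover and still ends by invoking Carrasco.

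What is missing is a weight-to-metric construction that works for an arbitrary compact metric space and bounds only the Hausdorff dimension. The paper's hyperbolic-filling section supplies this. A weight $\sigma$ satisfying the single-scale condition $(\star)$ produces a quasisymmetric visual metric $d_1$. Condition $(\star)$ says that every chain of level-$n$ cells crossing $B_{2\alpha^{n-1}}(y;D)\setminus B_{\alpha^{n-1}}(y;D)$ has $\sigma$-length at least $1$. The metric $d_1$ satisfies $\operatorname{diam}(B_{\alpha^n}(x;D);d_1)\preceq\pi(u)$, where $\pi(u)$ is controlled by a \emph{product} of per-scale weights along the ancestral line of $u$. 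Then $\sum_{u\in V_n}\pi(u)^p\to0$ gives Hausdorff dimension at most $p$.

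\textbf{The geometric input gives neither admissibility nor small mass.} Quasi-roundness of the images of individual balls up to $r^{-o(1)}$ factors cannot make your weight admissible on $B(x,2r)\setminus B(x,r)$. The Euclidean sizes of $\phi(B(x,r))$ and $\phi(B(x,2r))$ differ only by a factor of order one, which an unbounded $r^{-o(1)}$ distortion swamps. So you get no lower bound on the Euclidean gap between the two boundaries. The errors also depend on the absolute scale rather than on the relative depth, so they cannot give modulus decay uniform over all annuli, which a Carrasco-type criterion needs. Your ``many good scales'' fix does not address this either: in a modulus framework the weight must also be admissible on the annuli at bad scales.

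The paper needs neither single-ball roundness nor any uniformity.
\begin{enumerate}
\item It normalizes each cell by a coarser filled ball centred at the \emph{same} cell: $\sigma(x,n)=\operatorname{diam}(B_{4\alpha^n}(x))/\inradius(B^\bullet_{\alpha^{n-1+\zeta}}(x))$, capped at $1$ and set to $1$ off the no-bottleneck event $\widetilde F(x,n)$.
\item Admissibility then becomes topological. A sub-chain crossing a band of width $2\alpha^{n-1+\zeta}$ escapes each of these filled balls, and $\widetilde F$ handles the case where one of them swallows $y$.
\item The weight is bounded by a constant times $e^{-2\pi m}$, where $m$ is the conformal modulus of the wide band $A^\bullet_{32\alpha^j,\alpha^{j-1+\zeta}/2}(x)$. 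The key estimate is $\BE[e^{-2\pi m_\varepsilon p}]=O(\varepsilon^{q})$ for the modulus $m_\varepsilon$ of $A^\bullet_{\varepsilon,1}(0;D_\Phi)$, with $q$ close to $\sqrt{1+12p}-1$. This exceeds $d_\gamma=4$ exactly when $p>2$.
\item Independence across scales, given the boundary lengths, yields $\BE[\varpi(0,n)^p]\le\alpha^{qn}$.
\item A first-moment bound over an $\alpha^{-(4+\zeta)n}$-point net sampled from the LQG measure, followed by Borel--Cantelli, gives $\sum_u\pi(u)^p\to0$.
\end{enumerate}
Your heuristic ``$\sum\operatorname{diam}^2\approx$ area'' is the right intuition; it is the identity $q(2)=d_\gamma$. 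But it has to be realized as this averaged multiplicative estimate, not as a deterministic uniform bound.
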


Many naturally occurring fractals arise from random processes. However, to the best of our knowledge, prior to the present work, the graph of a one-dimensional Brownian motion was the only random fractal for which the conformal dimension had been explicitly determined \cite{MR4912977}.  In contrast to \cite{MR4912977}, we find that the conformal dimension of the Brownian sphere is equal to its topological dimension $2$ which is strictly smaller than its Hausdorff dimension $4$.  That is, the Brownian sphere is \emph{not} minimal.

We remark that there is a variant of the conformal dimension commonly considered called the  Ahlfors regular conformal dimension, where one requires that the target space is Ahlfors regular.  It was shown in \cite{MR4242630} that the Assouad dimension of the Brownian sphere is infinite, hence it cannot be embedded quasisymmetrically into $\BR^n$, or any doubling space.  In particular, its canonical embedding into $\BS^2$ is not quasisymmetric and its Ahlfors regular conformal dimension is infinite.

In general, a metric space has minimal conformal dimension if it contains a ``sufficiently rich family of rectifiable curves'' (see \cite[Proposition~4.1.8]{MR2662522}).  It was shown in \cite{GeoBMap} (and later \cite{MR3706747,BM-Geodesics}) that geodesics in the Brownian sphere have the \emph{confluence property}: Two geodesics with nearby starting and target points will inevitably merge and share a common segment outside a small neighborhood of their endpoints. It was further shown in \cite{BM-Geodesics} that the \emph{geodesic frame}, the closure of the union of all geodesics minus their endpoints, of the Brownian sphere is equal to $1$, the dimension of a single geodesic.  These results are strongly suggestive that the Brownian sphere lacks a sufficiently rich family of rectifiable curves to be minimal.

For curves and surfaces which are rough and fractal in a controlled manner, one can appeal to the literature on quasisymmetric uniformization which serves to generalize the classical uniformization.  Recall that the latter states that every simply connected two-dimensional Riemannian manifold is conformally equivalent to the upper half-plane, the complex plane, or the Riemann sphere. The quasisymmetric analogue asks: When is a metric space $(X, d_X)$ homeomorphic to a ``standard'' space $(Y, d_Y)$ quasisymmetrically equivalent to it? For $n = 1$, \cite{MR595180} established that a metric space homeomorphic to $\BS^1$ is quasisymmetrically equivalent to $\BS^1$ (a \emph{quasicircle}) if and only if it is doubling and linearly locally connected (LLC). For $n = 2$, \cite{MR1930885} proved that if a metric space homeomorphic to $\BS^2$ is Ahlfors $2$-regular and LLC, then it is quasisymmetrically equivalent to $\BS^2$ (see also \cite{MR3608292,MR4073230,MR4956983,MR4608329}).  The uniformization problem for dimensions $n \ge 3$ currently remains open.  Quasicircles in particular  have conformal dimension $1$ and Ahlfors $2$-regular LLC surfaces homeomorphic to $\BS^2$ have conformal dimension $2$.  The Brownian sphere is not Ahlfors regular \cite{MR4242630} and does not satisfy the LLC property, so one cannot use \cite{MR1930885} to compute its conformal dimension.

The conformal dimension is important in the theory of Gromov hyperbolic spaces (see \Cref{section:background-Gromov}) and this was Pansu's original motivation for introducing it \cite{MR1024425}. Specifically, quasi-isometries between Gromov hyperbolic spaces induce quasisymmetric mappings on their Gromov boundaries.  Thus the conformal dimension of the Gromov boundary is a quasi-isometric invariant for a Gromov hyperbolic space.  For boundaries homeomorphic to $\BS^2$, this leads to Cannon's conjecture \cite{MR1301392}: Let $G$ be a Gromov hyperbolic group whose Gromov boundary $\partial_\infty G$ is homeomorphic to $\BS^2$. Then the following equivalent statements hold:
\begin{enumerate}
    \item\label{it:Cannon-0} There exists an isometric, properly discontinuous, and cocompact action of $G$ on the hyperbolic space $\BH^3$. 
    \item\label{it:Cannon-1} The Gromov boundary $\partial_\infty G$ is quasisymmetrically equivalent to $\BS^2$.
\end{enumerate}
Remarkably, Cannon's conjecture is equivalent to stating that the Ahlfors regular conformal dimension of $\partial_\infty G$ is attained as a minimum \cite{MR2116315}.

There are many works in the literature which study the conformal dimension of fractals, including:
\begin{itemize}
    \item The conformal dimension of any metric space with a Hausdorff dimension strictly less than $1$ is $0$ \cite{MR2239342}. 
    \item For every $d \ge 1$, there exists a metric space with Hausdorff dimension $d$ that is minimal for conformal dimension \cite{MR1833245}. 
    \item The conformal dimension of the Sierpi\'nski gasket is exactly $1$ \cite{MR2268118}.  More generally, the conformal dimension of any sufficiently nice metric space with enough local cut points is $1$ \cite{MR3213834}.
    \item The exact conformal dimension of the Sierpi\'nski carpet remains unknown; however, it lies strictly between $1$ and its Hausdorff dimension \cite{MR1676353,MR2135168,MR3276005,MR4056527}. 
    \item Every Bedford--McMullen carpet with uniform fibers is minimal for conformal dimension \cite{MR2846307,MR4912977}. 
    \item Fractal percolation is almost surely not minimal for conformal dimension, although explicit values for this dimension have not yet been determined \cite{MR4259151}. 
    \item The graph of a one-dimensional Brownian motion is almost surely minimal for conformal dimension \cite{MR4912977}. 
\end{itemize}

In view of Theorem~\ref{thm:main} and the equivalence of the $\sqrt{8/3}$-LQG sphere and the Brownian sphere \cite{MR3572845,MR4050102,MR4225028,MR4348679,MR4242633}, it is also natural to consider the conformal dimension of $\gamma$-LQG surfaces for general values of $\gamma \in (0,2)$.  In this generality, the associated metric was constructed in \cite{TightLFPP,WeakLQGMet,ConfLQG,LocMetGFF,ExUniLQG}. We conjecture that the conformal dimension of the $\gamma$-LQG sphere is almost surely $2$ for all $\gamma \in (0, 2)$.

\subsection{Outline of the proof}\label{subsection:outline}

Our argument is motivated by the work of \cite{ConformalGauge}, which shows that the Ahlfors regular conformal dimension of a compact, doubling metric space is equal to the critical exponent associated with its combinatorial modulus.  (See also the expository work \cite{CP-ModARCD} on the results of \cite{ConformalGauge}.)

The approach in \cite{ConformalGauge} relies heavily on the technique of hyperbolic fillings (see \Cref{section:hyperbolic-filling} for a more detailed review). Heuristically, for a compact metric space $(X, D)$ with $\diam(X; D) < 1$, the hyperbolic filling is constructed as follows. Fix a sufficiently small parameter $\alpha \in (0, 1)$. Let $A_0 \subset A_1 \subset A_2 \subset \cdots$ be a sequence of finite subsets of $X$ such that each $A_n$ is a maximal $\alpha^n$-separated subset. The hyperbolic filling is then defined as a graph whose vertices are the metric balls $B_{\alpha^n}(x; D)$ for $n \ge 0$ and $x \in A_n$. Two vertices $B_{\alpha^m}(x; D)$ and $B_{\alpha^n}(y; D)$ are connected by an edge if either $m = n$ and the dilated balls $B_{4\alpha^m}(x; D)$ and $B_{4\alpha^n}(y; D)$ intersect, or $\lvert m - n \rvert = 1$ and the balls $B_{\alpha^m}(x; D)$ and $B_{\alpha^n}(y; D)$ intersect. The resulting metric graph forms a Gromov hyperbolic space whose Gromov boundary is quasisymmetrically equivalent to $(X, D)$. 

A \emph{weight function} is an assignment $\sigma$ from the vertices of the hyperbolic filling to the non-negative real numbers. Such a weight function is considered \emph{admissible} if it satisfies the following condition: Suppose $B_{\alpha^{n - 1}}(y; D)$ is a vertex, and there exists a path of vertices $B_{\alpha^n}(x_0; D) \sim B_{\alpha^n}(x_1; D) \sim \cdots \sim B_{\alpha^n}(x_N; D)$ such that $B_{\alpha^{n - 1}}(y; D) \cap B_{4\alpha^n}(x_0; D) \neq \emptyset$ and $(\BC \setminus B_{2\alpha^{n - 1}}(y; D)) \cap B_{4\alpha^n}(x_N; D) \neq \emptyset$. Then we must have $\sum_{j = 0}^N \sigma(B_{\alpha^n}(x_j; D)) \ge 1$. Following \cite{ConformalGauge,CP-ModARCD}, for each admissible weight function, we can construct a metric $\widetilde D$ on $X$ that is quasisymmetrically equivalent to $D$. Heuristically, the $\widetilde D$-diameter of $B_{\alpha^n}(x; D)$ is roughly bounded above by a constant multiple of $\prod_{j = 1}^n \sigma(B_{\alpha^j}(x_j; D))$, where $x_n = x$ and $B_{\alpha^{j - 1}}(x_{j - 1}; D)$ represents the ``parent'' of $B_{\alpha^j}(x_j; D)$. Consequently, for $p > 2$, to show that the conformal dimension of $(X, D)$ is at most $p$, it suffices to construct an admissible weight function $\sigma$ such that 
\begin{equation}\label{eq:outline-0}
    \sum_{x_n \in A_n} \prod_{j = 1}^n \sigma(B_{\alpha^j}(x_j; D))^p \to 0 \quad \text{as } n \to \infty. 
\end{equation}

We now focus on the case where $(X, D) = (\SCS, D_\SCS)$ is the Brownian sphere. Our construction of the desired admissible weight function relies on the a priori embedding $\SCS \to \BC \cup \{\infty\}$. One natural candidate for the weight function is given by $\sigma(B_{\alpha^n}(x; D_\SCS)) = \frac{\diam(B_{\alpha^n}(x; D_\SCS))}{\inradius(B_{\alpha^{n - 1}/2}(x; D_\SCS))}$, where $\diam(\bullet)$ denotes the Euclidean diameter and $\inradius(B_{\alpha^{n - 1}/2}(x; D_\SCS)) \defeq \sup\{R \ge 0 : B_R(x) \subset B_{\alpha^{n - 1}/2}(x; D_\SCS)\}$ (with $B_R(x)$ denoting the Euclidean ball). One can immediately verify that this choice satisfies the admissibility condition. However, with this definition, it is not straightforward to verify~\eqref{eq:outline-0}. Instead, we define
\begin{equation*}
    \sigma(B_{\alpha^n}(x; D_\SCS)) = \frac{\diam(B_{\alpha^n}(x; D_\SCS))}{\inradius(B_{\alpha^{n - 1}/2}^\bullet(x; D_\SCS))} \mathbf{1}_{E(x, n)} + \mathbf{1}_{E(x, n)^c}
\end{equation*}
for some carefully designed event $E(x, n)$. Here, $B_{\alpha^{n - 1}/2}^\bullet(x; D_\SCS)$ denotes the filled metric ball, i.e., the complement of the unbounded connected component of $\BC \setminus B_{\alpha^{n - 1}/2}(x; D_\SCS)$.

In order to verify~\eqref{eq:outline-0}, we first transfer our analysis from the Brownian sphere to a metric ball of the Brownian plane $({\SCC}, D_{\SCC}, \SM_\SCC)$, or equivalently, the $\sqrt{8/3}$-LQG cone, which serves as the natural unbounded variant of the Brownian sphere. Since the Hausdorff dimension of $({\SCC}, D_{\SCC})$ is $4$, the cardinality of $A_n$ (taken to be a maximal $\alpha^n$-separated subset of a metric ball $B_{t}(0; D_{\SCC})$ of some radius $t > 0$ in $({\SCC}, D_{\SCC})$ centered at the ``origin'' of the Brownian plane) grows as $\#A_n = \alpha^{-(4 + o(1))n}$ as $n \to \infty$. By exploiting the property of independence across scales, we can reduce the verification of~\eqref{eq:outline-0} to showing that $\BE\lbrack\sigma(B_{\alpha^n}(0; D_{\SCC}))^p\rbrack = O(\alpha^{q})$ as $\alpha \to 0$ for some $q > 4$ uniformly in $n \in \BN$. This will subsequently follow from the fact that the event $E(x, n)$ occurs with superpolynomially high probability as $\alpha \to 0$, combined with the corresponding estimates for the ratio $\frac{\diam(B_{\alpha^n}(0; D_{\SCC}))}{\inradius(B_{\alpha^{n - 1}/2}^\bullet(0; D_{\SCC}))}$.

The remainder of this paper is organized as follows. \Cref{section:background-BSph-LQG} provides the necessary background material on the Brownian sphere, the Brownian plane, and Liouville quantum gravity. \Cref{section:estimates} establishes important estimates for the Brownian plane, with a particular focus on bounding the ratio $\frac{\diam(B_{\alpha^n}(0; D_{\SCC}))}{\inradius(B_{\alpha^{n - 1}/2}^\bullet(0; D_{\SCC}))}$. \Cref{section:background-Gromov} recalls basic concepts from Gromov hyperbolic geometry. In \Cref{section:hyperbolic-filling}, we detail the framework of hyperbolic fillings and explain how admissible weight functions are used to construct quasisymmetrically equivalent metrics. \Cref{section:weight} is devoted to explicitly constructing our target weight function and rigorously verifying its admissibility. Finally, \Cref{section:proof} consolidates these elements to complete the proof of our main result, \Cref{thm:main}.

\subsection*{Acknowledgements}

J.M.~received support from ERC consolidator grant ARPF (Horizon Europe UKRI G120614). Y.T.~was supported by a Cambridge International Scholarship from Cambridge Trust. 

\section{Background on the Brownian sphere and Liouville quantum gravity}\label{section:background-BSph-LQG}

\subsection{Notations and conventions}

We let $\BC$, $\BR$, $\BZ$, and $\BN$ denote the sets of complex numbers, real numbers, integers, and positive integers, respectively. We shall write $\widehat\BC \defeq \BC \cup \{\infty\}$. For real numbers $a < b$, we define the discrete interval $[a, b]_\BZ \defeq [a, b] \cap \BZ$. 

For variable non-negative quantities $a$ and $b$, we write $a \preceq b$ (resp.~$a \succeq b$) if there exists a constant $C > 0$, independent of the relevant parameters, such that $a \le Cb$ (resp.~$a \ge Cb$). We write $a \asymp b$ if both $a \preceq b$ and $a \succeq b$ hold.

For a non-negative quantity $a$ depending on a parameter $t > 0$, and for a fixed exponent $\alpha > 0$, we write $a = O(t^\alpha)$ as $t \to 0$ if there exists a constant $C > 0$ such that $a \le Ct^\alpha$ for all sufficiently small $t$. We write $a = O(t^\infty)$ as $t \to 0$ if $a = O(t^\alpha)$ as $t \to 0$ for every $\alpha > 0$. Similarly, we write $a = O(t^{-\alpha})$ and $a = O(t^{-\infty})$ as $t \to \infty$ to denote the analogous bounds for large $t$. 

Let $(X, D)$ be a metric space. For $x \in X$ and $t > 0$, we define the open metric ball $B_t(x; D) \defeq \{y \in X : D(x, y) < t\}$. For $0 < s < t$, we define the open metric annulus $A_{s,t}(x; D) \defeq \{y \in X : s < D(x, y) < t\}$. Finally, the diameter of the space is denoted by $\diam(X; D) \defeq \sup\{D(x, y) : x, y \in X\}$. When $D$ is the Euclidean metric, we omit it from our notation.

\subsection{The Brownian sphere and the Brownian plane}

In the present subsection, we review the construction and fundamental properties of the Brownian sphere and the Brownian plane.

Let $\{X_t\}_{t \in [0, 1]}$ be a normalized Brownian excursion. Given $X$, let $\{Z_t\}_{t \in [0, 1]}$ be a centered Gaussian process with covariance function
\begin{equation*}
    \BE\lbrack Z_s Z_t\rbrack = \inf_{u \in [s, t]} X_u, \quad \forall 0 \le s \le t \le 1. 
\end{equation*}
We define
\begin{equation*}
    D^\circ(s, t) \defeq Z_s + Z_t - 2\left(\inf_{u \in [s, t]} Z_u \vee \inf_{u \in [0, s] \cup [t, 1]} Z_u\right), \quad \forall 0 \le s \le t \le 1. 
\end{equation*}
Let $D$ be the maximal pseudo-metric on $[0, 1]$ such that $D \le D^\circ$ and $D(s, t) = 0$ whenever $D^\circ(s, t) = 0$. We define the quotient space $\SCS \defeq [0, 1]/\sim$, where $s \sim t$ if and only if $D(s, t) = 0$. Let $D_\SCS$ denote the metric on $\SCS$ induced by $D$, and let $\SM_\SCS$ be the pushforward of the Lebesgue measure on $[0, 1]$ under the canonical projection $\mathop{\mathrm{pr}} \colon [0, 1] \to \SCS$. We refer to the quintuple $(\SCS, D_\SCS, \SM_\SCS; \mathop{\mathrm{pr}}(0), \mathop{\mathrm{pr}}(t_\ast))$ as the \emph{Brownian sphere} with unit area, where $t_\ast$ denotes the unique time at which $Z$ attains its infimum. 

Almost surely, $(\SCS, D_\SCS)$ is a geodesic metric space homeomorphic to $\BS^2$ (cf.~\cite{MR2438999,MR2399286}), and its Hausdorff dimension is $4$ (cf.~\cite{MR2336042}). Conditional on $(\SCS, D_\SCS, \SM_\SCS)$, the marked points $\mathop{\mathrm{pr}}(0)$ and $\mathop{\mathrm{pr}}(t_\ast)$ are independent and distributed according to $\SM_\SCS$ (cf.~\cite{GeoBMap}). Furthermore, the measure $\SM_\SCS$ is almost surely given by the Hausdorff measure associated with the gauge function $r \mapsto r^4\log(\log(1/r))$ (cf.~\cite{MR4474537}). 

The law of the unconditioned Brownian sphere (i.e., with a random area) is defined as the pushforward of the measure $\BP \otimes c a^{-3/2} \, \rd a$, for some constant $c > 0$, under the scaling $(\SCS, D_\SCS, \SM_\SCS; x, y) \mapsto (\SCS, a^{1/4}D_\SCS, a\SM_\SCS; x, y)$, where $\BP$ denotes the law of the Brownian sphere with unit area. Equivalently, the unconditioned Brownian sphere can be constructed via the same procedure described above by replacing the normalized Brownian excursion with It\^o's excursion measure.

The Brownian plane is an unbounded variant of the Brownian sphere and is constructed in a similar manner (cf.~\cite{TBP}). More precisely, let $\{X_t\}_{t \in \BR}$ be a process such that $\{X_t\}_{t \ge 0}$ and $\{X_{-t}\}_{t \ge 0}$ are independent three-dimensional Bessel processes starting from zero. Given $X$, let $\{Z_t\}_{t \in \BR}$ be a centered Gaussian process with covariance function
\begin{equation*}
    \BE\lbrack Z_s Z_t\rbrack = 
    \begin{cases}
        \inf_{u \in [s, t]} X_u & \text{if } s \le t \le 0 \text{ or } 0 \le s \le t; \\
        \inf_{u \in (-\infty, s] \cup [t, \infty)} X_u & \text{if } s \le 0 \le t. 
    \end{cases} 
\end{equation*}
We define
\begin{equation*}
    D^\circ(s, t) \defeq Z_s + Z_t - 2\inf_{u \in [s, t]} Z_u, \quad \forall s \le t. 
\end{equation*}
Let $D$ be the maximal pseudo-metric on $\BR$ such that $D \le D^\circ$ and $D(s, t) = 0$ whenever $D^\circ(s, t) = 0$. We define the quotient space $\SCC \defeq \BR/\sim$, where $s \sim t$ if and only if $D(s, t) = 0$. Let $D_\SCC$ denote the metric on $\SCC$ induced by $D$, and let $\SM_\SCC$ be the pushforward of the Lebesgue measure on $\BR$ under the canonical projection $\mathop{\mathrm{pr}} \colon \BR \to \SCC$. We refer to the quadruple $(\SCC, D_\SCC, \SM_\SCC; \mathop{\mathrm{pr}}(0))$ as the \emph{Brownian plane}.

The Brownian plane satisfies a natural scaling property: For each deterministic scalar $\lambda > 0$, the rescaled space $(\SCC, \lambda D_\SCC, \lambda^4\SM_\SCC; \mathop{\mathrm{pr}}(0))$ has exactly the same law as the original space $(\SCC, D_\SCC, \SM_\SCC; \mathop{\mathrm{pr}}(0))$.

\subsection{The breadth-first exploration}\label{subsection:BFS}

Recall that a \emph{continuous-state branching process (CSBP)} with \emph{branching mechanism} $\psi$ is an $\BR_{\ge 0}$-valued Markov process $Y$ such that 
\begin{equation*}
    \BE_y\lbrack \re^{-\lambda Y_t}\rbrack = \re^{-yu_t(\lambda)}, \quad \forall y > 0, \ \forall \lambda > 0, \ \forall t > 0, 
\end{equation*}
where 
\begin{equation*}
    \partial_tu_t(\lambda) = -\psi(u_t(\lambda)) \quad \text{and} \quad u_0(\lambda) = \lambda,
\end{equation*}
and $\psi$ takes the L\'evy--Khintchine form
\begin{equation*}
    \psi(u) = au + bu^2 + \int_0^\infty (\re^{-ux} - 1 + ux) \, \Pi(\rd x). 
\end{equation*}
When $\psi(u) = c u^\alpha$ for some $\alpha \in (1, 2]$ and $c > 0$, we refer to $Y$ as an \emph{$\alpha$-stable CSBP}. In the present paper, we consider the case where $\psi(u) = \sqrt{8/3} u^{3/2}$. In this setting,
\begin{equation*}
    u_t(\lambda) = (\lambda^{-1/2} + \sqrt{2/3} t)^{-2}, \quad \forall \lambda > 0.
\end{equation*}
Let $\zeta \defeq \inf\{t > 0 : Y_t = 0\}$ denote the lifetime of $Y$. Then
\begin{equation*}
    \BP_y\lbrack\zeta \le t\rbrack = \BP_y\lbrack Y_t = 0\rbrack = \lim_{\lambda \to \infty} \BE_y\lbrack \re^{-\lambda Y_t}\rbrack = \exp\!\left(-\frac{3y}{2t^2}\right), \quad \forall t > 0, 
\end{equation*}
yielding the density 
\begin{equation}\label{eq:lifetime-density}
    \BP_y\lbrack\zeta \in \rd t\rbrack = 3yt^{-3} \exp\!\left(-\frac{3y}{2t^2}\right) \, \rd t, \quad \forall t > 0. 
\end{equation}
The $3/2$-stable CSBP and its associated excursion measure can be obtained from the spectrally positive $3/2$-stable L\'evy process and its excursion measure, respectively, via the Lamperti transformation. 

Let $(\SCS, D_\SCS, \SM_\SCS; x, y)$ be an (unconditioned) Brownian sphere. Unlike Euclidean space, the complement of the metric ball $B_t(x; D_\SCS)$ for $t > 0$ has countably many connected components (unless $B_t(x; D_\SCS) = \SCS$). We define $B_t^y(x; D_\SCS)$ as the complement of the connected component of $\SCS \setminus B_t(x; D_\SCS)$ that contains $y$. Informally, $B_t^y(x; D_\SCS)$ is obtained by filling in the holes of $B_t(x; D_\SCS)$ that do not contain $y$. 

By \cite{Hull-TBP,MR4225028}, there exists a c\`adl\`ag process $\{Y_t\}_{t \in [0, D_\SCS(x, y)]}$ with only positive jumps, almost surely determined by $(\SCS, D_\SCS, \SM_\SCS; x, y)$, such that for each deterministic $t \ge 0$, almost surely on the event that $t < D_\SCS(x, y)$, 
\begin{equation*}
    Y_{D_\SCS(x, y) - t} = \lim_{\varepsilon \to 0} \varepsilon^{-2} \SM_\SCS(B_{t + \varepsilon}(x; D_\SCS) \setminus B_t^y(x; D_\SCS)). 
\end{equation*}
Moreover, the law of $\{Y_t\}_{t \in [0, D_\SCS(x, y)]}$ is given by the $3/2$-stable CSBP excursion measure. 

Let $(\SCC, D_\SCC, \SM_\SCC; x)$ be a Brownian plane. In a similar vein, let $B_t^\bullet(x; D_\SCC)$ denote the complement of the unbounded connected component of $\SCC \setminus B_t(x; D_\SCC)$. There exists a c\`adl\`ag process $\{Y_{-t}\}_{-t \le 0}$ with only positive jumps, almost surely determined by $(\SCC, D_\SCC, \SM_\SCC; x)$, such that for each deterministic $t \ge 0$, almost surely, 
\begin{equation*}
    Y_{-t} = \lim_{\varepsilon \to 0} \varepsilon^{-2} \SM_\SCC(B_{t + \varepsilon}(x; D_\SCC) \setminus B_{t}^\bullet(x; D_\SCC)). 
\end{equation*}
Furthermore, the law of $\{Y_{-t}\}_{t \ge 0}$ is characterized by the following properties:
\begin{equation}\label{eq:hull-process}
    \parbox{.85\linewidth}{
    \begin{itemize}
        \item $Y_{-t} \to \infty$ as $t \to \infty$, and
        \item for each deterministic $x > 0$, if we write $\tau_x \defeq \sup\{t \ge 0 : Y_{-t} = x\}$, then $\{Y_{s - \tau_x}\}_{s \in [0, \tau_x]}$ has the law of a $3/2$-stable CSBP starting from $x$. 
    \end{itemize}}
\end{equation}

We define $\SN_\SCS(\partial B_t^y(x; D_\SCS)) \defeq Y_{D_\SCS(x, y) - t}$ and, respectively, $\SN_\SCC(\partial B_t^\bullet(x; D_\SCC)) \defeq Y_{-t}$, and refer to them as the \emph{boundary lengths} of the filled metric balls. 

As shown in \cite{MR4225028}, for each deterministic $t > 0$, on the event that $t < D_\SCS(x, y)$, the boundary length $\SN_\SCS(\partial B_t^y(x; D_\SCS))$ is almost surely determined by 
\begin{equation}\label{eq:filled-metric-ball-sphere}
    \left(B_t^y(x; D_\SCS), D_\SCS(\bullet, \bullet; B_t^y(x; D_\SCS)), \SM_\SCS|_{B_t^y(x; D_\SCS)}; x\right), 
\end{equation}
where $D_\SCS(\bullet, \bullet; B_t^y(x; D_\SCS))$ denotes the internal metric (i.e., the infimum of the $D_\SCS$-lengths of all paths entirely contained in $B_t^y(x; D_\SCS)$); it is also almost surely determined by
\begin{equation}\label{eq:Brownian-disk}
    \left(\SCS \setminus B_t^y(x; D_\SCS), D_\SCS(\bullet, \bullet; \SCS \setminus B_t^y(x; D_\SCS)), \SM_\SCS|_{\SCS \setminus B_t^y(x; D_\SCS)}; y\right); 
\end{equation}
furthermore, \eqref{eq:filled-metric-ball-sphere} and~\eqref{eq:Brownian-disk} are conditionally independent given $\SN_\SCS(\partial B_t^y(x; D_\SCS))$. Moreover, the conditional law of~\eqref{eq:Brownian-disk} given $\SN_\SCS(\partial B_t^y(x; D_\SCS))$ does not depend on the choice of $t$. In fact, this conditional law is referred to as the \emph{pointed Brownian disk} of boundary length $\SN_\SCS(\partial B_t^y(x; D_\SCS))$.

Analogously, for each deterministic $t > 0$, the boundary length $\SN_\SCC(\partial B_t^\bullet(x; D_\SCC))$ is almost surely determined by 
\begin{equation}\label{eq:filled-metric-ball-cone}
    \left(B_t^\bullet(x; D_\SCC), D_\SCC(\bullet, \bullet; B_t^\bullet(x; D_\SCC)), \SM_\SCC|_{B_t^\bullet(x; D_\SCC)}; x\right), 
\end{equation}
and also by
\begin{equation}\label{eq:Brownian-horn}
    \left(\SCC \setminus B_t^\bullet(x; D_\SCC), D_\SCC(\bullet, \bullet; \SCC \setminus B_t^\bullet(x; D_\SCC)), \SM_\SCC|_{\SCC \setminus B_t^\bullet(x; D_\SCC)}\right); 
\end{equation}
furthermore, \eqref{eq:filled-metric-ball-cone} and~\eqref{eq:Brownian-horn} are conditionally independent given $\SN_\SCC(\partial B_t^\bullet(x; D_\SCC))$. Moreover, the conditional law of~\eqref{eq:Brownian-horn} given $\SN_\SCC(\partial B_t^\bullet(x; D_\SCC))$ does not depend on the choice of $t$. Last but not least, the conditional laws of the filled metric balls~\eqref{eq:filled-metric-ball-sphere} and~\eqref{eq:filled-metric-ball-cone} given their boundary lengths are identical. 

For $0 \le s < t$, we define the \emph{metric bands} $A_{s,t}^y(x; D_\SCS) \defeq B_t^y(x; D_\SCS) \setminus B_s^y(x; D_\SCS)$ and $A_{s,t}^\bullet(x; D_\SCC) \defeq B_t^\bullet(x; D_\SCC) \setminus B_s^\bullet(x; D_\SCC)$. For $\ell_1, \ell_2 \ge 0$, the conditional law of 
\begin{equation}\label{eq:metric-band-sphere}
    \left(A_{s,t}^y(x; D_\SCS), D_\SCS(\bullet, \bullet; A_{s,t}^y(x; D_\SCS)), \SM_\SCS|_{A_{s,t}^y(x; D_\SCS)}; \partial B_s^y(x; D_\SCS), \partial B_t^y(x; D_\SCS)\right)
\end{equation}
given $\SN_\SCS(\partial B_s^y(x; D_\SCS)) = \ell_1$, $\SN_\SCS(\partial B_t^y(x; D_\SCS)) = \ell_2$, and the event that $t \le D_\SCS(x, y)$, is identical to the conditional law of 
\begin{equation}\label{eq:metric-band-cone}
    \left(A_{s,t}^\bullet(x; D_\SCC), D_\SCC(\bullet, \bullet; A_{s,t}^\bullet(x; D_\SCC)), \SM_\SCC|_{A_{s,t}^\bullet(x; D_\SCC)}; \partial B_s^\bullet(x; D_\SCC), \partial B_t^\bullet(x; D_\SCC)\right)
\end{equation}
given $\SN_\SCC(\partial B_s^\bullet(x; D_\SCC)) = \ell_1$ and $\SN_\SCC(\partial B_t^\bullet(x; D_\SCC)) = \ell_2$. Moreover, this common conditional law depends only on $\ell_1$, $\ell_2$, and the width $t - s$, and we refer to it as the law of the \emph{metric band with inner boundary length $\ell_1$, outer boundary length $\ell_2$, and width $t - s$}. 

Similarly, we can consider the conditional laws of~\eqref{eq:metric-band-sphere} and~\eqref{eq:metric-band-cone} given only that the inner boundary length is $\ell_1$ (and, for the sphere, the event that $s < D_\SCS(x, y)$), without conditioning on the outer boundary length. While these two conditional laws still depend only on $\ell_1$ and $t - s$, they are no longer identical. We refer to them as the \emph{metric bands of sphere (resp.~cone) type with inner boundary length $\ell_1$ and width $t - s$.}

These metric bands satisfy a natural scaling property: If $(\SCA, D_\SCA, \SM_\SCA; I, O)$ is a metric band with inner boundary length $\ell_1$, outer boundary length $\ell_2$, and width $t - s$, then for any deterministic $\lambda > 0$, the rescaled tuple $(\SCA, \lambda D_\SCA, \lambda^4\SM_\SCA; I, O)$ is a metric band with inner boundary length $\lambda^2\ell_1$, outer boundary length $\lambda^2\ell_2$, and width $\lambda(t - s)$. An analogous statement holds for metric bands of sphere (resp.~cone) type.

\subsection{Liouville quantum gravity and space-filling SLE}

Let $\gamma \in (0, 2)$ and $Q \defeq 2/\gamma + \gamma/2$. A \emph{$\gamma$-LQG surface} is an equivalence class of triples $(U, g, \Phi)$, where $U$ is a Riemann surface, $g$ is a conformal metric on $U$ (i.e., a Riemannian metric of the form $\varrho(z)^2 \, \rd z \, \rd\overline z$ in each local chart), and $\Phi$ is a Schwartz distribution (an instance of the Gaussian free field) on $(U, g)$. Two such triples $(U, g, \Phi)$ and $(U^\prime, g^\prime, \Phi^\prime)$ are equivalent if there exists a conformal mapping $\phi \colon U^\prime \to U$ and a smooth function $\varrho \colon U^\prime \to \BR_{> 0}$ such that $\phi^\ast(g) = \varrho^2 g^\prime$ and $\Phi^\prime = \Phi \circ \phi + Q\log(\varrho)$. The particular choice of a representative $(U, g, \Phi)$ from the equivalence class is referred to as an \emph{embedding} of the $\gamma$-LQG surface. 

We will also consider \emph{marked} $\gamma$-LQG surfaces: If $A_1, \cdots, A_n$ (resp.~$A_1^\prime, \cdots, A_n^\prime$) are subsets of the completion of $U$ (resp.~$U^\prime$), then the tuples $(U, g, \Phi; A_1, \cdots, A_n)$ and $(U^\prime, g^\prime, \Phi^\prime; A_1^\prime, \cdots, A_n^\prime)$ are considered equivalent if the conformal mapping $\phi$ additionally satisfies $\phi(A_j^\prime) = A_j$ for all $j \in [1, n]_\BZ$. In the present paper, we focus on the case where $U$ is an open subset of $\BC$ and $g$ is the Euclidean metric; consequently, we omit $g$ from the notation.

As shown in \cite{LQGKPZ}, a $\gamma$-LQG surface admits a natural \emph{$\gamma$-LQG measure}, denoted by $\SM_\Phi$. This measure is formally given by ``$\re^{\gamma\Phi} \, \rd x \, \rd y$'' and does not depend on the choice of embedding. 

Furthermore, the surface admits a natural \emph{$\gamma$-LQG metric}, denoted by $D_\Phi$, which is formally given by ``$\re^{\gamma\Phi}(\rd x^2 + \rd y^2)$'' and similarly does not depend on the embedding. This metric was initially constructed for the specific case of $\gamma = \sqrt{8/3}$ in \cite{MR3572845,MR4050102,MR4225028,MR4348679,MR4242633}, and later extended to all $\gamma \in (0, 2)$ by \cite{TightLFPP,WeakLQGMet,ConfLQG,LocMetGFF,ExUniLQG}.

Recall that a \emph{whole-plane Gaussian free field (GFF)} $\Phi$ is defined as a random Schwartz distribution modulo additive constants such that 
\begin{equation*}
    \left\{\langle\Phi, f\rangle : f \in C_c^\infty(\BC), \ \int_\BC f(z) \, \rd z = 0\right\}
\end{equation*}
forms a centered Gaussian process with the covariance function
\begin{equation*}
    \BE\lbrack\langle\Phi, f\rangle\langle\Phi, g\rangle\rbrack = -\int_{\BC \times \BC} \log(|x - y|) f(x) g(y) \, \rd x \, \rd y. 
\end{equation*}
Let $H^1(\BC)$ denote the completion of the space 
\begin{equation*}
    \left\{f \in C^\infty(\BC) / \BR : \int_\BC \|\nabla f(z)\|^2 \, \rd z < \infty\right\} 
\end{equation*}
with respect to the inner product $\langle f, g\rangle_{H^1(\BC)} \defeq \frac1{2\pi}\int_\BC \nabla f(z) \cdot \nabla g(z) \, \rd z$. The field $\Phi$ can then equivalently be represented by the series expansion $\sum_{j = 1}^\infty X_j \psi_j$, where $(\psi_j)_{j \ge 1}$ forms an orthonormal basis for $H^1(\BC)$ and $(X_j)_{j \ge 1}$ is a sequence of independent standard Gaussian random variables.

We now proceed to define the $\gamma$-LQG sphere and the $\gamma$-LQG cone (cf.~\cite{dms2021mating}). Observe that $H^1(\BC)$ admits the orthogonal decomposition $H^1(\BC) = H^{\mathrm{rad}}(\BC) \oplus H^{\mathrm{circ}}(\BC)$, where 
\begin{align*}
    H^{\mathrm{rad}}(\BC) &\defeq \left\{f \in H^1(\BC) : f \text{ is constant on } \{z \in \BC : |z| = t\} \text{ for each } t > 0\right\}; \\
    H^{\mathrm{circ}}(\BC) &\defeq \left\{f \in H^1(\BC) : f \text{ has mean zero on } \{z \in \BC : |z| = t\} \text{ for each } t > 0\right\}. 
\end{align*}
Let us define the process $\{A_t\}_{t \in \BR}$ by $A_t \defeq B_t + \gamma t$ for $t \ge 0$ and $A_t \defeq \widetilde B_{-t} + \gamma t$ for $t < 0$, where $\{B_t\}_{t \ge 0}$ is a standard Brownian motion starting from $0$, and $\{\widetilde B_t\}_{t \ge 0}$ is an independent Brownian motion starting from $0$, conditioned such that $\widetilde B_t + (Q - \gamma)t > 0$ for all $t > 0$. Next, let $\Phi$ be a whole-plane GFF independent of $\{A_t\}_{t \in \BR}$. We define $\Phi^{\mathrm{cone}}$ as the random Schwartz distribution whose projection onto $H^{\mathrm{rad}}(\BC)$ is given by $A_{-\log(|\bullet|)}$ (where the process defined by $\Phi_r^{\mathrm{cone}}(0) \defeq A_{-\log(r)}$ for $r > 0$ is referred to as the \emph{circle average} process of $\Phi^{\mathrm{cone}}$), and whose projection onto $H^{\mathrm{circ}}(\BC)$ coincides with that of $\Phi$. The pointed $\gamma$-LQG surface represented by $(\BC, \Phi^{\mathrm{cone}}; 0, \infty)$ is then referred to as a \emph{$\gamma$-LQG cone}, with this specific parametrization being its \emph{circle average embedding}. 

Let $e$ be a Bessel excursion of dimension $4 - 8/\gamma^2$. Let $\{A_t^e\}_{t \in \BR}$ be the process $(2/\gamma)\log(e)$, reparameterized to have quadratic variation $\rd t$. If we replace $\{A_t\}_{t \in \BR}$ with $\{A_t^e\}_{t \in \BR}$ in the definition of the $\gamma$-LQG cone, we obtain the \emph{$\gamma$-LQG sphere}. Since the Bessel excursion measure is $\sigma$-finite, the law of a $\gamma$-LQG sphere is also a $\sigma$-finite measure.

It has been established in \cite{MR4050102,MR4225028,MR4348679,MR4242633} that the Brownian sphere $(\SCS, D_\SCS, \SM_\SCS; x, y)$ and the $\sqrt{8/3}$-LQG sphere $(\widehat\BC, D_{\Phi^{\mathrm{sph}}}, \SM_{\Phi^{\mathrm{sph}}}; 0, \infty)$ are equivalent in law as pointed metric measure spaces. Furthermore, the $\sqrt{8/3}$-LQG surface structure --- and in particular, its conformal structure as a Riemann surface --- is almost surely determined by this underlying metric measure space structure. Analogously, the Brownian plane $(\SCC, D_\SCC, \SM_\SCC; x)$ and the $\sqrt{8/3}$-LQG cone $(\BC, D_{\Phi^{\mathrm{cone}}}, \SM_{\Phi^{\mathrm{cone}}}; 0)$ also share the same law as pointed metric measure spaces.

Let $\kappa \defeq \gamma^2 \in (0, 4)$ and $\kappa^\prime \defeq 16/\kappa > 4$. The whole-plane space-filling SLE$_{\kappa^\prime}$ curve from $\infty$ to $\infty$, first constructed in \cite{IG4}, is a versatile object that, among its many applications, provides a natural framework for formulating the translation invariance property of the $\gamma$-LQG cone. This curve, denoted by $\eta^\prime \colon \BR \to \BC$, is almost surely non-self-crossing and space-filling, with $\eta^\prime(t) \to \infty$ as $t \to \pm\infty$. Modulo reparameterization, the law of $\eta^\prime$ is invariant under M\"obius transformations that fix $\infty$. Consequently, the whole-plane space-filling SLE$_{\kappa^\prime}$ curve is naturally well-defined on a $\gamma$-LQG sphere or cone. We may fix the parameterization such that $\eta^\prime(0) = 0$. As shown in \cite{dms2021mating}, if $(\BC, \Phi; 0, \infty)$ is a $\gamma$-LQG cone independent of $\eta^\prime$, and $\eta^\prime$ is parameterized by the $\gamma$-LQG measure $\SM_\Phi$, then for any deterministic time $t \in \BR$, the pointed $\gamma$-LQG surface $(\BC, \Phi; \eta^\prime(t), \infty)$ also has the law of a $\gamma$-LQG cone.

Throughout the remainder of the present paper, we shall write
\begin{gather*}
    \gamma \defeq \sqrt{8/3}; \quad Q \defeq 2/\gamma + \gamma/2 = 5/\sqrt6; \quad d_\gamma \defeq 4; \quad \xi \defeq \gamma/d_\gamma = 1/\sqrt6; \\
    \kappa \defeq \gamma^2 = 8/3; \quad \kappa^\prime \defeq 16/\kappa = 6. 
\end{gather*}

\section{Estimates for the Brownian plane}\label{section:estimates}

In the present section, we establish several estimates for the Brownian plane. In \Cref{subsection:CSBP}, we collect estimates for $3/2$-stable CSBPs. These results show that the boundary lengths of filled metric balls in the Brownian plane cannot significantly exceed their expected values across multiple scales. In \Cref{subsection:volume}, we present estimates for the volumes of metric balls. These volume bounds will allow us to construct dense nets for the Brownian plane with high probability via independent random sampling. Finally, in \Cref{subsection:conformal-moduli}, we prove estimates concerning the conformal moduli of metric bands in the Brownian plane. These estimates control the ratio between the Euclidean diameter and the inradius mentioned in \Cref{subsection:outline}, which is an important ingredient for the proof of our main theorem.

\subsection{Boundary lengths of filled metric balls}\label{subsection:CSBP}

\begin{lemma}\label{lem:CSBP}
    Let $y, T > 0$. Let $Y$ be a $3/2$-stable CSBP starting from $y$ with lifetime $T$. Then
    \begin{equation*}
        \BP_y\lbrack Y_t > A(T - t)^2\rbrack \le 3^{3/2} \exp\!\left(-A + 3(\sqrt3 - 1)y(T - t)/T^3\right), \quad \forall t \in (0, T), \ \forall A > 0. 
    \end{equation*}
\end{lemma}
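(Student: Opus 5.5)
The plan is to write the conditioning on the lifetime through the Markov property and the explicit lifetime density \eqref{eq:lifetime-density}, and then to bound the resulting expectation with the explicit Laplace transform of the $3/2$-stable CSBP. Throughout, set $s \defeq T - t$. Write $f_z(r) \defeq 3zr^{-3}\exp(-3z/(2r^2))$ for the lifetime density started from $z$. By the Markov property at time $t$, the joint law of $(Y_t, \zeta)$ is $\BP_y[Y_t \in \rd z]\, f_z(T - t)\, \rd T$. Hence the conditional probability given $\zeta = T$ is
\begin{equation*}
    \BP_y\lbrack Y_t > As^2 \mid \zeta = T\rbrack = \frac{\BE_y\lbrack \mathbf 1_{\{Y_t > As^2\}} f_{Y_t}(s)\rbrack}{f_y(T)}.
\end{equation*}
I will do this formally via disintegration; it is the standard $h$-transform description of the CSBP conditioned on its lifetime.

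Next comes a Chernoff-type step. On $\{z > As^2\}$ we have $e^{-z/s^2} \le e^{-A}$. Therefore
\begin{equation*}
    \mathbf 1_{\{z>As^2\}}\, z e^{-3z/(2s^2)} \le e^{-A}\, z e^{-\lambda z}, \qquad \lambda \defeq 1/(2s^2).
\end{equation*}
The remaining expectation is computed exactly by differentiating the Laplace transform:
\begin{equation*}
    \BE_y\lbrack Y_t e^{-\lambda Y_t}\rbrack = y\,\partial_\lambda u_t(\lambda)\, e^{-y u_t(\lambda)}.
\end{equation*}
Using $u_t(\lambda) = (\lambda^{-1/2} + \sqrt{2/3}\,t)^{-2}$ and $\lambda^{-1/2} = \sqrt2\, s$, one finds
\begin{equation*}
    u_t(\lambda) = \frac{3}{2(T + (\sqrt3 - 1)s)^2}, \qquad \partial_\lambda u_t(\lambda) = \lambda^{-3/2}(\lambda^{-1/2} + \sqrt{2/3}\,t)^{-3} = \frac{s^3}{(s + t/\sqrt3)^3}.
\end{equation*}

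It then remains to assemble the pieces. Dividing by $f_y(T) = 3yT^{-3}e^{-3y/(2T^2)}$ gives the bound
\begin{equation*}
    e^{-A}\,\frac{T^3}{(s + t/\sqrt3)^3}\,\exp\!\left(y\left(\frac{3}{2T^2} - u_t(\lambda)\right)\right).
\end{equation*}
For the prefactor, $s + t/\sqrt3 \ge T/\sqrt3$, so it is at most $3^{3/2}$. For the exponent, set $x \defeq (\sqrt3 - 1)s/T$. Then
\begin{equation*}
    \frac{3}{2T^2} - u_t(\lambda) = \frac{3}{2T^2}\bigl(1 - (1 + x)^{-2}\bigr) \le \frac{3}{2T^2}\cdot 2x = \frac{3(\sqrt3 - 1)s}{T^3},
\end{equation*}
which is exactly the claimed term $3(\sqrt3-1)y(T-t)/T^3$.

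The only delicate points are the rigorous justification of the conditional density formula (conditioning on the null event $\{\zeta = T\}$, which should be handled via the $h$-transform or disintegration) and the choice $\lambda = 1/(2s^2)$. That choice is what makes the constants come out exactly as $3^{3/2}$ and $3(\sqrt3 - 1)$; the rest is direct computation.
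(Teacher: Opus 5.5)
Your proof is correct and is essentially the paper's argument. The paper also conditions on $\zeta = T$ via the Markov property and the density \eqref{eq:lifetime-density}, computes $\BE_y[\re^{\lambda Y_t} \mid \zeta = T]$ through $u_t'$ with $\lambda = (T-t)^{-2}$ (so that $\theta - \lambda = 1/(2(T-t)^2)$ is exactly your $\lambda$), and then applies Markov's inequality, whereas you apply the same Chernoff bound pointwise inside the expectation before evaluating it; the prefactor bound $3^{3/2}$ and the exponent bound via $1-(1+x)^{-2} \le 2x$ are identical.
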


\begin{proof}
    Let $Y$ be a $3/2$-stable CSBP starting from $y$ (without conditioning on its lifetime $\zeta$). Then it follows from the Markov property and~\eqref{eq:lifetime-density} that
    \begin{align*}
        \BE_y\lbrack \re^{\lambda Y_t} \mid \zeta = T\rbrack &= \frac{\BE_y\lbrack\re^{\lambda Y_t} \mathbf 1_{\{\zeta \in \rd T\}}\rbrack}{\BP_y\lbrack\zeta \in \rd T\rbrack} = \frac{\BE_y\lbrack\re^{\lambda Y_t}\BP_{Y_t}\lbrack\zeta \in \rd (T - t)\rbrack\rbrack}{\BE_y\lbrack\BP_{Y_t}\lbrack\zeta \in \rd (T - t)\rbrack\rbrack} = \frac{\BE_y\lbrack Y_t\re^{-(\theta - \lambda)Y_t}\rbrack}{\BE_y\lbrack Y_t\re^{-\theta Y_t}\rbrack} \\
        &= \frac{u_t^\prime(\theta - \lambda)}{u_t^\prime(\theta)} \exp(-y(u_t(\theta - \lambda) - u_t(\theta))), \quad \forall \lambda \in (0, \theta), 
    \end{align*}
    where $\theta \defeq (3/2)(T - t)^{-2}$. Take $\lambda \defeq (T - t)^{-2}$. Then
    \begin{equation*}
        \frac{u_t^\prime(\theta - \lambda)}{u_t^\prime(\theta)} = \left(\frac\theta{\theta - \lambda}\right)^{3/2} \left(\frac{\theta^{-1/2} + \sqrt{2/3}t}{(\theta - \lambda)^{-1/2} + \sqrt{2/3}t}\right)^3 = 3^{3/2}\left(\frac{\theta^{-1/2} + \sqrt{2/3}t}{(\theta - \lambda)^{-1/2} + \sqrt{2/3}t}\right)^3 \le 3^{3/2}, 
    \end{equation*}
    and
    \begin{align*}
        \exp(-y(u_t(\theta - \lambda) - u_t(\theta)))
    &= \exp\!\left(\frac{3y}{2T^2}\left(1 - \left(1 + \frac{(\sqrt3 - 1)(T - t)}T\right)^{-2}\right)\right)\\
    &\le \exp\!\left(3(\sqrt3 - 1)y(T - t)/T^3\right),
    \end{align*}
    where the last inequality follows from the fact that $1 - (1 + x)^{-2} \le 2x$ for all $x \ge 0$. Thus, we conclude that
    \begin{equation*}
        \BE_y\lbrack \re^{\lambda Y_t} \mid \zeta = T\rbrack \le 3^{3/2} \exp\!\left(3(\sqrt3 - 1)y(T - t)/T^3\right), 
    \end{equation*}
    which implies that
    \begin{equation*}
        \BE_y\lbrack Y_t \ge A(T - t)^2 \mid \zeta = T\rbrack \le \re^{-A}\BE\lbrack \re^{\lambda Y_t} \mid \zeta = T\rbrack \le 3^{3/2} \exp\!\left(-A + 3(\sqrt3 - 1)y(T - t)/T^3\right). 
    \end{equation*}
    This completes the proof of \Cref{lem:CSBP}. 
\end{proof}

\begin{lemma}\label{lem:CSBP-independence}
    Let $(\BC, \Phi; 0, \infty)$ be a $\gamma$-LQG cone. For each $t \ge 0$, write $Y_{-t} \defeq \SN_\Phi(\partial B_t^\bullet(0; D_\Phi))$. 
    \begin{enumerate}
        \item\label{it:CSBP-independence-0} For each $\lambda \in (0, 1)$, $\alpha > 0$, and $b \in (0, 1)$, there exists $A = A(\lambda, \alpha, b) \in (0, 1)$ such that the following is true: Let $\{t_j\}_{j \in \BN}$ be a decreasing sequence of positive real numbers such that $t_{j + 1}/t_j \le \lambda$ for all $j \in \BN$. Then 
        \begin{equation*}
            \BP\lbrack\#\{j \in [1, n]_\BZ : Y_{-t_j} \le At_j^2\} \ge bn\rbrack \ge 1 - \re^{-\alpha n}, \quad \forall n \in \BN. 
        \end{equation*}
        \item\label{it:CSBP-independence-1} For each $\zeta > 0$ and $b \in (0, 1)$, there exists $\lambda_\ast = \lambda_\ast(\zeta, b) \in (0, 1)$
        \begin{equation*}
            \BP\lbrack\#\{j \in [1, n]_\BZ : Y_{-\lambda^j} \le \lambda^{2j - 2\zeta)}\} \ge bn\rbrack \ge 1 - \exp(-\lambda^{-\zeta}n), \quad \forall \lambda \in (0, \lambda_\ast], \ \forall n \in \BN. 
        \end{equation*}
    \end{enumerate}
\end{lemma}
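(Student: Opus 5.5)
We use the Markov structure of the time-reversed boundary-length process. By~\eqref{eq:hull-process}, $Y$ run forward in ``time-reversal'' order, i.e.\ $s \mapsto Y_{-(t_1 - s)}$, is Markov. More concretely, conditionally on $Y_{-t_{j}}$ and on the earlier data $\{Y_{-t_i}\}_{i<j}$, the remaining path $\{Y_{-(t_j - s)}\}_{s\in[0,t_j]}$ is a $3/2$-stable CSBP started from $Y_{-t_j}$ and conditioned to die exactly at time $t_j$. This follows from the conditional independence of the filled ball~\eqref{eq:filled-metric-ball-cone} and its complement~\eqref{eq:Brownian-horn} given the boundary length, together with the fact that the filled metric ball of radius $t_j$ has conditional law depending only on its boundary length. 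Given this, \Cref{lem:CSBP} applies with $T = t_j$, $y = Y_{-t_j}$ and $t = t_j - t_{j+1}$:
\begin{equation*}
    \BP\lbrack Y_{-t_{j+1}} > A t_{j+1}^2 \mid \mathcal F_j\rbrack \le 3^{3/2}\exp\!\left(-A + 3(\sqrt3-1) Y_{-t_j} t_{j+1}/t_j^3\right).
\end{equation*}
The difficulty is that the bound degrades when $Y_{-t_j}$ is large. So we only use it on the event that $Y_{-t_j} \le M t_j^2$ for a suitable $M$. On that event the exponent is at most $-A + 3(\sqrt3-1) M\lambda$.

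The key steps are as follows. We fix a large threshold $M$ and say $j$ is \emph{good} if $Y_{-t_j}\le M t_j^2$. From a good $j$ the next index is good with conditional probability at least $1 - 3^{3/2}\re^{-M + 3(\sqrt3 - 1)M\lambda}$; this is not useful when $\lambda$ is close to $1$, so we handle that case by a multi-step comparison. From an arbitrary state we also need an escape bound: regardless of $Y_{-t_j}$, the probability that $Y_{-t_{j+k}}$ stays large for many consecutive $k$ is exponentially small. For this we use the lifetime density~\eqref{eq:lifetime-density}, which shows that a CSBP from $y$ dying at time $T$ forces $y \lesssim T^2$ up to exponentially rare events. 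Indeed, conditionally on dying at time $t_j$, the law of $Y_{-t_j}/t_j^2$ is, by scaling, a fixed law with stretched-exponential tails. Iterating this bound gives a uniform-in-the-past lower bound $p(\lambda,A)$ on the probability that a given index is good, where $p \to 1$ as $A\to\infty$. The same comparison shows that, conditionally on the past, $\mathbf 1\{j \text{ bad}\}$ is stochastically dominated by independent Bernoulli$(1-p)$ variables, up to a geometric-trial structure. A Chernoff bound then gives $\#\{\text{bad}\}\le (1-b)n$ with probability at least $1-\re^{-\alpha n}$ once $1-p$ is small enough in terms of $\alpha$ and $b$. This proves part (1), with $A$ taken large.

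Part (2) uses the same argument with $t_j = \lambda^j$ and threshold $A = \lambda^{-2\zeta}$. The one-step failure bound becomes $3^{3/2}\exp(-\lambda^{-2\zeta} + O(\lambda^{1-2\zeta}))$ from a good state, which is $\le \exp(-c\lambda^{-2\zeta})$. Chernoff with per-step failure probability $q=\exp(-c\lambda^{-2\zeta})$ gives a tail bound of order $(\binom{n}{(1-b)n} q^{(1-b)n}) \le \exp(-\lambda^{-\zeta}n)$ for $\lambda$ small.

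The main obstacle I expect is making the domination rigorous from a bad state. There, $Y_{-t_j}$ can be huge, and \Cref{lem:CSBP} gives nothing. I would first try to avoid this by conditioning only on $Y_{-t_j}$ and using the explicit conditional law of $Y_{-t_{j+1}}$ given $Y_{-t_j}=y$ and lifetime $t_j$. For large $y$, the conditioning on the death time pulls the process down. My plan is to check, via the Laplace-transform computation in the proof of \Cref{lem:CSBP}, that the bound $\BE[\re^{\lambda' Y_t}\mid\zeta=T]\le C\exp(c\,y(T-t)/T^3)$ combines with the tail of $y/T^2$ itself to give a bound uniform over the past.
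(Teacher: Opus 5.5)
\textbf{There is a genuine gap.} Your Markov description of $s\mapsto Y_{-(t_j-s)}$ and the one-step bound from \Cref{lem:CSBP} are correct, but the mechanism you build on them fails in three places. There is no lower bound $p(\lambda,A)$ on the probability that an index is good that holds uniformly over the past. There is no domination of the bad indicators by independent Bernoulli variables. And there is no escape bound holding ``regardless of $Y_{-t_j}$''. To see this, write $x_j\defeq Y_{-t_j}/t_j^2$ and $r\defeq t_{j+1}/t_j$, and differentiate the Laplace transform from the proof of \Cref{lem:CSBP} at zero. This gives $\BE[x_{j+1}\mid x_j]=rx_j+(1-r)$ and $\mathrm{Var}(x_{j+1}\mid x_j)=r(1-r)x_j+O(1)$. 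So if $x_j$ is huge (take, say, $t_j=\lambda^j$), then the next roughly $\log(x_j/A)/\log(1/\lambda)$ indices are all bad with conditional probability close to one.

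The fallback in your last paragraph cannot fix this. The factor $\re^{-3y/(2T^2)}$ from \eqref{eq:lifetime-density} controls only the \emph{marginal} law of $x_j$, which is a Gamma law. Conditionally on a large earlier value $y'$, this tilt is balanced against $\BP_{y'}[\zeta\in\rd t_{j-1}]\propto y'\re^{-3y'/(2t_{j-1}^2)}$, and the law still concentrates near $y'(t_j/t_{j-1})^3$. For the same reason, your Chernoff bound $\binom{n}{(1-b)n}q^{(1-b)n}$ in part (2) is unjustified: a bad index that follows a bad index does not cost a factor $q$.

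\textbf{The missing idea.} You need to accept bad streaks of random length and control their exponential moments; this is what the paper does.
\begin{itemize}
\item \emph{Skip ahead.} From a bad index $i$, meaning $x_i>A$, jump to $i+k$ with $k\defeq\lceil\log x_i/\log(1/\lambda)\rceil$, and count every skipped index as bad.
\item \emph{Uniform tail at the landing index.} Since $t_{i+k}/t_i\le\lambda^k\le 1/x_i$, \Cref{lem:CSBP} with $T=t_i$ and $T-t=t_{i+k}$ gives $\BP[x_{i+k}>s\mid\text{past}]\le 3^{3/2}\re^{3(\sqrt3-1)}\re^{-s}$, uniformly in the past.
\item \emph{Cost of a jump.} The cost $\re^{\beta k}\le\re^{\beta}x_i^{\beta/\log(1/\lambda)}$ is only polynomial in $x_i$. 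Meanwhile $x_i$ has an exponential tail at each such starting index. Hence $\BE[\re^{\beta k}\mathbf 1_{\{x_i>A\}}\mid\text{past}]\le 1-1/\re$ once $A$ is large.
\item \emph{Gaps and Chernoff.} Summing over a geometric number of jumps gives $\BE[\re^{\beta(j_{m+1}-j_m-1)}\mid\text{past}]\le\re$, where $j_m$ are the consecutive good indices. Therefore $\BE[\re^{\beta(j_m-m)}]\le\re^m$, and Markov's inequality gives $\BP[j_{\lceil bn\rceil}>n]\le\re^{bn-\beta(1-b)n}$. Taking $\beta$ large yields part (1).
\item \emph{Part (2).} Take $A=\lambda^{-2\zeta}$ and $\beta\asymp\lambda^{-\zeta}$. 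At the threshold the jump cost is $A^{\beta/\log(1/\lambda)}=\re^{2\zeta\beta}$, which is beaten by $\re^{-c\lambda^{-2\zeta}}$.
\end{itemize}
Your instinct for $\lambda$ near one is the right one. When $3(\sqrt3-1)\lambda\ge1$, the step out of a good state is not controlled by \Cref{lem:CSBP}. One fix is to run the argument separately along residue classes modulo a fixed $k$ with $3(\sqrt3-1)\lambda^k<1$.
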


\begin{proof}
    First, we consider assertion~\eqref{it:CSBP-independence-0}. Fix $\beta > 0$ to be chosen later. For each $n \in \BN$, write $j_n$ for the $n$-th smallest $j \in \BN$ for which $Y_{-t_j} \le At_j^2$. Consider $i_0 \defeq 1$ and $i_n \defeq i_{n - 1} + \left\lceil\frac{\log(Y_{-t_{i_{n - 1}}}/t_{i_{n - 1}}^2)}{\log(1/\lambda)}\right\rceil$ for all $n \in \BN$. Recall from \cite[Proposition~1.2]{Hull-TBP} that $Y_{-t}$ is a Gamma random variable with shape parameter $3/2$ and mean $t^2$. This implies that $\BE\lbrack\re^{\beta(i_1 - 1)}\mathbf 1_{\{Y_{-t_1} > A t_1^2\}}\rbrack \le C_1\re^{-C_2A}$ for some universal constants $C_1, C_2 > 0$. Moreover, 
    \begin{align*}
        \BE\!\left\lbrack\re^{\beta(i_{n + 1} - i_n)}\mathbf 1_{\{Y_{-t_{i_n}} > A t_{i_n}^2\}} \ \middle\vert \ Y_{-t_{i_{n - 1}}}\right\rbrack &\le \re^{\beta}\BE\!\left\lbrack (Y_{-t_{i_n}}/t_{i_n}^2)^{\frac\beta{\log(1/\lambda)}} \mathbf 1_{\{Y_{-t_{i_n}} > A t_{i_n}^2\}} \ \middle\vert \ Y_{-t_{i_{n - 1}}}\right\rbrack \\
        &= \frac{\re^{\beta}\beta}{\log(1/\lambda)} \int_A^\infty x^{\frac\beta{\log(1/\lambda)} - 1} \BP\!\left\lbrack Y_{-t_{i_n}}/t_{i_n}^2 \ge x \ \middle\vert \ Y_{-t_{i_{n - 1}}}\right\rbrack \, \rd x \\
        &\le \frac{\re^{\beta}\beta}{\log(1/\lambda)} \int_A^\infty x^{\frac\beta{\log(1/\lambda)} - 1} C_3 \re^{-C_4x} \, \rd x \quad \text{(by \Cref{lem:CSBP})} \\
        &\le \frac{\re^{\beta}\beta}{\log(1/\lambda)} C_3 A^{\frac\beta{\log(1/\lambda)} - 1} \re^{-C_5A}, 
    \end{align*}
    where $C_3, C_4, C_5 > 0$ are universal constants. By choosing $A$ to be sufficiently large, we may arrange that $C_1\re^{-C_2A} \le 1 - 1/\re$ and $\frac{\re^{\beta}\beta}{\log(1/\lambda)} C_3 A^{\frac\beta{\log(1/\lambda)} - 1} \re^{-C_5A} \le 1 - 1/\re$. Write $N \defeq \inf\{n \ge 0 : Y_{-t_{i_n}} \le At_{i_n}^2\}$. Then $j_1 \le i_N$. Thus, $\BE\lbrack\re^{\beta(j_1 - 1)}\rbrack \le \BE\lbrack\re^{\beta(i_N - 1)}\rbrack \le \sum_{n = 0}^\infty (1 - 1/\re)^n = \re$. In a similar vein, $\BE\lbrack\re^{\beta(j_{n + 1} - j_n - 1)} \mid j_n\rbrack \le \re$ for all $n \in \BN$. This implies that $\BE\lbrack\re^{\beta(j_n - n)}\rbrack \le \re^n$ for all $n \in \BN$. Thus, we conclude that
    \begin{equation*}
        \BP\lbrack\#\{j \in [1, n]_\BZ : Y_{-t_j} \le At_j^2\} < bn\rbrack \le \BP\lbrack j_{bn} > n\rbrack \le \re^{bn - \beta(1 - b)n}, \quad \forall n \in \BN.  
    \end{equation*}
    This completes the proof of assertion~\eqref{it:CSBP-independence-0}. 

    By setting $\alpha = \lambda^{-\zeta}$ (hence $\beta = (\lambda^{-\zeta} + b)/(1 - b)$) and $A = \lambda^{-2\zeta}$, assertion~\eqref{it:CSBP-independence-1} follows immediately from a similar argument to the argument applied in the proof of assertion~\eqref{it:CSBP-independence-0}. 
\end{proof}

Furthermore, we record a lemma showing that the boundary lengths of filled metric balls in the Brownian plane do not deviate significantly below their expected values.

\begin{lemma}\label{lem:hull-process}
    Let $(\BC, \Phi; 0, \infty)$ be a $\gamma$-LQG cone. For each $t \ge 0$, write $Y_{-t} \defeq \SN_\Phi(\partial B_t^\bullet(0; D_\Phi))$. Then there are universal constants $\alpha, C > 0$ such that
    \begin{equation*}
        \BP\!\left\lbrack\sup\nolimits_{r \in [s, t]} Y_{-r} < \varepsilon (t - s)^2 \ \middle\vert \ Y_{-s} = \ell\right\rbrack \le C\exp(-\alpha\varepsilon^{-1/2}), \quad \forall 0 \le s < t, \ \forall \varepsilon \in (0, 1), \ \forall \ell \ge 0. 
    \end{equation*}
\end{lemma}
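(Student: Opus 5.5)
We will exploit the time-reversed description \eqref{eq:hull-process} of the hull process together with the Markov property in $r$. Read forward in the parameter $u = -r$, the process $Y$ is a $3/2$-stable CSBP conditioned to survive, i.e.\ the $h$-transform with $h(y)=y$; read in increasing radius $r$ the process is Markov as well (by the conditional independence of \eqref{eq:filled-metric-ball-cone} and \eqref{eq:Brownian-horn} given the boundary length, the law of the outer part given $Y_{-s}=\ell$ depends only on $\ell$). By the scaling property of the Brownian plane ($Y_{-r}$ scales like $r^2$), we may reduce to $t-s=1$: replace $D$ by $(t-s)^{-1}D$. So it suffices to show that, uniformly in $\ell \ge 0$, $\BP[\sup_{r\in[s,s+1]} Y_{-r} < \varepsilon \mid Y_{-s}=\ell] \le C\exp(-\alpha \varepsilon^{-1/2})$. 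If $\ell \ge \varepsilon$ the event is empty (up to the value at $r=s$), so we may assume $\ell < \varepsilon$.

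The key step is a one-step estimate: there is $p<1$ such that for every $\delta>0$ and every $\ell<\delta$, the probability that $Y_{-r}$ stays below $\delta$ for all $r \in [s, s+c\delta^{1/2}]$ is at most $p$, with $c$ a universal constant. By scaling this reduces to $\delta=1$: from any starting length $\ell<1$, within radius-increment $c$ the boundary length exceeds $1$ with probability at least $1-p$. I would prove this via the law of the hull process on growing radii, which is the reversed CSBP: the event that $Y$ stays below $1$ on an interval of length $c$ means (in the reversed time) that a $3/2$-stable CSBP conditioned to survive (equivalently, the Bessel-like hull process tending to $\infty$) spends time $c$ below level $1$ after its last visit to some level. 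Concretely, using \eqref{eq:hull-process} with $x=1$: $\tau_1=\sup\{r: Y_{-r}=1\}$, and on our event $\tau_1 \ge s + c$ while also $Y_{-r}<1$ on $[s,s+c]$; but before $\tau_1$ the reversed process is a CSBP from $1$, whose lifetime has density \eqref{eq:lifetime-density}, and the supremum-to-lifetime relation makes staying below $1$ for a long duration $c$ unlikely. Alternatively, and more robustly, use that $Y_{-r}$ is Gamma with shape $3/2$ and mean $r^2$ (cf.\ \cite[Proposition~1.2]{Hull-TBP}) together with the Markov property: given any $\ell<1$, the conditional law of $Y_{-(s+c)}$ stochastically dominates something with a uniformly positive probability of exceeding $1$ for $c$ large; I expect monotonicity in $\ell$ (the outer part from a larger boundary is ``bigger'') to let one take the worst case $\ell=0$, i.e.\ $s=0$, where $Y_{-c}$ is Gamma with mean $c^2$ and $\BP[Y_{-c}<1]$ is small.

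Iterating this step via the Markov property at radii $s + k c\varepsilon^{1/2}$, $k=0,1,\dots,K$ with $K \asymp c^{-1}\varepsilon^{-1/2}$, each block independently (conditionally) has probability at most $p$ of keeping $Y$ below $\varepsilon$, giving the bound $p^{K} = \exp(-\alpha\varepsilon^{-1/2})$. The main obstacle is the uniformity in $\ell$ of the one-step estimate, i.e.\ justifying the Markov property of $r\mapsto Y_{-r}$ in increasing $r$ with a transition kernel monotone in $\ell$; I would first try to obtain this from the conditional independence and the fact that the conditional law of \eqref{eq:Brownian-horn} depends only on the boundary length, and the monotonicity from the explicit $h$-transform formula for the CSBP conditioned on its lifetime, as computed in the proof of \Cref{lem:CSBP}.
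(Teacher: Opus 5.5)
\paragraph{The gap: uniformity in $\ell$.} Your outer structure is fine. The Markov property holds at the deterministic radii $s+kc\varepsilon^{1/2}$, by the conditional independence of \eqref{eq:filled-metric-ball-cone} and \eqref{eq:Brownian-horn} and the radius-independence of the law of the latter given its boundary length. Scaling and a product bound $p^K$ with $K\asymp\varepsilon^{-1/2}$ then finish the argument. But everything rests on the one-step bound holding uniformly in the starting length $\ell<\delta$, and you leave exactly that open.

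You only \emph{expect} monotonicity in $\ell$. The tool you propose for it, the computation in the proof of Lemma~\ref{lem:CSBP}, describes the time-reversed process conditioned on its lifetime. It gives no evident handle on stochastic monotonicity of the forward-in-radius kernel in its starting point. Your route (a) has the same defect: once you condition on $Y_{-s}=\ell$, the process read backwards from $\tau_1$ is no longer an unconditioned CSBP from $1$, so \eqref{eq:lifetime-density} cannot be applied as it stands. Your opening description is also inaccurate. In $u=-r$ the process is an ordinary CSBP after each last passage time, absorbed at $0$ at $u=0$, not one conditioned to survive. The feature you actually need is that in increasing $r$ it has only downward jumps.

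\paragraph{The missing idea: a first-passage identity.} This is precisely the first step of the paper's proof. Since $r\mapsto Y_{-r}$ starts at $0$ and cannot jump upward, $\sigma_\ell\defeq\inf\{r\ge0:Y_{-r}=\ell\}$ satisfies $Y_{-\sigma_\ell}=\ell$ and $Y_{-r}<\ell$ for $r<\sigma_\ell$. The strong Markov property at $\sigma_\ell$ then gives, for $\ell<\delta$ and $h>0$,
\begin{equation*}
    \BP\Bigl[\sup_{r\in[s,s+h]}Y_{-r}<\delta \Bigm| Y_{-s}=\ell\Bigr]
    = \BP\Bigl[\sup_{r\in[\sigma_\ell,\sigma_\ell+h]}Y_{-r}<\delta\Bigr]
    \le \BP\bigl[Y_{-h}<\delta\bigr].
\end{equation*}
The inequality holds because on the middle event $Y_{-r}<\delta$ on all of $[0,\sigma_\ell+h]$, which contains $h$. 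This is exactly your ``worst case $\ell=0$''. With $h=c\delta^{1/2}$, scaling turns the right-hand side into $\BP[Y_{-c}<1]$. Since $Y_{-c}$ is Gamma with mean $c^2$, this is at most $1/2$ for a large universal $c$.

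Route (a) also closes once conditioned this way. The middle event forces the last passage time of level $\delta$ to exceed $h$, and by \eqref{eq:hull-process} and \eqref{eq:lifetime-density} that has probability $1-\exp(-3/(2c^2))$.

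\paragraph{Comparison with the paper.} With this step inserted your proof is complete, and it differs from the paper's only after the reduction. The paper passes to the last passage time $\tau$ of level $\varepsilon(t-s)^2$. By \eqref{eq:hull-process}, the question becomes whether an unconditioned CSBP started from that level spends an interval of length $t-s$ below it, and the paper then appeals to the proof of \cite[Lemma~3.1]{MR4348679}. Your forward block iteration uses only the Gamma marginal, so it is self-contained and avoids that external input.
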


\begin{proof}
    We follow the argument applied in the proof of \cite[Lemma~3.1]{MR4348679}. Fix $0 \le s < t$, $\varepsilon \in (0, 1)$, and $\ell \ge 0$. Write $\sigma = \inf\{r \ge 0 : Y_{-r} = \ell\}$ and $\tau \defeq \sup\{r \ge 0 : Y_{-r} = \varepsilon(t - s)^2\}$. By the Markov property, 
    \begin{equation*}
        \BP\!\left\lbrack\sup\nolimits_{r \in [s, t]} Y_{-r} < \varepsilon (t - s)^2 \ \middle\vert \ Y_{-s} = \ell\right\rbrack = \BP\!\left\lbrack\sup\nolimits_{r \in [\sigma, \sigma + (t - s)]} Y_{-r} < \varepsilon (t - s)^2\right\rbrack. 
    \end{equation*}
    Again, by the Markov property (cf.~\eqref{eq:hull-process}), $\{Y_{r - \tau}\}_{r \in [0, \tau]}$ has the law of a $3/2$-stable CSBP starting from $\varepsilon(t - s)^2$. Write $E$ for the event that there exists a subinterval of $[0, \tau]$ with length at least $t - s$ during which $Y_{r - \tau}$ is less than $\varepsilon(t - s)^2$. It is clear that the event that $\sup_{r \in [\sigma, \sigma + (t - s)]} Y_{-r} < \varepsilon (t - s)^2$ is contained in the event $E$. Thus, it suffices to show that there are universal constants $\alpha, C > 0$ such that $\BP\lbrack E\rbrack \le C\exp(-\alpha\varepsilon^{-1/2})$. This follows immediately from the proof of \cite[Lemma~3.1]{MR4348679}. This completes the proof of \Cref{lem:hull-process}. 
\end{proof}

\subsection{Volumes of metric balls}\label{subsection:volume}

\begin{lemma}\label{lem:Brownian-cone-volume}
    Let $(\BC, \Phi; 0, \infty)$ be a $\gamma$-LQG cone. Then for each $\zeta > 0$, there almost surely exists $\varepsilon_\ast \in (0, 1)$ such that 
    \begin{equation*}
        \varepsilon^{d_\gamma + \zeta} \le \SM_\Phi(B_\varepsilon(z; D_\Phi)) \le \varepsilon^{d_\gamma - \zeta}, \quad \forall z \in B_1(0; D_\Phi), \ \forall \varepsilon \in (0, \varepsilon_\ast].
    \end{equation*}
\end{lemma}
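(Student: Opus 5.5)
We will prove both bounds by combining single-ball tail estimates with a covering argument over dyadic scales, followed by Borel--Cantelli. Throughout we use the equivalence in law between the $\gamma$-LQG cone $(\BC, D_\Phi, \SM_\Phi; 0)$ and the Brownian plane $(\SCC, D_\SCC, \SM_\SCC; \mathrm{pr}(0))$. We also use the scaling property of the Brownian plane. It suffices to treat $\varepsilon = 2^{-k}$, since for $\varepsilon \in [2^{-k-1}, 2^{-k}]$ the monotonicity of $\varepsilon \mapsto \SM_\Phi(B_\varepsilon(z; D_\Phi))$ costs only a factor $2^{d_\gamma\pm\zeta}$. That factor is absorbed by replacing $\zeta$ with $\zeta/2$.

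\emph{Single-point tails.} Fix a point $u$ sampled from $\SM_\Phi$ restricted to $B_2(0; D_\Phi)$. By the re-rooting invariance of the Brownian plane (equivalently, the translation invariance of the $\gamma$-LQG cone along the space-filling SLE$_6$ from \cite{dms2021mating}), the space seen from $u$ is again a Brownian plane, at least locally. By scaling, $\varepsilon^{-4}\SM_\SCC(B_\varepsilon(\mathrm{pr}(0); D_\SCC))$ has the law of $V \defeq \SM_\SCC(B_1(\mathrm{pr}(0); D_\SCC))$.

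For the upper tail, $V$ has exponential moments; this follows from the hull process via \Cref{lem:CSBP}, since the volume of $B_1^\bullet$ is controlled by integrated boundary lengths. Hence $\BP[V > \varepsilon^{-\zeta}] = O(\varepsilon^\infty)$.

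For the lower tail, $\BP[V < \varepsilon^{\zeta}]$ is superpolynomially small in $\varepsilon$. We argue this by partitioning $[0,1]$ into $N \asymp \varepsilon^{-\zeta/10}$ subintervals. By \Cref{lem:hull-process}, each subinterval contains a time $r$ at which $Y_{-r} \succeq N^{-2}\varepsilon^{\zeta/10}$ with overwhelming probability. Conditionally on this, the band just outside $B_r^\bullet$ carries volume at least comparable to (boundary length)$^2$, except on an event whose probability decays exponentially in a power of the ratio. This uses the band/Brownian disk scaling from \Cref{subsection:BFS}.

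\emph{Covering.} Sample i.i.d.\ points $u_1, \dots, u_M$ from $\SM_\Phi|_{B_2(0;D_\Phi)}$ with $M = \varepsilon^{-100}$. Such points are, with high probability, $\varepsilon^{2}$-dense in $B_1(0; D_\Phi)$: otherwise some ball of radius $\varepsilon^2$ would carry mass below $\varepsilon^{9}$, and this is excluded by a union bound combined with the lower tail above at a coarser scale.

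Given density, for every $z \in B_1(0;D_\Phi)$ there is some $u_i$ with $D_\Phi(z, u_i) < \varepsilon^2$. Then
\[
B_{\varepsilon/2}(u_i; D_\Phi) \subset B_\varepsilon(z; D_\Phi) \subset B_{2\varepsilon}(u_i; D_\Phi).
\]
Applying the single-point tails at the $M$ sampled points and taking a union bound leaves a failure probability of $M\cdot O(\varepsilon^\infty) = O(\varepsilon^\infty)$. Summing over $\varepsilon = 2^{-k}$ and applying Borel--Cantelli gives the almost sure existence of $\varepsilon_\ast$.

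\emph{Main obstacle.} The delicate step is the lower volume tail, uniformly enough to control all points including those near the cone point. Re-rooting at an $\SM_\Phi$-typical point handles typical points, and the density argument handles the rest. The density step itself is somewhat circular, since it needs a lower bound on ball masses at the finer scale $\varepsilon^2$. We break the circularity by using the single-point lower tail at the sample points only: if a ball $B_{\varepsilon^2}(z; D_\Phi)$ contained no sample point, then a nearby sample point (obtained from the density at a coarser level $\varepsilon^{3}$, iterated inductively in $k$) would have a small-mass ball around it, and that event is improbable. If this induction proves awkward, the fallback is to cite the known uniform volume estimates for the Brownian map \cite{MR4474537}, transferred to the plane via the local absolute continuity encoded in the metric band description.
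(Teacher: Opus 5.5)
Your main route has a genuine gap at the covering step, and the induction you sketch does not close it. The single-point tails control only balls centred at $\SM_\Phi$-typical points. To pass to every $z \in B_1(0;D_\Phi)$, including exceptional centres, you need the $\varepsilon^{-100}$ samples to be $\varepsilon/2$-dense. That in turn requires a lower bound on $\SM_\Phi(B_{\varepsilon^2}(z;D_\Phi))$, however weak, holding simultaneously for \emph{all} centres $z$ --- which is a version of the statement being proved.

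Knowing that some sample $u$ lies within a coarser distance $\delta>\varepsilon^2$ of $z$ only yields $B_{\varepsilon^2}(z;D_\Phi)\subset B_{\delta+\varepsilon^2}(u;D_\Phi)$. This is the wrong inclusion: a light or sample-free ball around $z$ says nothing about the mass of any ball centred at $u$. Moreover, $\varepsilon^3$ is a finer scale than $\varepsilon^2$, so the induction as written presupposes what it is meant to establish.

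To rescue this route you need an independent uniform input. One option is the H\"older bound $D(s,t)\le D^\circ(s,t)\le 2\,\mathrm{osc}_{[s,t]}Z\le C|t-s|^{1/4-\zeta'}$ on compact time intervals of the tree encoding. This gives $\SM(B_r(\mathrm{pr}(s)))\ge r^{4+\zeta}$ for all $s$ at once, hence the lower bound directly; density plus your union bound then give the upper bound. Another option is H\"older continuity of $D_\Phi$ with respect to the Euclidean metric, combined with uniform lower bounds for the LQG mass of Euclidean balls.

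Two smaller points. First, your justification of the upper tail is wrong as stated. $\SM(B_1^\bullet)$ is not controlled by the boundary-length process, because the filled-in holes are Brownian disks whose areas given perimeter $L$ have density proportional to $v^{-5/2}e^{-L^2/(2v)}$. So the hull volume has only polynomial tails. The superpolynomial upper tail for the \emph{ball} volume at a typical point is true, but it needs its own argument. Second, in the lower-tail sketch the band $A^\bullet_{r,r+w}$ contains newly filled holes lying outside $B_1$, so you should work with $B_{r+w}\setminus B_r^\bullet$. The superpolynomial decay must also come from iterating over the $N$ subintervals via the Markov property, with each attempt succeeding with probability bounded below, not from a single band.

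Your fallback, by contrast, is correct and is essentially the paper's proof. The paper takes the uniform two-sided bounds for the $\gamma$-LQG sphere from \cite[Lemma~2.2]{BM-Geodesics} and transfers them to the cone. The transfer uses that, on $\{D_{\Phi^{\mathrm{sph}}}(0,\infty)>2\}$, the surfaces parametrized by $B_2^\bullet(0;D_\Phi)$ and $B_2^\bullet(0;D_{\Phi^{\mathrm{sph}}})$ have mutually absolutely continuous laws. This holds because they have the same conditional law given the boundary length, and both boundary-length laws are absolutely continuous with respect to Lebesgue measure. For $z\in B_1(0;D_\Phi)$ and $\varepsilon<1/2$, the ball $B_\varepsilon(z;D_\Phi)$ and its mass are determined by this filled ball with its internal metric.
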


\begin{proof}
    By \cite[Lemma~2.2]{BM-Geodesics}, for almost every instance $(\widehat\BC, \Phi^{\mathrm{sph}}; 0, \infty)$ of a $\gamma$-LQG sphere, there exists $\varepsilon_\ast \in (0, 1)$ such that
    \begin{equation*}
        \varepsilon^{d_\gamma + \zeta} \le \SM_{\Phi^{\mathrm{sph}}}(B_\varepsilon(z; D_{\Phi^{\mathrm{sph}}})) \le \varepsilon^{d_\gamma - \zeta}, \quad \forall z \in \BC, \ \forall \varepsilon \in (0, \varepsilon_\ast].
    \end{equation*}
    Thus, \Cref{lem:Brownian-cone-volume} follows immediately from the fact that, on the event that $D_{\Phi^{\mathrm{sph}}}(0, \infty) > 2$, the laws of the $\gamma$-LQG surfaces parameterized by $B_2^\bullet(0; D_\Phi)$ and $B_2^\bullet(0; D_{\Phi^{\mathrm{sph}}})$ are mutually absolutely continuous. (Indeed, the laws of the boundary lengths of $B_2^\bullet(0; D_\Phi)$ and $B_2^\bullet(0; D_{\Phi^{\mathrm{sph}}})$ are both mutually absolutely continuous with respect to the Lebesgue measure on $\BR_{> 0}$, and the $\gamma$-LQG surfaces parameterized by $B_2^\bullet(0; D_\Phi)$ and $B_2^\bullet(0; D_{\Phi^{\mathrm{sph}}})$ have the same conditional law given their boundary lengths.)
\end{proof}

\begin{lemma}\label{lem:net}
    Let $(\BC, \Phi; 0, \infty)$ be a $\gamma$-LQG cone. Let $\eta^\prime \colon (-\infty, \infty) \to \BC$ be an independent whole-plane space-filling SLE$_{\kappa^\prime}$ curve from $\infty$ to $\infty$ parameterized by $\SM_\Phi$. Let $T > 0$. Given $\Phi$ and $\eta^\prime$, let $\{x_n\}_{n \in \BN}$ be conditionally independent samples from $\SM_\Phi|_{\eta^\prime([-T, T])}$ (renormalized to be a probability measure). Then for each $\zeta > 0$, almost surely on the event that $B_1(0; D_\Phi) \subset \eta^\prime([-T, T])$, there exists $\varepsilon_\ast \in (0, 1)$ such that 
    \begin{equation}\label{eq:net}
        B_1(0; D_\Phi) \subset \bigcup \left\{B_\varepsilon(x_n; D_\Phi) : n \in [1, \varepsilon^{-d_\gamma - \zeta}]_\BZ, \ x_n \in B_1(0; D_\Phi)\right\}, \quad \forall \varepsilon \in (0, \varepsilon_\ast].
    \end{equation}
\end{lemma}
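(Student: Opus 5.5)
We combine the volume lower bound of \Cref{lem:Brownian-cone-volume} with a covering argument and a union bound over dyadic scales. Work on the event $F \defeq \{B_1(0; D_\Phi) \subset \eta^\prime([-T, T])\}$ and condition on $(\Phi, \eta^\prime)$. Note that $\SM_\Phi(\eta^\prime([-T, T])) = 2T$, since $\eta^\prime$ is parameterized by $\SM_\Phi$. So each $x_n$ has conditional law $(2T)^{-1}\SM_\Phi|_{\eta^\prime([-T,T])}$. Fix $\zeta > 0$ and set $\zeta^\prime \defeq \zeta/4$. \Cref{lem:Brownian-cone-volume} applied with $\zeta^\prime$ gives a random $\varepsilon_0 \in (0,1)$, depending only on $\Phi$, such that $\SM_\Phi(B_\varepsilon(z; D_\Phi)) \ge \varepsilon^{d_\gamma + \zeta^\prime}$ for all $z \in B_1(0; D_\Phi)$ and $\varepsilon \le \varepsilon_0$.

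First, I reduce to dyadic scales. Let $\varepsilon_k \defeq 2^{-k}$. For each $k$, choose (measurably in $\Phi$) a maximal $\varepsilon_k/4$-separated subset $S_k$ of $B_1(0; D_\Phi)$. The balls $B_{\varepsilon_k/8}(z; D_\Phi)$, $z \in S_k$, are disjoint and contained in $B_2(0; D_\Phi)$, and $\SM_\Phi(B_2(0;D_\Phi))<\infty$ a.s. Combined with the volume lower bound at scale $\varepsilon_k/8$ (extended to $B_2$ by the same lemma after rescaling, or simply by applying it on $B_2$), this gives $\#S_k \preceq \varepsilon_k^{-d_\gamma - \zeta^\prime}$ for large $k$.

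Next, I bound the probability of missing a net point. Call $S_k$ \emph{good} if for every $z \in S_k$ there is $n \le N_k \defeq \lfloor (2\varepsilon_k)^{-d_\gamma-\zeta}\rfloor$ with $x_n \in B_{\varepsilon_k/4}(z; D_\Phi)$. Suppose $S_k$ is good. Then every $w \in B_1(0;D_\Phi)$ is within $\varepsilon_k/4$ of some $z \in S_k$, hence within $\varepsilon_k/2$ of some such $x_n$, and $x_n \in B_1(0;D_\Phi)$ can be ensured because $z \in B_1$ and we may take points slightly inside, as discussed below. On $F$, we have $B_{\varepsilon_k/4}(z;D_\Phi) \subset \eta^\prime([-T,T])$ for $z$ deep enough in $B_1$. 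So each $x_n$ lands in that ball with conditional probability at least $(2T)^{-1}(\varepsilon_k/4)^{d_\gamma+\zeta^\prime}$. Therefore
\[
\BP[S_k \text{ not good} \mid \Phi,\eta^\prime] \le \#S_k \bigl(1 - (2T)^{-1}(\varepsilon_k/4)^{d_\gamma+\zeta^\prime}\bigr)^{N_k} \le C\varepsilon_k^{-5}\exp\bigl(-c\,\varepsilon_k^{-(\zeta-\zeta^\prime)}\bigr).
\]
This is summable in $k$, so by Borel--Cantelli, a.s. on $F$ every $S_k$ with $k$ large is good. For general $\varepsilon \in (\varepsilon_{k+1}, \varepsilon_k]$ we use $S_{k+1}$: the covering radius $\varepsilon_{k+1}/2 \cdot 2 \le \varepsilon$ suffices, and $N_{k+1} \le \varepsilon^{-d_\gamma-\zeta}$. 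This gives \eqref{eq:net}.

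The main obstacle is the boundary issue. The statement only counts centers $x_n \in B_1(0;D_\Phi)$, so the points near $\partial B_1(0;D_\Phi)$ need care, and we also need $B_{\varepsilon}(z)\subset \eta^\prime([-T,T])$ for $z$ near the boundary. I would handle both using the fact that $D_\Phi$ is geodesic. For $w \in B_1$ with $D_\Phi(0,w) = 1-\delta$, the point $z^\prime$ on a geodesic from $0$ to $w$ at distance $\varepsilon_k/4$ back toward $0$ satisfies $B_{\varepsilon_k/8}(z^\prime) \subset B_1(0;D_\Phi)$. So I would instead place the net points $S_k$ in $B_{1-\varepsilon_k/4}(0;D_\Phi)$ and use target balls of radius $\varepsilon_k/8$. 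These target balls lie inside $B_1 \subset \eta^\prime([-T,T])$, which automatically forces $x_n \in B_1$. The covering radius then grows only by a constant factor, and this is absorbed by adjusting the constants $1/4$ and $1/8$ above.
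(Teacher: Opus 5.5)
Your proof is correct. It shares the paper's skeleton: only the lower volume bound from Lemma~\ref{lem:Brownian-cone-volume} is used, the boundary of $B_1(0;D_\Phi)$ is handled by sliding along a geodesic toward $0$, and the conclusion comes from Borel--Cantelli over dyadic scales. Your constants also close up. With net points in $B_{1-\varepsilon_k/4}(0;D_\Phi)$ and target balls of radius $\varepsilon_k/8$, the covering radius is below $\varepsilon_k/4+\varepsilon_k/4+\varepsilon_k/8<\varepsilon_k$. Hence $S_{k+1}$ with $N_{k+1}=\lfloor\varepsilon_k^{-d_\gamma-\zeta}\rfloor$ handles every $\varepsilon\in(\varepsilon_{k+1},\varepsilon_k]$. (You don't need to extend the volume bound to $B_2$, since all your centers lie in $B_1(0;D_\Phi)$.)

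The real difference is how the union over locations is taken. You fix a maximal separated set $S_k$ determined by $\Phi$, bound $\#S_k\le \SM_\Phi(B_1(0;D_\Phi))\,(\varepsilon_k/8)^{-d_\gamma-\zeta^\prime}$ by packing disjoint balls, and union-bound over $S_k$. The paper never builds a net. It compares the failure event $F(j)^c$ with the event $G(j)$ that the extra sample $x_{N+1}$ lands in $B_1(0;D_\Phi)$ at distance at least $2^{-j-3}$ from $x_1,\dots,x_N$. On $F(j)^c$ there is a sample-free ball $B_{2^{-j-2}}(x^\prime;D_\Phi)\subset B_1(0;D_\Phi)$, so $\BP[G(j)\mid F(j)^c]\ge 2^{-(j+3)(d_\gamma+\zeta/2)}/(2T)$. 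Unconditionally, $\BP[G(j)]\le\exp(-c2^{j\zeta/2})$, so $\BP[F(j)^c]\le \BP[G(j)]/\BP[G(j)\mid F(j)^c]$. The reciprocal of the conditional lower bound plays exactly the role of your $\#S_k$.

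What each route buys is modest. The paper's ratio trick needs no measurable choice of a net and no packing or finite-total-mass count. Your argument is the more standard and transparent one. In both, the polynomial loss is swamped by the stretched-exponential bound.
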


\begin{proof}
    Write $E_1(T)$ for the event that $B_1(0; D_\Phi) \subset \eta^\prime([-T, T])$. For each $\varepsilon_0 \in (0, 1)$, write $E_2(\varepsilon_0)$ for the event that
    \begin{equation*}
        \varepsilon^{d_\gamma + \zeta/2} \le \SM_\Phi(B_\varepsilon(z; D_\Phi)) \le \varepsilon^{d_\gamma - \zeta/2}, \quad \forall z \in B_1(0; D_\Phi), \ \forall \varepsilon \in (0, \varepsilon_0].
    \end{equation*}
    By \Cref{lem:Brownian-cone-volume}, it suffices to show that, almost surely on the event $E_1(T) \cap E_2(\varepsilon_0)$, there exists $\varepsilon_\ast \in (0, 1)$ such that~\eqref{eq:net} holds. For each $j \in \BN$, write $F(j)$ for the event that
    \begin{equation*}
        B_1(0; D_\Phi) \subset \bigcup \left\{B_{2^{-j - 1}}(x_n; D_\Phi) : n \in [1, 2^{j(d_\gamma + \zeta)}]_\BZ, \ x_n \in B_1(0; D_\Phi)\right\}. 
    \end{equation*}
    Note that if $F(j)$ occurs for all $j \ge \lfloor\log_2(1/\varepsilon_\ast)\rfloor$, then~\eqref{eq:net} holds. Note that if $F(j)$ does not occur, then there exists $x \in B_1(0; D_\Phi)$ such that $x_n \notin B_{2^{-j - 1}}(x; D_\Phi)$ for all $n \in [1, 2^{j(d_\gamma + \zeta)}]_\BZ$ with $x_n \in B_1(0; D_\Phi)$. In this case, if $D_\Phi(0, x) \ge 2^{-j - 2}$, let $x^\prime$ denote the point on the $D_\Phi$-geodesic from $0$ to $x$ such that $D_\Phi(x^\prime, x) = 2^{-j - 2}$; otherwise, let $x^\prime \defeq 0$. In either case, we have $B_{2^{-j - 2}}(x^\prime; D_\Phi) \subset B_1(0; D_\Phi)$ and $x_n \notin B_{2^{-j - 2}}(x^\prime; D_\Phi)$ for all $n \in [1, 2^{j(d_\gamma + \zeta)}]_\BZ$. Write $G(j)$ for the event that $x_{\lfloor2^{j(d_\gamma + \zeta)}\rfloor + 1} \in B_1(0; D_\Phi)$ and that $D_\Phi(x_n, x_{\lfloor2^{j(d_\gamma + \zeta)}\rfloor + 1}) \ge 2^{-j - 3}$ for all $n \in [1, 2^{j(d_\gamma + \zeta)}]_\BZ$. Then
    \begin{align*}
        \BP\lbrack G(j) \mid E_1(T) \cap E_2(\varepsilon_0) \cap F(j)^c\rbrack &\ge \BP\!\left\lbrack x_{\lfloor2^{j(d_\gamma + \zeta)}\rfloor + 1} \in B_{2^{-j - 3}}(x^\prime; D_\Phi) \ \middle\vert \ E_1(T) \cap E_2(\varepsilon_0) \cap F(j)^c\right\rbrack \\
        &\ge 2^{-(j + 3)(d_\gamma + \zeta/2)}/(2T). 
    \end{align*}
    On the other hand, 
    \begin{equation*}
        \BP\lbrack G(j) \mid E_1(T) \cap E_2(\varepsilon_0)\rbrack \le (1 - 2^{-(j + 3)(d_\gamma + \zeta/2)}/(2T))^{\lfloor2^{j(d_\gamma + \zeta)}\rfloor} \le \exp(-c2^{j\zeta/2})
    \end{equation*}
    for some $c = c(\zeta, T) > 0$. Thus, we conclude that
    \begin{equation*}
        \BP\lbrack F(j)^c \mid E_1(T) \cap E_2(\varepsilon_0)\rbrack \le \frac{\BP\lbrack G(j) \mid E_1(T) \cap E_2(\varepsilon_0)\rbrack}{\BP\lbrack G(j) \mid E_1(T) \cap E_2(\varepsilon_0) \cap F(j)^c\rbrack} \le \exp(-c2^{j\zeta/2}) \cdot 2^{(j + 3)(d_\gamma + \zeta/2)} \cdot (2T). 
    \end{equation*}
    Thus, we conclude from the Borel--Cantelli lemma that, almost surely on the event $E_1(T) \cap E_2(\varepsilon_0)$, there exists $j_0 \in \BN$ such that $F(j)$ occurs for all $j \ge j_0$. This completes the proof of \Cref{lem:net}. 
\end{proof}

\subsection{Conformal moduli of metric bands}\label{subsection:conformal-moduli}

\begin{proposition}\label{lem:conformal-moduli-marginal}
    Let $(\BC, \Phi; 0, \infty)$ be a $\gamma$-LQG cone. Then for each $p > 2$, there exists $q = q(p) > d_\gamma$ such that
    \begin{equation*}
        \BE\lbrack\re^{-2\pi m_\varepsilon p}\rbrack = O(\varepsilon^q) \quad \text{as } \varepsilon \to 0,
    \end{equation*}
    where $m_\varepsilon$ denotes the conformal modulus of $A_{\varepsilon,1}^\bullet(0; D_\Phi)$. 
\end{proposition}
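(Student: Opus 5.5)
The plan is to reduce the modulus to the ratio of two Euclidean radii around the cone point. Those radii are governed by the radial part of $\Phi$, which is the Brownian motion with drift $\{A_t\}$ from the circle average embedding. A hitting-time Laplace transform then gives the exponent. Work in the circle average embedding of $(\BC,\Phi;0,\infty)$. Since $A_{\varepsilon,1}^\bullet(0;D_\Phi)$ is a topological annulus separating $B_\varepsilon^\bullet(0;D_\Phi)$ from $\infty$, Teichm\"uller's extremal problem gives a universal constant $C$ with
\begin{equation*}
  \re^{-2\pi m_\varepsilon} \le C\,\frac{\outradius(B_\varepsilon^\bullet(0;D_\Phi))}{\inradius(B_1^\bullet(0;D_\Phi))}.
\end{equation*}
Here the outradius and inradius are Euclidean, measured from $0$. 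It therefore suffices to bound the $p$-th moment of this ratio by $O(\varepsilon^q)$.

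The heuristic, which also fixes the exponent, is as follows. On the event that the lateral part of the field is not atypically large, the Euclidean circle $\partial B_{\re^{-t}}(0)$ lies at $D_\Phi$-distance of order $\exp(\xi A_t - \xi Q t)$ from $0$. Here $A_t = B_t+\gamma t$ for $t\ge 0$. So $-\log$ of the $D_\Phi$-distance behaves like $X_t \defeq \xi\big((Q-\gamma)t - B_t\big)$, a Brownian motion with drift $\xi(Q-\gamma)=1/6$ and variance $\xi^2=1/6$. Consequently $\log\big(\inradius(B_1^\bullet)/\outradius(B_\varepsilon^\bullet)\big)\approx \tau_L - \tau_0$, where $\tau_L$ is the hitting time of level $L=\log(1/\varepsilon)$ by $X$. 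The standard Laplace transform $\BE[\re^{-p\tau_L}] = \re^{-\theta L}$ holds with $\tfrac{1}{12}\theta^2 + \tfrac16\theta = p$, that is, $\theta(p)=\sqrt{1+12p}-1$. This equals $4=d_\gamma$ exactly at $p=2$ and exceeds $4$ for $p>2$. So I will aim for any $q\in(d_\gamma,\theta(p))$, giving $q(p)=\theta(p)-\delta$ for small $\delta>0$.

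Making this rigorous proceeds by a multiscale argument. I split $[\varepsilon,1]$ into scales $\lambda^j$ for $j\le N=\log_\lambda\varepsilon$ and introduce, for each scale, a good event. On this event the Euclidean annulus $A_{\re^{-t-1},\re^{-t}}(0)$ has $D_\Phi$-crossing distance and $D_\Phi$-diameter within a factor $\lambda^{-\zeta}$ of $\exp(\xi A_t-\xi Qt)$. Given the radial process, such events fail with superpolynomially small probability in $\lambda$, by standard LQG metric moment bounds. These rely on locality of the metric and near-independence of the lateral field across scales, so the number of bad scales has exponential tails (an Azuma-type or Lemma~\ref{lem:CSBP-independence}-type count). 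I also need the outradius of the filled ball $B_\varepsilon^\bullet$ not to be much larger than that of $B_\varepsilon$, that is, no long fjords reaching out. For this I would use Lemma~\ref{lem:CSBP-independence}: boundary lengths $Y_{-\lambda^j}\le \lambda^{2j-2\zeta}$ at a positive fraction of scales. A small boundary length at some scale between $\varepsilon$ and $\varepsilon^{1-\zeta}$ confines $B_\varepsilon^\bullet$ inside a Euclidean ball comparable to the metric ball of radius $\varepsilon^{1-\zeta}$. I would use Lemma~\ref{lem:hull-process} symmetrically to prevent $B_1^\bullet$ from being thin, so that its inradius is comparable to the Euclidean radius at which $X$ first hits $0$. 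Combining these, off an event of probability $O(\varepsilon^{q'})$ with $q'>q$ (on which I use only the trivial bound $\re^{-2\pi m_\varepsilon}\le 1$), the ratio is at most $\varepsilon^{-O(\zeta)}\re^{-(\tau_L-\tau_0)}$. Taking the $p$-th moment and using the strong Markov property at $\tau_0$ yields $O(\varepsilon^{\theta(p)-O(\zeta)})$.

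The main obstacle I expect is the comparison step between metric balls and Euclidean radii with only $\varepsilon^{-O(\zeta)}$ loss. This must hold uniformly over the scales visited by the radial process, with failure probability beating $\varepsilon^{q}$ rather than merely tending to zero. I would first try to handle the filled-ball outradius via the boundary-length confinement described above. For the lateral field I would use a union bound over $O(\log(1/\varepsilon))$ scales with superpolynomial tails at each.
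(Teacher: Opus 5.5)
\textbf{There is a genuine gap, in the inradius step.} Your reduction and your exponent are right. The reduction uses the monotonicity of the modulus, which is the easy half of Lemma~\ref{lem:Teichmueller}. Your exponent $\sqrt{1+12p}-1$ is exactly the paper's $(\sqrt{(Q-\gamma)^2+2p}-(Q-\gamma))/\xi$. Bounding $\outradius(B_\varepsilon)$ by the Euclidean radius at which $X$ first reaches level $\approx(1-\zeta)\log(1/\varepsilon)$ is essentially what the paper gets from Lemma~\ref{lem:metric-ball-subset-Euclidean-ball}. Your fjord worry is vacuous: the complement of a Euclidean disc is connected, so $\outradius(B_\varepsilon^\bullet)=\outradius(B_\varepsilon)$.

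\textbf{Why the inradius claim fails.} In the circle average embedding $X_0=0$, so $\tau_0=0$. You are therefore asserting that $\inradius(B_1^\bullet(0;D_\Phi))\ge\varepsilon^{O(\zeta)}$ off an event of probability $O(\varepsilon^{q'})$ with $q'>4$. This is false.
\begin{itemize}
\item Take $T=\log(1/r)$. Since $X_T\sim N(T/6,T/6)$, the event $\{X_T\le -a\}$ has probability $\re^{-(1/12+o(1))T}$.
\item The $D_\Phi$-crossing distance of $A_{r/\re,r}(0)$ equals $\re^{-X_T}$ times a factor independent of $X_T$, with a fixed law. So for a suitable constant $a$, with conditional probability at least $1/2$ this crossing distance exceeds $1$.
\item Then $B_1(0;D_\Phi)\subset B_r(0)$, and hence $\inradius(B_1^\bullet)\le r$.
\end{itemize}
Thus $\BP[\inradius(B_1^\bullet)\le\varepsilon^{c}]\ge\varepsilon^{c/12+o(1)}$, which is far larger than $\varepsilon^{q'}$ when $c=O(\zeta)$.

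The inradius is governed by a \emph{last-exit} time of $X$: any later dip of $X$ at a smaller Euclidean radius traps the whole unit metric ball inside that radius. A last-exit time is not a stopping time, so the strong Markov step at $\tau_0$ does not apply. Lemma~\ref{lem:hull-process} cannot repair this, because boundary lengths carry no Euclidean information. On these bad events the ratio is still small, but only because $\outradius(B_\varepsilon)$ shrinks along with the inradius. The two radii must be measured from a common random scale, not separately.

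\textbf{How the paper supplies that scale.} Lemma~\ref{lem:good-band} rests on multiscale independence from the boundary-length process, together with Lemmas~\ref{lem:Beurling} and~\ref{lem:canonical-LF}. With probability $1-O(\varepsilon^\alpha)$ it yields:
\begin{itemize}
\item a metric radius $\re^{-j}$ with $j\in[\zeta\log(1/\varepsilon),2\zeta\log(1/\varepsilon)]$, and
\item a Euclidean radius $\lambda^k$ with $\inradius(B_1^\bullet)\ge b\lambda^k$, at which a bump average of $\Phi$ is calibrated to $\re^{-j}$.
\end{itemize}
A Gaussian comparison transfers this calibration to $\Phi_{\lambda^k}(0)$, and one also has $k=O(\log(1/\varepsilon))$. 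The paper then union-bounds over the deterministic candidates $k$. For each $k$ it uses the ordinary Markov property of the radial process at the deterministic radius $\lambda^k$, Lemma~\ref{lem:metric-ball-subset-Euclidean-ball}, and the hitting-time Laplace transform.

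\textbf{A fix within your own framework.} Replace $\tau_0$ by the last time $\sigma_\ell$ at which $X$ is below a level $\ell\asymp\zeta\log(1/\varepsilon)$. Your uniform annulus-diameter bounds then give $\inradius(B_1^\bullet)\ge\inradius(B_1)\gtrsim\re^{-\sigma_\ell}$. Handle the non-stopping time either by Williams' path decomposition, or by a union bound over deterministic scales $t$ on the events $\{X_t\approx\ell\}\cap\{X_s\ge\ell-1\ \forall s\ge t\}$. The exponent $\theta(p)$ survives, and the boundary-length lemmas are then not needed at all.
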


\begin{proposition}\label{lem:conformal-moduli-band}
    Let $\SCA_T$ be a metric band of cone type with inner boundary length one and width $T > 0$. Write $m_T$ for the conformal modulus of $\SCA_T$. Then for each $p > 2$, there exists $q = q(p) > d_\gamma$ such that
    \begin{equation*}
        \BE\lbrack\re^{-2\pi m_T p}\rbrack = O(T^{-q}) \quad \text{as } T \to \infty. 
    \end{equation*}
\end{proposition}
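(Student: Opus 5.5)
The plan is to deduce this statement from \Cref{lem:conformal-moduli-marginal} by combining scaling, the Markov property of filled metric balls, and monotonicity of the conformal modulus in the width. Throughout, write $f(T) \defeq \BE\lbrack\re^{-2\pi m_T p}\rbrack$ for a metric band of cone type with inner boundary length one and width $T$.

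\textbf{Step 1 (scale invariance of the modulus).} The conformal modulus depends only on the conformal structure of the band. That structure is almost surely determined by the underlying metric measure space and is unchanged when the metric and measure are rescaled. By the scaling property of metric bands, a band of cone type with inner boundary length $\ell$ and width $w$, rescaled by $\lambda = \ell^{-1/2}$, is a band of cone type with inner boundary length $1$ and width $w\ell^{-1/2}$. Hence the expectation of $\re^{-2\pi m p}$ for the former band equals $f(w\ell^{-1/2})$.

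\textbf{Step 2 (monotonicity of $f$).} Let $T < T'$, and realize a band of cone type with inner boundary length one and width $T'$ as $A^\bullet_{s,s+T'}(0;D_\Phi)$ in a $\gamma$-LQG cone, conditioned on $Y_{-s}=1$. Its sub-band $A^\bullet_{s,s+T}(0;D_\Phi)$ is then a band of cone type with inner boundary length one and width $T$. This sub-band is a topological annulus separating the two boundaries of the larger band. By monotonicity (superadditivity) of conformal modulus for nested separating annuli, $m_{T'} \ge m_T$ in this coupling. Therefore $f$ is non-increasing.

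\textbf{Step 3 (conditioning in the cone).} In the $\gamma$-LQG cone, by the Markov property recorded around~\eqref{eq:Brownian-horn}, the conditional law of $A^\bullet_{\varepsilon,1}(0;D_\Phi)$ given $Y_{-\varepsilon}=\ell$ is that of a band of cone type with inner boundary length $\ell$ and width $1-\varepsilon$. The modulus $m_\varepsilon$ of \Cref{lem:conformal-moduli-marginal} is the modulus of this band: the embedding into $\BC$ is conformal, so the modulus there agrees with the intrinsic one. By Step 1,
\begin{equation*}
    \BE\lbrack\re^{-2\pi m_\varepsilon p}\rbrack = \BE\!\left\lbrack f\!\left((1-\varepsilon)Y_{-\varepsilon}^{-1/2}\right)\right\rbrack .
\end{equation*}
Recall that $Y_{-\varepsilon}$ is Gamma distributed with shape $3/2$ and mean $\varepsilon^2$ \cite[Proposition~1.2]{Hull-TBP}. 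So $c_0 \defeq \BP\lbrack Y_{-\varepsilon} \ge \varepsilon^2\rbrack > 0$ is a constant independent of $\varepsilon$. On this event the width $(1-\varepsilon)Y_{-\varepsilon}^{-1/2}$ is at most $T_\varepsilon \defeq (1-\varepsilon)/\varepsilon$, so by Step 2 we have $f((1-\varepsilon)Y_{-\varepsilon}^{-1/2}) \ge f(T_\varepsilon)$. Consequently
\begin{equation*}
    c_0 f(T_\varepsilon) \le \BE\lbrack\re^{-2\pi m_\varepsilon p}\rbrack = O(\varepsilon^q) = O(T_\varepsilon^{-q}).
\end{equation*}
Every large $T$ equals $T_\varepsilon$ for $\varepsilon = 1/(T+1)$, which gives the claim with the same $q = q(p) > d_\gamma$.

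\textbf{Main obstacle.} The delicate point is Step 2, namely making the coupling rigorous. One must check that the sub-band of a cone-type band, conditioned only on its inner boundary length, is again of cone type with the same inner length and the shorter width. This follows from the Markov/independence structure in \Cref{subsection:BFS}, since the outer portion is independent given the intermediate boundary length. One must also justify that the inner sub-annulus separates the boundaries of the larger annulus, so that modulus monotonicity applies. The remaining steps are routine bookkeeping.
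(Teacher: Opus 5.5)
Your proof is correct. Its overall strategy matches the paper's: deduce the statement from \Cref{lem:conformal-moduli-marginal} using the Markov property of filled metric balls, scaling, and nesting of annuli. Where you differ is in how you produce a band with inner boundary length one.

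The paper uses the stopping time $\tau = \inf\{t \ge 0 : \SN_\Phi(\partial B_t^\bullet(0; D_\Phi)) = 1\} \wedge 1$. It bounds $m_{1,T} \le m_{\tau,T}$ by nesting, and on $\{\tau<1\}$ it treats $A_{\tau,T}^\bullet(0;D_\Phi)$ directly as a cone-type band with inner length one. The constant $\BP\lbrack\tau<1\rbrack$ plays the role of your $c_0$.

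You instead condition at the deterministic radius $\varepsilon$. You normalize the inner length by scaling, which makes the width random, namely $(1-\varepsilon)Y_{-\varepsilon}^{-1/2}$. You then restrict to $\{Y_{-\varepsilon}\ge\varepsilon^2\}$, whose probability does not depend on $\varepsilon$.

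What your route buys: it only uses the Markov property at deterministic radii, which is exactly what \Cref{subsection:BFS} records. It never needs a strong Markov property at the random radius $\tau$. The paper's route is shorter and avoids rescaling the inner length.

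Both arguments need $T \mapsto \BE\lbrack\re^{-2\pi m_T p}\rbrack$ to be non-increasing. The paper uses this tacitly in its final inequality, since its band has width $T-\tau \le T$; your Step~2 makes it explicit.

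Finally, the ``main obstacle'' you flag in Step~2 is not really one. The cone-type band law is by definition the conditional law given the inner boundary length, depending only on that length and the width. So $A^\bullet_{s,s+T}(0;D_\Phi)$ given $Y_{-s}=1$ is a cone-type band of width $T$ by definition. It separates the two boundaries of $A^\bullet_{s,s+T'}(0;D_\Phi)$ simply because $B^\bullet_{s}(0;D_\Phi) \subset B^\bullet_{s+T}(0;D_\Phi) \subset B^\bullet_{s+T'}(0;D_\Phi)$.
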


\Cref{lem:conformal-moduli-band} is a fixed-inner-boundary-length analogue of \Cref{lem:conformal-moduli-marginal} and follows easily from the latter. The proof of \Cref{lem:conformal-moduli-marginal} requires several preliminary lemmas. For an overview of the proof strategy, see the discussion immediately preceding it.

The following lemma states that any doubly connected domain surrounding the origin with a large conformal modulus contains a centered Euclidean annulus whose conformal modulus differs from that of the domain by at most an additive constant.

\begin{lemma}\label{lem:Teichmueller}
    Let $A \subset \BC$ be a doubly connected domain such that the origin is contained in the bounded connected component of $\BC \setminus A$. Write $m$ for the conformal modulus of $A$; $I$ (resp.~$O$) for the inner (resp.~outer) boundaries of $A$; $R_1 \defeq \inf\{R > 0 : I \subset B_R(0)\}$; $R_2 \defeq \sup\{R > 0 : O \cap B_R(0) = \emptyset\}$. Then
    \begin{equation*}
        \re^{2\pi m}/16 - 1 \le R_2/R_1 \le \re^{2\pi m}. 
    \end{equation*}
\end{lemma}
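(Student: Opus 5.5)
Both inequalities come from comparing $A$ with a round annulus, using monotonicity of conformal modulus under inclusion of separating doubly connected domains together with the Teichm\"uller extremal problem.

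\emph{Upper bound.} If $R_2 \le R_1$ there is nothing to prove, since $e^{2\pi m}\ge 1$. Otherwise the round annulus $\{R_1 < |z| < R_2\}$ separates $I$ from $O$. Indeed, $I\subset \overline{B_{R_1}(0)}$ by the definition of $R_1$, and $O\cap B_{R_2}(0)=\emptyset$ by the definition of $R_2$. Hence this round annulus is contained in $A$ and separates its two boundary components, because it lies between $I$ and $O$ and $0$ lies in the bounded complementary component. By monotonicity of modulus under inclusion of separating doubly connected subdomains,
\[
\tfrac{1}{2\pi}\log(R_2/R_1)\le m ,
\]
which gives $R_2/R_1\le e^{2\pi m}$.

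\emph{Lower bound.} Pick $a\in I$ with $|a|=R_1$ (it exists by compactness of $I$) and $b\in O$ with $|b|=R_2$. The bounded complementary component $E_1$ of $A$ contains $0$ and $a$. The unbounded component $E_2$ contains $b$ and $\infty$. Apply the Möbius map $z\mapsto z/R_1$, which preserves $m$. After this normalization $E_1\ni 0,a/R_1$ with $|a/R_1|=1$, and $E_2\ni b/R_1,\infty$ with $|b/R_1|=R_2/R_1$. Teichm\"uller's extremal theorem says that among doubly connected domains separating $\{0,-1\}$ from $\{P,\infty\}$ (with $P>0$), the modulus is maximized by the Teichm\"uller ring $\widehat\BC\setminus([-1,0]\cup[P,\infty))$. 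More generally, the bound depends only on $|x|$ and $|y|$ for continua joining $0$ to $x$ and $y$ to $\infty$. Therefore
\[
m\le \mathrm{mod}\, T(R_2/R_1)=\tfrac{1}{2\pi}\log\Psi(R_2/R_1),
\]
where $\Psi(P)$ is Teichm\"uller's function. The classical estimate $\Psi(P)\le 16(P+1)$ then yields $e^{2\pi m}\le 16(R_2/R_1+1)$, which is exactly the claimed lower bound.

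The main obstacle is quoting the correct form of Teichm\"uller's bound $\Psi(P)\le 16(P+1)$ with the right normalization of $P$. One should double-check it against the standard references, for instance Lehto–Virtanen, Chapter II, where the Teichm\"uller ring modulus satisfies $\mathrm{mod}\,R_T(P)\le \frac{1}{2\pi}\log(16(P+1))$. The rest is bookkeeping.
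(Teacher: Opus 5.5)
Your proposal is correct and is essentially the paper's argument: the round annulus $\{R_1<|z|<R_2\}$, as a separating subring of $A$, gives the upper bound, and Teichm\"uller's extremal theorem combined with the estimate $\Psi(P)\le 16(P+1)$ gives the lower bound. Your extremal ring is the right one, with slits $[-1,0]$ and $[P,\infty)$ on opposite sides of the origin. The paper's displayed comparison ring $\BC\setminus([0,R_1]\cup[R_2,\infty))$ puts both slits on the positive axis; under $z\mapsto z/R_1-1$ it becomes the smaller Teichm\"uller ring with parameter $R_2/R_1-1$, so that intermediate step in the paper is a slip, although its final bound $\frac{1}{2\pi}\log(16(R_2/R_1+1))$ is the same as yours.
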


\begin{proof}
    It is clear that 
    \begin{equation*}
        m \ge (\text{conformal modulus of } A_{R_1,R_2}(0)) = \frac1{2\pi}\log(R_2/R_1), 
    \end{equation*}
    which implies that $R_2/R_1 \le \re^{2\pi m}$. On the other hand, by \cite[Theorem~4-7 and (4-21)]{CoInAhl}, 
    \begin{equation*}
        m \le (\text{conformal modulus of } \BC \setminus ([0, R_1] \cup [R_2, \infty))) \le \frac1{2\pi}\log(16(R_2/R_1 + 1)), 
    \end{equation*}
    which implies that $\re^{2\pi m}/16 - 1 \le R_2/R_1$. This completes the proof of \Cref{lem:Teichmueller}. 
\end{proof}

The following lemma shows that the Euclidean size of the metric ball is tightly controlled by the circle average process.

\begin{lemma}\label{lem:metric-ball-subset-Euclidean-ball}
    Let $(\BC, \Phi; 0, \infty)$ be the circle average embedding of a $\gamma$-LQG cone. For each $t > 0$, write $R_t \defeq \sup\{r > 0 : \Phi_r(0) + Q\log(r) = (1/\xi)\log(t)\}$. Then
    \begin{equation*}
        \BE\!\left\lbrack\left(\inf\{t > 0 : B_1(0; D_\Phi) \subset B_{R_t}(0)\}\right)^s\right\rbrack < \infty, \quad \forall s > 0. 
    \end{equation*}
\end{lemma}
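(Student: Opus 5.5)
Write $t_\ast \defeq \inf\{t > 0 : B_1(0; D_\Phi) \subset B_{R_t}(0)\}$. The plan is to bound the tail $\BP[t_\ast > t]$ by a quantity decaying faster than any power of $t$; integrating then gives finiteness of all moments. The approach combines the LQG coordinate change with scaling of the metric. By the definition of $R_t$ (the last radius at which $\Phi_r(0) + Q\log r$ equals $(1/\xi)\log t$), the field $\Phi(R_t\,\cdot) + Q\log R_t$ restricted to the unit disk has, modulo the lateral part, the law of a cone field normalized so that its circle average at radius $1$ equals $(1/\xi)\log t$. Indeed, the radial part is a Brownian motion with drift started from a hitting time, and by the strong Markov property of $\{A_s\}$ at the last-passage time (in the $-\log r$ parametrization this is a first hitting time for the reversed process, and it is a.s. finite since $A_s \to \pm\infty$ appropriately), its law is that of $A$ conditioned appropriately. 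Adding a constant $c$ to the field multiplies $D_\Phi$ by $e^{\xi c}$. Hence $D_\Phi$-distances near scale $R_t$ are comparable to $t$ times a tight random variable.

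Concretely, the event $\{t_\ast > t\}$ means $B_1(0; D_\Phi) \not\subset B_{R_t}(0)$, which forces $D_\Phi(0, \partial B_{R_t}(0)) \le 1$. After rescaling by $R_t$ and subtracting the constant $(1/\xi)\log t$, this becomes the statement that, for the normalized field $\widetilde\Phi$ (circle average $0$ at radius $1$, and the same lateral law as a whole-plane GFF), we have $D_{\widetilde\Phi}(0, \partial\BD) \le t^{-1}$. So we need
\begin{equation*}
\BP\!\left[D_{\widetilde\Phi}(0, \partial\BD) \le \varepsilon\right] = O(\varepsilon^\infty) \quad \text{as } \varepsilon \to 0,
\end{equation*}
uniformly over the possible radial behaviour of $\widetilde\Phi$ inside $\BD$. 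Inside $\BD$ the radial part is $A_s$ for $s \ge 0$, which is Brownian motion with drift $\gamma$ started from $0$. The drift only pushes the field up, but the drift is not harmful either: it is just a smooth-ish function bounded below on a neighbourhood of $\partial\BD$.

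The main obstacle is this lower-tail bound for the distance from the origin to the unit circle. I would invoke the standard superpolynomial lower-tail estimate for LQG distances across an annulus: for a whole-plane GFF normalized at radius one, $\BP[D_h(\partial B_{1/2}(0), \partial\BD) < \varepsilon] \le C e^{-c(\log \varepsilon^{-1})^2}$. This follows from the Gaussian tail of the annulus-crossing distance in \cite{TightLFPP,WeakLQGMet,LocMetGFF}, together with Gaussian concentration of the circle average. Since $D_{\widetilde\Phi}(0, \partial\BD) \ge D_{\widetilde\Phi}(\partial B_{1/2}(0), \partial\BD)$, and the radial drift $A_s$ on $s \in [0, \log 2]$ is bounded below by $-M$ except with Gaussian-small probability in $M$, we may compare $\widetilde\Phi$ with a whole-plane GFF plus a bounded function on the annulus. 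This yields $\BP[t_\ast > t] \le C e^{-c(\log t)^2}$, which decays faster than any power of $t$ and hence gives $\BE[t_\ast^s] < \infty$ for all $s$.
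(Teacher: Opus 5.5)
Your proposal is correct and follows essentially the paper's route. You rescale by $R_t$ and subtract $(1/\xi)\log t$, so that $\{t_\ast > t\}$ becomes a lower-tail event for $D(0,\partial B_1(0))$. The field in question agrees in law on $B_1(0)$ with a normalized whole-plane GFF minus $\gamma\log\lvert\cdot\rvert$. The rescaling invariance you derive from the hitting-time decomposition is exactly \cite[Proposition~4.13]{dms2021mating}.

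The differences are cosmetic. The paper states the exact identity $t_\ast \overset{d}{=} D_\Phi(0,\partial B_1(0))^{-1}$ and quotes the negative-moment bound of \cite[Proposition~3.12]{WeakLQGMet}. You instead use only the one-sided inclusion and the annulus-crossing lower tail. In your version the truncation at $-M$ is unnecessary, since $-\gamma\log\lvert\cdot\rvert\ge 0$ on the annulus.
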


\begin{proof}
    First, we \emph{claim} that $\inf\{t > 0 : B_1(0; D_\Phi) \subset B_{R_t}(0)\}$ has the same law as 
    \begin{equation*}
        \inf\{t > 0 : B_{1/t}(0; D_\Phi) \subset B_1(0)\} = D_\Phi(0, \partial B_1(0))^{-1}. 
    \end{equation*}
    To this end, set $\Phi^t(\bullet) \defeq \Phi(R_t\bullet) + Q\log(R_t) - (1/\xi)\log(t)$. By \cite[Proposition~4.13, (i)]{dms2021mating}, $\Phi^t$ has the same law as $\Phi$. On the other hand, note that $B_{1/t}(0; D_{\Phi^t}) = R_t^{-1} B_1(0; D_\Phi)$ almost surely. This completes the proof of the \emph{claim}. 

    Recall that $\Phi|_{B_1(0)}$ agrees in law with the corresponding restriction of a whole-plane GFF normalized so that its circle average over $\partial B_1(0)$ is zero minus $\gamma\log(\lvert\bullet\rvert)$. Thus, it follows immediately from \cite[Proposition~3.12]{WeakLQGMet} that $\BE\lbrack D_\Phi(0, \partial B_1(0))^{-s}\rbrack < \infty$ for all $s > 0$. This completes the proof of \Cref{lem:metric-ball-subset-Euclidean-ball}. 
\end{proof}

The following lemma shows that if a doubly connected domain surrounding the origin has its conformal modulus bounded from below, then its ``eccentricity'' is bounded from above by a constant depending solely on this modulus.

\begin{lemma}\label{lem:Beurling}
    For each $m > 0$, there exists $b = b(m) \in (0, 1)$ such that the following is true: Let $A \subset \BC$ be a doubly connected domain such that the origin is contained in the bounded connected component of $\BC \setminus A$. Suppose that the conformal modulus of $A$ is at least $m$. Then there exists $x > 0$ such that 
    \begin{itemize}
        \item the planar Brownian motion starting from $x$ hits the inner boundary of $A$ before the outer boundary with probability $1/2$, 
        \item $B_{bx}(x) \subset A$, 
        \item $B_{bx}(0)$ does not intersect the outer boundary of $A$, and
        \item the inner boundary of $A$ is contained in $B_{b^{-1}x}(0)$. 
    \end{itemize}
\end{lemma}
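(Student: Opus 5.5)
Our plan is to find $x$ by an intermediate value argument on the positive real axis, and then to derive the three geometric bullets from Lemma~\ref{lem:Teichmueller}, the Koebe one-quarter theorem, and Beurling-type harmonic measure estimates. We write $I$ and $O$ for the inner and outer boundaries of $A$, and $h(z)$ for the probability that planar Brownian motion from $z$ hits $I$ before $O$. Then $h$ is harmonic in $A$, it tends to $1$ at $I$ and to $0$ at $O$ (regular points suffice, since $I$ and $O$ are continua), and it is continuous on $A$.

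First we produce the point. Set $r_1 \defeq \sup\{r>0 : [0,r]\cap I \neq \emptyset\}$ and $r_2 \defeq \inf\{r>r_1 : r\in O\}$. The open segment $(r_1,r_2)$ lies in $A$, and $h$ runs continuously from values near $1$ at $r_1^+$ to values near $0$ at $r_2^-$. Hence there exists $x\in(r_1,r_2)$ with $h(x)=1/2$.

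Next we show that $x$ is comparable to the inner size of $A$. Let $R_1$ and $R_2$ be as in Lemma~\ref{lem:Teichmueller}, and assume without loss of generality that $m$ is large (for small $m$ we can replace $A$ by a sub-annulus, or argue directly with a compactness-free estimate).
\begin{itemize}
    \item If $|x|\ge C R_1$, we compare $h(x)$ with harmonic measure relative to $B_{R_1}(0)$. Since $\BC\setminus A \supset$ the bounded component containing $I$, which is contained in $\overline{B_{R_1}(0)}$, we get
    \[
    h(x)\le P_x[\text{hit } \overline{B_{R_1}} \text{ before } O] \le \frac{\log(R_2/|x|)}{\log(R_2/R_1)}
    \]
    by monotonicity. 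This estimate is not immediately useful when $R_2\gg |x|$, so instead we use the Beurling estimate on the connected set $O$. The set $O$ connects $\partial B_{R_2}$ to $\infty$ and the point $x$ is at distance $\ge R_2-|x|$ from it. Combining these, $h(x)=1/2$ forces $|x|\le C(m)R_1$ and $|x|\ge c(m)R_2\wedge$ something.
    \item More simply: $I$ is a continuum joining $\partial B_{R_1}(0)$ to points near $0$. If $|x|\ge K R_1$, Brownian motion from $x$ escapes to $\partial B_{\sqrt{K}|x|}$ before hitting $B_{R_1}$ with probability close to $1$. It must then hit $O$ before returning (by Beurling, since $O$ is a connected set of diameter $\ge |x|$ within the relevant annulus whenever $R_2\le \sqrt K|x|$). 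If instead $R_2>\sqrt K|x|$, we use $R_2/R_1 \le e^{2\pi m}$ scale comparisons.
\end{itemize}
The cleanest route is to use the conformal map of $A$ onto the round annulus $A_{1,e^{2\pi m}}(0)$, under which $h$ becomes $1-\log|w|/(2\pi m)$. The level set $\{h=1/2\}$ is then the image of the circle $|w|=e^{\pi m}$, which has modulus distance $m/2$ from both boundaries.

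The remaining three bullets follow from this picture. Because the curve $\{h=1/2\}$ separates $I$ from $O$ with sub-annuli of modulus $m/2$ on each side, Lemma~\ref{lem:Teichmueller} applied to the inner sub-annulus bounded by $I$ and the level curve gives
\[
\sup_{I}|z| \le 16\,e^{-\pi m}\,(1+\cdots)\inf_{\{h=1/2\}}|z| \le C(m)x,
\]
which yields $I\subset B_{b^{-1}x}(0)$. Applied to the outer sub-annulus, it gives $\operatorname{dist}(0,O)\ge c(m)\sup_{\{h=1/2\}}|z|\ge c(m)x$, which yields $B_{bx}(0)\cap O=\emptyset$. (Here we use a version of Lemma~\ref{lem:Teichmueller} valid for general modulus, via the Teichmüller extremal bound, which is exactly what its proof gives.) Finally, for $B_{bx}(x)\subset A$ we argue from harmonic measure. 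If some boundary point lay within $bx$ of $x$, Beurling's estimate would apply: that point lies in $I$ or $O$, each a continuum of diameter $\gtrsim x$ by the previous bounds. Consequently $h(x)$ would be within $O(\sqrt b)$ of $1$ or of $0$, contradicting $h(x)=1/2$ once $b$ is small.

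The main obstacle is making the comparison between the level curve and $x$ uniform, that is, bounding $\sup_{\{h=1/2\}}|z|/\inf_{\{h=1/2\}}|z|$ by a constant. We would handle this with Lemma~\ref{lem:Teichmueller} applied to a thin sub-annulus around the level curve, or equivalently by Harnack's inequality along the level curve.
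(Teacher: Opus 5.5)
The failing step is your proof of the second bullet, $B_{bx}(x)\subset A$. Suppose a point of $I$ lies within $bx$ of $x$. Beurling applied to the bounded complementary component $K$ (a continuum containing $0$ and that point) only says that Brownian motion from $x$ hits $K$ before leaving $B_r(x)$, $r\le x$, with probability at least $1-C\sqrt{bx/r}$. To conclude $h(x)\ge 1-O(\sqrt b)$ you also need $\mathrm{dist}(x,O)\gtrsim x$, so that the path cannot reach $O$ first. Nothing you have established gives this. Your bounds $\sup_I|z|\le C(m)x$ and $\mathrm{dist}(0,O)\ge c(m)x$ allow $I$ and $O$ both to come within $bx$ of $x$, i.e.\ a pinch of $A$ at $x$, and in that configuration $h(x)=1/2$ is entirely consistent with harmonic measure. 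What rules out such a pinch is the lower bound on the modulus, and your argument for this bullet never uses it. By contrast, the ``main obstacle'' you single out (bounding $\sup_\Gamma|z|/\inf_\Gamma|z|$ for $\Gamma=\{h=1/2\}$) is not needed: your derivation of the third and fourth bullets only uses $\inf_\Gamma|z|\le x\le\sup_\Gamma|z|$.

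The paper avoids this by comparing with a disconnection event rather than with $h$. By conformal invariance, from a point where $h=1/2$, Brownian motion disconnects $I$ from $O$ before leaving $A$ with probability at least $c_1\re^{-\pi/m}$. Disconnection requires winding around $0\in\partial B_x(x)$, hence leaving $B_x(x)$. Beurling, applied to whichever complementary component reaches $\partial B_{b_1x}(x)$, bounds the probability of leaving $B_x(x)$ before leaving $A$ by $c_2\sqrt{b_1}$, so it never matters which boundary component the path hits.

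Within your own framework there is an equally short repair. The point $x$ lies on $\Gamma$, the outer boundary of the inner sub-annulus $A'$ between $I$ and $\Gamma$, whose modulus is at least $m/2$. If $p\in I$ with $|p-x|<bx$, every closed curve in $A'$ separating its boundary components has different winding numbers about $p$ and $x$, so it meets $[p,x]\subset B_{bx}(x)$. It also winds around $0$, so it leaves $B_x(x)$. The metric $|z-x|^{-1}$ on $\{bx<|z-x|<x\}$ then gives $\mathrm{mod}(A')\le 2\pi/\log(1/b)$, a contradiction once $b<\re^{-4\pi/m}$. The outer sub-annulus handles $O$ in the same way.

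Two smaller slips. First, $r_1$ as written is $+\infty$; you want the last point of the bounded complementary component on $[0,\infty)$. Second, ``assume $m$ large by passing to a sub-annulus'' is backwards, since sub-annuli have smaller modulus. For small $m$ the lower bound in Lemma~\ref{lem:Teichmueller} is vacuous, so you genuinely need Teichm\"uller's extremal theorem in its general form ($\mathrm{mod}\le\Lambda(R_2/R_1)$ with $\Lambda$ an increasing homeomorphism of $(0,\infty)$), as your parenthetical indicates.
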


\begin{proof}
    Note that there exists $x > 0$ such that the planar Brownian motion starting from $x$ hits the inner boundary of $A$ before the outer boundary with probability $1/2$. It is well-known that there is a universal constant $c_1 > 0$ such that the planar Brownian motion starting from $x$ disconnects the inner and outer boundaries of $A$ before exiting $A$ with probability at least $c_1 \re^{-\pi/m}$. Write $b_1 \defeq \sup\{b > 0 : B_{bx}(x) \subset A\}$. We observe that $\partial A$ crosses between the inner and outer boundaries of $A_{b_1x,x}(x)$ (i.e., there is a connected component of $\partial A \cap A_{b_1x,x}(x)$ that intersects both $\partial B_{b_1x}(x)$ and $\partial B_x(x)$). Thus, it follows from the Beurling estimate that there is a universal constant $c_2 > 0$ such that the planar Brownian motion starting from $x$ exits $B_x(x)$ before exiting $A$ with probability at most $c_2 b_1^{1/2}$. Thus, we conclude that $c_2 b_1^{1/2} \ge c_1 \re^{-\pi/m}$. 
    
    Write $b_2 \defeq \sup\{b > 0 : B_{bx}(0) \text{ does not intersect the outer boundary of } A\}$. Then there exists $y \in B_{b_2x}(0)$ such that the planar Brownian motion starting from $y$ hits the inner boundary of $A$ before the outer boundary with probability $1/2$. This implies that the planar Brownian motion starting from $y$ disconnects the inner and outer boundaries of $A$ before exiting $A$ with probability at least $c_1 \re^{-\pi/m}$. On the other hand, we observe that the outer boundary of $A$ crosses between the inner and outer boundaries of $A_{2b_2x,(1 - b_2)x}(y)$. Thus, it follows from the Beurling estimate that there is a universal constant $c_3 > 0$ such that the planar Brownian motion starting from $y$ exits $B_{(1 - b_2)x}(y)$ before hitting the outer boundary of $A$ with probability at most $c_3 b_2^{1/2}$. This implies that $c_3 b_2^{1/2} \ge c_1 \re^{-\pi/m}$. 
    
    Write $b_3 \defeq \sup\{b > 0 : \text{the inner boundary of } A \text{ is contained in } B_{b^{-1}x}(0)\}$. We observe that the inner boundary of $A$ crosses between the inner and outer boundaries of $A_{x,(b_3^{-1} - 1)x}(x)$. Thus, it follows from the Beurling estimate that there is a universal constant $c_4 > 0$ such that the planar Brownian motion starting from $x$ exits $B_{(b_3^{-1} - 1)x}(x)$ before hitting the inner boundary of $A$ with probability at most $c_4 b_3^{1/2}$. This implies that $c_4 b_3^{1/2} \ge c_1 \re^{-\pi/m}$. This completes the proof of \Cref{lem:Beurling}. 
\end{proof}

\begin{lemma}\label{lem:good-band}
    Fix a smooth bump function $\psi$ supported on $B_1(0)$ with $\int \psi(z) \, \rd z = 1$. Then for each $\alpha > 0$, there exists $m = m(\alpha) > 0$, $M = M(\alpha) > 0$, and $C = C(\alpha) > 0$ such that the following is true: Let $(\BC, \Phi; 0, \infty)$ be a $\gamma$-LQG cone. For each $t > 0$, write $E_t = E_t(m, M)$ for the event that the following are true:
    \begin{enumerate}
        \item\label{it:good-band-0} The conformal modulus of $A_{t/\re,t}^\bullet(0; D_\Phi)$ is at least $m$. 
        \item\label{it:good-band-1} For each embedding $A_{t/\re,t}^\bullet(0; D_\Phi) \subset \BC$ such that the origin is contained in the bounded connected component of $\BC \setminus A_{t/\re,t}^\bullet(0; D_\Phi)$, the following is true: Let $b = b(m) \in (0, 1)$ be as in \Cref{lem:Beurling}. Then there exists $c = c(m) \in (0, b)$ such that for each $x > 0$ such that 
        \begin{itemize}
            \item the planar Brownian motion starting from $x$ hits the inner boundary of $A_{t/\re,t}^\bullet(0; D_\Phi)$ before the outer boundary with probability $1/2$, and
            \item $B_{bx}(x) \subset A_{t/\re,t}^\bullet(0; D_\Phi)$, 
        \end{itemize}
        we have 
        \begin{equation*}
            \langle\Phi, \psi_{x,c}\rangle \in [(1/\xi)\log(t) - Q\log(x) - M, (1/\xi)\log(t) - Q\log(x) + M], 
        \end{equation*}
        where $\psi_{x,c}(\bullet) \defeq (cx)^{-2}\psi((cx)^{-1}(\bullet - x))$. 
    \end{enumerate}
    Let $\{t_j\}_{j \in \BN}$ be a decreasing sequence of positive real numbers such that $t_{j + 1} \le t_j/\re^2$ for all $j \in \BN$. Then 
    \begin{equation*}
        \BP\lbrack\text{there exists } j \in [1, n]_\BZ \text{ such that } E_{t_j/\re} \text{ occurs}\rbrack \ge 1 - C\re^{-\alpha n}, \quad \forall n \in \BN. 
    \end{equation*}
\end{lemma}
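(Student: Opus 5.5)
The plan is to first find parameters $m,M$ such that, for a single deterministic $t$, the event $E_t$ has probability at least some $p_0>0$ uniformly in $t$. We then upgrade this to the exponential bound $1-C\re^{-\alpha n}$ by a Markov/independence-across-scales argument along the metric bands, with $m,M$ tuned to make the failure probability per scale small.

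\textbf{Step 1 (scale invariance).} By the scaling property of the Brownian plane, together with the fact that the LQG surface structure is determined by the metric measure space, the law of the band $A^\bullet_{t/\re,t}(0;D_\Phi)$ as an LQG surface is the law at $t=1$ after adding $(1/\xi)\log t$ to the field. Conditions \eqref{it:good-band-0}–\eqref{it:good-band-1} are formulated to be invariant under this rescaling and under the choice of embedding. Indeed, replacing the embedding by $z\mapsto \lambda z$ shifts $x$ by a factor $\lambda$ and $\langle\Phi,\psi_{x,c}\rangle$ by $-Q\log\lambda$, which cancels the $-Q\log x$ term. Hence $\BP[E_t]$ does not depend on $t$.

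\textbf{Step 2 (positive probability for one band).} The band has positive conformal modulus almost surely, since $\partial B^\bullet_{1/\re}$ and $\partial B^\bullet_1$ are disjoint Jordan curves. So for $m$ small, \eqref{it:good-band-0} holds with probability close to $1$. Given \eqref{it:good-band-0}, \Cref{lem:Beurling} gives $b(m)$ and the point $x$ with $B_{bx}(x)$ inside the band. Work in the circle average embedding: the relevant $x$ is comparable to $R_1$ up to constants by \Cref{lem:metric-ball-subset-Euclidean-ball} and \Cref{lem:Teichmueller}. The variable $\langle\Phi,\psi_{x,c}\rangle+Q\log x$ is then an almost surely finite random variable, because $\psi_{x,c}$ is a smooth test function at a macroscopic scale in the domain. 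Taking $M$ large makes \eqref{it:good-band-1} hold with probability at least $1-\delta$. The set of admissible $x$ could be a family, but it is compact, and the field pairing is continuous in $x$, so a supremum bound still holds.

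\textbf{Step 3 (iteration over scales).} By the Markov property of the filled balls, the band $A^\bullet_{t_j/\re^2,t_j/\re}$ together with the field restricted to it is conditionally independent of the bands at larger scales given the boundary length $Y_{-t_j/\re^2}$. Its conditional law is that of a metric band of cone type with the given inner boundary length. The condition $t_{j+1}\le t_j/\re^2$ makes the bands disjoint. Using \Cref{lem:CSBP-independence}\eqref{it:CSBP-independence-0}, with probability at least $1-\re^{-\alpha n}$, a fraction $b$ of the scales have normalized boundary length $Y/t^2\le A$. On such scales, the band conditional on its inner boundary length has a law whose Radon–Nikodym behavior is controlled uniformly. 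We still need to handle the lower tail of the boundary length, using \Cref{lem:hull-process}, so that the normalized length lies in a compact range $[a,A]$ with high conditional probability. Consequently, on each good scale, $E$ fails with conditional probability at most $\delta'$, where $\delta'\to0$ as $m\to0$ and $M\to\infty$ by Step 2 applied with a uniform bound over the compact range. Conditional independence then gives failure probability at most $\delta'^{bn}+\re^{-\alpha n}$. Choosing $\delta'\le\re^{-2\alpha/b}$ finishes the proof.

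\textbf{Main obstacle.} The hardest part will be the uniformity in Step 3. Condition \eqref{it:good-band-1} refers to the field, that is, to the conformal embedding, and not only to the band's intrinsic metric. We have to check that it is a measurable function of the band as an LQG surface, hence covered by the conditional independence. We also need its conditional failure probability given the boundary lengths to be small uniformly over the compact range. I would handle this by mutual absolute continuity: the laws of band surfaces with different inner lengths in $[a,A]$, after rescaling, have densities with respect to each other that are bounded on the compact range.
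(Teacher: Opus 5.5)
Your overall architecture matches the paper's. That is: scale invariance, a single-scale success probability tending to one as $m\to0$ and $M\to\infty$, \Cref{lem:CSBP-independence} to find many scales with controlled boundary length, and a Markov/binomial-domination argument. However, Step~3 has a genuine gap exactly where you flag the main obstacle, and your proposed fix cannot work. The inner boundary length of a band $A_{s,s'}^\bullet(0;D_\Phi)$ is almost surely determined by the band itself: it equals $\lim_{\varepsilon\to0}\varepsilon^{-2}\SM_\Phi(B_{s+\varepsilon}(0;D_\Phi)\setminus B_s^\bullet(0;D_\Phi))$, and that set lies in the band. So band laws with different inner lengths are mutually singular, and there are no bounded Radon--Nikodym derivatives between them. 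Knowing that $\BP[E^c]$ is small under the marginal law then only says the conditional failure probability is small for most lengths, not uniformly on $[a,A]$.

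Your Markov statement also points the wrong way. Conditioning on the inner length $Y_{-t_j/\re^2}$ separates the band from what lies inside $B_{t_j/\re^2}^\bullet(0;D_\Phi)$, i.e.\ from the smaller scales. The larger-scale bands also lie outside $B_{t_j/\re^2}^\bullet(0;D_\Phi)$, and they are correlated with your band through $Y_{-t_j/\re}$, $Y_{-t_j}$, and so on.

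The missing idea is a buffer band. The paper conditions on $Y_{-t_j}$, one band further out than $A_{t_j/\re^2,t_j/\re}^\bullet(0;D_\Phi)$, which is the band $E_{t_j/\re}$ concerns. Given $Y_{-t}=at^2$, the process read inward is a $3/2$-stable CSBP started at $at^2$ and conditioned to die at time $t$. Hence the conditional law of $Y_{-t/\re}$ varies continuously in $a$ in total variation and is mutually absolutely continuous with its marginal. Since the band is conditionally independent of $Y_{-t}$ given $Y_{-t/\re}$, the map $a\mapsto\BP[E_{t/\re}\mid Y_{-t}=at^2]$ is continuous on $[0,A]$. Each value tends to $1$ as $m\to0$ and $M\to\infty$, by absolute continuity with respect to the marginal. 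Compactness of $[0,A]$, via a Dini-type argument, then gives a uniform bound. No lower tail of the boundary length is needed, so \Cref{lem:hull-process} plays no role. Moreover, the Markov property at radius $t_j$ (inside versus outside $B_{t_j}^\bullet(0;D_\Phi)$), together with $t_{j+1}\le t_j/\re^2$, gives a genuine outside-in filtration that makes the binomial domination rigorous.

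There is a smaller gap in Step~2. You control condition~(ii) only in the circle-average embedding, using compactness in $x$. But condition~(ii) quantifies over every embedding of the band, so you need control uniform over the conformal map $\phi$ as well. The paper gets this from \Cref{lem:canonical-LF}: by Koebe distortion, the pulled-back test functions $\lvert\phi'\rvert^2(\psi_{x,c}\circ\phi)$ form a relatively compact subset of $C_c^\infty$ of a fixed band. Hence the field pairing is almost surely bounded over all embeddings and all admissible $x$ simultaneously.
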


\begin{proof}[Proof of \Cref{lem:good-band}]
    It is clear that the event $E_t$ is almost surely determined by the $\gamma$-LQG surface parameterized by $A_{t/\re,t}^\bullet(0; D_\Phi)$. For each $t \ge 0$, write $Y_{-t} \defeq \SN_\Phi(\partial B_t^\bullet(0; D_\Phi))$. For each $n \in \BN$, write $j_n$ for the $n$-th smallest $j \in \BN$ for which $Y_{-t_j} \le At_j^2$. By \Cref{lem:CSBP-independence}, we may choose $A$ to be sufficiently large so that $\BP\lbrack j_{\lfloor n/2\rfloor} \le n\rbrack = 1 - O(\re^{-\alpha n})$ as $n \to \infty$. 
    
    We \emph{claim} that for each $p \in (0, 1)$, we may choose $m$ to be sufficiently small and $M$ to be sufficiently large so that 
    \begin{equation*}
        \BP\lbrack E_{t_j/\re} \mid Y_{-t_j} = at_j^2\rbrack \ge p, \quad \forall j \in \BN, \ \forall a \in [0, A]. 
    \end{equation*}
    By the scaling property, $\BP\lbrack E_{t/\re} \mid Y_{-t} = at^2\rbrack$ does not depend on the choice of $t$. Thus, it suffices to consider the case $t = 1$. The conditional law of the process $\{Y_{s - 1}\}_{s \in [0, 1]}$ given $Y_{-1}$ is that of a $3/2$-stable CSBP starting from $Y_{-1}$ and hitting zero at time one. This implies that the assignment 
    \begin{equation*}
        a \mapsto (\text{the conditional law of } Y_{-1/\re} \text{ given } Y_{-1} = a)
    \end{equation*}
    for $a \in \BR_{\ge 0}$ is continuous with respect to the total variation distance. Moreover, $A_{1/\re^2,1/\re}^\bullet(0; D_\Phi)$ is conditionally independent of $Y_{-1}$ given $Y_{-1/\re}$. This implies that the assignment 
    \begin{equation*}
        a \mapsto (\text{the conditional law of } A_{1/\re^2,1/\re}^\bullet(0; D_\Phi) \text{ given } Y_{-1} = a)
    \end{equation*}
    for $a \in \BR_{\ge 0}$ is continuous with respect to the total variation distance. In a similar vein, for each $a \ge 0$, the conditional law of $A_{1/\re^2,1/\re}^\bullet(0; D_\Phi)$ given $Y_{-1} = a$ and the marginal law of $A_{1/\re^2,1/\re}^\bullet(0; D_\Phi)$ are mutually absolutely continuous (since the conditional law of $Y_{-1/\re}$ given $Y_{-1} = a$ and the marginal law of $Y_{-1/\re}$ are both mutually absolutely continuous with respect to the Lebesgue measure on $\BR_{\ge 0}$). On the other hand, it is clear that $\BP\lbrack E_{1/\re}\rbrack \to 1$ as $m \to 0$ and $M \to \infty$. Indeed, it is clear that $\BP\lbrack\eqref{it:good-band-0}\rbrack \to 1$ as $m \to 0$, and it follows immediately from \Cref{lem:canonical-LF} that $\BP\lbrack\eqref{it:good-band-1}\rbrack \to 1$ as $M \to \infty$. This implies that for each $a \ge 0$, we have $\BP\lbrack E_{1/\re} \mid Y_{-1} = a\rbrack \to 1$ as $m \to 0$ and $M \to \infty$. By the above discussion, the assignment $\BR_{\ge 0} \to [0, 1] \colon a \mapsto \BP\lbrack E_{1/\re} \mid Y_{-1} = a\rbrack$ is continuous. Thus, we conclude that 
    \begin{equation*}
        \inf_{a \in [0, A]} \BP\lbrack E_{1/\re} \mid Y_{-1} = a\rbrack \to 1 \quad \text{as } m \to 0 \text{ and } M \to \infty. 
    \end{equation*}
    This completes the proof of the \emph{claim}. 

    The $\gamma$-LQG surfaces parameterized by $B_{t_j}^\bullet(0; D_\Phi)$ and $\BC \setminus B_{t_j}^\bullet(0; D_\Phi)$ are conditionally independent given $Y_{-t_j}$. This implies that $\#\{k \in [1, n]_\BZ : E_{t_{j_k}} \text{ occurs}\}$ stochastically dominates a binomial random variable with $n$ trials and success probability $p$. Thus, by choosing $p$ to be sufficiently close to one, 
    \begin{align*}
        &\BP\lbrack\text{there does not exist } j \in [1, n]_\BZ \text{ such that } E_{t_j/\re} \text{ occurs}\rbrack \\
        &\le \BP\lbrack j_{\lfloor n/2\rfloor} > n\rbrack + \BP\lbrack\text{there does not exist } k \in [1, n/2]_\BZ \text{ such that } E_{t_{j_k}} \text{ occurs}\rbrack \\
        &\le O(\re^{-\alpha n}) + (1 - p)^{\lfloor n/2\rfloor} = O(\re^{-\alpha n}) \quad \text{as } n \to \infty. 
    \end{align*}
    This completes the proof of \Cref{lem:good-band}.
\end{proof}

\begin{remark}
    It follows immediately from a similar argument to the argument applied in the proof of \Cref{lem:good-band} that the following is true: For each $\lambda \in (0, 1)$, $\alpha > 0$, and $b \in (0, 1)$, there exists $p = p(\lambda, \alpha, b) \in (0, 1)$ and $C = C(\lambda, \alpha, b) > 0$ such that the following is true: Let $(\BC, \Phi; 0, \infty)$ be a $\gamma$-LQG cone. Let $\{t_j\}_{j \in \BN}$ be a decreasing sequence of positive real numbers such that $t_{j + 1}/t_j \le \lambda$ for all $j \in \BN$. Let $\{E_j\}_{j \in \BN}$ be a sequence of events such that each $E_j$ is almost surely determined by the $\gamma$-LQG surface parameterized by $A_{t_{j + 1},t_j}^\bullet(0; D_\Phi)$. Suppose that $\BP\lbrack E_j\rbrack \ge p$ for all $j \in \BN$. Then 
    \begin{equation*}
        \BP\lbrack\#\{j \in [1, n]_\BZ : E_j \text{ occurs}\} \ge bn\rbrack \ge 1 - C\re^{-\alpha n}, \quad \forall n \in \BN. 
    \end{equation*}
\end{remark}

\begin{lemma}\label{lem:trivial}
    For each $n \in \BN$, there exists $a_n > 0$ such that the following is true: Let $\phi \colon B_1(0) \to \BC$ be a univalent function such that $\lvert\phi^\prime(0)\rvert \le 1$. Then $\lvert\phi^{(n)}(0)\rvert \le a_n$.
\end{lemma}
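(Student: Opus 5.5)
The plan is to reduce to the normalized case and then use a compactness argument for univalent functions, in the spirit of the Koebe distortion theorem and the growth estimates for the class $S$ of normalized univalent functions on $B_1(0)$.

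First we normalize. We may assume $\phi(0) = 0$, since translating $\phi$ does not affect $\phi^{(n)}(0)$ for $n \ge 1$. Because $\phi$ is univalent, $\phi^\prime(0) \neq 0$, so we can write $\phi = \phi^\prime(0)\, f$ with
\begin{equation*}
    f \defeq \frac{\phi - \phi(0)}{\phi^\prime(0)}.
\end{equation*}
Then $f$ is univalent on $B_1(0)$ with $f(0) = 0$ and $f^\prime(0) = 1$, i.e.\ $f$ belongs to the class $S$. Moreover $\phi^{(n)}(0) = \phi^\prime(0) f^{(n)}(0)$, so
\begin{equation*}
    \lvert\phi^{(n)}(0)\rvert \le \lvert f^{(n)}(0)\rvert.
\end{equation*}
It therefore suffices to bound $\lvert f^{(n)}(0)\rvert$ uniformly over $f \in S$.

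Next we obtain such a bound from the growth theorem. For $f \in S$ and $\lvert z\rvert = r < 1$, the Koebe growth theorem gives
\begin{equation*}
    \lvert f(z)\rvert \le \frac{r}{(1 - r)^2}.
\end{equation*}
Taking $r = 1/2$, we get $\lvert f\rvert \le 2$ on $\partial B_{1/2}(0)$. Cauchy's integral formula on the circle $\partial B_{1/2}(0)$ then yields
\begin{equation*}
    \lvert f^{(n)}(0)\rvert \le n!\, 2^{n} \cdot 2.
\end{equation*}
So we may take $a_n \defeq 2^{n+1} n!$. (De Branges' theorem would give the sharper constant $a_n = n \cdot n!$, but it is not needed here.)

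The only nontrivial input is the growth theorem. If one prefers to avoid citing it, an alternative route is available: the class $S$ is a normal family and is compact under locally uniform convergence, since limits of univalent functions with $f^\prime(0) = 1$ remain univalent by Hurwitz's theorem. Since $f \mapsto f^{(n)}(0)$ is continuous in this topology, it is bounded on $S$.

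I do not expect any step to be a real obstacle. The one point needing care is the normalization: we must use $\phi^\prime(0) \neq 0$, which follows from univalence, and the hypothesis $\lvert\phi^\prime(0)\rvert \le 1$ to pass from the bound on $f^{(n)}(0)$ to the bound on $\phi^{(n)}(0)$.
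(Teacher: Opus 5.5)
Your argument is correct and is essentially the paper's: both combine a Koebe-type a priori bound on $B_{1/2}(0)$ with interior derivative estimates. The paper bounds $\lvert\phi'\rvert$ via the distortion theorem and then iterates the harmonic gradient estimate on shrinking balls, while you bound $\lvert f\rvert$ via the growth theorem and apply Cauchy's estimate once, which is slightly cleaner and gives the explicit constant $a_n = 2^{n+1} n!$. One small caveat: your fallback compactness route does not really avoid the growth theorem, since the normality of the class $S$ is itself usually derived from it.
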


\begin{proof}
    Recall from the standard gradient estimate for harmonic functions that there is a universal constant $C > 0$ such that for each harmonic function $u \colon B_1(0) \to \BR$, 
    \begin{equation*}
        \lvert\nabla u(z)\rvert \le C (1 - \lvert z\rvert)^{-1} \sup_{x \in B_1(0)} \lvert u(x)\rvert, \quad \forall z \in B_1(0). 
    \end{equation*}
    By Koebe's distortion theorem, there exists $a_1 > 0$ such that $\lvert\phi^\prime(z)\rvert \le a_1$ for all $z \in B_{1/2}(0)$. By applying the above gradient estimate repeatedly, we obtain $a_n > 0$ such that $\lvert\phi^{(n)}(z)\rvert \le a_n$ for all $z \in B_{2^{-n}}(0)$, hence that $\lvert\phi^{(n)}(0)\rvert \le a_n$. This completes the proof of \Cref{lem:trivial}.
\end{proof}

\begin{lemma}\label{lem:canonical-LF}
    In the notation of \Cref{lem:good-band}, let $A \subset \BC$ be a doubly connected domain of conformal modulus at least $m$. Consider the collection of functions of the form $\lvert\phi^\prime\rvert^2 (\psi_{x,c} \circ \phi)$, where $\phi \colon A \to \phi(A)$ is a conformal mapping such that
    \begin{itemize}
        \item the origin is contained in the bounded connected component of $\BC \setminus \phi(A)$,
        \item the planar Brownian motion starting from $x$ hits the inner boundary of $\phi(A)$ before the outer boundary with probability $1/2$, and
        \item $B_{bx}(x) \subset \phi(A)$. 
    \end{itemize}
    Then there exists $c = c(m) \in (0, b)$ such that this collection is relatively compact in the space of test functions $C_c^\infty(A)$. 
\end{lemma}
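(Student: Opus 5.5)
The plan is to reduce to a normal-families argument for the inverse maps $g \defeq \phi^{-1}$, restricted to the Euclidean disk $B_{bx}(x) \subset \phi(A)$. First, a scaling normalization. Replacing $\phi$ by $\phi/x$ leaves the function $\lvert\phi^\prime\rvert^2(\psi_{x,c}\circ\phi)$ unchanged, since $\psi_{x,c}$ is the $L^1$-normalized rescaling. It also preserves the three bulleted conditions with $x$ replaced by $1$. So we may assume $x = 1$. The function in question is then the pullback under $\phi$ of $\psi_{1,c}$, which is supported in $g(B_c(1))$ with $c < b$. Relative compactness in $C_c^\infty(A)$ will follow from Arzel\`a--Ascoli once we show two things: (i) all supports lie in one fixed compact set $K^\prime \subset A$; (ii) on $K^\prime$ every derivative of the functions is uniformly bounded.

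\emph{Localization of $g(1)$.} By conformal invariance of harmonic measure, the point $g(1) \in A$ has harmonic measure exactly $1/2$ for the inner boundary of $A$. Both boundary components of $A$ are nondegenerate continua, because the modulus is finite and positive. Hence the harmonic measure extends continuously to $\partial A$ with values $1$ and $0$ there, and its level set $K \defeq \{\omega = 1/2\}$ is a compact subset of $A$. So $g(1) \in K$ for every admissible $\phi$.

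\emph{Two-sided bounds on $\lvert g^\prime(1)\rvert$.} Since $g$ maps $B_b(1)$ into $A$, Schwarz--Pick gives $\lvert g^\prime(1)\rvert\,\rho_A(g(1)) \le 2/b$, where $\rho_A$ is the hyperbolic density of $A$. As $\rho_A$ is bounded below on $K$, this bounds $\lvert g^\prime(1)\rvert$ above. For the lower bound we apply Schwarz--Pick to $\phi \colon A \to \phi(A)$. We show that $\phi(A)$ omits two points $0$ and $w$ with $\lvert w\rvert$ in a range depending only on $A$. By \Cref{lem:Beurling}, the inner boundary of $\phi(A)$ lies in $B_{1/b}(0)$, and the outer boundary avoids $B_b(0)$. \Cref{lem:Teichmueller}, applied to $\phi(A)$ (which has the same modulus as $A$), then yields a point $w$ of the outer boundary with $b \le \lvert w\rvert \le \re^{2\pi\,\mathrm{mod}(A)}/b$. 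The hyperbolic density of $\BC\setminus\{0,w\}$ at $1$ is therefore bounded below uniformly. This gives $\lvert\phi^\prime(g(1))\rvert \preceq \rho_A(g(1))$, which is bounded on $K$, and hence $\lvert g^\prime(1)\rvert$ is bounded below by a constant depending on $A$.

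\emph{Choice of $c$ and the derivative bounds.} By the Koebe $1/4$ theorem applied to $g$ on $B_b(1)$, we have $\mathrm{dist}(g(1), \partial A) \ge b\lvert g^\prime(1)\rvert/4$. By Koebe distortion, $g(B_{cb}(1)) \subset B_{C c b \lvert g^\prime(1)\rvert}(g(1))$ for a universal $C$ when $c$ is small. Choosing $c = c(m)$ small enough, depending only on $b = b(m)$, so that $Cc \le 1/8$, every support lies in the fixed compact set $K^\prime \defeq \{z : \mathrm{dist}(z, K) \le \operatorname{diam}(A) \text{-scale}\} \cap \{z : \mathrm{dist}(z, \partial A) \ge b\, \inf\lvert g^\prime(1)\rvert/8\}$. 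This proves (i). For (ii), apply \Cref{lem:trivial} to the rescalings $z \mapsto g(1 + bz)/(b\lvert g^\prime(1)\rvert)$. Combined with the two-sided bound on $\lvert g^\prime(1)\rvert$, this bounds all derivatives of $g$ on $B_{cb}(1)$, and $\lvert g^\prime\rvert$ stays bounded below there by Koebe. Inverting then bounds all derivatives of $\phi$ on $g(B_{cb}(1))$, including $\lvert\phi^\prime\rvert$ from both sides. The chain rule and the smoothness of $\psi$ now give uniform $C^k$ bounds for every $k$, and Arzel\`a--Ascoli concludes.

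The main obstacle is the lower bound on $\lvert g^\prime(1)\rvert$, equivalently the upper bound on $\lvert\phi^\prime\rvert$ near $g(1)$. Without it the supports could degenerate toward a point and the derivatives blow up. This is exactly where \Cref{lem:Beurling} and \Cref{lem:Teichmueller} are used, to pin $\phi(A)$ inside a twice-punctured plane with controlled punctures.
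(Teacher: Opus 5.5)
Your argument is correct. Its skeleton matches the paper's: scale to $x=1$, localize the support of the pullback by Koebe distortion of $g=\phi^{-1}$ on $B_b(1)$, then bound derivatives using \Cref{lem:trivial}. Two steps are handled differently.

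First, the paper normalizes $A$ to the round annulus $A_{\re^{-\pi m},\re^{\pi m}}(0)$, so that $g(1)$ lies on the unit circle by symmetry. You instead keep $A$ general and locate $g(1)$ in the compact level set $\{\omega=1/2\}$ of harmonic measure. This is legitimate even if $\phi$ exchanges the two boundary components, since the value $1/2$ is symmetric. You then bound $|g'(1)|$ from above via Schwarz--Pick with $\rho_A$, which plays the same role as the bound $b|g'(1)|/4\le \mathrm{dist}(g(1),\partial A)$ implicit in the paper's use of Koebe distortion. A small advantage of your version is that your $c$ visibly depends only on $b$ and a universal distortion constant.

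Second, and more substantively, you handle the crucial upper bound on $|\phi'|$ near $g(1)$ differently. You manufacture a second omitted point $w$ via \Cref{lem:Beurling} and \Cref{lem:Teichmueller} and compare with the hyperbolic metric of $\BC\setminus\{0,w\}$; that argument would work for any holomorphic map omitting two points. Since $\phi$ is univalent, this is more than needed. The paper simply applies Koebe's quarter theorem to $\phi$ on a disk $B_{a_0}(y)\subset A$ around each $y$ in the support. Using only $0\notin\phi(A)$ and $|\phi(y)|<1+c$, this gives $|\phi'(y)|\le 4(1+c)/a_0$. The paper then bounds the higher derivatives of $\phi$ directly by \Cref{lem:trivial}, avoiding your detour through derivatives of $g$ and their inversion.

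If you keep your route, tidy three points:
\begin{itemize}
\item \Cref{lem:Beurling} is stated existentially (for some $x$), so you are really using what its proof shows: its conclusions hold at every $x>0$ with harmonic measure $1/2$.
\item Your bookkeeping mixes $B_c(1)$ and $B_{cb}(1)$; the support is $g(B_c(1))$.
\item \Cref{lem:trivial} only controls derivatives at the center, so you need to recenter it (or use Cauchy estimates) to get bounds on all of $B_{c}(1)$.
\end{itemize}
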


\begin{proof}
    Recall that a subset $S \subset C_c^\infty(A)$ is relatively compact if
    \begin{itemize}
        \item there is a compact subset $K \subset A$ such that $\mathop{\mathrm{supp}}(\psi) \subset K$ for all $\psi \in S$, and
        \item for each multi-index $\alpha$, there exists $C_\alpha > 0$ such that $\lVert\partial^\alpha\psi\rVert_\infty \le C_\alpha$ for all $\psi \in S$. 
    \end{itemize}
    We may assume without loss of generality that $A = A_{\re^{-\pi m},\re^{\pi m}}(0)$. Moreover, by the scaling property, we may assume without loss of generality that $x = 1$. Thus, $\phi^{-1}(1) \in \partial B_1(0)$. By Koebe's distortion theorem, $\sup_{z \in B_{b/2}(1)} \lvert(\phi^{-1})^\prime(z)\rvert$ is bounded above by a constant depending only on $m$ and $b$. This implies that there exists $c = c(m) \in (0, b/2)$ such that $\phi^{-1}(B_c(1)) \subset A_{\re^{-\pi m/2},\re^{\pi m/2}}(0)$. Note that $\mathop{\mathrm{supp}}(\lvert\phi^\prime\rvert^2 (\psi_{1,c} \circ \phi)) \subset \phi^{-1}(B_c(1))$. Thus, we obtain that $\mathop{\mathrm{supp}}(\lvert\phi^\prime\rvert^2 (\psi_{1,c} \circ \phi)) \subset A_{\re^{-\pi m/2},\re^{\pi m/2}}(0)$. Fix a multi-index $\alpha$. Note that there exists $C_\alpha = C_\alpha(\psi, c)$ and a polynomial $P_\alpha \in \BR[X]$ such that 
    \begin{equation*}
        \left\lVert\partial^\alpha\!\left(\lvert\phi^\prime\rvert^2(\psi_{1,c} \circ \phi)\right)\right\rVert_\infty \le C_\alpha P_\alpha\!\left(\sup_{k \in [1, \lvert\alpha\rvert + 1]_\BZ} \lVert\phi^{(k)}|_{\phi^{-1}(B_c(1))}\rVert_\infty\right).
    \end{equation*}
    Thus, it suffices to show that for each $k \in \BN$, there exists $a_k = a_k(m) > 0$ such that $\lVert\phi^{(k)}|_{\phi^{-1}(B_c(1))}\rVert_\infty \le a_k$. Fix $y \in \phi^{-1}(B_c(1))$. It follows from the above discussion that $y \in A_{\re^{-\pi m/2},\re^{\pi m/2}}(0)$. In particular, there exists $a_0 = a_0(m) > 0$ such that $B_{a_0}(y) \subset A$. Since $0 \notin \phi(B_{a_0}(y))$ and $\lvert\phi(y)\rvert < 1 + c$, it follows from Koebe's quarter theorem that $\lvert\phi^\prime(y)\rvert \le a_1$ for some $a_1 = a_1(m) > 0$. Combining this with \Cref{lem:trivial}, we conclude that there exists $a_k = a_k(a_0, a_1) > 0$ such that $\lvert\phi^{(k)}(y)\rvert \le a_k$. This completes the proof of \Cref{lem:canonical-LF}.  
\end{proof}

The idea of the proof of \Cref{lem:conformal-moduli-marginal} is as follows. We first fix a sufficiently small $\zeta > 0$. Recall from \Cref{lem:metric-ball-subset-Euclidean-ball} that we can bound the Euclidean size of the metric ball. Suppose $T > 0$ is a deterministic time such that $\Phi_{\re^{-T}}(0) - QT \ge -((1 - \zeta)/\xi)\log(1/\varepsilon)$. Conditional on this, the process $\{\Phi_{\re^{-t}}(0)\}_{t \ge T}$ is a Brownian motion with drift $\gamma$ starting from $\Phi_{\re^{-T}}(0)$. If we define $R \defeq \sup\{r \in (0, \re^{-T}) : \Phi_r(0) + Q\log(r) = -((1 - \zeta)/\xi)\log(1/\varepsilon)\}$, then $B_\varepsilon(0; D_\Phi) \subset B_R(0)$ holds with superpolynomially high probability as $\varepsilon \to 0$. Combining \Cref{lem:metric-ball-subset-Euclidean-ball} with the Laplace transform of the first hitting time for a drifted Brownian motion, we obtain
\begin{equation*}
    \BE\lbrack\outradius(B_\varepsilon^\bullet(0; D_\Phi))^p \wedge 1\rbrack = O(\varepsilon^q) \quad \text{as } \varepsilon \to 0
\end{equation*}
for some constant $q = q(p) > 4$.

To estimate the conformal modulus, we also need to bound the inradius of $B_1^\bullet(0; D_\Phi)$. We do this by applying \Cref{lem:good-band} to the metric band $A_{\varepsilon^\zeta,1}^\bullet(0; D_\Phi)$. This implies that, with high probability, the band has a conformal modulus bounded below, and the average of $\Phi$ over a small Euclidean ball inside the band is bounded. Next, \Cref{lem:Beurling} allows us to control the inradius of $B_1^\bullet(0; D_\Phi)$. It guarantees there is a Euclidean circle centered at the origin that the metric band $A_{\varepsilon^\zeta,1}^\bullet(0; D_\Phi)$ stays close to. The difference between the circle average of $\Phi$ and its average over this small Euclidean ball has a Gaussian tail and is therefore well controlled. Finally, we apply the deterministic radius $\re^{-T}$ discussed above to all possible spatial scales of this Euclidean circle.

\begin{proof}[Proof of \Cref{lem:conformal-moduli-marginal}]
    We may assume without loss of generality that $(\BC, \Phi; 0, \infty)$ is the circle average embedding. By \Cref{lem:Teichmueller}, it suffices to show that
    \begin{equation*}
        \BE\!\left\lbrack\left(\frac{\outradius(B_\varepsilon^\bullet(0; D_\Phi))}{\inradius(B_1^\bullet(0; D_\Phi))}\right)^p \wedge 1\right\rbrack = O(\varepsilon^q) \quad \text{as } \varepsilon \to 0,
    \end{equation*}
    where $\outradius(B_\varepsilon^\bullet(0; D_\Phi)) \defeq \inf\{R > 0 : B_\varepsilon^\bullet(0; D_\Phi) \subset B_R(0)\}$ and $\inradius(B_1^\bullet(0; D_\Phi)) \defeq \sup\{R > 0 : B_R(0) \subset B_1^\bullet(0; D_\Phi)\}$. Fix $p > 2$ and $\alpha > 4$. Fix a sufficiently small $\zeta > 0$ to be chosen later. By \Cref{lem:good-band}, there exists $m = m(\zeta, \alpha) > 0$ and $M = M(\zeta, \alpha) > 0$ such that the following is true: Let the events $\{E^j \defeq E_{\re^{-j}} = E_{\re^{-j}}(m, M)\}_{j \in \BN}$ be as in \Cref{lem:good-band}. Then it holds with probability $1 - O(\varepsilon^\alpha)$ as $\varepsilon \to 0$ that there exists $j \in [\zeta\log(1/\varepsilon), 2\zeta\log(1/\varepsilon)]_\BZ$ such that $E^j$ occurs. Write $F_1(\varepsilon)$ for this event, i.e., $F_1(\varepsilon) \defeq \bigcup_{j \in [\zeta\log(1/\varepsilon), 2\zeta\log(1/\varepsilon)]_\BZ} E^j$. By definition, on the event $F_1(\varepsilon)$, there exists $j \in [\zeta\log(1/\varepsilon), 2\zeta\log(1/\varepsilon)]_\BZ$ and $x > 0$ such that 
    \begin{itemize}
        \item $B_{bx}(x) \subset A_{\re^{- j - 1},\re^{-j}}^\bullet(0; D_\Phi)$, 
        \item $\inradius(B_1^\bullet(0; D_\Phi)) \ge \inradius(B_{\re^{-j}}^\bullet(0; D_\Phi)) \ge bx$, and
        \item $\langle\Phi, \psi_{x,c}\rangle \in [-j/\xi - Q\log(x) - M, -j/\xi - Q\log(x) + M]$. 
    \end{itemize}
    Set $\lambda \defeq 1 - b$. By possibly decreasing $b$, we may assume without loss of generality that $x = \lambda^k$ for some $k \in \BZ$. 

    Write $F_2(\varepsilon)$ for the event that $B_{\varepsilon^\zeta}(0; D_\Phi) \subset B_1(0)$. Since $\BE\lbrack D_\Phi(0, \partial B_1(0))^{-s}\rbrack < \infty$ for all $s > 0$ (cf.~\cite[Proposition~3.12]{WeakLQGMet}), it follows that $\BP\lbrack F_2(\varepsilon)\rbrack = 1 - O(\varepsilon^\infty)$ as $\varepsilon \to 0$. On the other hand, it follows from \cite[Proposition~3.18]{WeakLQGMet} that we may choose a sufficiently large $A = A(\zeta, \alpha, b) > 0$ such that it holds with probability $1 - O(\varepsilon^\alpha)$ as $\varepsilon \to 0$ that $B_{\varepsilon^{A\log(1/\lambda)}}(0) \subset B_{\varepsilon^{2\zeta}}(0; D_\Phi)$. Write $F_3(\varepsilon)$ for this event. In particular, on the event $F_1(\varepsilon) \cap F_2(\varepsilon) \cap F_3(\varepsilon)$, if $x = \lambda^k$ is as in the preceding paragraph, then $k \in [1, A\log(1/\varepsilon)]_\BZ$. 

    Since $\Phi|_{B_1(0)}$ agrees in law with the corresponding restriction of a whole-plane GFF normalized so that its circle average over $\partial B_1(0)$ is zero minus $\gamma\log(\lvert\bullet\rvert)$, this implies that for $x \in (0, 1)$, the random variable $\Phi_x(0) - \langle\Phi, \psi_{x,c}\rangle$ is Gaussian with mean and variance not depending on $x$. Thus, it follows from the standard Gaussian tail estimate and a union bound that it holds with superpolynomially high probability as $\varepsilon \to 0$ that $\lvert\Phi_{\lambda^k}(0) - \langle\Phi, \psi_{\lambda^k,b}\rangle\rvert \le (\zeta/\xi)\log(1/\varepsilon)$ for all $k \in [1, A\log(1/\varepsilon)]_\BZ$. Write $F_4(\varepsilon)$ for this event. In particular, we conclude that, on the event $F_1(\varepsilon) \cap F_2(\varepsilon) \cap F_3(\varepsilon) \cap F_4(\varepsilon)$, there exists $k \in [1, A\log(1/\varepsilon)]_\BZ$ such that 
    \begin{itemize}
        \item $\inradius(B_1^\bullet(0; D_\Phi)) \ge b\lambda^k$, and
        \item $\Phi_{\lambda^k}(0) \in [Qk\log(1/\lambda) - M - (3\zeta/\xi)\log(1/\varepsilon), Qk\log(1/\lambda) + M]$. 
    \end{itemize}
    For each $k \in \BN$, write $\Phi^k(\bullet) \defeq \Phi(\lambda^k\bullet) - Qk\log(1/\lambda)$; $F^k$ for the event that $\Phi_{\lambda^k}(0) \in [Qk\log(1/\lambda) - M - (3\zeta/\xi)\log(1/\varepsilon), Qk\log(1/\lambda) + M]$ (or, equivalently, that $\Phi_1^k(0) \in [- M - (3\zeta/\xi)\log(1/\varepsilon), M]$); $R_{\varepsilon^{1 - \zeta}}^k \defeq \sup\{r \in (0, 1) : \Phi_r(0) + Q\log(r) + ((1 - \zeta)/\xi)\log(1/\varepsilon) = 0\}$. Note that, given $\Phi_1^k(0)$, $\Phi^k|_{B_1(0)}$ agrees in law with the corresponding restriction of a whole-plane GFF normalized so that its circle average over $\partial B_1(0)$ is equal to $\Phi_1^k(0)$ minus $\gamma\log(\lvert\bullet\rvert)$. Thus, we conclude from \Cref{lem:metric-ball-subset-Euclidean-ball} that, almost surely on the event $F^k$, 
    \begin{equation*}
        \BP\!\left\lbrack B_\varepsilon(0; D_{\Phi^k}) \subset B_{R_{\varepsilon^{1 - \zeta}}^k}(0) \ \middle\vert \ \Phi_1^k(0)\right\rbrack = 1 - O(\varepsilon^\infty), 
    \end{equation*}
    at a rate which is uniform in $k$. Moreover, we note that $B_\varepsilon(0; D_\Phi) = \lambda^k B_\varepsilon(0; D_{\Phi^k})$. Write $F_5(\varepsilon)$ for the event that $B_\varepsilon(0; D_{\Phi^k}) \subset B_{R_{\varepsilon^{1 - \zeta}}^k}(0)$ for all $k \in [1, A\log(1/\varepsilon)]_\BZ$ and $F(\varepsilon) \defeq F_1(\varepsilon) \cap F_2(\varepsilon) \cap F_3(\varepsilon) \cap F_4(\varepsilon) \cap F_5(\varepsilon)$. It follows from the above discussion that
    \begin{align*}
        \BE\!\left\lbrack\left(\frac{\outradius(B_\varepsilon^\bullet(0; D_\Phi))}{\inradius(B_1^\bullet(0; D_\Phi))}\right)^p \wedge 1\right\rbrack &\le \BE\!\left\lbrack\left(\frac{\outradius(B_\varepsilon^\bullet(0; D_\Phi))}{\inradius(B_1^\bullet(0; D_\Phi))}\right)^p \mathbf 1_{F(\varepsilon)}\right\rbrack + \BP[F(\varepsilon)^c] \\
        &\le b^{-p} \sum_{k \in [1, A\log(1/\varepsilon)]_\BZ} \BE\!\left\lbrack (R_{\varepsilon^{1 - \zeta}}^k)^p \mathbf 1_{F^k}\right\rbrack + O(\varepsilon^\alpha) \quad \text{as } \varepsilon \to 0. 
    \end{align*}
    Finally, since, given $\Phi_1^k(0)$, $\log(1/R_{\varepsilon^{1 - \zeta}}^k)$ has the law of the first time at which a Brownian motion with drift $-(Q - \gamma)$ starting from $\Phi_1^k(0)$ hits $-((1 - \zeta)/\xi)\log(1/\varepsilon)$, it follows that, almost surely on the event $F^k$, 
    \begin{align*}
        \BE\!\left\lbrack (R_{\varepsilon^{1 - \zeta}}^k)^p \ \middle\vert \ \Phi_1^k(0)\right\rbrack &= \exp\!\left(-\left(\sqrt{(Q - \gamma)^2 + 2p} - (Q - \gamma)\right)\left(((1 - \zeta)/\xi)\log(1/\varepsilon) + \Phi_1^k(0)\right)\right) \\
        &\le \exp\!\left(-\left(\sqrt{(Q - \gamma)^2 + 2p} - (Q - \gamma)\right)\left(((1 - 4\zeta)/\xi)\log(1/\varepsilon) - M\right)\right) \\
        &\le \varepsilon^{(1 - 4\zeta)(\sqrt{(Q - \gamma)^2 + 2p} - (Q - \gamma))/\xi + o(1)} \quad \text{as } \varepsilon \to 0, 
    \end{align*}
    at a rate which is uniform in $k$. By choosing $\zeta > 0$ to be sufficiently small, we have 
    \begin{equation*}
        \left.(1 - 4\zeta)\left(\sqrt{(Q - \gamma)^2 + 2p} - (Q - \gamma)\right)\middle/\xi\right. > \left.\left(\sqrt{(Q - \gamma)^2 + 4} - (Q - \gamma)\right)\middle/\xi\right. = d_\gamma. 
    \end{equation*}
    This completes the proof of \Cref{lem:conformal-moduli-marginal}. 
\end{proof}

\begin{proof}[Proof of \Cref{lem:conformal-moduli-band}]
    Fix $p > 2$. Let $(\BC, \Phi; 0, \infty)$ be a $\gamma$-LQG cone. By \Cref{lem:conformal-moduli-marginal} and the scaling property, there exists $q > 4$ such that $\BE\lbrack\re^{-2\pi m_{1,T} p}\rbrack = O(T^{-q})$ as $T \to \infty$, where $m_{1,T}$ denotes the conformal modulus of $A_{1,T}^\bullet(0; D_\Phi)$. Write $\tau \defeq \inf\{t \ge 0 : \SN_\Phi(\partial B_t^\bullet(0; D_\Phi)) = 1\} \wedge 1$. Note that $\tau$ is a stopping time for the filtration generated by 
    \begin{equation*}
        \{\text{the $\gamma$-LQG surface parameterized by } B_t^\bullet(0; D_\Phi)\}_{t \ge 0}, 
    \end{equation*}
    and, given $\tau$ and the event that $\tau < 1$, the $\gamma$-LQG surface parameterized by $A_{\tau,T}^\bullet(0; D_\Phi)$ is conditionally a metric band of cone type with inner boundary length one and width $T - \tau$. This implies that
    \begin{equation*}
        \BE\lbrack\re^{-2\pi m_{1,T} p}\rbrack \ge \BE\lbrack\re^{-2\pi m_{\tau,T} p}\rbrack \ge \BE\lbrack\re^{-2\pi m_{\tau,T} p} \mathbf 1_{\{\tau < 1\}}\rbrack \ge \BP\lbrack\tau < 1\rbrack \BE\lbrack\re^{-2\pi m_T p}\rbrack. 
    \end{equation*}
    Since $\BP\lbrack\tau < 1\rbrack > 0$, it follows that 
    \begin{equation*}
        \BE\lbrack\re^{-2\pi m_T p}\rbrack \le \BP\lbrack\tau < 1\rbrack^{-1} \BE\lbrack\re^{-2\pi m_{1,T} p}\rbrack = O(T^{-q}) \quad \text{as } T \to \infty.
    \end{equation*}
    This completes the proof of \Cref{lem:conformal-moduli-band}. 
\end{proof}

\section{Background on Gromov hyperbolic geometry}\label{section:background-Gromov}

In the present section, we review standard material on Gromov hyperbolic geometry (cf., e.g., \cite{MR919829,MR1075994,MR1086648,MR1744486,MR2275649,MR2327160,MR2662522}). 

Let $(X, d)$ be a metric space. The \emph{Gromov product} is given by
\begin{equation*}
    (x, y)_o \defeq \frac12(d(x, o) + d(y, o) - d(x, y)), \quad \forall o, x, y \in X. 
\end{equation*}
The metric space $(X, d)$ is called \emph{Gromov ($\delta$-)hyperbolic} if there exists $\delta \ge 0$ such that
\begin{equation*}
    (x, y)_o \ge (x, z)_o \wedge (y, z)_o - \delta, \quad \forall o, x, y, z \in X. 
\end{equation*}
One verifies immediately that if there exists $o \in X$ such that $(x, y)_o \ge (x, z)_o \wedge (y, z)_o - \delta$ for all $x, y, z \in X$, then $(X, d)$ is Gromov $2\delta$-hyperbolic. If $(X, d)$ is geodesic, then it is Gromov hyperbolic if and only if there exists $\delta^\prime \ge 0$ such that $P_{x,y} \subset B_{\delta^\prime}(P_{x,z} \cup P_{y,z}; d)$ for all $x, y, z \in X$, where $P_{x,y}$ denotes any $d$-geodesic connecting $x$ and $y$. 

Let $(X, d_X)$ and $(Y, d_Y)$ be metric spaces. A mapping $F \colon (X, d_X) \to (Y, d_Y)$ is called a \emph{quasi-isometry} if there exists $C \ge 1$ such that 
\begin{itemize}
    \item $C^{-1} d_X(x, y) - C \le d_Y(F(x), F(y)) \le C d_X(x, y) + C$ for all $x, y \in X$, and
    \item $\inf_{x \in X} d_Y(F(x), y) \le C$ for all $y \in Y$. 
\end{itemize}
The metric spaces $(X, d_X)$ and $(Y, d_Y)$ are called \emph{quasi-isometric} if there exists a quasi-isometry $F \colon (X, d_X) \to (Y, d_Y)$. If $(X, d_X)$ and $(Y, d_Y)$ are geodesic and quasi-isometric, then $(X, d_X)$ is Gromov hyperbolic if and only if $(Y, d_Y)$ is Gromov hyperbolic.

Let $(X, d)$ be a Gromov $\delta$-hyperbolic space. The \emph{Gromov boundary} of $(X, d)$ is given by
\begin{equation*}
    \partial_\infty(X, d) \defeq \{\{x_j\}_{j \in \BN} \subset X : (x_j, x_k)_o \to \infty \text{ as } j, k \to \infty\}/\sim, 
\end{equation*}
where $\{x_j\}_{j \in \BN} \sim \{y_j\}_{j \in \BN}$ if $(x_j, y_j)_o \to \infty$ as $j \to \infty$. The Gromov boundary $\partial_\infty(X, d)$ does not depend on the choice of $o$. If $(X, d)$ is proper and geodesic, then $\partial_\infty(X, d)$ may be naturally identified with
\begin{equation*}
    \{P \colon [0, \infty) \to X \text{ is a } d\text{-geodesic ray}\}/\sim, 
\end{equation*}
where $P \sim Q$ if the $d$-Hausdorff distance between $P$ and $Q$ is finite. The Gromov product extends naturally to the Gromov boundary
\begin{equation*}
    (a, b)_o \defeq \inf_{\{x_j\}_{j \in \BN}, \{y_j\}_{j \in \BN}} \liminf_{j \to \infty} (x_j, y_j)_o \in [0, \infty], \quad \forall a, b \in \partial_\infty(X, d), 
\end{equation*}
where the infimum is over all $\{x_j\}_{j \in \BN}, \{y_j\}_{j \in \BN} \subset X$ with $\{x_j\}_{j \in \BN} \sim a$ and $\{y_j\}_{j \in \BN} \sim b$. (Here, we note that $(a, b)_o = \infty$ if and only if $a = b$.) Moreover, for each $\{x_j\}_{j \in \BN}, \{y_j\}_{j \in \BN} \subset X$ with $\{x_j\}_{j \in \BN} \sim a$ and $\{y_j\}_{j \in \BN} \sim b$, 
\begin{equation}\label{eq:product-boundary}
    (a, b)_o \le \liminf_{j \to \infty} (x_j, y_j)_o \le \limsup_{j \to \infty} (x_j, y_j)_o \le (a, b)_o + 2\delta. 
\end{equation}

Let $(X, d)$ be a Gromov $\delta$-hyperbolic space. A metric $D$ on the Gromov boundary $\partial_\infty(X, d)$ is called \emph{visual} with parameter $\varepsilon > 0$ if there exists $C \ge 1$ such that 
\begin{equation*}
    C^{-1} \re^{-\varepsilon(a, b)_o} \le D(a, b) \le C \re^{-\varepsilon(a, b)_o}, \quad \forall a, b \in \partial_\infty(X, d). 
\end{equation*}
There exists $\varepsilon_\ast = \varepsilon_\ast(\delta) > 0$ such that for each $\varepsilon \in (0, \varepsilon_\ast]$, there exists a visual metric on $\partial_\infty(X, d)$ with parameter $\varepsilon$. Let $D$ (resp.~$D^\prime$) be a visual metric on $\partial_\infty(X, d)$ with parameter $\varepsilon$ (resp.~$\varepsilon^\prime$). Then there exists $C \ge 1$ such that 
\begin{equation*}
    C^{-1} D(a, b)^{\varepsilon^\prime/\varepsilon} \le D^\prime(a, b) \le C D(a, b)^{\varepsilon^\prime/\varepsilon}, \quad \forall a, b \in \partial_\infty(X, d). 
\end{equation*}
In particular, any two visual metrics on $\partial_\infty(X, d)$ are quasisymmetrically equivalent, i.e., $\partial_\infty(X, d)$ is equipped with the conformal gauge of visual metrics. 

Let $(X, d)$ be a proper and geodesic Gromov $\delta$-hyperbolic space. Then there exists $\varepsilon_\ast = \varepsilon_\ast(\delta) > 0$ such that for each $\varepsilon \in (0, \varepsilon_\ast]$, the following is true (cf.~\cite{MR1829896}): Write 
\begin{equation*}
    \phi_\varepsilon(x) \defeq \re^{-\varepsilon d(o, x)}, \quad \forall x \in X; \quad d_\varepsilon(x, y) \defeq \inf_P \int_a^b \phi_\varepsilon(P(t)) \, \rd t, \quad \forall x, y \in X, 
\end{equation*}
where the infimum is over all paths $P \colon [a, b] \to X$ parameterized by $d$-length. Write $(\overline X_\varepsilon, d_\varepsilon)$ for the completion of $(X, d_\varepsilon)$ and $\partial_\varepsilon X \defeq \overline X_\varepsilon \setminus X$. Then $\partial_\varepsilon X$ may be naturally identified with $\partial_\infty(X, d)$ and $d_\varepsilon$ induces a visual metric on $\partial_\infty(X, d)$ with parameter $\varepsilon$. 

The following is well-known (cf., e.g., \cite[Th\'eor\`eme~3.1]{MR2605327}). 

\begin{theorem}
    Let $(X, d_X)$ and $(Y, d_Y)$ be proper and geodesic Gromov hyperbolic metric spaces. Then every quasi-isometry $(X, d_X) \to (Y, d_Y)$ induces a natural quasisymmetric mapping $\partial_\infty(X, d_X) \to \partial_\infty(Y, d_Y)$. 
\end{theorem}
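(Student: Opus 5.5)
Write $F \colon X \to Y$ for the given quasi-isometry with constant $C \ge 1$, fix a basepoint $o \in X$, and set $o' \defeq F(o)$. The plan is to build the boundary map through sequences, show that it distorts Gromov products by at most a controlled multiplicative and additive amount, and then read off quasisymmetry from visual metrics.

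\textbf{Step 1: Gromov products are quasi-preserved.} The central estimate is that there are constants $\lambda \ge 1$ and $K \ge 0$, depending only on $C$ and the hyperbolicity constants, such that
\begin{equation*}
    \lambda^{-1}(x, y)_o - K \le (F(x), F(y))_{o'} \le \lambda (x, y)_o + K, \quad \forall x, y \in X.
\end{equation*}
In a proper geodesic $\delta$-hyperbolic space, $(x, y)_o$ equals, up to an additive constant depending on $\delta$, the distance from $o$ to a geodesic $P_{x,y}$. The image $F(P_{x,y})$ is a quasi-geodesic. By the Morse lemma (stability of quasi-geodesics), it lies within bounded Hausdorff distance $H = H(C, \delta_Y)$ of a geodesic $P_{F(x),F(y)}$. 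Since $F$ distorts distances from $o$ only affinely, $d_Y(o', P_{F(x),F(y)})$ is comparable to $d_X(o, P_{x,y})$ in the affine sense above. \textbf{The main obstacle is this step, namely invoking the Morse lemma with uniform constants;} everything else is bookkeeping.

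\textbf{Step 2: Define the boundary map.} For $a = [\{x_j\}] \in \partial_\infty X$, Step 1 shows that $\{F(x_j)\}$ still has Gromov products tending to $\infty$ and that equivalent sequences go to equivalent sequences. So $\partial F(a) \defeq [\{F(x_j)\}]$ is well defined. Combining Step 1 with \eqref{eq:product-boundary} extends the product estimate to boundary points:
\begin{equation*}
    \lambda^{-1}(a, b)_o - K' \le (\partial F(a), \partial F(b))_{o'} \le \lambda (a, b)_o + K'.
\end{equation*}
Apply the same construction to a quasi-inverse $G$ of $F$. Since $d(G \circ F(x), x)$ is uniformly bounded, $\partial G \circ \partial F$ and $\partial F \circ \partial G$ are the identities. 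Hence $\partial F$ is a bijection, and the two-sided product bounds make it a homeomorphism for the visual topologies.

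\textbf{Step 3: Quasisymmetry.} Fix visual metrics $D_X$ and $D_Y$ with parameter $\varepsilon$, so that $D(a, b) \asymp \re^{-\varepsilon(a, b)}$. The two-sided bound shows only that $\partial F$ is a power quasisymmetry in the sense of ratios of distances from a common point, so a finer estimate is needed.

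For distinct $a, b, c$, use the hyperbolic four-point identity: up to additive constants, $(a, c)_o - (b, c)_o$ equals the signed distance along a geodesic ray toward $c$ between the projections of the geodesics $[a, c]$ and $[b, c]$. This is a quasi-geodesic quantity. By Step 1 applied with basepoints moved along that ray, it is distorted only by the factor $\lambda$ plus an additive constant. Therefore
\begin{equation*}
    \frac{D_Y(\partial F(a), \partial F(c))}{D_Y(\partial F(b), \partial F(c))} \le \eta\!\left(\frac{D_X(a, c)}{D_X(b, c)}\right)
\end{equation*}
with $\eta(t) = C' \max(t^{\lambda}, t^{1/\lambda})$. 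This is the standard power quasisymmetry, and it completes the argument.
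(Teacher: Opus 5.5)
\textbf{Comparison with the paper.} The paper does not prove this statement. It records it as well known, with a pointer to Th\'eor\`eme~3.1 of the cited reference, and uses it only as background for the hyperbolic-filling construction. Your outline is the standard proof from that literature, and it is correct. Its steps are: the Morse lemma gives coarse preservation of Gromov products up to a factor $\lambda$; this yields a well-defined boundary bijection that is bi-H\"older (not yet quasisymmetric, as you essentially note); a finer estimate on differences of Gromov products then upgrades it to a power quasisymmetry. Compared with the paper, you trade a citation for a self-contained argument.

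\textbf{The step to spell out: Step~3.} Write $\Delta=(a,c)_o-(b,c)_o$ and let $\Delta'$ be the corresponding quantity for $\partial F(a),\partial F(b),\partial F(c)$ at $o'$. Your $\eta(t)=C'\max\{t^{\lambda},t^{1/\lambda}\}$ needs the signed bound $\Delta'\ge\min\{\lambda\Delta,\lambda^{-1}\Delta\}-K$. In other words, the order of the branch points of $a$ and $b$ along the ray toward $c$ must be coarsely preserved, not just the gap between them. Since Gromov products are nonnegative, applying Step~1 at a moved basepoint $w$ gives this only once you know where $F(w)$ sits. The fix:
\begin{enumerate}
\item Take $w$ to be the branch point of $b$ on the ray $[o,c)$, i.e.\ the coarse center of the ideal triangle $o,b,c$. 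Then $(a,c)_w$ agrees with $\max\{\Delta,0\}$ up to a bounded error.
\item The Morse lemma, applied to the three sides of that triangle, shows $F(w)$ lies within bounded distance of the center of the ideal triangle $o',\partial F(b),\partial F(c)$. Hence $(\partial F(a),\partial F(c))_{F(w)}$ agrees with $\max\{\Delta',0\}$ up to a bounded error.
\item Step~1 at $w$, together with the same argument with $a$ and $b$ interchanged, then gives the signed estimate, and hence your $\eta$.
\end{enumerate}
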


\section{Hyperbolic fillings}\label{section:hyperbolic-filling}

The present section reviews the technique of hyperbolic fillings, a framework utilized to construct a Gromov hyperbolic space with a prescribed Gromov boundary. This construction was first introduced in \cite{CohLpBesov} (cf.~also \cite{MR2327160,MR3781334,MR4377995}). By employing this technique and adapting the arguments established in \cite{ConformalGauge,CP-ModARCD}, we show that for every admissible weight function defined on the hyperbolic filling, one can construct a metric space quasisymmetrically equivalent to the prescribed one. To ensure the present paper remains self-contained, we include complete proofs of these results. Furthermore, the arguments presented in the present section apply to any compact metric space, without requiring the assumption of the doubling property.

Let $(X, D)$ be a compact metric space with $\diam(X; D) < 1$. 

Fix a sufficiently small parameter $\alpha \in (0, 1)$. Let $A_0 \subset A_1 \subset A_2 \subset \cdots$ be a family of finite subsets of $X$ such that each $A_n$ is a maximal $\alpha^n$-separated subset, i.e., 
\begin{equation*}
    X \subset \bigcup_{x \in A_n} B_{\alpha^n}(x; D) \quad \text{and} \quad D(x, y) \ge \alpha^n, \quad \forall \text{ distinct } x, y \in A_n. 
\end{equation*}
We shall write $V_n \defeq \{(x, n) : x \in A_n\}$ and $V \defeq \coprod_{n \ge 0} V_n$. For distinct $(x, n), (y, n) \in V_n$, we shall write $(x, n) \sim (y, n)$ if $B_{4\alpha^n}(x; D)$ and $B_{4\alpha^n}(y; D)$ intersect. For $(x, n) \in V_n$ and $(y, n + 1) \in V_{n + 1}$, we shall write $(x, n) \sim (y, n + 1)$ if $B_{\alpha^n}(x; D)$ and $B_{\alpha^{n + 1}}(y; D)$ intersect. We shall refer to the edges $(x, n) \sim (y, n)$ as \emph{horizontal}; we shall refer to the edges $(x, n) \sim (y, n + 1)$ as \emph{vertical}. Since $\diam(X; D) < 1$, it follows that $\#A_0 = 1$. We shall write $o \in V_0$ for the root. By convention, we shall write $u \sim u$ for all $u \in V$.

By choosing $\alpha$ to be sufficiently small, we may assume without loss of generality that the following is true: Let $u_0, u_1, \cdots, u_{100} \in V_n$ such that $u_0 \sim u_1 \sim \cdots \sim u_{100}$. Let $v_0, v_{100} \in V_{n - 1}$ such that $u_0 \sim v_0$ and $u_{100} \sim v_{100}$. Then $v_0 \sim v_{100}$.

We shall write $(G, d_G)$ for the metric graph associated with $(V, \sim)$; 
\begin{equation*}
    (u, v)_G \defeq \frac12(d_G(o, u) + d_G(o, v) - d_G(u, v)), \quad \forall u, v \in G
\end{equation*}
for the Gromov product. 

\begin{lemma}\label{lem:hyperbolic-filling}
    \begin{enumerate}
        \item\label{it:hyperbolic-filling-0} There exists $\delta = \delta(\alpha) \ge 0$ such that $(G, d_G)$ is Gromov $\delta$-hyperbolic.
        \item\label{it:hyperbolic-filling-1} The Gromov boundary $\partial_\infty(G, d_G)$ may be naturally identified with $X$, and $D$ induces a visual metric on $\partial_\infty(G, d_G)$ with parameter $\log(1/\alpha)$. 
    \end{enumerate}
\end{lemma}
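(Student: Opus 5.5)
The natural route is to compare the graph distance on $G$ with the scale structure of $X$. The key intermediate estimate is
\begin{equation*}
    (u, v)_G = \min(m, n) - \log_{1/\alpha}\!\left(\frac{D(x, y) + \alpha^{m} + \alpha^{n}}{\alpha^{\min(m,n)}}\right) + O(1)
\end{equation*}
for $u = (x, m)$ and $v = (y, n)$, where the $O(1)$ depends only on $\alpha$ (in fact it will be absolute once $\alpha$ is small). Both assertions follow from this estimate: hyperbolicity, and the identification of the boundary with $X$ with $D$ visual for parameter $\log(1/\alpha)$.

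First I would record the basic structure of $G$. Every $(x,n)$ with $n \ge 1$ has a vertical neighbour in $V_{n-1}$, since $x \in B_{\alpha^{n-1}}(y;D)$ for some $y \in A_{n-1}$ by maximality. Hence $d_G(o,(x,n)) = n$: it is at most $n$ by descending, and at least $n$ because each edge changes the level by at most one. Next, if $(x,n) \sim (y,n)$ horizontally, then $D(x,y) < 8\alpha^n$. Conversely, if $D(x,y) \le C\alpha^{k}$, then the ancestors of $x$ and $y$ at level $k - O(1)$ coincide or are adjacent; here I use the standing assumption on $\alpha$ that horizontal chains of length $100$ project to adjacent vertices one level up.

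With this, a geodesic from $(x,m)$ to $(y,n)$ can be taken to climb vertically to a level $k \approx \log_{1/\alpha}(1/D(x,y))$ (capped at $\min(m,n)$), cross a bounded number of horizontal edges, and descend. A lower bound on any path shows that a path staying at levels $\ge k+c$ needs too many horizontal steps: each horizontal step at level $j$ moves at most $8\alpha^{j}$ in $D$, and the projection property bounds the cost. This gives $d_G(u,v) = m + n - 2k + O(1)$, and therefore the displayed formula for the Gromov product.

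For \eqref{it:hyperbolic-filling-0}, rewrite the formula as $\alpha^{(u,v)_G} \asymp \rho(u,v)$, where $\rho(u,v) \defeq D(x,y) \vee \alpha^{m} \vee \alpha^{n}$. This $\rho$ satisfies the triangle inequality up to a factor $3$, so $(u,v)_G \ge (u,w)_G \wedge (v,w)_G - \delta$ with $\delta = O(1)$ at base point $o$. By the remark in \Cref{section:background-Gromov}, $G$ is then $2\delta$-hyperbolic.

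For \eqref{it:hyperbolic-filling-1}, send $x \in X$ to the class of any sequence $(x_n,n)$ with $D(x_n,x) < \alpha^n$. The formula gives $(x_j,x_k)_G \to \infty$, so this is a Gromov sequence, and the class is independent of the choices. Conversely, any Gromov sequence must have levels tending to infinity, and the centres form a $D$-Cauchy sequence, whose limit lies in $X$ by compactness. This yields a bijection. Passing to limits with \eqref{eq:product-boundary} gives $\re^{-\log(1/\alpha)(a,b)_o} \asymp D(a,b)$, which is exactly visuality with parameter $\log(1/\alpha)$.

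The main obstacle is the lower bound on $d_G$: showing that no path can beat the "climb, cross, descend" route. The difficulty is controlling paths that wander horizontally at deep levels. I would handle this by projecting each path to level $k$ and using the $100$-chain assumption to charge every long horizontal run to a shorter run one level up, iterating upward until the projected endpoints are adjacent.
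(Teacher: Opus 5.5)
Your architecture is the paper's: the two-sided estimate $\alpha^{(u,v)_G} \asymp D(x,y)+\alpha^m+\alpha^n$ (this is \eqref{eq:hyperbolic-filling-proof-2}), the quasi-ultrametric inequality for \eqref{it:hyperbolic-filling-0}, and Cauchy limits with \eqref{eq:product-boundary} for \eqref{it:hyperbolic-filling-1}. For \eqref{it:hyperbolic-filling-0} you still need the one-line extension of the four-point inequality from vertices to all points of the metric graph, which you omit. Your ancestor argument for the upper bound on $d_G$ is the paper's argument in contrapositive form. It needs only the triangle inequality along vertical chains, not the $100$-chain hypothesis.

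The real difference is in the step you flag as the main obstacle, the lower bound on $d_G$. The paper never analyzes the shape of geodesics. Along any path $(x_0,m_0)\sim\cdots\sim(x_N,m_N)$ from $u$ to $v$ one has $m_j \ge \max(m-j,\, n-N+j)$. Every step, vertical or horizontal, moves $D$-distance less than $8\alpha^{\min(m_{j-1},m_j)}$. So $D(x,y)$ is bounded by a two-sided geometric series centred at level $(m+n-N)/2$. For a geodesic this gives $D(x,y)\preceq\alpha^{(u,v)_G}$, and $\alpha^m+\alpha^n\le 2\alpha^{(u,v)_G}$ is automatic. That takes a few lines and uses only the displacement bound you already wrote down, applied to vertical steps as well as horizontal ones.

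Your projection/charging route is a legitimate alternative, and it proves more: geodesics can be taken to be ``climb, cross boundedly, descend.'' But it is heavier than your sketch suggests. Shortening long horizontal runs with the $100$-chain property does not by itself handle paths that oscillate in level with short horizontal runs, or that take two horizontal steps at every level on the way down. You would also need to collapse such ``valleys'' into single horizontal edges, push horizontal steps to coarser levels, and give a termination argument. For this lemma the displacement sum is the efficient choice, and it leaves the $100$-chain hypothesis for later use (e.g.\ in \Cref{lem:induction}).
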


\begin{proof}
    We follow the argument applied in the proof of \cite[Theorem~3.4]{MR4377995}. We \emph{claim} that there exists $C = C(\alpha) \ge 1$ such that 
    \begin{equation}\label{eq:hyperbolic-filling-proof-2}
        C^{-1} \alpha^{(u, v)_G} \le D(x, y) + \alpha^m + \alpha^n \le C \alpha^{(u, v)_G}, \quad \forall u = (x, m) \in V, \ \forall v = (y, n) \in V. 
    \end{equation}
    We may assume without loss of generality that $m \le n$. 
    
    First, we consider the first inequality in~\eqref{eq:hyperbolic-filling-proof-2}. First, we consider the case where there exists a path $(y_m, m) \sim \cdots \sim (y_n, n) = (y, n)$ such that either $(x, m) \sim (y_m, m)$ or $(x, m) = (y_m, m)$. In this case, 
    \begin{equation*}
        (u, v)_G = \frac12(m + n - d_G(u, v)) \ge \frac12(m + n - (n - m + 1)) = m - \frac12, 
    \end{equation*}
    Thus, $D(x, y) + \alpha^m + \alpha^n \ge \alpha^m \ge \alpha^{1/2} \alpha^{(u, v)_G}$. Next, we consider the case where $k < m$ is the largest non-negative integer such that there exist paths $(x_k, k) \sim \cdots \sim (x_m, m) = (x, m)$ and $(y_k, k) \sim \cdots \sim (y_n, n) = (y, n)$ such that $(x_k, k) \sim (y_k, k)$. In this case, $(x_{k + 1}, k + 1) \nsim (y_{k + 1}, k + 1)$, which implies that
    \begin{align*}
        D(x, y) &\ge D(x_{k + 1}, y_{k + 1}) - D(x, x_{k + 1}) - D(y, y_{k + 1}) \\
        &\ge D(x_{k + 1}, y_{k + 1}) - (D(x_m, x_{m - 1}) + \cdots + D(x_{k + 2}, x_{k + 1})) - (D(y_n, y_{n - 1}) + \cdots + D(y_{k + 2}, y_{k + 1})) \\
        &> 8\alpha^{k + 1} - ((\alpha^m + \alpha^{m - 1}) + \cdots + (\alpha^{k + 2} + \alpha^{k + 1})) - ((\alpha^n + \alpha^{n - 1}) + \cdots + (\alpha^{k + 2} + \alpha^{k + 1})) \\
        &> 6\alpha^{k + 1} - \frac{4\alpha^{k + 2}}{1 - \alpha}. 
    \end{align*}
    Moreover, 
    \begin{equation*}
        (u, v)_G = \frac12(m + n - d_G(u, v)) \ge \frac12(m + n - (m + n - 2k + 1)) = k - \frac12. 
    \end{equation*}
    Thus, 
    \begin{equation*}
        D(x, y) + \alpha^m + \alpha^n > D(x, y) > 6\alpha^{k + 1} - \frac{4\alpha^{k + 2}}{1 - \alpha} \ge \left(6\alpha^{3/2} - \frac{4\alpha^{5/2}}{1 - \alpha}\right)\alpha^{(u, v)_G}. 
    \end{equation*}
    This completes the proof of the first inequality in~\eqref{eq:hyperbolic-filling-proof-2}. 

    Next, we consider the second inequality in~\eqref{eq:hyperbolic-filling-proof-2}. Let $(x, m) = (x_0, m_0) \sim (x_1, m_1) \cdots (x_N, m_N) = (y, n)$ be a $d_G$-geodesic connecting $(x, m)$ and $(y, n)$. Note that $\lvert m_{j - 1} - m_j\rvert \le 1$ and $D(x_{j - 1}, x_j) \le 4\alpha^{m_{j - 1}} + 4\alpha^{m_j}$ for all $j \in [1, N]_\BZ$. This implies that 
    \begin{equation*}
        D(x, y) \le (4\alpha^m + 4\alpha^{m - 1}) + \cdots + (4\alpha^{k + 1} + 4\alpha^k) + 8\alpha^k + (4\alpha^k + 4\alpha^{k + 1}) + \cdots + (4\alpha^{n - 1} + 4\alpha^n) < \frac{16\alpha^k}{1 - \alpha}, 
    \end{equation*}
    where $k \defeq \lfloor(m + n - N)/2\rfloor \le m$. Since $(u, v)_G = (m + n - N)/2$, we conclude that
    \begin{equation*}
        D(x, y) + \alpha^m + \alpha^n < \frac{16\alpha^k}{1 - \alpha} + 2\alpha^k \le \left(\frac{16\alpha^{-1/2}}{1 - \alpha} + 2\alpha^{-1/2}\right) \alpha^{(u, v)_G}. 
    \end{equation*}
    This completes the proof of the second inequality in~\eqref{eq:hyperbolic-filling-proof-2}. 

    First, we consider assertion~\eqref{it:hyperbolic-filling-0}. We conclude from~\eqref{eq:hyperbolic-filling-proof-2} that
    \begin{align*}
        C^{-1} \alpha^{(u, v)_G} &\le D(x, y) + \alpha^m + \alpha^n \le D(x, z) + \alpha^m + \alpha^l + D(y, z) + \alpha^n + \alpha^l \\
        &\le C\alpha^{(u, w)_G} + C\alpha^{(v, w)_G} \le 2C\alpha^{(u, w)_G \wedge (v, w)_G} 
    \end{align*}
    for all $u = (x, m) \in V$, $v = (y, n) \in V$, and $w = (z, l) \in V$. This implies that $(u, v)_G \ge (u, w)_G \wedge (v, w)_G - \delta$ for some $\delta = \delta(\alpha) \ge 0$ and all $u, v, w \in V$. By possibly increasing $\delta$, this implies that $(u, v)_G \ge (u, w)_G \wedge (v, w)_G - \delta$ for some $\delta = \delta(\alpha) \ge 0$ and all $u, v, w \in G$. This completes the proof of assertion~\eqref{it:hyperbolic-filling-0}. 

    Next, we consider assertion~\eqref{it:hyperbolic-filling-1}. One verifies immediately that
    \begin{equation*}
        \partial_\infty(G, d_G) = \{\{u_j\}_{j \in \BN} \subset V : (u_j, u_k)_G \to \infty \text{ as } j, k \to \infty\}/\sim,  
    \end{equation*}
    i.e., it suffices to consider the case where $u_j \in V$ for all $j \in \BN$. Write $u_j = (x_j, m_j)$. Then it follows from~\eqref{eq:hyperbolic-filling-proof-2} that $D(x_j, x_k) \le C\alpha^{(u_j, u_k)_G} - \alpha^{m_j} - \alpha^{n_k} \to 0$ as $j, k \to \infty$, i.e., that $\{x_j\}_{j \in \BN}$ is a Cauchy sequence in $(X, D)$. Since $(X, D)$ is complete, $x \defeq \lim_{j \to \infty} x_j$ exists. Thus, we obtain a natural mapping 
    \begin{equation}\label{eq:hyperbolic-filling-proof-0}
        \partial_\infty(G, d_G) \to X \colon \{u_j\}_{j \in \BN} \mapsto x. 
    \end{equation}
    Let $\{v_j = (y_j, n_j)\}_{j \in \BN} \subset V$ be another sequence in $\partial_\infty(G, d_G)$ with $y \defeq \lim_{j \to \infty} y_j$. Suppose that $\{u_j\}_{j \in \BN} \nsim \{v_j\}_{j \in \BN}$. Then it follows from~\eqref{eq:hyperbolic-filling-proof-2} that
    \begin{equation}\label{eq:hyperbolic-filling-proof-1}
        D(x, y) = \lim_{j \to \infty} D(x_j, y_j) \ge \limsup_{j \to \infty} \left(C^{-1} \alpha^{(u_j, v_j)_G} - \alpha^{m_j} - \alpha^{n_j}\right) > 0. 
    \end{equation}
    This implies that the mapping in~\eqref{eq:hyperbolic-filling-proof-0} is injective. Let $z \in X$. Since $X \subset \bigcup_{x \in A_n} B_{\alpha^n}(x; D)$ for all $n \in \BN$, there exists a sequence $\{w_j = (z_j, n)\}_{j \in \BN}$ such that $D(z_j, z) \le \alpha^n$ for all $j \in \BN$. By~\eqref{eq:hyperbolic-filling-proof-2}, $C^{-1} \alpha^{(w_j, w_k)_G} \le D(z_j, z_k) + \alpha^j + \alpha^k \to 0$ as $j, k \to \infty$. This implies that $\{w_j\}_{j \in \BN} \in \partial_\infty(G, d_G)$. Thus, the mapping in~\eqref{eq:hyperbolic-filling-proof-0} is surjective, hence bijective. It remains to show that $D$ induces a visual metric on $\partial_\infty(G, d_G)$. Indeed, it follows from~\eqref{eq:product-boundary} and~\eqref{eq:hyperbolic-filling-proof-1} that
    \begin{equation*}
        D(x, y) \ge \limsup_{j \to \infty} \left(C^{-1} \alpha^{(u_j, v_j)_G} - \alpha^{m_j} - \alpha^{n_j}\right) \ge C^{-1} \alpha^{(x, y)_G + 2\delta}. 
    \end{equation*}
    It follows from~\eqref{eq:product-boundary} and~\eqref{eq:hyperbolic-filling-proof-2} that 
    \begin{equation*}
        D(x, y) = \lim_{j \to \infty} D(x_j, y_j) \le \liminf_{j \to \infty} \left(C\alpha^{(u_j, v_j)_G} - \alpha^{m_j} - \alpha^{n_j}\right) \le C \alpha^{(x, y)_G}. 
    \end{equation*}
    This completes the proof of assertion~\eqref{it:hyperbolic-filling-1}. 
\end{proof}

Fix an assignment $\sigma \colon V \setminus \{o\} \to \BR_{\ge 0}$. Suppose that the following condition is satisfied: 
\begin{equation}\label{eq:admissibility}
    \tag{$\star$}
    \parbox{.85\linewidth}{For each $(y, n - 1) \in V_{n - 1}$ and $(x_0, n), (x_1, n), \cdots, (x_N, n) \in V_n$ such that $(x_0, n) \sim (x_1, n) \sim \cdots \sim (x_N, n)$, $B_{\alpha^{n - 1}}(y; D) \cap B_{4\alpha^n}(x_0; D) \neq \emptyset$, and $(X \setminus B_{2\alpha^{n - 1}}(y; D)) \cap B_{4\alpha^n}(x_N; D) \neq \emptyset$,
    \begin{equation*}\hspace{-.15\linewidth}
        \sum_{j = 0}^N \sigma(x_j, n) \ge 1. 
    \end{equation*}} 
\end{equation}
Fix a sufficiently small parameter $\eta > 0$. For each $n \in \BN$ and $u \in V_n$, we shall write
\begin{equation}\label{eq:definition-nu-mu}
    \nu(u) \defeq 2 \sup\{\sigma(w) : v, w \in V_n, \ u \sim v \sim w\}; \quad \mu(u) \defeq \eta \vee \nu(u) \wedge (1 - \eta). 
\end{equation}

Choose a subgraph $(V, \sim_Z)$ of $(V, \sim)$ with the same vertices satisfying the following conditions:
\begin{itemize}
    \item Every horizontal edge $u \sim v$ is also an edge $u \sim_Z v$. 
    \item For each $(y, n + 1) \in V_{n + 1}$, there is exactly one $(x, n) \in V_n$ such that $(x, n) \sim_Z (y, n + 1)$ and 
    \begin{equation}\label{eq:parent}
        D(x, y) \le \sup_{(x^\prime, n) \in V_n \setminus \{(x, n)\}} D(x^\prime, y). 
    \end{equation}
    In particular, the vertices together with the vertical edges of $(V, \sim_Z)$ form a tree. (Note that if $(x, n) \sim_Z (y, n + 1)$, then $D_\Phi(x, y) \le \alpha^n$.)
\end{itemize}
For each $u \in V_n$, write $o = g(u)_0 \sim_Z g(u)_1 \sim_Z \cdots \sim_Z g(u)_n = u$ for the unique vertical path from the root to $u$. We shall write $(Z, d_Z)$ for the metric graph associated with $(V, \sim_Z)$. Note that if $u \in V_n$ and $v \in V_{n + 1}$ such that $u \sim v$, then $u \sim g(v)_n$. This implies that $d_Z(u, v) \le 2d_G(u, v) + 2$ for all $u, v \in Z$. Thus, $(Z, d_Z)$ and $(G, d_G)$ are quasi-isometric. In particular, $(Z, d_Z)$ is Gromov hyperbolic and the Gromov boundary $\partial_\infty(Z, d_Z)$ is quasisymmetrically equivalent to $(X, D)$. 

The following is reproduced from \cite[Lemma~2.13]{ConformalGauge} and \cite[Lemma~4.4]{CP-ModARCD}.

\begin{lemma}\label{lem:induction}
    Let $n \ge 0$. Let $\pi \colon V_n \to \BR_{> 0}$ and $\pi^\prime \colon V_{n + 1} \to \BR_{> 0}$. Suppose that the following conditions are satisfied:
    \begin{enumerate}[label=(\alph*),ref=\alph*]
        \item\label{it:induction-condition-0} $\eta \le \pi(u)/\pi(u^\prime) \le \eta^{-1}$ for all $u, u^\prime \in V_n$ with $u \sim u^\prime$. 
        \item\label{it:induction-condition-1} For each $v \in V_{n + 1}$, there exists $u \in V_n$ such that $u \sim v$ and $1 \le \pi(u)/\pi^\prime(v) \le \eta^{-1}$. 
    \end{enumerate}
    Then there exists $\pi \colon V_{n + 1} \to \BR_{> 0}$ satisfying the following conditions:
    \begin{enumerate}
        \item\label{it:induction-0} $\eta \le \pi(v)/\pi(v^\prime) \le \eta^{-1}$ for all $v, v^\prime \in V_{n + 1}$ with $v \sim v^\prime$. 
        \item\label{it:induction-1} For each $v \in V_{n + 1}$, either $\pi(v) = \pi^\prime(v)$ or $\pi(v) = \eta\sup\{\pi^\prime(v^\prime) : v^\prime \in V_{n + 1}, \ v \sim v^\prime\} > \pi^\prime(v)$.  
        \item\label{it:induction-2} For each $v \in V_{n + 1}$, there exists $u \in V_n$ such that $u \sim v$ and $1 \le \pi(u)/\pi(v) \le \eta^{-1}$. 
    \end{enumerate}
\end{lemma}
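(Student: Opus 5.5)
The plan is to construct $\pi$ on $V_{n+1}$ explicitly by a ``lifting'' rule, as in \cite[Lemma~2.13]{ConformalGauge}. For each $v \in V_{n+1}$ set
\begin{equation*}
    \pi(v) \defeq \max\!\left\{\pi^\prime(v),\ \eta\sup\{\pi^\prime(v^\prime) : v^\prime \in V_{n+1},\ v \sim v^\prime\}\right\}.
\end{equation*}
Since $v \sim v$ by convention and $\eta < 1$, the supremum term is at least $\eta\pi^\prime(v)$. Hence $\pi(v)$ equals either $\pi^\prime(v)$ or $\eta\sup\{\pi^\prime(v^\prime) : v \sim v^\prime\}$, and in the second case the latter strictly exceeds $\pi^\prime(v)$. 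This is exactly condition~\eqref{it:induction-1}.

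\textbf{Step 1: the ratio bound~\eqref{it:induction-0}.} Let $v \sim v^\prime$. Then $\pi(v) \ge \eta\pi^\prime(v^\prime)$ holds directly. Also $\pi(v) \ge \eta\cdot\eta\sup_{w \sim v^\prime}\pi^\prime(w)$ is not immediate, and this single-level definition only controls neighbours at graph distance one, not two. I therefore expect to need the hypotheses~\eqref{it:induction-condition-0} and~\eqref{it:induction-condition-1} to finish.

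By~\eqref{it:induction-condition-1}, each $w \in V_{n+1}$ has a parent $u_w \in V_n$ with $u_w \sim w$ and $\pi^\prime(w) \le \pi(u_w) \le \eta^{-1}\pi^\prime(w)$. For neighbours $v \sim v^\prime$ at level $n+1$, the choice of small $\alpha$ (the ``100-step'' property fixed before the lemma) gives that the parents of all vertices within distance two of $v$ are mutually adjacent at level $n$. By~\eqref{it:induction-condition-0}, their $\pi$-values then agree up to a factor $\eta^{\pm1}$.

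This shows that $\pi^\prime$ varies by at most a bounded factor (a power of $\eta$) over such neighbourhoods. That is not yet the sharp factor $\eta$, so the naive definition needs adjusting.

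\textbf{Step 2: the honest fix (main obstacle).} To obtain the sharp constant $\eta$ in~\eqref{it:induction-0} while keeping~\eqref{it:induction-1}, I would instead iterate the lifting: put $\pi_0 = \pi^\prime$ and $\pi_{k+1}(v) = \max\{\pi_k(v), \eta\sup_{v^\prime \sim v}\pi_k(v^\prime)\}$. The sequence is monotone. The key claim is that it stabilises at a function equal to $\eta^{d(v,w)}$-weighted maxima, namely
\begin{equation*}
    \pi(v) = \sup_w \eta^{d_{n+1}(v,w)}\pi^\prime(w),
\end{equation*}
with $d_{n+1}$ the horizontal graph distance. This function clearly satisfies~\eqref{it:induction-0}: the triangle inequality gives $\pi(v) \ge \eta\pi(v^\prime)$ for $v \sim v^\prime$.

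The real difficulty is~\eqref{it:induction-1}, which asks that the maximising $w$ be a neighbour whenever $\pi(v) > \pi^\prime(v)$. Here I would use~\eqref{it:induction-condition-0} and~\eqref{it:induction-condition-1}: along a horizontal path $v = w_0 \sim \cdots \sim w_k = w$, the parents are chained neighbours. Hence $\pi^\prime(w) \le \pi(u_w) \le \eta^{-k'}\pi(u_v) \le \eta^{-k'-1}\pi^\prime(v)$, where $k'$ is the level-$n$ distance between the parents, roughly $k\alpha$-shrunk by the 100-step property. For $k \ge 2$ this yields $\eta^{k}\pi^\prime(w) \le \pi^\prime(v)$ once $\alpha$ is small relative to the number of steps. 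So only $k \le 1$ contributes, which gives~\eqref{it:induction-1}.

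\textbf{Step 3: condition~\eqref{it:induction-2}.} Take $u = u_v$ from~\eqref{it:induction-condition-1}. We have $\pi(v) \ge \pi^\prime(v) \ge \eta\pi(u)$. For the other direction, $\pi(v) \le \pi(u)$: either $\pi(v) = \pi^\prime(v) \le \pi(u)$, or $\pi(v) = \eta\pi^\prime(v^\prime)$ with $v^\prime \sim v$. In the latter case $\eta\pi^\prime(v^\prime) \le \eta\pi(u_{v^\prime}) \le \pi(u)$ by~\eqref{it:induction-condition-0}, since $u_{v^\prime} \sim u$ by the 100-step property.

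I expect the bookkeeping in Step 2, controlling the parent distance via the choice of $\alpha$, to be the main obstacle.
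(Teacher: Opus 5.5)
Your proof is correct, and your final $\pi$ is in fact the paper's $\pi$, but the framing of Step 1 is mistaken. The paper defines $\pi(v)=\pi'(v)$ unless some $v'\sim v$ has $\pi'(v')>\eta^{-1}\pi'(v)$ (written $v'\succ v$), in which case $\pi(v)=\eta\sup\{\pi'(v'):v'\succ v\}$. This is exactly your one-step rule $\max\{\pi'(v),\eta\sup_{v'\sim v}\pi'(v')\}$, and it does satisfy the sharp bound \eqref{it:induction-0}.

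The factor $\eta^{-2}$ over horizontal distance two that you derive in Step 1 is precisely what is needed. It gives $\pi(v)\ge\pi'(v)\ge\eta^{2}\pi'(w)$ whenever $d_{n+1}(v,w)\le 2$. The paper encodes this as ``there is no chain $v\succ v'\succ v''$'' and then checks \eqref{it:induction-0} by a three-case analysis.

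Your Step 2 proves the same domination for all $k\ge 2$. That shows $\sup_w\eta^{d_{n+1}(v,w)}\pi'(w)$ collapses to the one-step maximum, so your ``fix'' returns the same function. What your presentation buys is cleaner bookkeeping. Property \eqref{it:induction-0} is automatic from the triangle inequality, since your $\pi$ is the least majorant of $\pi'$ whose logarithm is $\log(1/\eta)$-Lipschitz for the horizontal graph metric. The only real work is then one domination estimate for \eqref{it:induction-1}. The paper instead gets \eqref{it:induction-1} for free from its definition and pays with the case analysis for \eqref{it:induction-0}. Your Step 3 coincides with the paper's argument.

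You should fix the wording in Step 2. No smallness of $\alpha$ ``relative to the number of steps'' is needed, and none would be available: $k$ is unbounded as $n$ grows, while $\alpha$ is fixed before the lemma. Instead, split a horizontal path of length $k$ into blocks of length at most $100$ and apply the 100-step property to each block, padding with the self-loops $u\sim u$. This gives $k'\le\lceil k/100\rceil\le k-1$ for every $k\ge 2$, hence $\eta^{k}\pi'(w)\le\eta^{k-k'-1}\pi'(v)\le\pi'(v)$ uniformly.

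Alternatively, you can avoid long paths entirely. Iterate the distance-two estimate $\eta^{2}\pi'(w_{j+2})\le\pi'(w_j)$ along a geodesic $v=w_0\sim\cdots\sim w_k=w$. This bounds $\eta^k\pi'(w)$ by $\pi'(v)$ when $k$ is even and by $\eta\pi'(w_1)$ when $k$ is odd, and both are at most the one-step maximum.
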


\begin{proof}
    For each $v \in V_{n + 1}$, let $h(v) \defeq u$ be as in condition~\eqref{it:induction-condition-1}. For each $v, v^\prime \in V_{n + 1}$, we shall write $v \succ v^\prime$ if $v \sim v^\prime$ and $\pi^\prime(v) > \eta^{-1} \pi^\prime(v^\prime)$. We \emph{claim} that there do not exist $v, v^\prime, v^\pprime \in V_{n + 1}$ such that $v \succ v^\prime \succ v^\pprime$. Indeed, since $v \sim v^\prime \sim v^\pprime$, it follows that $h(v) \sim h(v^\pprime)$. However, $\pi(h(v)) \ge \pi^\prime(v) > \eta^{-2}\pi^\prime(v^\pprime) \ge \eta^{-1}\pi(h(v^\pprime))$, in contradiction to condition~\eqref{it:induction-condition-0}. This completes the proof of the \emph{claim}. 

    For each $v \in V_{n + 1}$, if there does not exist $v^\prime \in V_{n + 1}$ such that $v^\prime \succ v$, then set $\pi(v) \defeq \pi^\prime(v)$; otherwise, set $\pi(v) \defeq \eta\sup\{\pi^\prime(v^\prime) : v^\prime \in V_{n + 1}, \ v^\prime \succ v\} > \pi^\prime(v)$. It is clear that condition~\eqref{it:induction-1} is satisfied. Condition~\eqref{it:induction-2} follows immediately from condition~\eqref{it:induction-1}, together with the fact that if $v \sim v^\prime$, then $h(v) \sim h(v^\prime)$. Thus, it suffices to verify condition~\eqref{it:induction-0}. Let $v, v^\prime \in V_{n + 1}$ with $v \sim v^\prime$. 

    First, we consider the case where $\pi(v) = \pi^\prime(v)$ and $\pi(v^\prime) = \pi^\prime(v^\prime)$. In this case, by definition, $v \nsucc v^\prime$ and $v^\prime \nsucc v$, which implies that $\eta \le \pi(v)/\pi(v^\prime) = \pi^\prime(v)/\pi^\prime(v^\prime) \le \eta^{-1}$. Next, we consider the case where $\pi(v) = \pi^\prime(v)$ and $\pi(v^\prime) > \pi^\prime(v^\prime)$. In this case, by definition, there exists $v^\pprime \succ v^\prime$ such that $\pi(v^\prime) = \eta\sup\{\pi^\prime(v^{\prime\prime\prime}) : v^{\prime\prime\prime} \in V_{n + 1}, \ v^\prime \sim v^{\prime\prime\prime}\} = \eta\pi^\prime(v^\pprime)$. In particular, $\pi(v^\prime) = \eta\pi^\prime(v^\pprime) \ge \eta\pi^\prime(v) = \eta\pi(v)$. On the other hand, since $v \sim v^\prime \sim v^\pprime$, it follows that $h(v) \sim h(v^\pprime)$, which implies that $\pi(v^\prime) = \eta\pi^\prime(v^\pprime) \le \eta\pi(h(v^\pprime)) \le \pi(h(v)) \le \eta^{-1}\pi^\prime(v) = \eta^{-1}\pi(v)$. Finally, we consider the case where $\pi(v) > \pi^\prime(v)$ and $\pi(v^\prime) > \pi^\prime(v^\prime)$. By symmetry, it suffices to show that $\pi(v) \ge \eta\pi(v^\prime)$. By definition, there exists $v^\pprime \succ v^\prime$ such that $\pi(v^\prime) = \eta\pi^\prime(v^\pprime)$. Since $v \sim v^\prime \sim v^\pprime$, it follows that $h(v) \sim h(v^\pprime)$. Thus, $\pi(v) > \pi^\prime(v) \ge \eta\pi(h(v)) \ge \eta^2\pi(h(v^\pprime)) \ge \eta^2\pi^\prime(v^\pprime) = \eta\pi(v^\prime)$. This completes the proof of condition~\eqref{it:induction-0}. 
\end{proof}

Set $\pi(o) \defeq 1$. Then, inductively, for each $n \in \BN$, set $\pi^\prime(u) \defeq \pi(g(u)_{n - 1})\mu(u)$ for all $u \in V_n$ and let $\pi \colon V_n \to \BR_{> 0}$ be as in \Cref{lem:induction}. (Here, we note that conditions~\eqref{it:induction-condition-0} and~\eqref{it:induction-condition-1} of \Cref{lem:induction} are satisfied.) We shall write
\begin{equation}\label{eq:rho}
    \varrho(u) \defeq \pi(u)/\pi(g(u)_{n - 1}), \quad \forall u \in V_n. 
\end{equation} 

\Cref{lem:H1-H2,lem:H3} below correspond to Axioms~(H1), (H2), and (H3) of \cite{ConformalGauge}. The proof of \Cref{lem:H3} is adapted from \cite[Section~4.7]{CP-ModARCD}.

\begin{lemma}\label{lem:H1-H2}
    \begin{enumerate}
        \item\label{it:H1} Let $n \in \BN$ and $u \in V_n$. Then 
        \begin{itemize}
            \item $\eta \le \varrho(u) \le 1 - \eta$, and 
            \item $\mu(u) \le \varrho(u) \le \sup\{\mu(v) : v \in V_n, \ u \sim v\}$.
        \end{itemize}
        \item\label{it:H2} $\eta^2 \le \pi(u)/\pi(v) \le \eta^{-2}$ for all $u, v \in V$ with $u \sim v$. 
    \end{enumerate}
\end{lemma}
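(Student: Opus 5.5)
The plan is to read everything off the two outputs of \Cref{lem:induction} together with the definition $\pi'(u) = \pi(g(u)_{n-1})\mu(u)$ and $\varrho(u) = \pi(u)/\pi(g(u)_{n-1})$. The argument is an induction on $n$, carried jointly for both assertions.

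\textbf{Assertion~\eqref{it:H1}.} Fix $u \in V_n$ and write $p \defeq g(u)_{n-1}$. By conclusion~\eqref{it:induction-1} there are two cases.

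If $\pi(u) = \pi'(u)$, then $\varrho(u) = \mu(u)$, and both bounds are immediate since $\eta \le \mu \le 1-\eta$ by \eqref{eq:definition-nu-mu}.

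Otherwise $\pi(u) = \eta \sup\{\pi'(v) : v \sim u\} > \pi'(u)$. This gives $\varrho(u) > \mu(u) \ge \eta$ at once. For the upper bound, pick $v \sim u$ attaining the supremum, so that
\begin{equation*}
\varrho(u) = \eta\,\mu(v)\,\pi(g(v)_{n-1})/\pi(p).
\end{equation*}
Here the parent of $v$ and $p$ are related by a short chain of edges: $g(v)_{n-1} \sim v \sim u \sim p$. By the standing choice of $\alpha$, the vertical edges $v \sim g(v)_{n-1}$ and $u \sim p$ with $u \sim v$ horizontal force $g(v)_{n-1} \sim p$. Applying hypothesis~\eqref{it:induction-condition-0} at level $n-1$ then gives $\pi(g(v)_{n-1})/\pi(p) \le \eta^{-1}$. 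Hence
\begin{equation*}
\varrho(u) \le \mu(v) \le \sup\{\mu(w) : w \sim u\} \le 1-\eta.
\end{equation*}
This is the delicate point of the proof. It relies on the $\eta^{-1}$ ratio bound across $\sim$ on the previous level being exactly the factor that cancels the $\eta$ prefactor. It also requires that the parents of neighbours are neighbours, which is where the 100-chain property from the choice of $\alpha$ enters.

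\textbf{Assertion~\eqref{it:H2}.} For a horizontal edge $u \sim v$ in $V_n$, conclusion~\eqref{it:induction-0} gives $\eta \le \pi(u)/\pi(v) \le \eta^{-1}$ directly. This also supplies hypothesis~\eqref{it:induction-condition-0} for the next step, so the induction closes.

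For a vertical edge $u \in V_{n-1}$, $v \in V_n$, let $p = g(v)_{n-1}$. By the chain property, $u \sim p$, so $\pi(u)/\pi(p) \in [\eta, \eta^{-1}]$. By part~\eqref{it:H1}, $\pi(v)/\pi(p) = \varrho(v) \in [\eta, 1]$. Combining the two ratios gives $\pi(u)/\pi(v) \in [\eta, \eta^{-2}]$, which lies in $[\eta^2, \eta^{-2}]$.

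\textbf{Closing the induction.} It remains to check that hypothesis~\eqref{it:induction-condition-1} holds at each step, so that \Cref{lem:induction} applies level by level. Take $u = g(v)_{n-1}$. Then $\pi(u)/\pi'(v) = 1/\mu(v) \in [1, \eta^{-1}]$ once $\mu \ge \eta$ and $\mu \le 1$. The bound $1/\mu(v) \le \eta^{-1}$ is exact from $\mu \ge \eta$, so the hypothesis holds as required.
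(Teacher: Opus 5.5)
Your proposal is correct and matches the paper's proof: for (H1) it uses the same case split from \Cref{lem:induction}, \eqref{it:induction-1}, the same fact that parents of horizontal neighbours are neighbours, and the same cancellation of the $\eta$ prefactor by the $\eta^{-1}$ ratio bound at level $n-1$. The only differences are minor. For vertical edges in (H2) you argue through the tree parent $g(v)_{n-1}$ and $\varrho(v)\in[\eta,1]$ instead of citing \Cref{lem:induction}, \eqref{it:induction-2}, which is the same estimate since the tree parent is a valid witness there. Your explicit check of the hypotheses of \Cref{lem:induction} also just spells out what the paper asserts in a parenthetical remark.
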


\begin{proof}
    First, we consider assertion~\eqref{it:H1}. By \Cref{lem:induction}, \eqref{it:induction-1}, either $\varrho(u) = \mu(u)$ or there exists a horizontal edge $u \sim v$ such that
    \begin{equation*}
        \varrho(u) = \frac{\eta\pi(g(v)_{n - 1})\mu(v)}{\pi(g(u)_{n - 1})} > \mu(u). 
    \end{equation*}
    Since $\eta \le \mu(u) \le 1 - \eta$, it suffices to consider the second case. Since $g(u)_{n - 1} \sim g(v)_{n - 1}$, we conclude that $\pi(g(v)_{n - 1})/\pi(g(u)_{n - 1}) \le \eta^{-1}$, which implies that $\eta \le \mu(u) < \varrho(u) \le \mu(v) \le 1 - \eta$. This completes the proof of assertion~\eqref{it:H1}. 

    Next, we consider assertion~\eqref{it:H2}. If $u \sim v$ is a horizontal edge, then the assertion follows immediately from \Cref{lem:induction}, \eqref{it:induction-0}. Suppose that $u \in V_n$ and $v \in V_{n + 1}$. By \Cref{lem:induction}, \eqref{it:induction-2}, there exists $u^\prime \in V_n$ such that $u^\prime \sim v$ and $1 \le \pi(u^\prime)/\pi(v) \le \eta^{-1}$. We observe that $u \sim u^\prime$. Thus $\eta \le \pi(u)/\pi(u^\prime) \le \eta^{-1}$, hence $\eta^2 \le \pi(u)/\pi(v) \le \eta^{-2}$. This completes the proof of assertion~\eqref{it:H2}. 
\end{proof}

We shall write
\begin{equation*}
    n(u, v) \defeq \sup\{n \ge 0 : g(u)_n \sim g(v)_n\}; \quad \pi(u, v) \defeq \pi(g(u)_{n(u, v)}) \vee \pi(g(v)_{n(u, v)})
\end{equation*} 
for all $u, v \in V$.

\begin{lemma}\label{lem:H3}
    Let $u_0 \sim_Z u_1 \sim_Z \cdots \sim_Z u_N$ be a path in $(V, \sim_Z)$. Then $\sum_{j = 0}^N \pi(u_j) \succeq \pi(u_0, u_N)$, where the implicit constant depends only on $\eta$.
\end{lemma}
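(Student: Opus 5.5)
We argue by strong induction on the ``level gap'' of the path, following \cite[Section~4.7]{CP-ModARCD}. The path $u_0 \sim_Z \cdots \sim_Z u_N$ uses horizontal edges inside some levels $V_n$ and vertical tree edges between consecutive levels. Write $m \defeq \min_j \lvert u_j\rvert$ for the lowest level visited, where $\lvert u\rvert$ is the level of $u$. Set $k \defeq n(u_0, u_N)$. First reduce to the case where the path does not drop below level $k$. If $m \le k$, some $u_j$ lies at level $m \le k$. By \Cref{lem:H1-H2}, \eqref{it:H2}, and the fact that the path from $u_0$ up to that vertex passes through a vertex near $g(u_0)_k$, we get $\sum_j \pi(u_j) \succeq \pi(u_j)$. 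This quantity dominates $\pi(g(u_0)_k)$ up to constants, since $\pi$ decreases along ancestry by factors in $[\eta, 1-\eta]$ (\Cref{lem:H1-H2}, \eqref{it:H1}). So we may assume all vertices lie at levels $> k$ and consider the ``shadow'' of the path.

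Next, project the path to level $k+1$. Replace each $u_j$ by its ancestor $g(u_j)_{k+1}$. Consecutive projections are adjacent or equal in $(V, \sim)$: a horizontal edge at level $n$ projects, by the choice of $\alpha$ in the paragraph after the definition of the filling (the ``100-step'' property, applied iteratively), to a chain of adjacent vertices. Since $g(u_0)_{k+1} \nsim g(u_N)_{k+1}$, this yields a horizontal chain at level $k+1$ from the shadow of $u_0$ to that of $u_N$. In $D$-terms, the chain starts within $O(\alpha^{k+1})$ of a point near the parent ball and exits $B_{2\alpha^k}(y; D)$ for an appropriate $(y,k) \in V_k$, since $D(x_0, x_N) \gtrsim 8\alpha^{k+1}$ by the computation in \Cref{lem:hyperbolic-filling}. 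Admissibility \eqref{eq:admissibility} at the appropriate level then forces $\sum \sigma \ge 1$ along suitable sub-chains. By the definitions of $\nu$ and $\mu$, this gives $\sum \mu \succeq 1$ for the level-$(k+1)$ vertices visited, unless $\mu$ is truncated at $1-\eta$, in which case a single term already suffices.

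Then run the key induction. Define $\Lambda(v) \defeq \sum_{j : g(u_j)_{k+1} = v} \pi(u_j)/\pi(v)$ for the descendants of $v$ visited. We claim $\Lambda(v) \succeq \varrho$-weighted coverage, and prove it by downward induction on levels. For a subpath staying in descendants of $v$ that crosses a level-$n$ parent annulus, the sum of $\pi(u_j)$ is at least $c\,\pi(\text{parent}) \sum \mu \ge c\,\pi(\text{parent})$. This uses $\varrho(u) \ge \mu(u)$ from \Cref{lem:H1-H2}, \eqref{it:H1}, together with the comparability of $\pi$ among neighbors from \eqref{it:H2}. Either a subpath stays at level $\ge n+1$ and crosses, in which case admissibility at level $n+1$ together with $\pi(w) = \pi(g(w)_n)\varrho(w)$ gives a sum $\succeq \pi$ of a level-$n$ vertex, or it visits level $\le n$ directly. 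Iterating down from the deepest level to $k+1$ yields $\sum_j \pi(u_j) \succeq \pi(g(u_0)_{k+1}) \succeq \eta^2\pi(u_0,u_N)$.

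The main obstacle is the bookkeeping in this multiscale induction. A single horizontal step of the projected path may come from a long excursion at deep levels, and we must guarantee that each admissible crossing at level $n+1$ is charged to a distinct portion of the path. The constants must also not degrade with the number of levels. I would handle this as in \cite{ConformalGauge}: at each level, split the path into maximal segments whose level-$n$ projections stay within a $2\alpha^{n-1}$-ball. Then use the factor $2$ in $\nu$ and the neighbor-sup, which absorbs the $O(1)$ shift between the path's vertices and the chain's vertices. The bound is closed by a stopping argument rather than a geometric sum, so that no constant accumulates.
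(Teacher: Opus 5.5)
Your outline names the right ingredients (admissibility crossings, $\varrho\ge\mu$, projection to coarser levels). However, the one thing the lemma actually requires is left undone: a projection step that can be iterated over unboundedly many levels with no multiplicative loss. You flag this as the main obstacle, but ``$\Lambda(v)\succeq\varrho$-weighted coverage'' is never defined and the ``stopping argument'' is not specified. Since the path may descend arbitrarily deep while the implicit constant may depend only on $\eta$, any scheme that loses even a factor $\eta^{c}$ per level fails.

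The paper resolves this with a specific length functional. Set $\pi'(u)\defeq\inf\{\pi(v):v\sim u\}$, give horizontal edges weight $\pi'(u)\wedge\pi'(v)$, and give vertical edges $g(u)_{n-1}\sim u$ weight $\eta^{-4}\pi'(u)$. Then $\sum_j\pi(u_j)\succeq\ell(\text{path})$, and three facts hold:
\begin{enumerate}
\item Every crossing in $\Gamma(v)$, $v\in V_{n-1}$, has $\ell$-length at least $\pi'(v)$. This uses admissibility together with $\pi(w)=\pi(g(w)_{n-1})\varrho(w)$, where $g(w)_{n-1}\sim v$.
\item Consider an excursion that leaves level $n$, runs horizontally at level $n+1$, and returns to level $n$ (deeper excursions having already been flattened). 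Cut it greedily into consecutive disjoint $\Gamma$-crossings, and replace each by one level-$n$ edge of length at most $\pi'$ of its base. The leftover last edge is paid for by the returning vertical edge; this is what the weight $\eta^{-4}$ is for.
\item An analogous step passes from level $n$ to level $n-1$ when neither the endpoints nor their parents are adjacent.
\end{enumerate}
Each replacement is $\ell$-non-increasing with constant exactly one, so it iterates freely. The terminal case, where the endpoints or their parents are adjacent, is settled by a single edge and needs no admissibility at all.

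Two of your concrete steps are also wrong.
\begin{itemize}
\item In your first reduction, a visited vertex $u_j$ at level $m\le k$ need not be adjacent to $g(u_0)_k$. The path can travel far at fine levels before descending, and \Cref{lem:H1-H2}, \eqref{it:H2} only compares neighbours. So $\pi(u_j)$ can be much smaller than $\pi(g(u_0)_k)$, and ``$\pi$ decreases along ancestry'' only helps for actual ancestors. Bounding the cost of that fine-level excursion from below is itself an instance of the lemma; it is the paper's \eqref{eq:H3-proof-7}.
\item In your second step you invoke admissibility at the wrong scale. The relation $g(u_0)_{k+1}\nsim g(u_N)_{k+1}$ only gives a $D$-separation of order $8\alpha^{k+1}$. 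Membership in $\Gamma(v)$ for $v=(y,k)\in V_k$ requires the level-$(k+1)$ chain to travel from $B_{\alpha^k}(y;D)$ to the complement of $B_{2\alpha^k}(y;D)$, a distance of order $\alpha^k\gg 8\alpha^{k+1}$. So no such $v$ need exist. The crossings that are genuinely forced occur at level $k+2$ and finer, around level-$(k+1)$ parents. This is why the argument has to be a level-by-level, length-non-increasing reduction rather than a single application of admissibility at level $k+1$.
\end{itemize}
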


\begin{proof}
    All edges considered in this proof belong to $(V, \sim_Z)$. Whenever the unscripted notation $\sim$ is used instead of $\sim_Z$, it is to emphasize that the corresponding edge is horizontal.
    To lighten notation, for $v = (y, n - 1) \in V_{n - 1}$, write $\Gamma(v)$ for the collection of paths $(x_0, n) \sim (x_1, n) \sim \cdots \sim (x_N, n)$ such that $B_{\alpha^{n - 1}}(y; D) \cap B_{4\alpha^n}(x_0; D) \neq \emptyset$ and $(X \setminus B_{2\alpha^{n - 1}}(y; D)) \cap B_{4\alpha^n}(x_N; D) \neq \emptyset$. We \emph{claim} that for each $v \in V_{n - 1}$ and $(u_0 \sim u_1 \sim \cdots \sim u_N) \in \Gamma(v)$,
    \begin{equation}\label{eq:H3-proof-0}
        \sum_{j = 1}^N \inf\!\left\{\varrho(w) : w \in V_n, \ w \sim u_{j - 1} \text{ or } w \sim u_j\right\} \ge 1. 
    \end{equation}
    Indeed, by \Cref{lem:H1-H2}, \eqref{it:H1}, 
    \begin{equation*}
        \sum_{j = 1}^N \inf\!\left\{\varrho(w) : w \in V_n, \ w \sim u_{j - 1} \text{ or } w \sim u_j\right\} \ge \sum_{j = 1}^N \inf\!\left\{\mu(w) : w \in V_n, \ w \sim u_{j - 1} \text{ or } w \sim u_j\right\}. 
    \end{equation*}
    If $\inf\{\mu(w) : w \in V_n, \ w \sim u_{j - 1} \text{ or } w \sim u_j\} = 1 - \eta$ for some $j \in [1, N]_\BZ$, since $N \ge 2$, it follows that
    \begin{equation*}
        \sum_{j = 1}^N \inf\!\left\{\mu(w) : w \in V_n, \ w \sim u_{j - 1} \text{ or } w \sim u_j\right\} \ge (1 - \eta) + \eta = 1. 
    \end{equation*}
    Thus, we may assume without loss of generality that $\inf\{\mu(w) : w \in V_n, \ w \sim u_{j - 1} \text{ or } w \sim u_j\} < 1 - \eta$ for all $j \in [1, N]_\BZ$. By the definition of $\mu$, this implies that
    \begin{equation*}
        \sum_{j = 1}^N \inf\!\left\{\mu(w) : w \in V_n, \ w \sim u_{j - 1} \text{ or } w \sim u_j\right\} \ge \sum_{j = 1}^N \inf\!\left\{\nu(w) : w \in V_n, \ w \sim u_{j - 1} \text{ or } w \sim u_j\right\}. 
    \end{equation*}
    By the definition of $\nu$, $\inf\{\nu(w) : w \in V_n, \ w \sim u_{j - 1} \text{ or } w \sim u_j\} \ge \sigma(u_{j - 1}) + \sigma(u_j)$ for each $j \in [1, N]_\BZ$. Combining this with~\eqref{eq:admissibility}, we obtain that
    \begin{equation*}
        \sum_{j = 1}^N \inf\!\left\{\nu(w) : w \in V_n, \ w \sim u_{j - 1} \text{ or } w \sim u_j\right\} \ge \sum_{j = 0}^N \sigma(u_j) \ge 1. 
    \end{equation*}
    This completes the proof of~\eqref{eq:H3-proof-0}.

    To lighten notation, for $u \in V_n$, write 
    \begin{equation*}
        \pi^\prime(u) \defeq \inf\{\pi(v) : v \in V_n, \ u \sim v\} \quad \text{and} \quad \varrho^\prime(u) \defeq \inf\{\varrho(v) : v \in V_n, \ u \sim v\}. 
    \end{equation*}
    Write 
    \begin{equation*}
        \begin{cases}
            \ell(u \sim v) \defeq \pi^\prime(u) \wedge \pi^\prime(v) & \text{if } u \sim v \text{ is horizontal}; \\
            \ell(g(u)_{n - 1} \sim u) \defeq \eta^{-4}\pi^\prime(u) & \text{if } u \in V_n \text{ for some } n \ge 1.
        \end{cases}
    \end{equation*}
    Note that for each path $u_0 \sim_Z u_1 \sim_Z \cdots \sim_Z u_N$ in $(V, \sim_Z)$, we have $\sum_{j = 0}^N \pi(u_j) \ge \sum_{j = 0}^N \pi^\prime(u_j) \ge \eta^4 \ell(u_0 \sim_Z u_1 \sim_Z \cdots \sim_Z u_N)$. Thus, it suffices to show that 
    \begin{equation}\label{eq:H3-proof-4}
        \ell(u_0 \sim_Z u_1 \sim_Z \cdots \sim_Z u_N) \succeq \pi(u_0, u_N). 
    \end{equation}
    We \emph{claim} that for each $v \in V_{n - 1}$ and $(u_0 \sim u_1 \sim \cdots \sim u_N) \in \Gamma(v)$,
    \begin{equation}\label{eq:H3-proof-1}
        \ell(u_0 \sim u_1 \sim \cdots \sim u_N) \ge \pi^\prime(v). 
    \end{equation}
    Write $v = (y, n - 1)$ and $u_j = (x_j, n)$ for $j \in [0, N]_\BZ$. We may assume without loss of generality that $B_{4\alpha^n}(x_j; D) \cap B_{2\alpha^{n - 1}}(y; D) \neq \emptyset$ for all $j \in [0, N]_\BZ$. We observe that for each $j \in [0, N]_\BZ$ and $w \in V_n$ such that $w \sim u_j$, we have $v \sim g(w)_{n - 1}$. Indeed, if we write $w = (z, n)$ and $g(w)_{n - 1} = (z^\prime, n - 1)$, then
    \begin{equation*}
        D(y, z^\prime) \le D(y, x_j) + D(x_j, z) + D(z, z^\prime) \le (2\alpha^{n - 1} + 4\alpha^n) + 8\alpha^n + \alpha^{n - 1} < 8\alpha^{n - 1}. 
    \end{equation*}
    This implies that $v \sim g(w)_{n - 1}$. Thus, we conclude that
    \begin{multline*}
        \ell(u_0 \sim u_1 \sim \cdots \sim u_N) = \sum_{j = 1}^N \inf\!\left\{\pi(w) : w \in V_n, \ w \sim u_{j - 1} \text{ or } w \sim u_j\right\} \\
        \ge \pi^\prime(v) \sum_{j = 1}^N \inf\!\left\{\varrho(w) : w \in V_n, \ w \sim u_{j - 1} \text{ or } w \sim u_j\right\} = \pi^\prime(v) \sum_{j = 1}^N \varrho^\prime(u_{j - 1}) \wedge \varrho^\prime(u_j) \ge \pi^\prime(v), 
    \end{multline*}
    where the last inequality follows from~\eqref{eq:H3-proof-0} and the definition of $\varrho^\prime$. This completes the proof of~\eqref{eq:H3-proof-1}.

    Next, we \emph{claim} that 
    \begin{equation}\label{eq:H3-proof-2}
        \parbox{.85\linewidth}{for each path $u_0 \sim_Z u_1 \sim \cdots \sim u_{M - 1} \sim_Z u_M$ with $u_0, u_M \in V_n$ and $u_1, \cdots, u_{M - 1} \in V_{n + 1}$, there exists a path $u_0 = v_0 \sim v_1 \sim \cdots \sim v_N = u_M$ such that $v_0, v_1, \cdots, v_N \in V_n$ and $\ell(v_0 \sim v_1 \sim \cdots \sim v_N) \le \ell(u_1 \sim \cdots \sim u_{M - 1} \sim_Z u_M)$.}
    \end{equation}
    (Here, we note that the edge $u_0 \sim_Z u_1$ is excluded from the upper bound; this exclusion will be necessary for the subsequent proofs of~\eqref{eq:H3-proof-7} and~\eqref{eq:H3-proof-6}.) If $u_0 \sim u_M$, then it follows from \Cref{lem:H1-H2}, \eqref{it:H2} and the definition of $\ell$ that $\ell(u_{M - 1} \sim_Z u_M) = \eta^{-4} \inf\{\pi(v) : v \in V_{n + 1}, \ u_{M - 1} \sim v\} \ge \eta^{-2} \pi(u_{M - 1}) \ge \pi(u_M) \ge \ell(u_0 \sim u_M)$. Thus, it suffices to consider the case $u_0 \nsim u_M$. In this case, $(u_1 \sim \cdots \sim u_{M - 1}) \in \Gamma(u_0)$. Indeed, if we write $u_0 = (x_0, n)$, $u_M = (x_M, n)$, and $u_j = (x_j, n + 1)$ for $j \in [1, M - 1]_\BZ$, then $u_0 \sim_Z u_1$ implies that $B_{\alpha^n}(x_0; D) \cap B_{4\alpha^{n + 1}}(x_1; D) \neq \emptyset$, and
    \begin{equation*}
        D(x_0, x_{M - 1}) \ge D(x_0, x_M) - D(x_M, x_{M - 1}) > 8\alpha^n - \alpha^n = 7\alpha^n, 
    \end{equation*}
    which implies that $(X \setminus B_{2\alpha^n}(x_0; D)) \cap B_{4\alpha^{n + 1}}(x_{M - 1}; D) \neq \emptyset$. Write $j_1 > 1$ for the smallest number such that $(u_1 \sim \cdots \sim u_{j_1}) \in \Gamma(u_0)$. Then it follows from~\eqref{eq:H3-proof-1} and the proof of~\eqref{eq:H3-proof-1} that $u_0 \sim g(u_{j_1})_n$ and $\ell(u_0 \sim g(u_{j_1})_n) \le \pi^\prime(u_0) \le \ell(u_1 \sim \cdots \sim u_{j_1})$. Inductively, for each $k \in \BN$, if $g(u_{j_k})_n \nsim u_M$, then we may write $j_{k + 1} > j_k$ for the smallest number such that $(u_{j_k} \sim \cdots \sim u_{j_{k + 1}}) \in \Gamma(g(u_{j_k})_n)$, in which case we have $g(u_{j_k})_n \sim g(u_{j_{k + 1}})_n$ and $\ell(g(u_{j_k})_n \sim g(u_{j_{k + 1}})_n) \le \pi^\prime(g(u_{j_k})_n) \le \ell(u_{j_k} \sim \cdots \sim u_{j_{k + 1}})$. Thus, we obtain $1 < j_1 < \cdots < j_N \le M - 1$ such that 
    \begin{align*}
        \ell(u_0 \sim g(u_{j_1})_n \sim \cdots \sim g(u_{j_N})_n \sim u_M) &\le \ell(u_1 \sim \cdots \sim u_{M - 1}) + \ell(g(u_{j_N})_n \sim u_M) \\
        &\le \ell(u_1 \sim \cdots \sim u_{M - 1}) + \ell(u_{M - 1} \sim_Z u_M). 
    \end{align*}
    This completes the proof of~\eqref{eq:H3-proof-2}.

    By repeatedly applying~\eqref{eq:H3-proof-2}, we obtain that
    \begin{equation}\label{eq:H3-proof-5}
        \parbox{.85\linewidth}{for each path $u_0 \sim_Z u_1 \sim_Z \cdots \sim_Z u_M$ with $u_0, u_M \in V_n$, there exists a path $u_0 = v_0 \sim_Z v_1 \sim_Z \cdots \sim_Z v_N = u_M$ such that $v_0, v_1, \cdots, v_N \in \bigcup_{k = 0}^n V_k$ and $\ell(v_0 \sim_Z v_1 \sim_Z \cdots \sim_Z v_N) \le \ell(u_0 \sim_Z u_1 \sim_Z \cdots \sim_Z u_M)$,}
    \end{equation}
    and that
    \begin{equation}\label{eq:H3-proof-7}
        \parbox{.85\linewidth}{for each path $u_0 \sim_Z u_1 \sim_Z \cdots \sim_Z u_M$ with $u_0 \in V_n$ and $u_M \in \bigcup_{k = n + 1}^\infty V_k$, there exists a path $u_0 = v_0 \sim_Z v_1 \sim_Z \cdots \sim_Z v_N = g(u_M)_n$ such that $v_0, v_1, \cdots, v_N \in \bigcup_{k = 0}^n V_k$ and $\ell(v_0 \sim_Z v_1 \sim_Z \cdots \sim_Z v_N) \le \ell(u_0 \sim_Z u_1 \sim_Z \cdots \sim_Z u_M)$.}
    \end{equation}
    Next, we \emph{claim} that 
    \begin{equation}\label{eq:H3-proof-6}
        \parbox{.85\linewidth}{for each path $u_0 \sim_Z u_1 \sim_Z \cdots \sim_Z u_M$ with $u_0, u_M \in V_n$, if $u_0 \nsim u_M$ and $g(u_0)_{n - 1} \nsim g(u_M)_{n - 1}$, then there exists a path $g(u_0)_{n - 1} = v_0 \sim_Z v_1 \sim_Z \cdots \sim_Z v_N = g(u_M)_{n - 1}$ such that $v_0, v_1, \cdots, v_N \in \bigcup_{k = 0}^{n - 1} V_k$ and $\ell(v_0 \sim_Z v_1 \sim_Z \cdots \sim_Z v_N) \le \ell(u_0 \sim_Z u_1 \sim_Z \cdots \sim_Z u_M)$.}
    \end{equation}
    By~\eqref{eq:H3-proof-5}, we may assume without loss of generality that $u_0, u_1, \cdots, u_M \in \bigcup_{k = 0}^n V_k$.  First, we consider the case where $\{u_0, u_1, \cdots, u_M\} \not\subset V_n$.  In this case, write $j$ (resp.~$k$) for the smallest (resp.~largest) number such that $u_j \in V_{n - 1}$ (resp.~$u_k \in V_{n - 1}$). By~\eqref{eq:H3-proof-2}, there exists a path from $g(u_0)_{n - 1}$ to $u_j$ in $(V_{n - 1}, \sim)$ (resp.~$g(u_M)_{n - 1}$ to $u_k$ in $(V_{n - 1}, \sim)$) whose $\ell$-length is at most $\ell(u_0 \sim \cdots \sim u_{j - 1} \sim_Z u_j)$ (resp.~$\ell(u_M \sim \cdots \sim u_{k + 1} \sim_Z u_k)$). By~\eqref{eq:H3-proof-5}, there exists a path from $u_j$ to $u_k$ in $(\bigcup_{k = 0}^{n - 1} V_k, \sim_Z)$ whose $\ell$-length is at most $\ell(u_j \sim_Z \cdots \sim_Z u_k)$. By concatenating these three paths, we complete the proof of the case where $\{u_0, u_1, \cdots, u_M\} \not\subset V_n$. It remains to consider the case where $u_0, u_1, \cdots, u_M \in V_n$. In this case, by a similar argument to the argument applied in the proof of~\eqref{eq:H3-proof-2}, there are $0 = j_0 < j_1 < \cdots < j_N \le M$ such that if we write $u_j = (x_j, n)$ and $g(u_j)_{n - 1} = (x_j^\prime, n - 1)$, then
    \begin{itemize}
        \item for each $k \in [1, N]_\BZ$, 
        \begin{itemize}
            \item $(u_{j_{k - 1}} \sim \cdots \sim u_{j_k}) \in \Gamma(g(u_{j_{k - 1}})_{n - 1})$, 
            \item $B_{2\alpha^{n - 1}}(x_{j_{k - 1}}^\prime; D) \cap B_{4\alpha^n}(x_{j_k}; D) \neq \emptyset$, 
            \item $\ell(u_{j_{k - 1}} \sim \cdots \sim u_{j_k}) \ge \pi^\prime(g(u_{j_{k - 1}})_{n - 1})$,
        \end{itemize}
        \item $(u_{j_N} \sim \cdots \sim u_M) \notin \Gamma(g(u_{j_N})_{n - 1})$.
    \end{itemize}
    This implies that
    \begin{align*}
        D(x_{j_{N - 1}}^\prime, x_M^\prime) &\le D(x_{j_{N - 1}}^\prime, x_{j_N}) + D(x_{j_N}, x_{j_N}^\prime) + D(x_{j_N}^\prime, x_M) + D(x_M, x_M^\prime) \\
        &\le (2\alpha^{n - 1} + 4\alpha^n) + \alpha^{n - 1} + 2\alpha^{n - 1} + \alpha^{n - 1} < 8\alpha^{n - 1}, 
    \end{align*}
    which implies that $g(u_{j_{N - 1}})_{n - 1} \sim g(u_M)_{n - 1}$. Thus, we conclude that
    \begin{align*}
        &\ell(g(u_0)_{n - 1} \sim g(u_{j_1})_{n - 1} \sim \cdots \sim g(u_{j_{N - 1}})_{n - 1} \sim g(u_M)_{n - 1})\\
        \le& \sum_{k = 0}^{N - 1} \pi^\prime(g(u_{j_k})_{n - 1})
        \le  \ell(u_0 \sim u_1 \sim \cdots \sim u_M). 
    \end{align*}
    This completes the proof of~\eqref{eq:H3-proof-6}.

    We are now ready to prove~\eqref{eq:H3-proof-4}. By repeatedly applying~\eqref{eq:H3-proof-7} and~\eqref{eq:H3-proof-6}, we reduce immediately to the case where $u_0, u_N \in V_n$ for some $n \ge 0$ and either $u_0 \sim u_N$ or $g(u_0)_{n - 1} \sim g(u_N)_{n - 1}$, in which case one verifies immediately that~\eqref{eq:H3-proof-4} is true. This completes the proof of \Cref{lem:H3}.
\end{proof}

We shall write $d_\varrho$ for the geodesic metric on $Z$ such that 
\begin{equation*}
    \begin{cases}
        \len(u \sim v; d_\varrho) \defeq 2\log(1/\eta) & \text{if } u \sim v \text{ is horizontal}; \\
        \len(g(u)_{n - 1} \sim u; d_\varrho) \defeq \log(1/\varrho(u)) & \text{if } u \in V_n \text{ for some } n \ge 1.
    \end{cases}
\end{equation*}
It follows immediately from \Cref{lem:H1-H2}, \eqref{it:H1} that $(Z, d_\varrho)$ is bi-Lipschitz equivalent to $(Z, d_Z)$. In particular, $(Z, d_\varrho)$ is Gromov hyperbolic and the Gromov boundary $\partial_\infty(Z, d_\varrho)$ is quasisymmetrically equivalent to $(X, D)$. We shall write
\begin{equation*}
    (u, v)_\varrho \defeq \frac12(d_\varrho(o, u) + d_\varrho(o, v) - d_\varrho(u, v)), \quad \forall u, v \in Z
\end{equation*}
for the Gromov product. 

For each $\varepsilon \in (0, 1]$, write 
\begin{equation*}
    \phi_\varepsilon(u) \defeq \re^{-\varepsilon d_\varrho(o, u)}, \quad \forall u \in Z; \quad d_\varepsilon(u, v) \defeq \inf_P \int_a^b \phi_\varepsilon(P(t)) \, \rd t, \quad \forall u, v \in Z, 
\end{equation*}
where the infimum is over all paths $P \colon [a, b] \to Z$ from $u$ to $v$ parameterized by $d_\varrho$-length. Write $(\overline Z_\varepsilon, d_\varepsilon)$ for the completion of $(Z, d_\varepsilon)$ and $\partial_\varepsilon Z \defeq \overline Z_\varepsilon \setminus X$. By the discussion of \Cref{section:background-Gromov}, there exists $\varepsilon_\ast > 0$ such that for each $\varepsilon \in (0, \varepsilon_\ast]$, we have that $\partial_\varepsilon Z$ may be naturally identified with $\partial_\infty(Z, d_\varrho) \sim X$ and $d_\varepsilon$ induces a visual metric on $X$ with parameter $\varepsilon$. It turns out that each $d_\varepsilon$ for $\varepsilon \in (0, 1]$ induces a visual metric on $X$ with parameter $\varepsilon$. 

For distinct $x, y \in X$, we shall write
\begin{gather*}
    n(x, y) \defeq \sup\{n \ge 0 : \text{there exists } (z, n) \in V_n \text{ such that } x, y \in B_{2\alpha^n}(z; D)\}; \\
    c(x, y) \defeq \{(z, n(x, y)) \in V_{n(x, y)} : x, y \in B_{2\alpha^{n(x, y)}}(z; D)\}; \\
    \pi(x, y) \defeq \sup\{\pi(u) : u \in c(x, y)\}. 
\end{gather*} 

The following appears in \cite[Proposition~2.4]{ConformalGauge}.

\begin{lemma}\label{lem:d_epsilon}
    For each $\varepsilon \in (0, 1]$, we have $d_\varepsilon(x, y) \asymp \pi(x, y)^\varepsilon$ for all $x, y \in X$. 
\end{lemma}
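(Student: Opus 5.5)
The plan is to prove the two inequalities $d_\varepsilon(x, y) \preceq \pi(x, y)^\varepsilon$ and $d_\varepsilon(x, y) \succeq \pi(x, y)^\varepsilon$ separately. The common translation is that $d_\varrho(o, u) = \log(1/\pi'(u))$ up to an additive constant, where $\pi'$ is the "vertical" product $\prod_j \varrho(g(u)_j)$. Indeed, along the vertical path $o = g(u)_0 \sim_Z \cdots \sim_Z g(u)_n = u$ the $d_\varrho$-length telescopes to
\[
\sum_{j} \log(1/\varrho(g(u)_j)) = \log(1/\pi(u)) + \log \pi(o) = \log(1/\pi(u)),
\]
using \eqref{eq:rho}. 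Hence $d_\varrho(o, u) \le \log(1/\pi(u))$ and $\phi_\varepsilon(u) \ge \pi(u)^\varepsilon$. For the reverse bound $\phi_\varepsilon(u) \preceq \pi(u)^\varepsilon$, we need that no path does much better than the vertical one. I would get this from \Cref{lem:H1-H2}, \eqref{it:H2}, since horizontal edges cost $2\log(1/\eta)$ and change $\log \pi$ by at most $2\log(1/\eta)$, while vertical steps change $\log\pi$ by exactly their length. Thus $\log(1/\pi)$ is $1$-Lipschitz along edges for $d_\varrho$, so $d_\varrho(o, u) \ge \log(1/\pi(u))$ and $\phi_\varepsilon(u) = \pi(u)^\varepsilon$ exactly on vertices. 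On edges, $\phi_\varepsilon$ is comparable to this with constants depending on $\eta$.

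\textbf{Upper bound.} Pick $u = (z, n(x, y)) \in c(x, y)$ attaining (up to a factor) $\pi(x, y)$. For $x$, choose the vertices $(x_k, k)$ with $D(x, x_k) < \alpha^k$ for $k \ge n(x,y)$. Consecutive ones are adjacent vertically, and the first is adjacent to $u$, which requires checking $D$-distances with the constants $2\alpha^n$ and $4\alpha^n$. This gives a path from $u$ to the boundary point $x$ with $d_\varepsilon$-length at most
\[
\sum_{k \ge n} C\,\pi(x_k, k)^\varepsilon \le C \sum_{j \ge 0} \eta^{-2\varepsilon} (1-\eta)^{\varepsilon j} \pi(u)^\varepsilon \asymp \pi(u)^\varepsilon .
\]
This estimate uses $\pi(v)/\pi(w) \in [\eta^2, \eta^{-2}]$ along edges and the bound $\varrho \le 1 - \eta$ from \Cref{lem:H1-H2}, \eqref{it:H1}, which gives geometric decay when descending. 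One subtlety is that a vertical edge between non-parent pairs is not in $Z$. I would handle this by passing through $g(\cdot)$, using the observation that $u \sim v$ implies $u \sim g(v)_n$, at the cost of one horizontal edge per level. That extra cost is again summable. Doing the same for $y$ and concatenating yields $d_\varepsilon(x, y) \preceq \pi(x, y)^\varepsilon$.

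\textbf{Lower bound.} This is the main obstacle, and it is where \Cref{lem:H3} enters. Take any path in $Z$ from $x$ to $y$, approximated by a vertex path $u_0 \sim_Z \cdots \sim_Z u_N$ with $u_0$ close to $x$ and $u_N$ close to $y$ at a deep level $m \gg n(x, y)$. Its $d_\varepsilon$-length is $\succeq \sum_j \phi_\varepsilon(u_j) \asymp \sum_j \pi(u_j)^\varepsilon$. For $\varepsilon = 1$, \Cref{lem:H3} gives directly $\sum_j \pi(u_j) \succeq \pi(u_0, u_N)$. It then remains to compare $\pi(u_0, u_N)$ with $\pi(x, y)$: the level $n(u_0, u_N)$ differs from $n(x, y)$ by a bounded amount, because non-adjacency of ancestors forces $D(x, y) \succeq \alpha^{k}$, and \Cref{lem:H1-H2}, \eqref{it:H2} gives comparability of $\pi$ over boundedly many steps.

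For $\varepsilon < 1$ the inequality $\sum \pi^\varepsilon \ge (\sum \pi)^\varepsilon$ would not suffice, because the whole path must be controlled by its largest scale. So I would instead rerun the proof of \Cref{lem:H3} with $\ell$ replaced by $\ell^\varepsilon$-type weights. The reductions \eqref{eq:H3-proof-2} and \eqref{eq:H3-proof-6} only use that a crossing path at level $n$ has total weight at least $\pi'$ of its parent. With $\varepsilon$-powers, \eqref{eq:H3-proof-1} still holds because $\sum \varrho^\varepsilon \ge \sum \varrho \ge 1$, since $\varrho \le 1$. So the same projection argument goes through verbatim. Letting $m \to \infty$ and taking the infimum over paths gives $d_\varepsilon(x, y) \succeq \pi(x, y)^\varepsilon$.
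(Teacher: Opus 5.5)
Your outline has the same architecture as the paper's proof. First, $\log(1/\pi)$ is $1$-Lipschitz along edges for $d_\varrho$, so $\phi_\varepsilon(u)=\pi(u)^\varepsilon$ on vertices and edges have $d_\varepsilon$-length $\asymp\pi^\varepsilon$. Then an explicit path through a vertex of $c(x,y)$ gives the upper bound, and \Cref{lem:H3} gives the lower bound. Two points need correcting, and the second is a genuine (if easily repaired) gap.

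\textbf{The simple inequality does suffice.} Your claim that $\sum_j\pi(u_j)^\varepsilon\ge(\sum_j\pi(u_j))^\varepsilon$ ``would not suffice'' for $\varepsilon<1$ is wrong; this is exactly the paper's argument. Since $(a+b)^\varepsilon\le a^\varepsilon+b^\varepsilon$ for $\varepsilon\in(0,1]$,
\[
\sum_{j}\pi(u_j)^\varepsilon\ \ge\ \Bigl(\sum_{j}\pi(u_j)\Bigr)^{\varepsilon}\ \succeq\ \pi(u_0,u_N)^\varepsilon .
\]
\Cref{lem:H3} has already controlled the whole path by its top scale before the power is taken. Your rerun of \Cref{lem:H3} with $\varepsilon$-powers is not wrong. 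With weight $\ell(e)^\varepsilon$ on each edge, every step of that proof is either an edge-by-edge comparison or this same subadditivity. It is simply redundant.

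\textbf{The comparison of $\pi(u_0,u_N)$ with $\pi(x,y)$ is argued in the wrong direction.} Write $k=n(u_0,u_N)$. The lower bound needs $\pi(u_0,u_N)\succeq\pi(x,y)$, i.e.\ that $k$ is not much larger than $n(x,y)$. Your argument is that non-adjacency of ancestors at level $k+1$ forces $D(x,y)\succeq\alpha^{k}$. That only gives $n(x,y)\le k+O(1)$, which is the direction relevant to the upper bound, not this one.

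What you need comes from \emph{adjacency} at level $k$. Write $g(u_0)_j=(x_j,j)$ and $g(u_N)_j=(y_j,j)$. Then $D(x_{k-1},x)<2\alpha^{k-1}$ and $D(x_{k-1},y)\le\alpha^{k-1}+8\alpha^k+\sum_{j\ge k}\alpha^j<2\alpha^{k-1}$, so $n(x,y)\ge k-1$. For $w\in c(x,y)$, a distance check gives $g(w)_{k-1}\sim g(u_0)_{k-1}$. Since $\pi$ decreases along the tree, \Cref{lem:H1-H2} then yields $\pi(w)\le\pi(g(w)_{k-1})\preceq\pi(g(u_0)_{k-1})\preceq\pi(g(u_0)_k)\le\pi(u_0,u_N)$.

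\textbf{A smaller remark on the upper bound.} The factor $(1-\eta)^{\varepsilon j}$ cannot come from stepping along your detoured chain. Each step there only gives $\pi(x_{k+1},k+1)\le\eta^{-2}(1-\eta)\pi(x_k,k)$, and $\eta^{-2}(1-\eta)>1$. The decay has to come from the tree: the level-$n(x,y)$ ancestor of each $(x_k,k)$ lies within $8\alpha^{n(x,y)}$ of $z$, hence is adjacent to $u$, and $\pi$ decays by $(1-\eta)$ per level below it. The paper avoids detours altogether: it takes one deep vertex near $x$ and follows its vertical path in $Z$ up to level $n(x,y)$.
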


\begin{proof}
    First, we \emph{claim} that every vertical path in $Z$ is a $d_\varrho$-geodesic. In particular, $d_\varrho(o, u) = \log(1/\pi(u))$ for all $u \in V$. Indeed, it suffices to show that each vertical path in $Z$ from $o$ to a point of $V$ is a $d_\varrho$-geodesic. Suppose by way of contradiction that this is false. Then there exists $n \ge 1$ and $u, v \in V_n$ such that $u \sim v$ and 
    \begin{align*}
        \log(1/\pi(v)) + 2\log(1/\eta) &= \len(o \sim g(v)_1 \sim \cdots \sim g(v)_{n - 1} \sim v \sim u; d_\varrho) \\
        &< \len(o \sim g(u)_1 \sim \cdots \sim g(u)_{n - 1} \sim u; d_\varrho) = \log(1/\pi(u)), 
    \end{align*}
    in contradiction to \Cref{lem:H1-H2}, \eqref{it:H2}. This completes the proof of the \emph{claim}. 

    Next, we \emph{claim} that for each $\varepsilon \in (0, 1]$, 
    \begin{equation}\label{eq:d_epsilon-proof-0}
        \len(u \sim_Z v; d_\varepsilon) \asymp \pi(u)^\varepsilon, \quad \forall u, v \in V \text{ with } u \sim_Z v. 
    \end{equation}
    Indeed, by \Cref{lem:H1-H2}, \eqref{it:H1} and the definition of $d_\varrho$, we have $\len(u \sim_Z v; d_\varrho) \asymp 1$. Combining this with the \emph{claim} of the preceding paragraph, we obtain that $\re^{-\varepsilon d_\varrho(o, u)} \asymp \pi(u)^\varepsilon$. Thus, \eqref{eq:d_epsilon-proof-0} follows immediately from the definition of $d_\varepsilon$. 

    Fix $\varepsilon \in (0, 1]$ and $x, y \in X$. Fix $u = (x_m, m) \in V$ and $v = (y_n, n) \in V$ such that $m \wedge n \ge n(x, y)$, $x \in B_{\alpha^m}(x_m; D)$, and $y \in B_{\alpha^n}(y_n; D)$. Since $d_\varepsilon(u, v) \to d_\varepsilon(x, y)$ as $m, n \to \infty$, it suffices to show that $d_\varepsilon(u, v) \asymp \pi(x, y)^\varepsilon$. 

    First, we verify that $d_\varepsilon(u, v) \preceq \pi(x, y)^\varepsilon$. Fix $(z, n(x, y)) \in c(x, y)$. We \emph{claim} that $g(u)_{n(x, y)} \sim (z, n(x, y)) \sim g(v)_{n(x, y)}$. Indeed, if we write $g(u)_j = (x_j, j)$, then
    \begin{equation*}
        D(x_{n(x, y)}, x_m) \le \sum_{j = n(x, y)}^{m - 1} D(x_j, x_{j + 1}) \le \sum_{j = n(x, y)}^{m - 1} \alpha^j \le \frac{\alpha^{n(x, y)}}{1 - \alpha}, 
    \end{equation*}
    where the second inequality follows from~\eqref{eq:parent}. This implies that 
    \begin{equation*}
        D(x_{n(x, y)}, z) \le D(x_{n(x, y)}, x_m) + D(x_m, x) + D(x, z) \le \frac{\alpha^{n(x, y)}}{1 - \alpha} + \alpha^m + 2\alpha^{n(x, y)} < 8\alpha^{n(x, y)}, 
    \end{equation*}
    hence that $g(u)_{n(x, y)} \sim (z, n(x, y))$. In a similar vein, $(z, n(x, y)) \sim g(v)_{n(x, y)}$. Thus, we conclude that 
    \begin{align*}
        d_\varepsilon(u, v) &\le \len(g(u)_m \sim_Z \cdots \sim_Z g(u)_{n(x, y)} \sim_Z (z, n(x, y)) \sim_Z g(v)_{n(x, y)} \sim_Z \cdots \sim_Z g(v)_n; d_\varepsilon) \\
        &\preceq \sum_{j = n(x, y)}^m \pi(g(u)_j)^\varepsilon + \pi(z, n(x, y))^\varepsilon + \sum_{j = n(x, y)}^n \pi(g(v)_j)^\varepsilon \quad \text{(by~\eqref{eq:d_epsilon-proof-0})} \\
        &\le \pi(z, n(x, y))^\varepsilon \left(1 + 2\eta^{-2\varepsilon}\sum_{j = 0}^\infty (1 - \eta)^{j\varepsilon}\right) \quad \text{(by \Cref{lem:H1-H2}, \eqref{it:H1} and~\eqref{it:H2})}. 
    \end{align*}
    This completes the proof that $d_\varepsilon(u, v) \preceq \pi(x, y)^\varepsilon$. 

    Next, we verify that $d_\varepsilon(u, v) \succeq \pi(x, y)^\varepsilon$. Let $u = u_0 \sim_Z u_1 \sim_Z \cdots \sim_Z u_N = v$ be a path in $(V, \sim_Z)$ connecting $u$ and $v$. Then it follows from~\eqref{eq:d_epsilon-proof-0} and \Cref{lem:H3} that
    \begin{equation*}
        \len(u_0 \sim_Z u_1 \sim_Z \cdots \sim_Z u_N; d_\varepsilon) \succeq \sum_{j = 0}^N \pi(u_j)^\varepsilon \ge \left(\sum_{j = 0}^N \pi(u_j)\right)^\varepsilon \succeq \pi(u, v)^\varepsilon, 
    \end{equation*}
    where the second inequality follows from the fact that $\varepsilon \in (0, 1]$. Thus, it suffices to show that $\pi(u, v) \succeq \pi(x, y)$. Write $g(u)_{n(u, v)} = (x_{n(u, v)}, n(u, v))$ and $g(v)_{n(u, v)} = (y_{n(u, v)}, n(u, v))$. By definition, either $(x_{n(u, v)}, n(u, v)) \sim (y_{n(u, v)}, n(u, v))$ or $x_{n(u, v)} = y_{n(u, v)}$. Write $g(u)_j = (x_j, j)$ and $g(v)_j = (y_j, j)$. We \emph{claim} that $x, y \in B_{2\alpha^{n(u, v) - 1}}(x_{n(u, v) - 1}; D)$. Indeed, 
    \begin{align*}
        D(x_{n(u, v) - 1}, x) &\le D(x_{n(u, v) - 1}, x_{n(u, v)}) + \cdots + D(x_{m - 1}, x_m) + D(x_m, x) \\
        &\le \alpha^{n(u, v) - 1} + \cdots + \alpha^{m - 1} + \alpha^m \\
        &< 2\alpha^{n(u, v) - 1}, 
    \end{align*}
    and
    \begin{align*}
        &D(x_{n(u, v) - 1}, y) \\
        &\le D(x_{n(u, v) - 1}, x_{n(u, v)}) + D(x_{n(u, v)}, y_{n(u, v)}) + D(y_{n(u, v)}, y_{n(u, v) + 1}) + \cdots + D(y_{n - 1}, y_n) + D(y_n, y) \\
        &\le \alpha^{n(u, v) - 1} + 8\alpha^{n(u, v)} + \alpha^{n(u, v)} + \cdots + \alpha^{n - 1} + \alpha^n \\
        &< 2\alpha^{n(u, v) - 1}. 
    \end{align*}
    This completes the proof that $x, y \in B_{2\alpha^{n(u, v) - 1}}(x_{n(u, v) - 1}; D)$. This implies that $n(x, y) \ge n(u, v) - 1$. Fix $(z, n(x, y)) \in c(x, y)$. We \emph{claim} that $g(z, n(x, y))_{n(u, v) - 1} \sim (x_{n(u, v) - 1}, n(u, v) - 1)$. Indeed, if we write $g(z, n(x, y))_{n(u, v) - 1} = (z_{n(u, v) - 1}, n(u, v) - 1)$, then 
    \begin{align*}
        D(z_{n(u, v) - 1}, x_{n(u, v) - 1}) &\le D(z_{n(u, v) - 1}, z) + D(z, x) + D(x, x_{n(u, v) - 1}) \\
        &\le \alpha^{n(u, v) - 1} + \cdots + \alpha^{n(x, y)} + 2\alpha^{n(x, y)} + 2\alpha^{n(u, v) - 1} \\
        &< 8\alpha^{n(u, v) - 1}. 
    \end{align*}
    This completes the proof that $g(z, n(x, y))_{n(u, v) - 1} \sim (x_{n(u, v) - 1}, n(u, v) - 1)$. Thus, 
    \begin{equation*}
        \pi(z, n(x, y)) \le \pi(g(z, n(x, y))_{n(u, v) - 1}) \preceq \pi(x_{n(u, v) - 1}, n(u, v) - 1) \preceq \pi(x_{n(u, v)}, n(u, v)) \le \pi(u, v), 
    \end{equation*}
    where the second and third inequalities follow from \Cref{lem:H1-H2}, \eqref{it:H2}, and the last inequality follows from the definition that $\pi(u, v) = \pi(x_{n(u, v)}, n(u, v)) \vee \pi(y_{n(u, v)}, n(u, v))$. This completes the proof that $d_\varepsilon(u, v) \succeq \pi(x, y)^\varepsilon$, hence the proof of \Cref{lem:d_epsilon}. 
\end{proof}

\begin{corollary}\label{lem:corollary}
    The metric $d_1$ induces a visual metric on $X$ with parameter one such that 
    \begin{equation*}
        \diam(B_{\alpha^n}(x; D); d_1) \preceq \pi(u), \quad \forall u = (x, n) \in V. 
    \end{equation*}
    In particular, since $\pi(u) \le (1 - \eta)^n$ for $u \in V_n$ (cf.~\Cref{lem:H1-H2}, \eqref{it:H1}), if $\sum_{u \in V_n} \pi(u)^p \to 0$ as $n \to \infty$, then the Hausdorff dimension of $(X, d_1)$ (hence also the conformal dimension of $(X, D)$) is at most $p$.
\end{corollary}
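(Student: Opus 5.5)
We need to prove the Corollary: $d_1$ is a visual metric with parameter one, $\diam(B_{\alpha^n}(x;D); d_1) \preceq \pi(u)$, and the Hausdorff dimension consequence.

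The plan is to deduce everything from \Cref{lem:d_epsilon} with $\varepsilon = 1$, which gives $d_1(x, y) \asymp \pi(x, y)$ for all $x, y \in X$.

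\emph{Visual metric.} First we identify $\pi(x,y)$ with $\re^{-(x,y)_\varrho}$ up to constants. Every vertical path is a $d_\varrho$-geodesic, so $d_\varrho(o,u) = \log(1/\pi(u))$. For $x, y$ with $c(x,y) \ni w$, the vertical geodesic rays toward $x$ and $y$ pass within bounded $d_\varrho$-distance of $w$, by the adjacency argument in the proof of \Cref{lem:d_epsilon}. Here \Cref{lem:H1-H2} ensures that edges have $d_\varrho$-length $\asymp 1$ and that $\pi$ varies by bounded factors along edges. Hence $(x,y)_\varrho = \log(1/\pi(x,y)) + O(1)$. Alternatively, one can observe that $d_1$ is, for small $\varepsilon$, a power of a visual metric $d_\varepsilon$: \Cref{lem:d_epsilon} gives $d_1 \asymp \pi^{1} \asymp d_\varepsilon^{1/\varepsilon}$, and $d_\varepsilon \asymp \re^{-\varepsilon(x,y)_\varrho}$ for $\varepsilon \le \varepsilon_\ast$. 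This yields $d_1 \asymp \re^{-(x,y)_\varrho}$ directly, which is the cleaner route. Since $(Z, d_\varrho)$ has boundary quasisymmetric to $(X,D)$ and all visual metrics are quasisymmetric to each other, $(X, d_1)$ is quasisymmetric to $(X, D)$.

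\emph{Diameter bound.} Take $u = (x,n)$ and $y, y' \in B_{\alpha^n}(x; D)$. Then $x \in A_n$ witnesses $y, y' \in B_{2\alpha^n}(x;D)$, so $n(y,y') \ge n$. Let $w = (z, n(y,y')) \in c(y,y')$. We need $\pi(w) \preceq \pi(u)$. The ancestor $g(w)_n$ satisfies $D(z_n, x) \le \alpha^n/(1-\alpha) + 2\alpha^{n(y,y')} + \alpha^n < 8\alpha^n$, so $g(w)_n \sim u$. Moreover $\pi$ is non-increasing along vertical paths, since $\varrho \le 1 - \eta$. Combining these with \Cref{lem:H1-H2}, \eqref{it:H2},
\[
\pi(w) \le \pi(g(w)_n) \le \eta^{-2}\pi(u).
\]
Taking the supremum over $y, y'$ gives $\diam(B_{\alpha^n}(x;D); d_1) \preceq \pi(u)$.

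\emph{Dimension bound.} The balls $\{B_{\alpha^n}(x;D) : (x,n) \in V_n\}$ cover $X$, and by the diameter bound their $d_1$-diameters are at most $C\pi(u) \le C(1-\eta)^n \to 0$. Hence the $p$-dimensional Hausdorff content at scale $C(1-\eta)^n$ is at most $C^p\sum_{u \in V_n}\pi(u)^p \to 0$. Thus $\mathcal H^p(X, d_1) = 0$, so $\dim_H(X,d_1) \le p$, and the conformal dimension of $(X, D)$ is at most $p$. The main (mild) obstacle is the visual-metric identification for $\varepsilon = 1$ beyond $\varepsilon_\ast$; the power-comparison trick via \Cref{lem:d_epsilon} handles it.
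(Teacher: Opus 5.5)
Your proposal is correct and follows the paper's proof essentially step for step. The visual-metric claim goes through the comparison $d_1^{\varepsilon_\ast} \asymp \pi^{\varepsilon_\ast} \asymp d_{\varepsilon_\ast} \asymp \re^{-\varepsilon_\ast (x,y)_\varrho}$ from \Cref{lem:d_epsilon}, and the diameter bound goes through $n(y,y') \ge n$, $g(w)_n \sim u$ for $w \in c(y,y')$, monotonicity of $\pi$ along vertical paths, and \Cref{lem:H1-H2}. Your explicit Hausdorff-content covering argument for the final dimension bound fills in a step the paper leaves implicit.
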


\begin{proof}
    Recall that there exists $\varepsilon_\ast > 0$ such that for each $\varepsilon \in (0, \varepsilon_\ast]$, we have $\partial_\varepsilon Z$ may be naturally identified with $\partial_\infty(Z, d_\varrho) \sim X$ and $d_\varepsilon$ induces a visual metric on $X$ with parameter $\varepsilon$. Combining this with \Cref{lem:d_epsilon}, we obtain that 
    \begin{equation*}
        d_1(x, y)^{\varepsilon_\ast} \asymp \pi(x, y)^{\varepsilon_\ast} \asymp d_{\varepsilon_\ast}(x, y) \asymp \re^{-\varepsilon_\ast(x, y)_\varrho}, \quad \forall x, y \in X, 
    \end{equation*}
    hence that $d_1$ induces a visual metric on $X$ with parameter one. Let $u = (x, n) \in V$ and $y, z \in B_{\alpha^n}(x; D)$. By a similar argument to the argument applied in the proof of \Cref{lem:d_epsilon}, one verifies that $n \le n(y, z)$ and $u \sim g(v)_n$ for all $v \in c(y, z)$. Thus, we conclude from \Cref{lem:d_epsilon} and \Cref{lem:H1-H2}, \eqref{it:H2} that
    \begin{equation*}
        d_1(y, z) \preceq \pi(y, z) \le \sup\{\pi(g(v)_n) : v \in c(y, z)\} \preceq \pi(u). 
    \end{equation*}
    This completes the proof of \Cref{lem:corollary}. 
\end{proof}

\section{Constructing an admissible weight}\label{section:weight}

In the present section, we adopt the notation of \Cref{section:hyperbolic-filling} with $X$ a subset of $\BC$ and $D = D_\Phi$, where $(\BC, \Phi; \infty)$ is an embedding of a $\gamma$-LQG surface; we construct a weight function $\sigma \colon \BC \times \BN \to \BR_{\ge 0}$ that satisfies~\eqref{eq:admissibility}.

Fix sufficiently small parameters $\zeta > 0$ and $\alpha = \alpha(\zeta) > 0$ to be chosen later. 

\begin{definition}\label{def:G}
    Let $\SCA = (A, \Phi_A; I, O)$ be a $\gamma$-LQG surface parameterized by a doubly connected domain $A$, where $I$ (resp.~$O$) denotes the inner (resp.~outer) boundary of $A$. Suppose that 
    \begin{equation}\label{eq:G}
        w \defeq D_{\Phi_A}(I, O) = D_{\Phi_A}(I, x), \quad \forall x \in O. 
    \end{equation}
    Then:
    \begin{itemize}
        \item Let $0 \le s < t \le w$. Then we shall write $B_t^O(I; D_{\Phi_A})$ for the complement of the connected component of $A \setminus B_t(I; D_{\Phi_A})$ whose boundary contains $O$; we shall write $A_{s,t}^O(I; D_{\Phi_A}) \defeq B_t^O(I; D_{\Phi_A}) \setminus B_s^O(I; D_{\Phi_A})$. 
        \item We shall write $G(\SCA)$ for the event that the metric ball $B_{w/8}(x; D_{\Phi_A})$ does not disconnect $I$ and $O$ for all $x \in A_{w/4,3w/4}^O(I; D_{\Phi_A})$. (Here, we note that $B_{w/8}(x; D_{\Phi_A})$ does not intersect $I$ and $O$ for all $x \in A_{w/4,3w/4}^O(I; D_{\Phi_A})$.) 
    \end{itemize}
    Let $x \in \BC$. Then we shall write $G_{s,t}(x) \defeq G(A_{s,t}^\bullet(x; D_\Phi))$. 
\end{definition}

Heuristically, the event $G(\SCA)$ ensures that the surface $\SCA$ does not contain a narrow bottleneck that could disconnect its inner and outer boundaries.

The point of defining $G$ for an abstract $\gamma$-LQG surface (rather than only for metric bands) is that, viewed purely as a $\gamma$-LQG surface, a metric band does not determine its center or its inner and outer radii.

\begin{lemma}\label{lem:G-measurability}
    In the notation of \Cref{def:G}, the event $G(\SCA)$ is almost surely determined by $\SCA$ (as a $\gamma$-LQG surface). 

\end{lemma}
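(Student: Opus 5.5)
The key observation is that every ingredient in the definition of $G(\SCA)$ is defined intrinsically from the metric $D_{\Phi_A}$ and the marked boundary arcs $I$ and $O$. Moreover, each of these ingredients is invariant under the coordinate changes that define equivalence of $\gamma$-LQG surfaces. Accordingly, I will show that if $(A, \Phi_A; I, O)$ and $(A', \Phi_{A'}; I', O')$ are two embeddings related by a conformal map $\phi \colon A' \to A$ with $\Phi_{A'} = \Phi_A \circ \phi + Q\log|\phi'|$ and $\phi(I') = I$, $\phi(O') = O$, then $G$ occurs for one embedding if and only if it occurs for the other (almost surely).

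The main input is the coordinate change formula for the LQG metric. Almost surely, for every such $\phi$ simultaneously, $D_{\Phi_{A'}}(z,w) = D_{\Phi_A}(\phi(z),\phi(w))$ for all $z,w \in A'$. I will quote this from the construction of the metric (\cite{ExUniLQG,ConfLQG}), which states it for the internal metric on domains. The one point needing care is that $\phi$ is random and depends on the field, so I need the version of coordinate change that holds almost surely for all conformal maps at once. \cite{ConfLQG} provides exactly this. I expect this to be the main obstacle: choosing the right statement and checking that it extends continuously to the boundary arcs. The metric extends to $\overline A$ by continuity, and $\phi$ extends to a homeomorphism between the prime-end compactifications carrying $I'$ to $I$ and $O'$ to $O$. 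Since both boundaries are at finite distance, the distances $D_{\Phi_A}(I,\cdot)$ are preserved.

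Granting this, the rest is formal. First, the quantity $w$ in \eqref{eq:G} is preserved, since $D_{\Phi_{A'}}(I', x') = D_{\Phi_A}(I, \phi(x'))$; the same holds for the hypothesis that $D(I,\cdot)$ is constant on $O$. Second, $\phi(B_t(I'; D_{\Phi_{A'}})) = B_t(I; D_{\Phi_A})$. Because $\phi$ is a homeomorphism mapping the boundary component $O'$ to $O$, it sends the connected component of $A' \setminus B_t(I')$ whose boundary contains $O'$ to the corresponding component for $A$. Hence $\phi(B_t^{O'}(I')) = B_t^O(I)$, and therefore $\phi(A'^{O'}_{w/4,3w/4}(I')) = A^O_{w/4,3w/4}(I)$. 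Third, metric balls satisfy $\phi(B_{w/8}(x'; D_{\Phi_{A'}})) = B_{w/8}(\phi(x'); D_{\Phi_A})$. A set disconnects $I'$ from $O'$ in $\overline{A'}$ exactly when its image disconnects $I$ from $O$, since disconnection is a topological property preserved by homeomorphisms of the closures respecting the boundary arcs. Combining these three facts, the event $G$ is invariant, and hence it is a function of the equivalence class $\SCA$.

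Finally, I need measurability of $G$ as an event. I would note that it can be written using countably many rational radii and a countable dense set of points $x$. The continuity of $D_{\Phi_A}$ with respect to the Euclidean topology makes the quantifier over $x$ reducible to a dense countable family, up to replacing $w/8$ by rational radii approximating it from above and below. Since the condition ``for all $x$, the ball does not disconnect'' is monotone in the radius, this reduction is harmless.
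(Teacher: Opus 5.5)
Your proposal is correct and is essentially the paper's argument. The paper's proof is only the remark that the lemma ``follows immediately from the definitions,'' and you have unpacked that remark. Every ingredient of the event $G$ is defined intrinsically from $D_{\Phi_A}$ and the marked boundary components $I$ and $O$, and all of these are preserved under the conformal coordinate change rule for the LQG metric. The ingredients are the width $w$, the sets $B_t^O(I; D_{\Phi_A})$, the metric balls, and the disconnection condition. Your additional remarks on boundary behaviour and on reducing to countable families for measurability go beyond what the paper records, but they are consistent with it.
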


\begin{proof} 
    This follows immediately from the definitions.  
\end{proof} 

\begin{definition}\label{def:F}
    Let $x \in \BC$ and $n \in \BN$. Then we shall write $F(x, n)$ for the event that there exists $\alpha^{n - 1}/8 \le t < t + 8\alpha^{n - 1 + \zeta} \le \alpha^{n - 1}/4$ such that the event $G_{t,t + 8\alpha^{n - 1 + \zeta}}(x)$ occurs. 
\end{definition}

\begin{lemma}\label{lem:F-measurability}
    In the notation of \Cref{def:F}, the event $F(x, n)$ is almost surely determined by the $\gamma$-LQG surface parameterized by 
    \begin{equation*}
        \left(A_{\alpha^{n - 1}/8,\alpha^{n - 1}/4}^\bullet(x; D_\Phi); \partial B_{\alpha^{n - 1}/8}^\bullet(x; D_\Phi), \partial B_{\alpha^{n - 1}/4}^\bullet(x; D_\Phi)\right). 
    \end{equation*}
\end{lemma}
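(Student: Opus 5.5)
The plan is to reduce \Cref{lem:F-measurability} to \Cref{lem:G-measurability} by showing that each sub-band appearing in \Cref{def:F} is an intrinsic object of the big band
\[
\SCA \defeq \bigl(A_{\alpha^{n-1}/8,\alpha^{n-1}/4}^\bullet(x; D_\Phi); \partial B_{\alpha^{n-1}/8}^\bullet(x; D_\Phi), \partial B_{\alpha^{n-1}/4}^\bullet(x; D_\Phi)\bigr).
\]
Write $s_0 \defeq \alpha^{n-1}/8$ and $w_0 \defeq \alpha^{n-1}/8$, the width of $\SCA$.

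\textbf{Step 1: the big band satisfies~\eqref{eq:G}.} Every point of $\partial B_{s_0+w_0}^\bullet(x; D_\Phi)$ lies at $D_\Phi$-distance exactly $s_0+w_0$ from $x$. Every point of the inner boundary lies at distance $s_0$ from $x$. Moreover, a $D_\Phi$-geodesic from $x$ to a point $y$ of the outer boundary runs inside $B_{s_0+w_0}(x;D_\Phi)$, hence inside the filled ball. Its portion after the last hitting time of $\partial B_{s_0}^\bullet(x;D_\Phi)$ stays in the band and has length $w_0$.

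It remains to check that internal distances in the band dominate $D_\Phi$-distance differences. A path in the band from the inner boundary to $y$ changes $D_\Phi(x,\cdot)$ by $w_0$, so its length is at least $w_0$. Combining the two facts gives $D_{\Phi_A}(I,y)=w_0$ for every $y \in O$, which is exactly~\eqref{eq:G}.

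\textbf{Step 2: identify the intrinsic neighbourhood.} For $u\in\SCA$ the same argument gives
\[
D_{\Phi_A}(I,u)=D_\Phi(x,u)-s_0 .
\]
The lower bound comes from the distance-difference argument above. The upper bound comes from following a geodesic from $x$ to $u$, which enters the band through its inner boundary. Hence $B_r(I;D_{\Phi_A})=B_{s_0+r}(x;D_\Phi)\cap\SCA$.

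Next compare complementary components. The component of $A\setminus B_r(I;D_{\Phi_A})$ whose boundary contains $O$ equals the unbounded component of $\BC\setminus B_{s_0+r}(x;D_\Phi)$, intersected with the band. Indeed, the unbounded component contains the exterior of the band, and it is connected through $O$. Therefore, for $0\le a<b\le w_0$,
\[
A^O_{a,b}(I;D_{\Phi_A})=A^\bullet_{s_0+a,\,s_0+b}(x;D_\Phi),
\]
with matching inner and outer boundaries.

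\textbf{Step 3: conclude.} Each sub-band $A^\bullet_{t,t+8\alpha^{n-1+\zeta}}(x;D_\Phi)$ with $t\in[\alpha^{n-1}/8,\alpha^{n-1}/4-8\alpha^{n-1+\zeta}]$ is therefore obtained from $\SCA$ by a rule depending only on the $\gamma$-LQG surface structure of $\SCA$ and the deterministic number $t-s_0$. This holds as a marked surface, because the restriction of the field and the internal metric are intrinsic (the LQG metric is local). By \Cref{lem:G-measurability}, each event $G_{t,t+8\alpha^{n-1+\zeta}}(x)$ is then determined by $\SCA$.

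The union over uncountably many $t$ is still determined by $\SCA$, since it is a deterministic function of the whole surface. If a measurability concern arises, one can use continuity in $t$, or rational $t$ with a mild monotonicity argument.

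\textbf{Expected main obstacle.} The delicate point is Step 2's claim that the internal metric of the band computes $D_\Phi(x,\cdot)-s_0$. This requires that $D_{\Phi_A}$, the internal metric on the closed band, agrees with the LQG metric of the restricted field. That agreement follows from the locality of the $\gamma$-LQG metric. It also requires that geodesics from $x$ can be truncated at their last hitting of the inner boundary, and I would handle this carefully at that truncation point.
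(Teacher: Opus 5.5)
Your proposal is correct and takes the same route as the paper, whose proof just says the lemma follows from the definitions and \Cref{lem:G-measurability}. Your Step~2 identity $D_{\Phi_A}(I,u) = D_\Phi(x,u) - s_0$ and the resulting identification $A^O_{a,b}(I; D_{\Phi_A}) = A^\bullet_{s_0+a,s_0+b}(x; D_\Phi)$ are exactly the content of ``follows from the definitions.'' Step~1 only shows that $G$ is well defined on these bands and is not needed for measurability; the paper does not address the uncountable union over $t$ either.
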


\begin{proof}
    This follows immediately from the definitions and \Cref{lem:G-measurability}.
\end{proof}

\begin{definition}\label{def:tilde-F}
    Let $x \in \BC$ and $n \in \BN$. Then we shall write $\widetilde F(x, n)$ for the event that there exists a doubly connected domain $A$ that is contained in $A_{\alpha^{n - 1}/16,5\alpha^{n - 1}/16}^\bullet(x; D_\Phi)$ and disconnects the inner and outer boundaries of $A_{\alpha^{n - 1}/16,5\alpha^{n - 1}/16}^\bullet(x; D_\Phi)$ such that~\eqref{eq:G} is satisfied with $w = 8\alpha^{n - 1 + \zeta}$, and the event $G$ of the $\gamma$-LQG surface parameterized by $A$ occurs.
\end{definition}

The point of introducing the event $\widetilde F(x, n)$ is to obtain a version of $F(x, n)$ that is robust under small perturbations of the center $x$.

\begin{lemma}\label{lem:F-2-tilde-F}
    Let $x \in \BC$ and $n \in \BN$. If $F(x, n)$ occurs, then $\widetilde F(x^\prime, n)$ occurs for all $x^\prime \in B_{\alpha^{n - 1}/16}(x; D_\Phi)$. 
\end{lemma}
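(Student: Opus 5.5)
The plan is to take as the doubly connected domain $A$ in \Cref{def:tilde-F} (for the center $x^\prime$) the very metric band $A_0 \defeq A_{t,t + 8\alpha^{n - 1 + \zeta}}^\bullet(x; D_\Phi)$ supplied by $F(x, n)$, regarded as a $\gamma$-LQG surface with its own inner and outer boundaries. Here $\alpha^{n - 1}/8 \le t < t + 8\alpha^{n - 1 + \zeta} \le \alpha^{n - 1}/4$. Since the event $G$ of \Cref{def:G} is intrinsic to the marked surface $\SCA = (A_0, \Phi|_{A_0}; I, O)$ (\Cref{lem:G-measurability}), it occurs for $A_0$ no matter which center we use. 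So it suffices to check three things: that $A_0 \subset A_{\alpha^{n - 1}/16,5\alpha^{n - 1}/16}^\bullet(x^\prime; D_\Phi)$; that $A_0$ disconnects the inner and outer boundaries of the latter band; and that \eqref{eq:G} holds for $A_0$ with $w = 8\alpha^{n - 1 + \zeta}$.

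\emph{Step 1 (the width condition \eqref{eq:G}).} Every point of $O = \partial B_{t + w}^\bullet(x; D_\Phi)$ is at $D_\Phi$-distance exactly $t + w$ from $x$, and every point of $I = \partial B_t^\bullet(x; D_\Phi)$ is at distance $t$. For $y \in O$, take a $D_\Phi$-geodesic from $y$ to $x$. It must cross $I$, and its portion from $y$ to $I$ stays in $\overline{A_0}$: after leaving $A_0$ it would enter $B_t^\bullet$, and since distances to $x$ decrease monotonically along the geodesic, it cannot come back out. Hence $D_{\Phi_{A_0}}(I, y) \le w$. Conversely, any path in $A_0$ from $I$ to $y$ changes the distance to $x$ by at least $w$, so its length is at least $w$. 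This gives $D_{\Phi_{A_0}}(I, y) = w = D_{\Phi_{A_0}}(I, O)$ for all $y \in O$.

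\emph{Step 2 (containment).} For $y \in A_0$ we have $t \le D_\Phi(x, y) \le t + w$, up to the filled holes. Since $D_\Phi(x, x^\prime) < \alpha^{n - 1}/16$, the triangle inequality gives $\alpha^{n - 1}/16 < D_\Phi(x^\prime, y) < 5\alpha^{n - 1}/16$. More carefully, we will argue with filled balls:
\begin{equation*}
    B_{\alpha^{n - 1}/16}^\bullet(x^\prime; D_\Phi) \subset B_{\alpha^{n - 1}/8}^\bullet(x; D_\Phi) \subset B_t^\bullet(x; D_\Phi),
\end{equation*}
because $B_{\alpha^{n - 1}/16}(x^\prime) \subset B_{\alpha^{n - 1}/8}(x)$ and filling is monotone. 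Similarly,
\begin{equation*}
    B_{t + w}^\bullet(x; D_\Phi) \subset B_{\alpha^{n - 1}/4}^\bullet(x; D_\Phi) \subset B_{5\alpha^{n - 1}/16}^\bullet(x^\prime; D_\Phi).
\end{equation*}
Taking set differences yields the containment $A_0 \subset A_{\alpha^{n - 1}/16,5\alpha^{n - 1}/16}^\bullet(x^\prime; D_\Phi)$.

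\emph{Step 3 (disconnection).} The same chain of inclusions shows that $B_t^\bullet(x)$ contains the inner complementary piece $B_{\alpha^{n - 1}/16}^\bullet(x^\prime)$. Moreover, $\BC \setminus B_{t + w}^\bullet(x)$ is connected, unbounded, and contains $\BC \setminus B_{5\alpha^{n - 1}/16}^\bullet(x^\prime)$. Hence any path in the larger band from its inner boundary to its outer boundary must pass through $A_0$, so $A_0$ separates the two boundaries.

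The main subtlety I expect is the filled-ball bookkeeping in Steps 2 and 3. One must make sure that the holes of $B_t(x)$ which get filled in do not escape $B_{5\alpha^{n - 1}/16}^\bullet(x^\prime)$. This is handled by monotonicity of the map $U \mapsto U^\bullet$ under inclusion of bounded sets, which I will verify directly from the definition as the complement of the unbounded component. The interpretation of ``$A$ is a doubly connected domain'' (open band versus closure) is a minor technicality handled by taking interiors.
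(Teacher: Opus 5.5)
Your proposal is correct and is the argument the paper leaves implicit when it says the lemma ``follows immediately from the definitions''. The band $A_{t,t+8\alpha^{n-1+\zeta}}^\bullet(x; D_\Phi)$ witnessing $F(x,n)$ serves as the domain $A$ in $\widetilde F(x',n)$, because $G$ is intrinsic to the marked surface, and the triangle inequality together with monotonicity of filling gives $B_{\alpha^{n-1}/16}^\bullet(x'; D_\Phi) \subset B_t^\bullet(x; D_\Phi)$ and $B_{t+8\alpha^{n-1+\zeta}}^\bullet(x; D_\Phi) \subset B_{5\alpha^{n-1}/16}^\bullet(x'; D_\Phi)$. Your Step 1 also checks \eqref{eq:G} for the band, which the paper takes for granted when defining $G_{s,t}(x)$.
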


\begin{proof}
    This follows immediately from the definitions.
\end{proof}

\begin{lemma}\label{lem:F-probability}
    Suppose that $(\BC, \Phi; 0, \infty)$ is a $\gamma$-LQG cone. Then for each $n \in \BN$, 
    \begin{equation}\label{eq:F-probability}
        \inf_{\ell \ge 0} \BP\!\left\lbrack F(0, n) \ \middle\vert \ \SN_\Phi(\partial B_{\alpha^{n - 1}/8}^\bullet(0; D_\Phi)) = \ell\right\rbrack = 1 - O(\alpha^\infty), 
    \end{equation}
    as $\alpha \to 0$, at a rate which is uniform in $n$. 
\end{lemma}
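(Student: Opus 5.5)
By scaling, the conditional probability in \eqref{eq:F-probability} does not depend on $n$ once $\ell$ is rescaled. Multiplying $D_\Phi$ by $8\alpha^{-(n-1)}$ turns the band $A^\bullet_{\alpha^{n-1}/8,\alpha^{n-1}/4}(0;D_\Phi)$ into $A^\bullet_{1,2}$ and the inner boundary length into $64\alpha^{-2(n-1)}\ell$. The infimum over $\ell$ absorbs this, so it suffices to treat $n=1$ after rescaling, namely the band $A^\bullet_{1,2}(0;D_\Phi)$ conditioned on $\SN_\Phi(\partial B_1^\bullet(0;D_\Phi))=\ell$. Call this the metric band of cone type with inner boundary length $\ell$ and width $1$. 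We must show it contains a sub-band of width $\delta\defeq 64\alpha^{\zeta}$ on which $G$ occurs, with failure probability $O(\alpha^\infty)$ uniformly in $\ell$.

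The sub-bands to test are $A_j\defeq A^\bullet_{1+j\delta,1+(j+1)\delta}(0;D_\Phi)$ for $j\in[0,\delta^{-1}/2]_\BZ$, about $\alpha^{-\zeta}/128$ of them. By Lemma~\ref{lem:F-measurability} and the Markov property of the breadth-first exploration, given the boundary lengths $Y_j\defeq\SN_\Phi(\partial B^\bullet_{1+j\delta})$, the sub-band $A_j$ is a metric band of cone type with inner length $Y_j$ and width $\delta$. It is conditionally independent of the past given $Y_j$. After rescaling by $\delta^{-1}$, it becomes a band of width $1$ with inner length $Y_j\delta^{-2}$. Whether $G$ occurs on it is determined by $A_j$, by Lemma~\ref{lem:G-measurability}.

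I would then establish two inputs. (i) A uniform lower bound: there are $p_0>0$ and $K<\infty$ such that for every $a\in[K^{-1},K]$, a cone-type band of inner length $a$ and width $1$ satisfies $G$ with probability at least $p_0$. This should follow from the total variation continuity in $a$, as in the proof of Lemma~\ref{lem:good-band}, together with $\BP[G]>0$ for a single fixed band. The latter would come from a positive-probability ``no bottleneck'' configuration. Alternatively, I would argue that bottlenecks of size $w/8$ separating the boundaries at distance $\ge w/4$ from both are unlikely for fat bands: a disconnecting ball of radius $w/8$ means the boundary length of some intermediate filled ball is small relative to $w^2$, which is controlled by Lemma~\ref{lem:hull-process}. (ii) The boundary lengths $Y_j\delta^{-2}$ lie in $[K^{-1},K]$ for a positive fraction of $j$, except on an event of probability $\exp(-c\alpha^{-\zeta})$. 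The upper bound should follow from Lemma~\ref{lem:CSBP-independence}\eqref{it:CSBP-independence-0}, adapted to arithmetic sequences of times via Lemma~\ref{lem:CSBP} applied between consecutive times. The lower bound should follow from Lemma~\ref{lem:hull-process}, applied on consecutive windows to show that $Y$ is not below $K^{-1}\delta^2$ for long stretches. This is uniform in the starting value $\ell$ because both lemmas are uniform in the initial length. Given (i) and (ii), the same stochastic domination by a binomial used in Lemma~\ref{lem:good-band} shows that all sub-bands fail with probability at most $(1-p_0)^{c\alpha^{-\zeta}}+\exp(-c\alpha^{-\zeta})=O(\alpha^\infty)$.

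The main obstacle is (i): proving $\BP[G]>0$ and its uniformity for a width-one band with bounded inner length. I would first try the direct route through Lemma~\ref{lem:hull-process}, which bounds the probability that the hull process stays small over a stretch. If that falls short, I would use the conformal embedding. Lemma~\ref{lem:good-band} already gives good conformal shape with positive probability. With conformal modulus bounded below and field averages controlled, a small metric ball would be contained in a small Euclidean ball and so could not disconnect the annulus. Some care is also needed when $\ell$ is very large or very small. For that I would start the sub-band count only after $Y$ has entered the window $[K^{-1}\delta^2,K\delta^2]$; the estimates in (ii) should show this happens before distance $1/4$, with probability $1-O(\alpha^\infty)$.
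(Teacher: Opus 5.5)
There is a genuine gap. Your skeleton matches the paper's: reduce to $n=1$, test many disjoint sub-bands, use the Markov property of the exploration, and finish with a geometric bound. The trouble is a scaling error that breaks step (ii). Your test bands $A_j$ have width $\delta\asymp\alpha^{\zeta}$ but sit at distance $\asymp 1$ from the origin. Their inner boundary lengths $Y_j=\SN_\Phi(\partial B^\bullet_{1+j\delta}(0;D_\Phi))$ are therefore of order one, not of order $\delta^2$. Indeed, the marginal law of $Y_{-t}$ is Gamma with shape $3/2$ and mean $t^2$, so $\BP[Y_{-t}\le K\delta^2]\asymp K^{3/2}\delta^3$ for $t\in[1,2]$. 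Averaging over $\ell$, the expected number of $j\le\delta^{-1}/2$ with $Y_j\delta^{-2}\le K$ is $O(K^{3/2}\delta^2)\to0$. So (ii), which you need uniformly in $\ell$, fails: the event has probability tending to zero rather than $1-\exp(-c\alpha^{-\zeta})$. Waiting for $Y$ to enter $[K^{-1}\delta^2,K\delta^2]$ fails for the same reason, since going outward the hull process only jumps down (when bubbles are swallowed) and typically grows like $t^2$. The rescaled lengths $a=Y_j\delta^{-2}$ that actually occur are of order $\delta^{-2}\to\infty$. Your (i), coming from total-variation continuity, is only uniform over a compact range of $a$. What the argument needs is a lower bound on $\BP[G]$ for a width-$w$ band that is uniform over \emph{all} inner lengths $\ell\ge Cw^2$, and no compactness argument gives this.

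The paper supplies exactly that bound, as follows.
\begin{itemize}
\item If $G_{t,t+w}(0)$ fails, some ball of radius $w/8$ is crossed by every geodesic from $\partial B^\bullet_{t+w}(0;D_\Phi)$ to $\partial B^\bullet_t(0;D_\Phi)$. Hence any two points of $\partial B^\bullet_{t+w}(0;D_\Phi)$ are within $4w$ of each other in the internal metric of $\BC\setminus B^\bullet_t(0;D_\Phi)$.
\item Given its boundary length $\ell$, this exterior surface \eqref{eq:Brownian-horn} has a law depending only on $\ell$ and covariant under scaling. For boundary length one, the outer boundary of the width-$c$ band has internal diameter at most $4c$ with probability at most $1/2$ once $c\le c_\ast$.
\item By scaling, $\BP[G_{t,t+w}(0)\mid\ell]\ge 1/2$ for all $\ell\ge c_\ast^{-2}w^2$. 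So large boundary length helps rather than needing to be excluded.
\item After that, only a \emph{lower} bound on boundary length is required. Apply Lemma~\ref{lem:hull-process} on windows of length $\alpha^{\zeta/2}$, and use stopping times $\tau_j$ at which the length first exceeds $c_\ast^{-2}w^2$ after $\tau_{j-1}+w$. The strong Markov property then gives failure probability at most $2^{-\lfloor\alpha^{-\zeta/2}/16\rfloor}+C\alpha^{-\zeta/2}\exp(-b\alpha^{-\zeta/2})$.
\end{itemize}
Your suggested alternative route to (i) does not replace this. A small disconnecting ball forces an intermediate boundary to have small diameter, not small boundary length, and there is no deterministic link between the two. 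Moreover, Lemma~\ref{lem:hull-process} only rules out the boundary length staying small over a whole interval, not a dip at a single radius.
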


\begin{proof}
    By the scaling property, the left-hand side of~\eqref{eq:F-probability} does not depend on the choice of $n$. Thus, it suffices to consider the case $n = 1$.
    
    We observe that if $t > 0$ such that the event $G_{t,t + 8\alpha^\zeta}(0)$ does not occur, then 
    \begin{equation*}
        \sup\!\left\{D_\Phi(x, y; \BC \setminus B_t^\bullet(0; D_\Phi)) : x, y \in \partial B_{t + 8\alpha^\zeta}^\bullet(0; D_\Phi)\right\} \le 32\alpha^\zeta. 
    \end{equation*}
    Indeed, for each $x, y \in \partial B_{t + 8\alpha^\zeta}^\bullet(0; D_\Phi)$, we may choose $D_\Phi$-geodesics contained in $A_{t,t + 8\alpha^\zeta}^\bullet(0; D_\Phi)$ and connecting $x$ and $y$ to $\partial B_t^\bullet(0; D_\Phi)$. By the definition of the event $G_{t,t + 8\alpha^\zeta}(0)$, these two $D_\Phi$-geodesics must cross the same metric ball of radius $\alpha^\zeta$, and this metric ball is contained in $A_{t,t + 8\alpha^\zeta}^\bullet(0; D_\Phi)$. This implies that $D_\Phi(x, y; \BC \setminus B_t^\bullet(0; D_\Phi)) \le 18\alpha^\zeta \le 32\alpha^\zeta$.

    Let $(\SCD, D_\SCD, \SM_\SCD)$ denote the random metric measure space defined by~\eqref{eq:Brownian-horn}, conditioned to have boundary length one. (Recall from the discussion immediately following~\eqref{eq:Brownian-horn} that the law of this space is well-defined.) Write $I$ for its boundary. We may choose a sufficiently small constant $c_\ast > 0$ such that for each $c \in (0, c_\ast]$, it holds with probability at most $1/2$ that $\sup_{x, y \in \partial B_c^\bullet(I; D_{\SCD})}D_\SCD(x, y) \le 4c$. By the scaling property, this implies that for each deterministic $t > 0$, 
    \begin{equation*}
        \BP\!\left\lbrack\sup\!\left\{D_\Phi(x, y; \BC \setminus B_t^\bullet(0; D_\Phi)) : x, y \in \partial B_{t + 8\alpha^\zeta}^\bullet(0; D_\Phi)\right\} \le 32\alpha^\zeta \ \middle\vert \ \SN_\Phi(\partial B_t^\bullet(0; D_\Phi)) = \ell\right\rbrack \le 1/2, 
    \end{equation*}
    hence $\BP\lbrack G_{t,t + 8\alpha^\zeta}(0) \mid \SN_\Phi(\partial B_t^\bullet(0; D_\Phi)) = \ell\rbrack \ge 1/2$, for all $\ell \ge c_\ast^{-2} \cdot 64 \cdot \alpha^{2\zeta}$.

    On the other hand, it follows from \Cref{lem:hull-process} that there are universal constants $a, C > 0$ such that for each deterministic $t \ge 0$ and $\ell \ge 0$, given $\SN_\Phi(\partial B_t^\bullet(0; D_\Phi)) = \ell$, it holds with conditional probability at least $1 - C\exp(-b\alpha^{-\zeta/2})$ that there exists $s \in [t, t + \alpha^{\zeta/2}]$ such that $\SN_\Phi(\partial B_t^\bullet(0; D_\Phi)) \ge c_\ast^{-2} \cdot 64 \cdot \alpha^{2\zeta}$, where $b \defeq ac_\ast/8$.

    Set $\tau_1 \defeq \inf\{t \ge 1/8 : \SN_\Phi(\partial B_t^\bullet(0; D_\Phi)) \ge c_\ast^{-2} \cdot 64 \cdot \alpha^{2\zeta}\}$. Inductively, for each $n \in \BN$, set $\tau_{n + 1} \defeq \inf\{t \ge \tau_n + 8\alpha^\zeta : \SN_\Phi(\partial B_t^\bullet(0; D_\Phi)) \ge c_\ast^{-2} \cdot 64 \cdot \alpha^{2\zeta}\}$. Thus, we conclude from the above discussions that for each deterministic $\ell \ge 0$, 
    \begin{equation*}
        \BP\!\left\lbrack\tau_{\lfloor\alpha^{-\zeta/2}/16\rfloor} \le 1/4 - 8\alpha^\zeta \ \middle\vert \ \SN_\Phi(\partial B_{1/8}^\bullet(0; D_\Phi)) = \ell\right\rbrack
    \end{equation*}
    is at least the conditional probability that for each $j \in [1, \alpha^{-\zeta/2}/16]_\BZ$, there exists $s \in [1/8 + 2(j - 1)\alpha^{\zeta/2}, 1/8 + (2j - 1)\alpha^{\zeta/2}]$ such that $\SN_\Phi(\partial B_t^\bullet(0; D_\Phi)) \ge c_\ast^{-2} \cdot 64 \cdot \alpha^{2\zeta}$, which, by a union bound, is at least $1 - C\exp(-b\alpha^{-\zeta/2}) \cdot (\alpha^{-\zeta/2}/16) = 1 - O(\alpha^\infty)$ as $\alpha \to 0$. Moreover, it follows from the above discussions that 
    \begin{multline*}
        \BP\!\left\lbrack\text{there exists } j \in [1, \alpha^{-\zeta/2}/16]_\BZ \text{ such that } G_{\tau_j,\tau_j + 8\alpha^\zeta}(0) \text{ occurs} \ \middle\vert \ \SN_\Phi(\partial B_{1/8}^\bullet(0; D_\Phi)) = \ell\right\rbrack \\
        \ge 1 - 2^{-\lfloor\alpha^{-\zeta/2}/16\rfloor} = 1 - O(\alpha^\infty) \quad \text{as } \alpha \to 0. 
    \end{multline*}
    Thus, we conclude that
    \begin{align*}
        \BP\!\left\lbrack F(0, n)^c \ \middle\vert \ \SN_\Phi(\partial B_{1/8}^\bullet(0; D_\Phi)) = \ell\right\rbrack &\le \BP\!\left\lbrack\tau_{\lfloor\alpha^{-\zeta/2}/16\rfloor} > 1/4 - 8\alpha^\zeta \ \middle\vert \ \SN_\Phi(\partial B_{1/8}^\bullet(0; D_\Phi)) = \ell\right\rbrack \\
        &+ \BP\!\left\lbrack\left(\bigcup\nolimits_{j \in [1, \alpha^{-\zeta/2}/16]_\BZ} G_{\tau_j,\tau_j + 8\alpha^\zeta}(0)\right)^c \ \middle\vert \ \SN_\Phi(\partial B_{1/8}^\bullet(0; D_\Phi)) = \ell\right\rbrack \\
        &= O(\alpha^\infty) \quad \text{as } \alpha \to 0. 
    \end{align*}
    This completes the proof of \Cref{lem:F-probability}.
\end{proof}

\begin{definition}
    Let $x \in \BC$ and $n \in \BN$. Then we shall write
    \begin{equation*}
        \sigma(x, n) \defeq \left(\frac{\diam(B_{4\alpha^n}(x; D_\Phi))}{\inradius(B_{\alpha^{n - 1 + \zeta}}^\bullet(x; D_\Phi))} \wedge 1\right)\mathbf 1_{\widetilde F(x, n)} + \mathbf 1_{\widetilde F(x, n)^c}, 
    \end{equation*}
    where $\inradius(B_{\alpha^{n - 1 + \zeta}}^\bullet(x; D_\Phi)) \defeq \sup\{R > 0 : B_R(0) \subset B_{\alpha^{n - 1 + \zeta}}^\bullet(x; D_\Phi)\}$. 
\end{definition}

\begin{lemma}\label{lem:admissibility}
    Let $n \in \BN$. Let $y, x_0, x_1, \cdots, x_N \in \BC$ such that
    \begin{itemize}
        \item $B_{\alpha^{n - 1}}(y; D_\Phi) \cap B_{4\alpha^n}(x_0; D_\Phi) \neq \emptyset$,
        \item $B_{4\alpha^n}(x_{j - 1}; D_\Phi) \cap B_{4\alpha^n}(x_j; D_\Phi) \neq \emptyset$ for all $j \in [1, N]_\BZ$, and 
        \item $(\BC \setminus B_{2\alpha^{n - 1}}(y; D_\Phi)) \cap B_{4\alpha^n}(x_N; D_\Phi) \neq \emptyset$. 
    \end{itemize}
    Then $\sum_{j = 0}^N \sigma(x_j, n) \ge 1$. In particular, condition~\eqref{eq:admissibility} is satisfied. 
\end{lemma}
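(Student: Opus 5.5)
We argue by contradiction: assume $\sum_{j=0}^N \sigma(x_j,n) < 1$ and derive an impossible configuration. Since every summand is at most one, every index has $\sigma(x_j,n)<1$. Because $\sigma(x_j,n)=1$ on $\widetilde F(x_j,n)^c$, the event $\widetilde F(x_j,n)$ must occur for every $j$. The truncation $\wedge 1$ is then inactive, so
\begin{equation*}
\sum_{j=0}^N \frac{\diam(B_{4\alpha^n}(x_j; D_\Phi))}{\inradius(B^\bullet_{\alpha^{n-1+\zeta}}(x_j; D_\Phi))} < 1 .
\end{equation*}
(Here the inradius is measured around the relevant center $x_j$.) The union $K \defeq \bigcup_j B_{4\alpha^n}(x_j;D_\Phi)$ is connected by the chain condition. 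It meets $B_{\alpha^{n-1}}(y;D_\Phi)$ and $\BC\setminus B_{2\alpha^{n-1}}(y;D_\Phi)$, so its $D_\Phi$-diameter is at least roughly $\alpha^{n-1}$. Meanwhile each ball $B_{4\alpha^n}(x_j;D_\Phi)$ has $D_\Phi$-radius only $4\alpha^n\ll \alpha^{n-1+\zeta}$ once $\alpha$ is small depending on $\zeta$.

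The key step is a Euclidean comparison using $x_0$ as a base point. First, every Euclidean inradius $r_j\defeq\inradius(B^\bullet_{\alpha^{n-1+\zeta}}(x_j;D_\Phi))$ should be compared to a common Euclidean scale. The plan is to use the event $\widetilde F(x_j,n)$: it provides a doubly connected domain $A_j\subset A^\bullet_{\alpha^{n-1}/16,5\alpha^{n-1}/16}(x_j;D_\Phi)$ of $D_\Phi$-width $8\alpha^{n-1+\zeta}$ with no bottleneck, meaning no $D_\Phi$-ball of radius $w/8=\alpha^{n-1+\zeta}$ centered in its middle band disconnects its boundaries. Walk along the chain: the Euclidean distance from $x_j$ to $x_{j+1}$ is at most $\diam B_{4\alpha^n}(x_j)+\diam B_{4\alpha^n}(x_{j+1})$. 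A geometric-series argument then gives $r_{j}\ge r_0-\sum_{i<j}(\text{those diameters})$, since the filled ball of radius $\alpha^{n-1+\zeta}$ about $x_{j}$ contains the one about a nearby center shifted by a small $D_\Phi$-distance. That is not literally true in $D_\Phi$-distance, so instead I argue directly as follows.

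Let $j_\ast$ be the first index at which $K_{j_\ast}\defeq\bigcup_{i\le j_\ast}B_{4\alpha^n}(x_i;D_\Phi)$ exits $B^\bullet_{\alpha^{n-1+\zeta}}(x_0;D_\Phi)$. Some such index exists: the chain reaches $D_\Phi$-distance $\gtrsim\alpha^{n-1}$ from $x_0$, and $\widetilde F$ ensures the filled ball of radius $\alpha^{n-1+\zeta}$ is not the whole relevant region. Then $K_{j_\ast}$ is a connected set joining $x_0$ to the complement of $B^\bullet_{\alpha^{n-1+\zeta}}(x_0;D_\Phi)$, which contains no Euclidean ball $B_R(x_0)$ with $R>r_0$. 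Hence $\sum_{i\le j_\ast}\diam B_{4\alpha^n}(x_i)\ge r_0$. We then need $r_i\le r_0$-type control for $i\le j_\ast$ to get $\sum_{i\le j_\ast}\sigma(x_i,n)\ge 1$. Equivalently, the argument should be run from whichever index has the smallest inradius. Concretely, pick $i_\ast$ minimizing $r_i$ over the stretch of the chain lying inside the annulus where the $G$-event holds. Apply the exit argument at $x_{i_\ast}$, moving along the chain in whichever direction reaches outside $B^\bullet_{\alpha^{n-1+\zeta}}(x_{i_\ast};D_\Phi)$. Every term then has denominator at most $r_i\le$... in fact $\ge r_{i_\ast}$, which is the wrong direction. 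So the correct choice is to bound all $r_i$ \emph{above} by a common $R$ and exhibit a connected chain piece of Euclidean diameter $\ge R$. This is where $\widetilde F$ is needed.

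The main obstacle is therefore producing a Euclidean lower bound, for the chain's Euclidean extent, that is uniform in the $r_i$. The plan: on $\widetilde F(x_j,n)$, the no-bottleneck property implies that the Euclidean ball $B_{r_j}(x_j)\subset B^\bullet_{\alpha^{n-1+\zeta}}(x_j;D_\Phi)$ cannot be large compared to the separating domain. I expect to show that $B_{r_j}(x_j)$ does not contain the whole annulus $A_j$, since otherwise a ball of radius $\alpha^{n-1+\zeta}$ would disconnect $A_j$. The chain, travelling from inside $B_{\alpha^{n-1}}(y)$ to outside $B_{2\alpha^{n-1}}(y)$, must cross each $A_j$ separating the appropriate boundaries, using $D_\Phi(x_j,y)\lesssim\alpha^{n-1}$ for $j$ near the start. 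So the piece of chain crossing the band around $x_j$ has Euclidean diameter at least the Euclidean distance from $x_j$ to points outside $B^\bullet_{\alpha^{n-1+\zeta}}(x_j)$, which is $\ge r_j$. Finally, choose $j$ to be an index whose ball meets $B^\bullet_{\alpha^{n-1+\zeta}}(x_j)$ crossing points. Restricting the sum to the consecutive sub-chain from $x_j$ to its first exit from $B^\bullet_{\alpha^{n-1+\zeta}}(x_j;D_\Phi)$, I would bound the inradii along the sub-chain by $r_j$ using $B^\bullet_{\alpha^{n-1+\zeta}}(x_i)\subset B^\bullet_{2\alpha^{n-1+\zeta}}(x_j)$ for $D_\Phi(x_i,x_j)\le\alpha^{n-1+\zeta}$, and I would verify that $\widetilde F$ prevents $B^\bullet_{2\alpha^{n-1+\zeta}}$ from having a much larger inradius than $B^\bullet_{\alpha^{n-1+\zeta}}$. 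If that comparison fails, the fallback is to take $j$ as the minimizer of $r_i$ along the sub-chain, which makes every denominator at least its own value and thus only helps the inequality.
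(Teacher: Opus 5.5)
Your reduction is correct: you may assume every $\widetilde F(x_j,n)$ occurs and that the truncation $\wedge 1$ is inactive, and you rightly read the inradius as taken about $x_j$. After that the proposal has a genuine gap, because the step it finally rests on fails.

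\textbf{Why the proposed inradius comparison fails.} Nothing in $\widetilde F(x_j,n)$ controls $\inradius(B^\bullet_{2\alpha^{n-1+\zeta}}(x_j;D_\Phi))$ in terms of $r_j$. That event concerns an annulus inside $A^\bullet_{\alpha^{n-1}/16,5\alpha^{n-1}/16}(x_j;D_\Phi)$. It says nothing about the Euclidean size of filled balls at metric scale $\alpha^{n-1+\zeta}$, whose inradii at radii $r$ and $2r$ admit no deterministic comparison. The inclusion $B^\bullet_{\alpha^{n-1+\zeta}}(x_i)\subset B^\bullet_{2\alpha^{n-1+\zeta}}(x_j)$ bounds an inradius about $x_i$, not about $x_j$. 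Moreover, any constant-factor comparison would only give $\sum\sigma\succeq 1$, not the required $\ge 1$. Your fallback of choosing the minimiser of $r_i$ goes in the wrong direction, as you noticed one paragraph earlier.

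\textbf{Other unsupported steps.} The existence of a ``first exit'' from $B^\bullet_{\alpha^{n-1+\zeta}}(x_0;D_\Phi)$, or from $B^\bullet_{\alpha^{n-1+\zeta}}(x_j;D_\Phi)$, is not established: a filled ball can swallow $y$, and even the end of the chain, inside a hole. Your observation that $B_{r_j}(x_j)$ does not contain $A_j$ is trivially true (the two sets are disjoint) and yields nothing.

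\textbf{The missing idea.} No comparison between different $r_i$ is needed. It suffices to find a consecutive sub-chain $[a,b]_\BZ$ such that, for \emph{every} $j\in[a,b]_\BZ$, the union $U=\bigcup_{i\in[a,b]_\BZ}B_{4\alpha^n}(x_i;D_\Phi)$ contains a point outside $B^\bullet_{\alpha^{n-1+\zeta}}(x_j;D_\Phi)$. Since $x_j\in U$, this gives $r_j\le \diam(U)\le\sum_{i\in[a,b]_\BZ}\diam(B_{4\alpha^n}(x_i;D_\Phi))$ for all $j$ at once, and the sum of the ratios is then $\ge 1$.

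\textbf{How the paper builds such a sub-chain.} It takes a minimal piece of the chain crossing a metric band of width $2\alpha^{n-1+\zeta}$. If $B^z_{3\alpha^{n-1}/2}(y;D_\Phi)$ is bounded, the band sits around $y$ just outside radius $\alpha^{n-1}$. Otherwise it sits just inside radius $2\alpha^{n-1}$, relative to a point $z\notin B_{2\alpha^{n-1}}(y;D_\Phi)$. Each unfilled ball $B_{\alpha^{n-1+\zeta}}(x_j;D_\Phi)$ then misses one side of the band. A connectedness argument upgrades this to the filled ball, provided $y$ (resp.~$z$) is not in $B^\bullet_{\alpha^{n-1+\zeta}}(x_j;D_\Phi)$.

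\textbf{The remaining case and the role of $\widetilde F$.} If $y$ (resp.~$z$) lies in some $B^\bullet_{\alpha^{n-1+\zeta}}(x_k;D_\Phi)$, then $B^\bullet_{\alpha^{n-1}/16}(x_k;D_\Phi)$ engulfs the inner region. Hence the annulus $A$ given by $\widetilde F(x_k,n)$ separates the two boundaries that the chain must cross. One reruns the crossing argument in $A^O_{3\alpha^{n-1+\zeta},5\alpha^{n-1+\zeta}}(I;D_\Phi)$. The no-bottleneck event $G(A)$ guarantees that the relevant band-boundary point $w$ stays in the unbounded component of $\BC\setminus B_{\alpha^{n-1+\zeta}}(x_j;D_\Phi)$, i.e.\ $w\notin B^\bullet_{\alpha^{n-1+\zeta}}(x_j;D_\Phi)$. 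So $\widetilde F$ controls the \emph{topology} of the filled balls of the other chain points, not the size of their inradii.
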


We begin by outlining the proof of \Cref{lem:admissibility}. The goal is to find a subinterval $[a, b]_\BZ \subset [0, N]_\BZ$ such that 
\begin{equation}\label{eq:admissibility-illustration-0}
    \bigcup_{k \in [a, b]_\BZ} B_{4\alpha^n}(x_k; D_\Phi) \not\subset B_{\alpha^{n - 1 + \zeta}}^\bullet(x_j; D_\Phi), \quad \forall j \in [a, b]_\BZ. 
\end{equation}
Indeed, in this case, we would have
\begin{multline*}
    \sum_{j \in [a, b]_\BZ} \frac{\diam(B_{4\alpha^n}(x_j; D_\Phi))}{\inradius(B_{\alpha^{n - 1 + \zeta}}^\bullet(x_j; D_\Phi))} \\
    \ge \sum_{j \in [a, b]_\BZ} \diam(B_{4\alpha^n}(x_j; D_\Phi)) \cdot \diam\left(\bigcup_{k \in [a, b]_\BZ} B_{4\alpha^n}(x_k; D_\Phi)\right)^{-1} \ge 1, 
\end{multline*}
where the last inequality follows from the fact that $\bigcup_{j \in [a, b]_\BZ} B_{4\alpha^n}(x_j; D_\Phi)$ is connected. We choose $z \in \BC \setminus B_{2\alpha^{n - 1}}(y; D_\Phi)$ such that the path of balls $B_{4\alpha^n}(x_0; D_\Phi), B_{4\alpha^n}(x_1; D_\Phi), \cdots, B_{4\alpha^n}(x_N; D_\Phi)$ crosses between the inner and outer boundaries of $A_{\alpha^{n - 1},2\alpha^{n - 1}}^z(y; D_\Phi)$. The proof is divided into four cases. We first distinguish between two primary cases, depending on whether $B_{3\alpha^{n - 1}/2}^z(y; D_\Phi)$ is bounded or unbounded.

Suppose $B_{3\alpha^{n - 1}/2}^z(y; D_\Phi)$ is bounded, so that $B_{3\alpha^{n - 1}/2}^z(y; D_\Phi) = B_{3\alpha^{n - 1}/2}^\bullet(y; D_\Phi)$. We first consider the metric band $A_{\alpha^{n - 1} + 2\alpha^{n - 1 + \zeta},\alpha^{n - 1} + 4\alpha^{n - 1 + \zeta}}^\bullet(y; D_\Phi)$, which is centered at $y$, has width $2\alpha^{n - 1 + \zeta}$, and lies near the inner boundary of $A_{\alpha^{n - 1},3\alpha^{n - 1}/2}^\bullet(y; D_\Phi)$. We choose a minimal subinterval $[i_1, j_1]_\BZ \subset [0, N]_\BZ$ such that the path $B_{4\alpha^n}(x_{i_1}; D_\Phi), B_{4\alpha^n}(x_{i_1 + 1}; D_\Phi), \cdots, B_{4\alpha^n}(x_{j_1}; D_\Phi)$ crosses between the inner and outer boundaries of this metric band. We then verify that if $y \notin B_{\alpha^{n - 1 + \zeta}}^\bullet(x_j; D_\Phi)$ for all $j \in [i_1, j_1]_\BZ$, the subinterval $[i_1, j_1]_\BZ$ satisfies~\eqref{eq:admissibility-illustration-0}.

Alternatively, consider the subcase where $B_{3\alpha^{n - 1}/2}^z(y; D_\Phi)$ is bounded but there exists $k_1 \in [i_1, j_1]_\BZ$ such that $y \in B_{\alpha^{n - 1 + \zeta}}^\bullet(x_{k_1}; D_\Phi)$. Here, we rely on the event $\widetilde F(x_{k_1}, n)$. Recall that the event $\widetilde F(x_{k_1}, n)$ ensures the existence of a ``good'' doubly connected domain $A$ contained within $A_{\alpha^{n - 1}/16,5\alpha^{n - 1}/16}^\bullet(x_{k_1}; D_\Phi)$ that disconnects its inner and outer boundaries. To apply this, we first verify that $A_{\alpha^{n - 1}/16,5\alpha^{n - 1}/16}^\bullet(x_{k_1}; D_\Phi)$ is contained in $A_{\alpha^{n - 1},3\alpha^{n - 1}/2}^\bullet(y; D_\Phi)$ and disconnects its inner and outer boundaries; consequently, $A$ also shares these properties. Thus, we may choose a minimal subinterval $[i_{k_1}, j_{k_1}]_\BZ \subset [0, N]_\BZ$ such that the path $B_{4\alpha^n}(x_{i_{k_1}}; D_\Phi), B_{4\alpha^n}(x_{i_{k_1} + 1}; D_\Phi), \cdots, B_{4\alpha^n}(x_{j_{k_1}}; D_\Phi)$ crosses between the inner and outer boundaries of $A$ (more precisely, a specific subdomain of $A$). We then use the conditions of the event $\widetilde F(x_{k_1}, n)$ to verify that $[i_{k_1}, j_{k_1}]_\BZ$ satisfies~\eqref{eq:admissibility-illustration-0}.

When $B_{3\alpha^{n - 1}/2}^z(y; D_\Phi)$ is unbounded, the metric band $A_{3\alpha^{n - 1}/2,2\alpha^{n - 1}}^z(y; D_\Phi)$ must be bounded. Intuitively, in this case, $A_{3\alpha^{n - 1}/2,2\alpha^{n - 1}}^z(y; D_\Phi)$ plays a role analogous to that of $A_{\alpha^{n - 1},3\alpha^{n - 1}/2}^\bullet(y; D_\Phi)$ in the bounded setting (where $\partial B_{2\alpha^{n - 1}}^z(y; D_\Phi)$ and $\partial B_{3\alpha^{n - 1}/2}^z(y; D_\Phi)$ act as the inner and outer boundaries, respectively). The verification follows from similar arguments.

In all four cases, the path $B_{4\alpha^n}(x_a; D_\Phi), B_{4\alpha^n}(x_{a + 1}; D_\Phi), \cdots, B_{4\alpha^n}(x_b; D_\Phi)$ crosses between the inner and outer boundaries of a metric band of width $2\alpha^{n - 1 + \zeta}$, and verifying~\eqref{eq:admissibility-illustration-0} amounts to showing that for each $j \in [a, b]_\BZ$, either $B_{4\alpha^n}(x_a; D_\Phi) \not\subset B_{\alpha^{n - 1 + \zeta}}^\bullet(x_j; D_\Phi)$ or $B_{4\alpha^n}(x_b; D_\Phi) \not\subset B_{\alpha^{n - 1 + \zeta}}^\bullet(x_j; D_\Phi)$. The width of the metric band immediately implies that either $B_{4\alpha^n}(x_a; D_\Phi) \not\subset B_{\alpha^{n - 1 + \zeta}}(x_j; D_\Phi)$ or $B_{4\alpha^n}(x_b; D_\Phi) \not\subset B_{\alpha^{n - 1 + \zeta}}(x_j; D_\Phi)$. However, some careful checking is still needed to pass from the (unfilled) metric balls to the filled metric balls.

\begin{figure}[ht!]
    \centering
    \includegraphics[width=.6\linewidth]{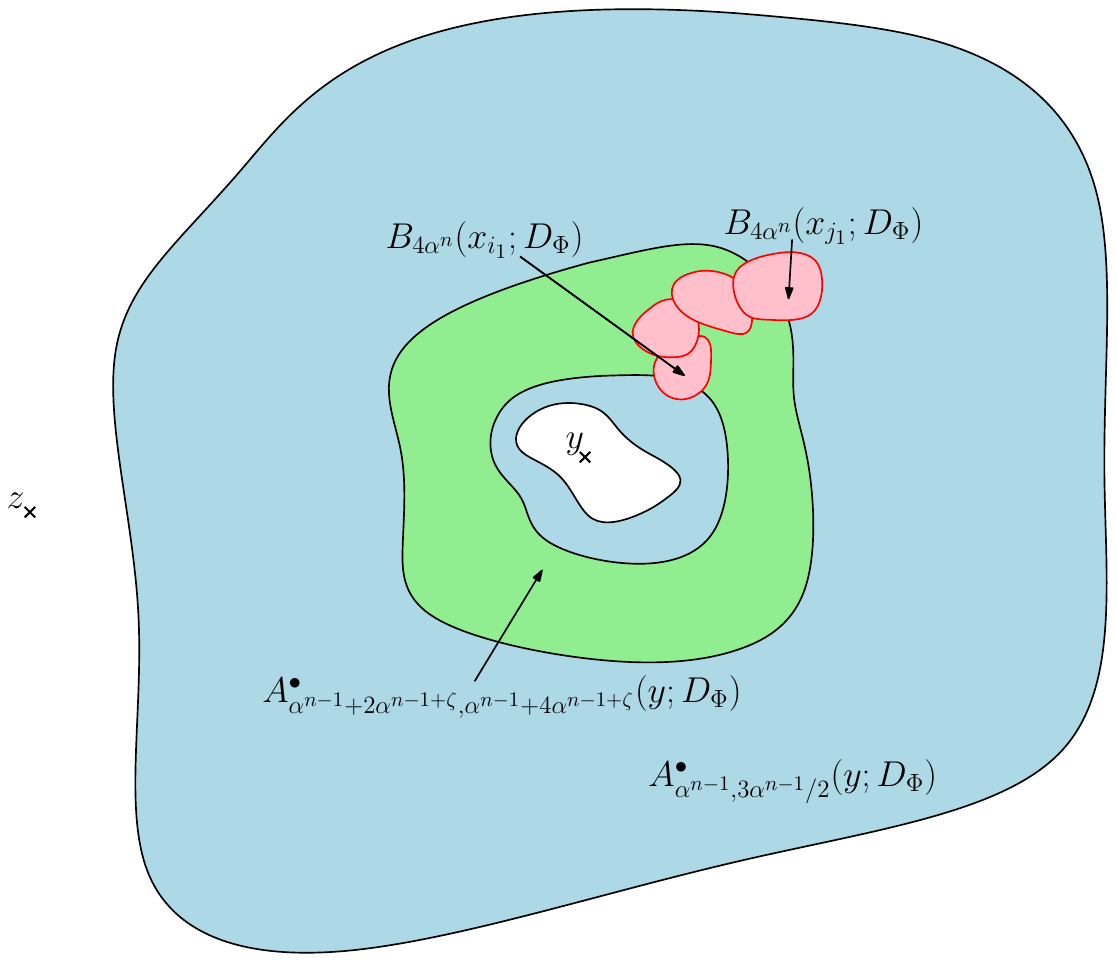}
    \caption{Illustration of the first of four cases in the proof of \Cref{lem:admissibility}. The blue region represents the metric band $A_{\alpha^{n - 1},3\alpha^{n - 1}/2}^\bullet(y; D_\Phi)$, with the green metric band $A_{\alpha^{n - 1} + 2\alpha^{n - 1 + \zeta},\alpha^{n - 1} + 4\alpha^{n - 1 + \zeta}}^\bullet(y; D_\Phi)$ overlaid upon it. The sequence of red metric balls, $B_{4\alpha^n}(x_{i_1}; D_\Phi), \cdots, B_{4\alpha^n}(x_{j_1}; D_\Phi)$, forms a minimal subpath crossing between the inner and outer boundaries of the green band. This figure depicts the scenario where $B_{3\alpha^{n - 1}/2}^z(y; D_\Phi)$ is bounded and $y \notin B_{\alpha^{n - 1 + \zeta}}^\bullet(x_j; D_\Phi)$ for all $j \in [i_1, j_1]_\BZ$. Consequently, the subpath satisfies condition~\eqref{eq:admissibility-illustration-0} with $[a, b] = [i_1, j_1]$. Note that the holes of the red metric balls are omitted for clarity.}
    \label{fig:admissibility1}
\end{figure}

\begin{figure}[ht!]
    \centering
    \includegraphics[width=.6\linewidth]{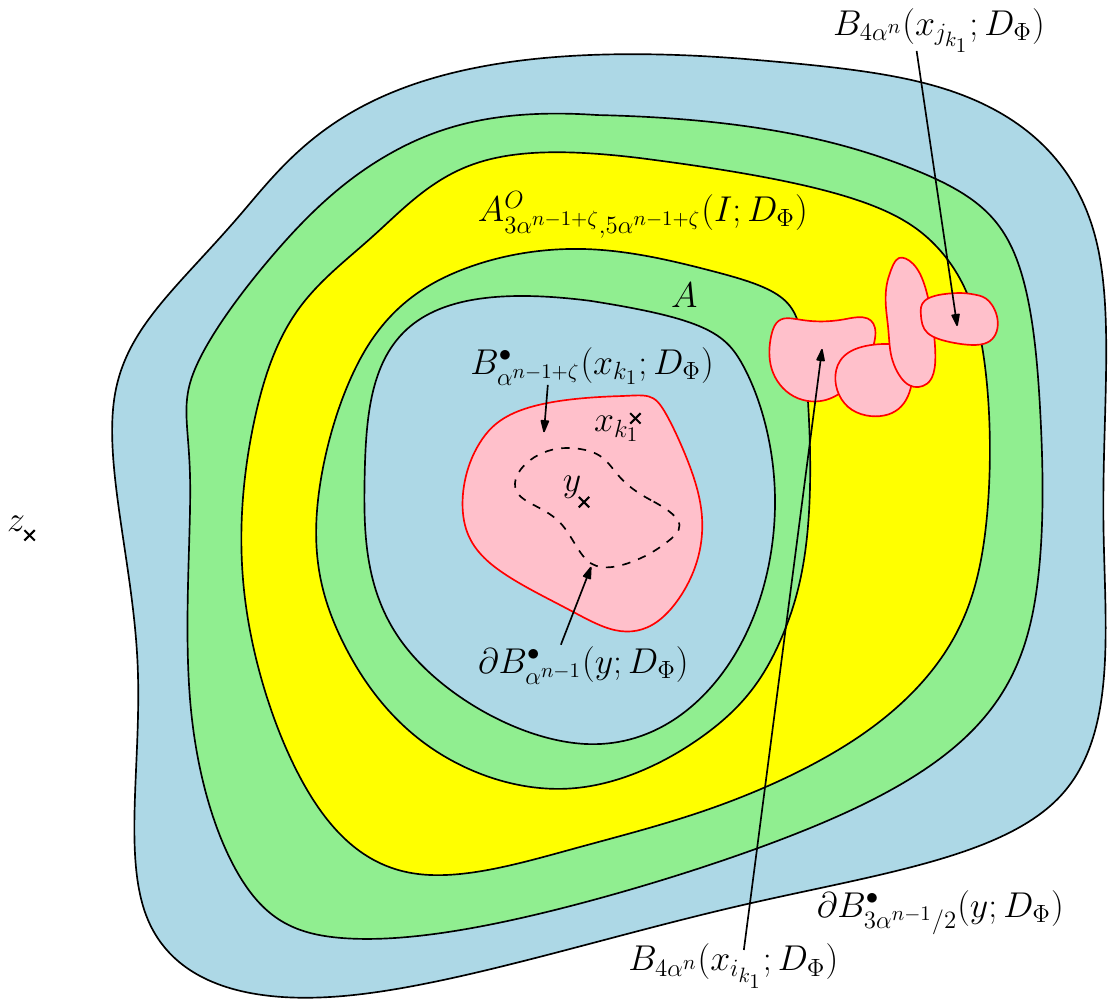}
    \caption{Illustration of the second of four cases in the proof of \Cref{lem:admissibility}. This figure depicts the scenario where $B_{3\alpha^{n - 1}/2}^z(y; D_\Phi)$ is bounded, but unlike the first case, $y$ is contained in at least one of the filled metric balls forming the minimal subpath (highlighted here as the central red ball). In this situation, we utilize the event $\widetilde F(x_{k_1}, n)$ to identify a suitable metric band $A$, represented in green. We then find a minimal subpath crossing the inner and outer boundaries of $A_{3\alpha^{n - 1 + \zeta},5\alpha^{n - 1 + \zeta}}^O(I; D_\Phi)$, shown in yellow, which satisfies condition~\eqref{eq:admissibility-illustration-0}.}
    \label{fig:admissibility2}
\end{figure}

\begin{proof}[Proof of \Cref{lem:admissibility}]
    By definition, we may assume without loss of generality that $\widetilde F(x_j, n)$ occurs and 
    \begin{equation*}
        \sigma(x_j, n) = \frac{\diam(B_{4\alpha^n}(x_j; D_\Phi))}{\inradius(B_{\alpha^{n - 1 + \zeta}}^\bullet(x_j; D_\Phi))}
    \end{equation*}
    for all $j \in [0, N]_\BZ$ (otherwise $\sigma(x_j, n) = 1$ for some $j \in [0, N]_\BZ$). Note that there exists $z \in \BC \setminus B_{2\alpha^{n - 1}}(y; D_\Phi)$ such that the path $B_{4\alpha^n}(x_0; D_\Phi), B_{4\alpha^n}(x_1; D_\Phi), \cdots, B_{4\alpha^n}(x_N; D_\Phi)$ crosses between the inner and outer boundaries of $A_{\alpha^{n - 1},2\alpha^{n - 1}}^z(y; D_\Phi)$.

    First, we consider the case where $B_{3\alpha^{n - 1}/2}^z(y; D_\Phi)$ is bounded, or, equivalently, $B_{3\alpha^{n - 1}/2}^z(y; D_\Phi) = B_{3\alpha^{n - 1}/2}^\bullet(y; D_\Phi)$. Write
    \begin{itemize}
        \item $j_1$ for the smallest $j \in [0, N]_\BZ$ such that $B_{4\alpha^n}(x_j; D_\Phi)$ intersects $\BC \setminus B_{\alpha^{n - 1} + 4\alpha^{n - 1 + \zeta}}^\bullet(y; D_\Phi)$; 
        \item $i_1$ for the largest $i \in [0, N]_\BZ$ smaller than $j_1$ such that $B_{4\alpha^n}(x_i; D_\Phi)$ intersects $B_{\alpha^{n - 1} + 2\alpha^{n - 1 + \zeta}}^\bullet(y; D_\Phi)$. 
    \end{itemize}
    First, we consider the case where $y \notin B_{\alpha^{n - 1 + \zeta}}^\bullet(x_j; D_\Phi)$ for all $j \in [i_1, j_1]_\BZ$ (cf.~\Cref{fig:admissibility1}). We \emph{claim} that 
    \begin{equation}\label{eq:admissibility-proof-1}
        \sum_{j \in [i_1, j_1]_\BZ} \sigma(x_j, n) = \sum_{j \in [i_1, j_1]_\BZ} \frac{\diam(B_{4\alpha^n}(x_j; D_\Phi))}{\inradius(B_{\alpha^{n - 1 + \zeta}}^\bullet(x_j; D_\Phi))} \ge 1.
    \end{equation}
    It suffices to show that 
    \begin{equation}\label{eq:admissibility-proof-2}
        \bigcup_{i \in [i_1, j_1]_\BZ} B_{4\alpha^n}(x_i; D_\Phi) \not\subset B_{\alpha^{n - 1 + \zeta}}^\bullet(x_j; D_\Phi), \quad \forall j \in [i_1, j_1]_\BZ. 
    \end{equation}
    Indeed, in this case, 
    \begin{multline*}
        \sum_{j \in [i_1, j_1]_\BZ} \frac{\diam(B_{4\alpha^n}(x_j; D_\Phi))}{\inradius(B_{\alpha^{n - 1 + \zeta}}^\bullet(x_j; D_\Phi))} \\
        \ge \sum_{j \in [i_1, j_1]_\BZ} \diam(B_{4\alpha^n}(x_j; D_\Phi)) \cdot \diam\left(\bigcup_{i \in [i_1, j_1]_\BZ} B_{4\alpha^n}(x_i; D_\Phi)\right)^{-1} \ge 1, 
    \end{multline*}
    where the last inequality follows from the fact that $\bigcup_{i \in [i_1, j_1]_\BZ} B_{4\alpha^n}(x_i; D_\Phi)$ is connected. Fix $j \in [i_1, j_1]_\BZ$. Since the inner and outer boundaries of $A_{\alpha^{n - 1} + 2\alpha^{n - 1 + \zeta},\alpha^{n - 1} + 4\alpha^{n - 1 + \zeta}}^\bullet(y; D_\Phi)$ have $D_\Phi$-distance $2\alpha^{n - 1 + \zeta}$, and the path
    \[ B_{4\alpha^n}(x_{i_1}; D_\Phi), B_{4\alpha^n}(x_{i_1 + 1}; D_\Phi), \cdots, B_{4\alpha^n}(x_{j_1}; D_\Phi)\]
    crosses between the inner and outer boundaries of $A_{\alpha^{n - 1} + 2\alpha^{n - 1 + \zeta},\alpha^{n - 1} + 4\alpha^{n - 1 + \zeta}}^\bullet(y; D_\Phi)$, it follows that either
    \begin{itemize}
        \item $B_{\alpha^{n - 1 + \zeta}}(x_j; D_\Phi)$ does not intersect $B_{\alpha^{n - 1} + 2\alpha^{n - 1 + \zeta}}^\bullet(y; D_\Phi)$, or
        \item $B_{\alpha^{n - 1 + \zeta}}(x_j; D_\Phi)$ does not intersect $\BC \setminus B_{\alpha^{n - 1} + 4\alpha^{n - 1 + \zeta}}^\bullet(y; D_\Phi)$. 
    \end{itemize}
    First, we consider the case where $B_{\alpha^{n - 1 + \zeta}}(x_j; D_\Phi)$ does not intersect $B_{\alpha^{n - 1} + 2\alpha^{n - 1 + \zeta}}^\bullet(y; D_\Phi)$. Since $B_{\alpha^{n - 1} + 2\alpha^{n - 1 + \zeta}}^\bullet(y; D_\Phi)$ is connected and $y \notin B_{\alpha^{n - 1 + \zeta}}^\bullet(x_j; D_\Phi)$, it follows that $B_{\alpha^{n - 1} + 2\alpha^{n - 1 + \zeta}}^\bullet(y; D_\Phi) \cap B_{\alpha^{n - 1 + \zeta}}^\bullet(x_j; D_\Phi) = \emptyset$. Since (by the definition of $i_1$) $B_{\alpha^{n - 1} + 2\alpha^{n - 1 + \zeta}}^\bullet(y; D_\Phi) \cap B_{4\alpha^n}(x_{i_1}; D_\Phi) \neq \emptyset$, it follows that $B_{4\alpha^n}(x_{i_1}; D_\Phi) \not\subset B_{\alpha^{n - 1 + \zeta}}^\bullet(x_j; D_\Phi)$. Next, we consider the case where $B_{\alpha^{n - 1 + \zeta}}(x_j; D_\Phi)$ does not intersect $(\BC \setminus B_{\alpha^{n - 1} + 4\alpha^{n - 1 + \zeta}}^\bullet(y; D_\Phi))$. Since $\BC \setminus B_{\alpha^{n - 1} + 4\alpha^{n - 1 + \zeta}}^\bullet(y; D_\Phi)$ is connected and unbounded, it follows that $(\BC \setminus B_{\alpha^{n - 1} + 4\alpha^{n - 1 + \zeta}}^\bullet(y; D_\Phi)) \cap B_{\alpha^{n - 1 + \zeta}}^\bullet(x_j; D_\Phi) = \emptyset$. Since (by the definition of $j_1$) $(\BC \setminus B_{\alpha^{n - 1} + 4\alpha^{n - 1 + \zeta}}^\bullet(y; D_\Phi)) \cap B_{4\alpha^n}(x_{j_1}; D_\Phi) \neq \emptyset$, it follows that $B_{4\alpha^n}(x_{j_1}; D_\Phi) \not\subset B_{\alpha^{n - 1 + \zeta}}^\bullet(x_j; D_\Phi)$. This completes the proof of~\eqref{eq:admissibility-proof-2}, hence also the proof of~\eqref{eq:admissibility-proof-1}. 

    Next, we consider the case where there exists $k_1 \in [i_1, j_1]$ such that $y \in B_{\alpha^{n - 1 + \zeta}}^\bullet(x_{k_1}; D_\Phi)$ (cf.~\Cref{fig:admissibility2}). We \emph{claim} that $A_{\alpha^{n - 1}/16,5\alpha^{n - 1}/16}^\bullet(x_{k_1}; D_\Phi)$ is contained in $A_{\alpha^{n - 1},3\alpha^{n - 1}/2}^\bullet(y; D_\Phi)$ and disconnects the inner and outer boundaries of $A_{\alpha^{n - 1},3\alpha^{n - 1}/2}^\bullet(y; D_\Phi)$, or, equivalently, that
    \begin{equation*}
        B_{\alpha^{n - 1}}^\bullet(y; D_\Phi) \subset B_{\alpha^{n - 1}/16}^\bullet(x_{k_1}; D_\Phi) \subset B_{5\alpha^{n - 1}/16}^\bullet(x_{k_1}; D_\Phi) \subset B_{3\alpha^{n - 1}/2}^\bullet(y; D_\Phi). 
    \end{equation*}
    Since $A_{\alpha^{n - 1} + 2\alpha^{n - 1 + \zeta},\alpha^{n - 1} + 4\alpha^{n - 1 + \zeta}}^\bullet(y; D_\Phi) \cap B_{4\alpha^n}(x_{k_1}; D_\Phi) \neq \emptyset$, it follows that $B_{\alpha^{n - 1}}^\bullet(y; D_\Phi) \cap B_{\alpha^{n - 1 + \zeta}}(x_{k_1}; D_\Phi) = \emptyset$ and $B_{5\alpha^{n - 1}/16}(x_{k_1}; D_\Phi) \subset B_{3\alpha^{n - 1}/2}^\bullet(y; D_\Phi)$. Since $B_{\alpha^{n - 1}}^\bullet(y; D_\Phi)$ is connected and $y \in B_{\alpha^{n - 1 + \zeta}}^\bullet(x_{k_1}; D_\Phi)$, it follows that $B_{\alpha^{n - 1}}^\bullet(y; D_\Phi) \subset B_{\alpha^{n - 1 + \zeta}}^\bullet(x_{k_1}; D_\Phi) \subset B_{\alpha^{n - 1}/16}^\bullet(x_{k_1}; D_\Phi)$. Since $B_{3\alpha^{n - 1}/2}^\bullet(y; D_\Phi)$ is simply connected, it follows that $B_{5\alpha^{n - 1}/16}^\bullet(x_{k_1}; D_\Phi) \subset B_{3\alpha^{n - 1}/2}^\bullet(y; D_\Phi)$. This completes the proof of the \emph{claim}. 

    Since the event $\widetilde F(x_{k_1}, n)$ occurs, by definition, there exists a doubly connected domain $A$ that is contained in $A_{\alpha^{n - 1}/16,5\alpha^{n - 1}/16}^\bullet(x_{k_1}; D_\Phi)$ and disconnects its inner and outer boundaries such that~\eqref{eq:G} is satisfied with $w = 8\alpha^{n - 1 + \zeta}$, and the event $G$ of the $\gamma$-LQG surface parameterized by $A$ occurs. Write $B$ for the connected component of $\BC \setminus A$ that contains $y$. The \emph{claim} of the preceding paragraph implies that
    \begin{equation*}
        B_{\alpha^{n - 1}}^\bullet(y; D_\Phi) \subset B_{\alpha^{n - 1}/16}^\bullet(x_{k_1}; D_\Phi) \subset B \subset B \cup A \subset B_{5\alpha^{n - 1}/16}^\bullet(x_{k_1}; D_\Phi) \subset B_{3\alpha^{n - 1}/2}^\bullet(y; D_\Phi). 
    \end{equation*}
    Thus, the path $B_{4\alpha^n}(x_0; D_\Phi), B_{4\alpha^n}(x_1; D_\Phi), \cdots, B_{4\alpha^n}(x_N; D_\Phi)$ crosses between $I$ and $O$. (By convention, $I$ (resp.~$O$) is the boundary of $B$ (resp.~the unbounded connected component of $\BC \setminus A$).) Thus, we may write 
    \begin{itemize}
        \item $j_{k_1}$ for the smallest $j \in [0, N]_\BZ$ such that $B_{4\alpha^n}(x_j; D_\Phi)$ intersects $\partial B_{5\alpha^{n - 1 + \zeta}}^O(I; D_\Phi)$; 
        \item $i_{k_1}$ for the largest $i \in [0, N]_\BZ$ smaller than $j_{k_1}$ such that $B_{4\alpha^n}(x_i; D_\Phi)$ intersects $\partial B_{3\alpha^{n - 1 + \zeta}}^O(I; D_\Phi)$. 
    \end{itemize}
    We \emph{claim} that 
    \begin{equation}\label{eq:admissibility-proof-5}
        \sum_{j \in [i_{k_1}, j_{k_1}]_\BZ} \sigma(x_j, n) = \sum_{j \in [i_{k_1}, j_{k_1}]_\BZ} \frac{\diam(B_{4\alpha^n}(x_j; D_\Phi))}{\inradius(B_{\alpha^{n - 1 + \zeta}}^\bullet(x_j; D_\Phi))} \ge 1.
    \end{equation}
    By a similar argument to the argument applied in the proof of~\eqref{eq:admissibility-proof-1}, it suffices to show that
    \begin{equation}\label{eq:admissibility-proof-4}
        \bigcup_{i \in [i_{k_1}, j_{k_1}]_\BZ} B_{4\alpha^n}(x_i; D_\Phi) \not\subset B_{\alpha^{n - 1 + \zeta}}^\bullet(x_j; D_\Phi), \quad \forall j \in [i_{k_1}, j_{k_1}]_\BZ. 
    \end{equation}
    For each $t \in [0, 8\alpha^{n - 1 + \zeta}]$, write $B_t \defeq B \cup A_{0,t}^O(I; D_\Phi)$. Since the inner and outer boundaries of $A_{3\alpha^{n - 1 + \zeta},5\alpha^{n - 1 + \zeta}}^O(I; D_\Phi)$ have $D_\Phi$-distance $2\alpha^{n - 1 + \zeta}$, and the path
    \[ B_{4\alpha^n}(x_{i_{k_1}}; D_\Phi), B_{4\alpha^n}(x_{i_{k_1} + 1}; D_\Phi), \cdots, B_{4\alpha^n}(x_{j_{k_1}}; D_\Phi)\]
    crosses between the inner and outer boundaries of $A_{3\alpha^{n - 1 + \zeta},5\alpha^{n - 1 + \zeta}}^O(I; D_\Phi)$, it follows that either
    \begin{itemize}
        \item $B_{3\alpha^{n - 1 + \zeta}} \cap B_{\alpha^{n - 1 + \zeta}}(x_j; D_\Phi) = \emptyset$, or
        \item $(\BC \setminus B_{5\alpha^{n - 1 + \zeta}}) \cap B_{\alpha^{n - 1 + \zeta}}(x_j; D_\Phi) = \emptyset$. 
    \end{itemize}
    For simplicity, we consider the first case; the second case is entirely similar. Choose $w \in \partial B_{3\alpha^{n - 1 + \zeta}} \cap B_{4\alpha^n}(x_{i_{k_1}}; D_\Phi)$. (By definition, $\partial B_{3\alpha^{n - 1 + \zeta}} \cap B_{4\alpha^n}(x_{i_{k_1}}; D_\Phi) = \partial B_{3\alpha^{n - 1 + \zeta}}^O(I; D_\Phi) \cap B_{4\alpha^n}(x_{i_{k_1}}; D_\Phi) \neq \emptyset$.) Note that there is a $D_\Phi$-geodesic from $w$ to $I$ that is contained in $B_{3\alpha^{n - 1 + \zeta}}$. Since the event $G$ of the $\gamma$-LQG surface parameterized by $A$ occurs, by definition, $B_{\alpha^{n - 1 + \zeta}}(x_j; D_\Phi)$ does not disconnect $I$ and $O$. (One verifies immediately that $x_j \in A_{2\alpha^{n - 1 + \zeta},6\alpha^{n - 1 + \zeta}}^O(I; D_\Phi)$ for all $j \in [i_{k_1}, j_{k_1}]_\BZ$.) Thus, we conclude that $w$ and $I$ and $O$ are contained in the same connected component of $\BC \setminus B_{\alpha^{n - 1 + \zeta}}(x_j; D_\Phi)$. Since $A \subset A_{\alpha^{n - 1}/16,5\alpha^{n - 1}/16}^\bullet(x_{k_1}; D_\Phi)$ is bounded and $B_{\alpha^{n - 1 + \zeta}}(x_j; D_\Phi) \subset A$, it follows that this connected component contains $\BC \setminus A$, hence is unbounded. Thus, we conclude that $w \notin B_{\alpha^{n - 1 + \zeta}}^\bullet(x_j; D_\Phi)$. This completes the proof of~\eqref{eq:admissibility-proof-4}, hence also the proof of~\eqref{eq:admissibility-proof-5}. This completes the proof of the case where $B_{3\alpha^{n - 1}/2}^z(y; D_\Phi)$ is bounded. 

    Next, we consider the case where $B_{3\alpha^{n - 1}/2}^z(y; D_\Phi)$ is unbounded. Write
    \begin{itemize}
        \item $j_2$ for the smallest $j \in [0, N]_\BZ$ such that $B_{4\alpha^n}(x_j; D_\Phi)$ intersects $\BC \setminus B_{2\alpha^{n - 1} - 2\alpha^{n - 1 + \zeta}}^z(y; D_\Phi)$; 
        \item $i_2$ for the largest $i \in [0, N]_\BZ$ smaller than $j_2$ such that $B_{4\alpha^n}(x_i; D_\Phi)$ intersects $B_{2\alpha^{n - 1} - 4\alpha^{n - 1 + \zeta}}^z(y; D_\Phi)$. 
    \end{itemize}
    First, we consider the case where $z \notin B_{\alpha^{n - 1 + \zeta}}^\bullet(x_j; D_\Phi)$ for all $j \in [i_2, j_2]_\BZ$. We \emph{claim} that 
    \begin{equation}\label{eq:admissibility-proof-0}
        \sum_{j \in [i_2, j_2]_\BZ} \sigma(x_j, n) = \sum_{j \in [i_2, j_2]_\BZ} \frac{\diam(B_{4\alpha^n}(x_j; D_\Phi))}{\inradius(B_{\alpha^{n - 1 + \zeta}}^\bullet(x_j; D_\Phi))} \ge 1.
    \end{equation}
    By a similar argument to the argument applied in the proof of~\eqref{eq:admissibility-proof-1}, it suffices to show that
    \begin{equation}\label{eq:admissibility-proof-3}
        \bigcup_{i \in [i_2, j_2]_\BZ} B_{4\alpha^n}(x_i; D_\Phi) \not\subset B_{\alpha^{n - 1 + \zeta}}^\bullet(x_j; D_\Phi), \quad \forall j \in [i_2, j_2]_\BZ. 
    \end{equation}
    Fix $j \in [i_2, j_2]_\BZ$. Since the inner and outer boundaries of $A_{2\alpha^{n - 1} - 4\alpha^{n - 1 + \zeta},2\alpha^{n - 1} - 2\alpha^{n - 1 + \zeta}}^z(y; D_\Phi)$ have $D_\Phi$-distance $2\alpha^{n - 1 + \zeta}$, and the path $B_{4\alpha^n}(x_{i_2}; D_\Phi), B_{4\alpha^n}(x_{i_2 + 1}; D_\Phi), \cdots, B_{4\alpha^n}(x_{j_2}; D_\Phi)$ crosses between the inner and outer boundaries of $A_{2\alpha^{n - 1} - 4\alpha^{n - 1 + \zeta},2\alpha^{n - 1} - 2\alpha^{n - 1 + \zeta}}^z(y; D_\Phi)$, it follows that either
    \begin{itemize}
        \item $B_{\alpha^{n - 1 + \zeta}}(x_j; D_\Phi)$ does not intersect $B_{2\alpha^{n - 1} - 4\alpha^{n - 1 + \zeta}}^z(y; D_\Phi)$, or
        \item $B_{\alpha^{n - 1 + \zeta}}(x_j; D_\Phi)$ does not intersect $\BC \setminus B_{2\alpha^{n - 1} - 2\alpha^{n - 1 + \zeta}}^z(y; D_\Phi)$. 
    \end{itemize}
    First, we consider the case where $B_{\alpha^{n - 1 + \zeta}}(x_j; D_\Phi)$ does not intersect $B_{2\alpha^{n - 1} - 4\alpha^{n - 1 + \zeta}}^z(y; D_\Phi)$. Since $B_{2\alpha^{n - 1} - 4\alpha^{n - 1 + \zeta}}^z(y; D_\Phi)$ is connected and contains $B_{3\alpha^{n - 1}/2}^z(y; D_\Phi)$ (hence is unbounded), it follows that $B_{2\alpha^{n - 1} - 4\alpha^{n - 1 + \zeta}}^z(y; D_\Phi) \cap B_{\alpha^{n - 1 + \zeta}}^\bullet(x_j; D_\Phi) = \emptyset$. However, since (by the definition of $i_2$) $B_{2\alpha^{n - 1} - 4\alpha^{n - 1 + \zeta}}^z(y; D_\Phi) \cap B_{4\alpha^n}(x_{i_2}; D_\Phi) \neq \emptyset$, it follows that $B_{4\alpha^n}(x_{i_2}; D_\Phi) \not\subset B_{\alpha^{n - 1 + \zeta}}^\bullet(x_j; D_\Phi)$. Next, we consider the case where $B_{\alpha^{n - 1 + \zeta}}(x_j; D_\Phi)$ does not intersect $\BC \setminus B_{2\alpha^{n - 1} - 2\alpha^{n - 1 + \zeta}}^z(y; D_\Phi)$. Since $\BC \setminus B_{2\alpha^{n - 1} - 2\alpha^{n - 1 + \zeta}}^z(y; D_\Phi)$ is connected and (by assumption) $z \notin B_{\alpha^{n - 1 + \zeta}}^\bullet(x_j; D_\Phi)$, it follows that $(\BC \setminus B_{2\alpha^{n - 1} - 2\alpha^{n - 1 + \zeta}}^z(y; D_\Phi)) \cap B_{\alpha^{n - 1 + \zeta}}^\bullet(x_j; D_\Phi) = \emptyset$. However, since (by the definition of $j_2$) $(\BC \setminus B_{2\alpha^{n - 1} - 2\alpha^{n - 1 + \zeta}}^z(y; D_\Phi)) \cap B_{4\alpha^n}(x_{j_2}; D_\Phi) \neq \emptyset$, it follows that $B_{4\alpha^n}(x_{j_2}; D_\Phi) \not\subset B_{\alpha^{n - 1 + \zeta}}^\bullet(x_j; D_\Phi)$. This completes the proof of~\eqref{eq:admissibility-proof-3}, hence also the proof of~\eqref{eq:admissibility-proof-0}. 

    Next, we consider the case where there exists $k_2 \in [i_2, j_2]$ such that $z \in B_{\alpha^{n - 1 + \zeta}}^\bullet(x_{k_2}; D_\Phi)$. We \emph{claim} that $A_{\alpha^{n - 1}/16,5\alpha^{n - 1}/16}^\bullet(x_{k_2}; D_\Phi)$ is contained in $A_{3\alpha^{n - 1}/2,2\alpha^{n - 1}}^z(y; D_\Phi)$ and disconnects the inner and outer boundaries of $A_{3\alpha^{n - 1}/2,2\alpha^{n - 1}}^z(y; D_\Phi)$, or, equivalently, that
    \begin{equation*}
        \BC \setminus B_{2\alpha^{n - 1}}^z(y; D_\Phi) \subset B_{\alpha^{n - 1}/16}^\bullet(x_{k_2}; D_\Phi) \subset B_{5\alpha^{n - 1}/16}^\bullet(x_{k_2}; D_\Phi) \subset \BC \setminus B_{3\alpha^{n - 1}/2}^z(y; D_\Phi). 
    \end{equation*}
    Since $A_{2\alpha^{n - 1} - 4\alpha^{n - 1 + \zeta},2\alpha^{n - 1} - 2\alpha^{n - 1 + \zeta}}^z(y; D_\Phi) \cap B_{4\alpha^n}(x_{k_2}; D_\Phi) \neq \emptyset$, we have $(\BC \setminus B_{2\alpha^{n - 1}}^z(y; D_\Phi)) \cap B_{\alpha^{n - 1 + \zeta}}(x_{k_1}; D_\Phi) = \emptyset$ and $B_{5\alpha^{n - 1}/16}(x_{k_1}; D_\Phi) \subset \BC \setminus B_{3\alpha^{n - 1}/2}^z(y; D_\Phi)$. Since $\BC \setminus B_{2\alpha^{n - 1}}^z(y; D_\Phi)$ is connected and $z \in B_{\alpha^{n - 1 + \zeta}}^\bullet(x_{k_2}; D_\Phi)$, it follows that $\BC \setminus B_{2\alpha^{n - 1}}^z(y; D_\Phi) \subset B_{\alpha^{n - 1 + \zeta}}^\bullet(x_{k_2}; D_\Phi) \subset B_{\alpha^{n - 1}/16}^\bullet(x_{k_2}; D_\Phi)$. Since $\BC \setminus B_{3\alpha^{n - 1}/2}^z(y; D_\Phi)$ is simply connected, we have $B_{5\alpha^{n - 1}/16}^\bullet(x_{k_2}; D_\Phi) \subset \BC \setminus B_{3\alpha^{n - 1}/2}^z(y; D_\Phi)$, This completes the proof of the \emph{claim}. 

    Since the event $\widetilde F(x_{k_2}, n)$ occurs, by definition, there exists a doubly connected domain $A$ that is contained in $A_{\alpha^{n - 1}/16,5\alpha^{n - 1}/16}^\bullet(x_{k_2}; D_\Phi)$ and disconnects its inner and outer boundaries such that~\eqref{eq:G} is satisfied with $w = 8\alpha^{n - 1 + \zeta}$, and the event $G$ of the $\gamma$-LQG surface parameterized by $A$ occurs. Write $B$ for the connected component of $\BC \setminus A$ that contains $z$. The \emph{claim} of the preceding paragraph implies that
    \begin{equation*}
        \BC \setminus B_{2\alpha^{n - 1}}^z(y; D_\Phi) \subset B_{\alpha^{n - 1}/16}^\bullet(x_{k_2}; D_\Phi) \subset B \subset B \cup A \subset B_{5\alpha^{n - 1}/16}^\bullet(x_{k_2}; D_\Phi) \subset \BC \setminus B_{3\alpha^{n - 1}/2}^z(y; D_\Phi). 
    \end{equation*}
    Thus, the path $B_{4\alpha^n}(x_0; D_\Phi), B_{4\alpha^n}(x_1; D_\Phi), \cdots, B_{4\alpha^n}(x_N; D_\Phi)$ crosses between $I$ and $O$. (By convention, $I$ (resp.~$O$) is the boundary of $B$ (resp.~the unbounded connected component of $\BC \setminus A$).) Thus, we may write 
    \begin{itemize}
        \item $j_{k_2}$ for the smallest $j \in [0, N]_\BZ$ such that $B_{4\alpha^n}(x_j; D_\Phi)$ intersects $\partial B_{3\alpha^{n - 1 + \zeta}}^O(I; D_\Phi)$; 
        \item $i_{k_2}$ for the largest $i \in [0, N]_\BZ$ smaller than $j_{k_2}$ such that $B_{4\alpha^n}(x_i; D_\Phi)$ intersects $\partial B_{5\alpha^{n - 1 + \zeta}}^O(I; D_\Phi)$. 
    \end{itemize}
    We \emph{claim} that 
    \begin{equation}\label{eq:admissibility-proof-6}
        \sum_{j \in [i_{k_2}, j_{k_2}]_\BZ} \sigma(x_j, n) = \sum_{j \in [i_{k_2}, j_{k_2}]_\BZ} \frac{\diam(B_{4\alpha^n}(x_j; D_\Phi))}{\inradius(B_{\alpha^{n - 1 + \zeta}}^\bullet(x_j; D_\Phi))} \ge 1.
    \end{equation}
    By a similar argument to the argument applied in the proof of~\eqref{eq:admissibility-proof-1}, it suffices to show that
    \begin{equation}\label{eq:admissibility-proof-7}
        \bigcup_{i \in [i_{k_2}, j_{k_2}]_\BZ} B_{4\alpha^n}(x_i; D_\Phi) \not\subset B_{\alpha^{n - 1 + \zeta}}^\bullet(x_j; D_\Phi), \quad \forall j \in [i_{k_2}, j_{k_2}]_\BZ. 
    \end{equation}
    For each $t \in [0, 8\alpha^{n - 1 + \zeta}]$, write $B_t \defeq B \cup A_{0,t}^O(I; D_\Phi)$. Since the inner and outer boundaries of $A_{3\alpha^{n - 1 + \zeta},5\alpha^{n - 1 + \zeta}}^O(I; D_\Phi)$ have $D_\Phi$-distance $2\alpha^{n - 1 + \zeta}$, and the path 
    \begin{equation*}
        B_{4\alpha^n}(x_{i_{k_2}}; D_\Phi), B_{4\alpha^n}(x_{i_{k_2} + 1}; D_\Phi), \cdots, B_{4\alpha^n}(x_{j_{k_2}}; D_\Phi). 
    \end{equation*}
    crosses between the inner and outer boundaries of $A_{3\alpha^{n - 1 + \zeta},5\alpha^{n - 1 + \zeta}}^O(I; D_\Phi)$, it follows that either
    \begin{itemize}
        \item $B_{3\alpha^{n - 1 + \zeta}} \cap B_{\alpha^{n - 1 + \zeta}}(x_j; D_\Phi) = \emptyset$, or
        \item $(\BC \setminus B_{5\alpha^{n - 1 + \zeta}}) \cap B_{\alpha^{n - 1 + \zeta}}(x_j; D_\Phi) = \emptyset$. 
    \end{itemize}
    For simplicity, we consider the first case; the second case is entirely similar. Choose $w \in \partial B_{3\alpha^{n - 1 + \zeta}} \cap B_{4\alpha^n}(x_{j_{k_2}}; D_\Phi)$. (By definition, $\partial B_{3\alpha^{n - 1 + \zeta}} \cap B_{4\alpha^n}(x_{j_{k_2}}; D_\Phi) = \partial B_{3\alpha^{n - 1 + \zeta}}^O(I; D_\Phi) \cap B_{4\alpha^n}(x_{j_{k_2}}; D_\Phi) \neq \emptyset$.) Note that there is a $D_\Phi$-geodesic from $w$ to $I$ that is contained in $B_{3\alpha^{n - 1 + \zeta}}$. Since the event $G$ of the $\gamma$-LQG surface parameterized by $A$ occurs, by definition, $B_{\alpha^{n - 1 + \zeta}}(x_j; D_\Phi)$ does not disconnect $I$ and $O$. (One verifies immediately that $x_j \in A_{2\alpha^{n - 1 + \zeta},6\alpha^{n - 1 + \zeta}}^O(I; D_\Phi)$ for all $j \in [i_{k_2}, j_{k_2}]_\BZ$.) Thus, we conclude that $w$ and $I$ and $O$ are contained in the same connected component of $\BC \setminus B_{\alpha^{n - 1 + \zeta}}(x_j; D_\Phi)$. Since $A \subset A_{3\alpha^{n - 1}/2,2\alpha^{n - 1}}^z(y; D_\Phi)$ is bounded (since, by assumption, $B_{3\alpha^{n - 1}/2}^z(y; D_\Phi)$ is unbounded) and $B_{\alpha^{n - 1 + \zeta}}(x_j; D_\Phi) \subset A$, it follows that this connected component contains $\BC \setminus A$, hence is unbounded. Thus, we conclude that $w \notin B_{\alpha^{n - 1 + \zeta}}^\bullet(x_j; D_\Phi)$. This completes the proof of~\eqref{eq:admissibility-proof-7}, hence also the proof of~\eqref{eq:admissibility-proof-6}. This completes the proof of \Cref{lem:admissibility}. 
\end{proof}

\begin{definition}\label{def:tilde-rho}
    Let $x \in \BC$ and $n \in \BN$. Then we shall write
    \begin{equation*}
        \varsigma(x, n) \defeq (\eta + 128\re^{-2\pi m_n} + 2 \cdot \mathbf 1_{F(x, n)^c}) \wedge 1, 
    \end{equation*}
    where $m_n$ denotes the conformal modulus of $A_{32\alpha^n,\alpha^{n - 1 + \zeta}/2}^\bullet(x; D_\Phi)$; we shall write
    \begin{equation*}
        \varpi(x, n) \defeq \prod_{j = 1}^n \varsigma(x, j). 
    \end{equation*}
\end{definition}

\begin{lemma}\label{lem:pi-upper-bound}
    In the notation of \Cref{section:hyperbolic-filling}, $\pi(u) \le \varpi(x, n)$ for all $n \in \BN$ and $u = (x, n) \in V_n$. 
\end{lemma}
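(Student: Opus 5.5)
The plan is to compare $\pi(u)$ with $\varpi(x, n)$ one factor at a time along the vertical path from the root to $u$. Write $g(u)_j = (x_j, j)$ for $j \in [0, n]_\BZ$, so that $x_n = x$. By construction $\pi(o) = 1$ and $\pi(g(u)_j) = \pi(g(u)_{j - 1})\varrho(g(u)_j)$ by \eqref{eq:rho}. Hence
\begin{equation*}
    \pi(u) = \prod_{j = 1}^n \varrho(g(u)_j),
\end{equation*}
and it suffices to prove $\varrho(g(u)_j) \le \varsigma(x, j)$ for every $j \in [1, n]_\BZ$. Note that the right-hand side uses the fixed center $x$ at every scale, whereas the ancestors have different centers $x_j$. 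By \eqref{eq:parent}, however, $D_\Phi(x_j, x) \le \sum_{k \ge j} \alpha^k \le \alpha^j/(1 - \alpha)$, so all centers stay within distance $2\alpha^j$ of $x$ at scale $j$.

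Fix $j$ and set $v \defeq g(u)_j$. If $\varsigma(x, j) = 1$, there is nothing to show, since $\varrho(v) \le 1 - \eta$ by \Cref{lem:H1-H2}, \eqref{it:H1}. Otherwise $F(x, j)$ occurs and $\eta + 128\re^{-2\pi m_j} < 1$; in particular $\re^{2\pi m_j} > 128$. Again by \Cref{lem:H1-H2}, \eqref{it:H1}, $\varrho(v) \le \sup\{\mu(w) : w \sim v\}$. By \eqref{eq:definition-nu-mu}, each such $\mu(w)$ is at most $\eta \vee 2\sup \sigma(z, j)$, where the supremum runs over vertices $(z, j)$ at horizontal graph distance at most $3$ from $v$. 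Each horizontal step moves the center by less than $8\alpha^j$, so $D_\Phi(z, x) < 2\alpha^j + 24\alpha^j < 30\alpha^j$. Thus it suffices to show $2\sigma(z, j) \le 128 \re^{-2\pi m_j}$ for all $z \in B_{30\alpha^j}(x; D_\Phi)$.

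For such $z$, since $\alpha$ is small we have $30\alpha^j < \alpha^{j - 1}/16$. \Cref{lem:F-2-tilde-F} then gives that $\widetilde F(z, j)$ occurs, so
\begin{equation*}
    \sigma(z, j) \le \frac{\diam(B_{4\alpha^j}(z; D_\Phi))}{\inradius(B^\bullet_{\alpha^{j - 1 + \zeta}}(z; D_\Phi))},
\end{equation*}
where the inradius is read as being centered at $z$. The geometric inclusions are as follows.
\begin{itemize}
    \item The inner piece: $B_{4\alpha^j}(z; D_\Phi) \subset B_{34\alpha^j}(x; D_\Phi)$. With a little room in the constants (shrinking $30$ to $28$ if needed), this lies in $B^\bullet_{32\alpha^j}(x; D_\Phi)$, the inner hole of the annulus $A^\bullet_{32\alpha^j, \alpha^{j - 1 + \zeta}/2}(x; D_\Phi)$ whose modulus is $m_j$.
    \item The outer piece: $B^\bullet_{\alpha^{j - 1 + \zeta}/2}(x; D_\Phi) \subset B^\bullet_{\alpha^{j - 1 + \zeta}}(z; D_\Phi)$, because $30\alpha^j \ll \alpha^{j - 1 + \zeta}/2$ once $\alpha^{1 - \zeta}$ is small.
\end{itemize}
Now apply \Cref{lem:Teichmueller} to this annulus, translated so that $z$, which lies in the inner hole, is at the origin. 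This gives $R_2/R_1 \ge \re^{2\pi m_j}/16 - 1 \ge \re^{2\pi m_j}/32$. Moreover $\diam(B_{4\alpha^j}(z; D_\Phi)) \le 2R_1$ and $\inradius(B^\bullet_{\alpha^{j - 1 + \zeta}}(z; D_\Phi)) \ge R_2$. Hence
\begin{equation*}
    2\sigma(z, j) \le \frac{4R_1}{R_2} \le 128 \re^{-2\pi m_j}.
\end{equation*}
Consequently $\mu(w) \le \eta \vee 128\re^{-2\pi m_j} \le \varsigma(x, j)$ for every $w \sim v$, so $\varrho(v) \le \varsigma(x, j)$. Taking the product over $j$ gives $\pi(u) \le \varpi(x, n)$.

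The main obstacle is the bookkeeping of metric-ball inclusions in the last step. One must check that every center $z$ reachable within three horizontal steps of the ancestor $g(u)_j$ stays close enough to $x$ that two things hold simultaneously. First, $F(x, j)$ transfers to $\widetilde F(z, j)$. Second, the pair $B_{4\alpha^j}(z; D_\Phi)$ and $B^\bullet_{\alpha^{j - 1 + \zeta}}(z; D_\Phi)$ is sandwiched by the inner and outer boundaries of the annulus defining $m_j$, including passing from unfilled to filled balls. This is where the choice of $\alpha$ small relative to $\zeta$, and the specific constants $32$ and $1/2$ in \Cref{def:tilde-rho}, are used.
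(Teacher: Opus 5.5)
Your proposal is correct and follows the paper's proof. You factor $\pi(u)=\prod_{j}\varrho(g(u)_j)$ and bound each $\varrho(g(u)_j)$ by $\eta+2\sup\sigma(z,j)$ over centers $z$ within $26\alpha^j$ of $x$ (three horizontal steps plus the drift of the ancestors). You then pass from $F(x,j)$ to $\widetilde F(z,j)$ via \Cref{lem:F-2-tilde-F} and compare with the annulus defining $m_j$ via \Cref{lem:Teichmueller}, exactly as the paper does.

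The differences are cosmetic. You split on whether $\varsigma(x,j)=1$ instead of adding $2\cdot\mathbf 1_{F(x,j)^c}$ and capping at $1$. You also apply \Cref{lem:Teichmueller} with $z$ as the origin (legitimate, since $z\in B^\bullet_{32\alpha^j}(x;D_\Phi)$). This handles the centering of the inradius more cleanly than the paper's intermediate comparison with balls centered at $x$.
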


\begin{proof}
    Recall from~\eqref{eq:rho} that $\pi(u) = \prod_{j = 1}^n \varrho(g(u)_j)$. Thus, it suffices to show that 
    \begin{equation}\label{eq:pi-upper-bound-proof-2}
        \varrho(g(u)_j) \le \varsigma(x, j), \quad \forall j \in [1, n]_\BZ. 
    \end{equation}
    For each $j \in [1, n]_\BZ$, write $g(u)_j = (x_j, j)$. We have
    \begin{align*}
        \varrho(g(u)_j) &\le \sup\{\mu(v) : v \in V_j \text{ with } g(u)_j \sim v\} \quad \text{(by \Cref{lem:H1-H2}, \eqref{it:H1})} \\
        &\le \eta + \sup\{\nu(v) : v \in V_j \text{ with } g(u)_j \sim v\} \quad \text{(by~\eqref{eq:definition-nu-mu})} \\
        &\le \eta + 2\sup\{\sigma(v^\pprime) : v, v^\prime, v^\pprime \in V_j \text{ with } g(u)_j \sim v \sim v^\prime \sim v^\pprime\} \quad \text{(by~\eqref{eq:definition-nu-mu})} \\
        &\le \eta + 2\sup\{\sigma(y, j) : y \in \BC \text{ with } D_\Phi(x_j, y) \le 24\alpha^j\}, 
    \end{align*}
    where the last inequality follows from the fact that if $(x_j, j) \sim (y, j)$ is a horizontal edge, then $B_{4\alpha^j}(x_j; D_\Phi)$ and $B_{4\alpha^j}(y; D_\Phi)$ intersect. Note that
    \begin{equation*}
        D_\Phi(x, x_j) = D_\Phi(x_n, x_j) \le \sum_{k = j}^{n - 1} D_\Phi(x_k, x_{k + 1}) \le \sum_{k = j}^{n - 1} \alpha^k < 2\alpha^j, 
    \end{equation*}
    where the penultimate inequality follows from~\eqref{eq:parent}. Thus, we obtain that
    \begin{equation*}
        \varrho(g(u)_j) \le \eta + 2\sup\{\sigma(y, j) : y \in \BC \text{ with } D_\Phi(x, y) \le 26\alpha^j\}. 
    \end{equation*}
    We have
    \begin{align*}
        \sigma(y, j) &= \left(\frac{\diam(B_{2\alpha^j}(y; D_\Phi))}{\inradius(B_{\alpha^{j - 1 + \zeta}}^\bullet(y; D_\Phi))} \wedge 1\right)\mathbf 1_{\widetilde F(y, j)} + \mathbf 1_{\widetilde F(y, j)^c} \quad \text{(by definition)} \\
        &\le \left(\frac{\diam(B_{2\alpha^j}(y; D_\Phi))}{\inradius(B_{\alpha^{j - 1 + \zeta}}^\bullet(y; D_\Phi))} \wedge 1\right) + \mathbf 1_{\widetilde F(y, j)^c} \\
        &\le \left(\frac{\diam(B_{28\alpha^j}(x; D_\Phi))}{\inradius(B_{\alpha^{j - 1 + \zeta}/2}^\bullet(x; D_\Phi))} \wedge 1\right) + \mathbf 1_{\widetilde F(y, j)^c} \quad \text{(since $26\alpha^j \le \alpha^{j - 1 + \zeta}/2$)} \\
        &\le \left(\frac{\diam(B_{28\alpha^j}(x; D_\Phi))}{\inradius(B_{\alpha^{j - 1 + \zeta}/2}^\bullet(x; D_\Phi))} \wedge 1\right) + \mathbf 1_{F(x, j)^c} \quad \text{(by \Cref{lem:F-2-tilde-F})} \\
        &\le \left(\frac{\diam(B_{32\alpha^j}(x; D_\Phi))}{\inradius(B_{\alpha^{j - 1 + \zeta}/2}^\bullet(x; D_\Phi))} \wedge 1\right) + \mathbf 1_{F(x, j)^c}
    \end{align*}
    for all $y \in \BC$ with $D_\Phi(x, y) \le 26\alpha^j$. Thus, we obtain that
    \begin{equation*}
        \varrho(g(u)_j) \le \eta + 2\left(\frac{\diam(B_{32\alpha^j}(x; D_\Phi))}{\inradius(B_{\alpha^{j - 1 + \zeta}/2}^\bullet(x; D_\Phi))} \wedge 1\right) + 2 \cdot \mathbf 1_{F(x, j)^c}. 
    \end{equation*}
    We have
    \begin{align*}
        \frac{\diam(B_{32\alpha^j}(x; D_\Phi))}{\inradius(B_{\alpha^{j - 1 + \zeta}/2}^\bullet(x; D_\Phi))} \wedge 1 &\le \frac{2\outradius(B_{32\alpha^j}^\bullet(x; D_\Phi))}{\inradius(B_{\alpha^{j - 1 + \zeta}/2}^\bullet(x; D_\Phi))} \wedge 1 \\
        &\le 2\left((\re^{2\pi m_j}/16 - 1) \vee 0\right)^{-1} \wedge 1 \quad \text{(by \Cref{lem:Teichmueller})} \\
        &\le 64 \re^{-2\pi m_j}, 
    \end{align*}
    where $\outradius(B_{32\alpha^j}^\bullet(x; D_\Phi)) \defeq \inf\{R > 0 : B_{32\alpha^j}^\bullet(x; D_\Phi) \subset B_R(0)\}$ and the last inequality follows from the fact that $2((x - 1) \vee 0)^{-1} \wedge 1 \le 4x^{-1}$ for all $x > 0$. Thus, we obtain that
    \begin{equation*}
        \varrho(g(u)_j) \le \eta + 128\re^{-2\pi m_j} + 2 \cdot \mathbf 1_{F(x, j)^c}. 
    \end{equation*}
    Combining this with the fact that $\varrho(g(u)_j) \le 1$ (cf.~\Cref{lem:H1-H2}, \eqref{it:H1}), we complete the proof of~\eqref{eq:pi-upper-bound-proof-2}, hence the proof of \Cref{lem:pi-upper-bound}. 
\end{proof}

\begin{lemma}\label{lem:pi-expectation}
    Set $\eta \defeq \alpha^{100}$. Suppose that $(\BC, \Phi; 0, \infty)$ is a $\gamma$-LQG cone. Then for each $p > 2$, there exists $q = q(p) > d_\gamma$ and $\zeta_\ast = \zeta_\ast(p) > 0$ such that for each $\zeta \in (0, \zeta_\ast]$, there exists $\alpha_\ast = \alpha_\ast(\zeta) \in (0, 1)$ such that 
    \begin{equation*}
        \BE\lbrack\varpi(0, n)^p\rbrack \le \alpha^{qn}, \quad \forall \alpha \in (0, \alpha_\ast], \ \forall n \in \BN. 
    \end{equation*}
\end{lemma}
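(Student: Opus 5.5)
We bound $\BE\lbrack\varpi(0,n)^p\rbrack = \BE\lbrack\prod_{j=1}^n \varsigma(0,j)^p\rbrack$ one factor at a time. The factor $\varsigma(0,j)$ depends only on two pieces of the cone. The modulus $m_j$ depends on the band $A^\bullet_{32\alpha^j,\alpha^{j-1+\zeta}/2}(0;D_\Phi)$. The event $F(0,j)$, by \Cref{lem:F-measurability}, depends on the band $A^\bullet_{\alpha^{j-1}/8,\alpha^{j-1}/4}(0;D_\Phi)$. Both pieces lie inside $A^\bullet_{32\alpha^j,\alpha^{j-1}/4}(0;D_\Phi)$.

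\textbf{Step 1: parity split.} For consecutive $j$ these bands overlap, because $32\alpha^{j}<\alpha^{j}/4$ fails. Bands with indices differing by $2$ are disjoint, since $\alpha^{j+1}/4 < 32\alpha^j$ for small $\alpha$. By Hölder we may therefore bound $\BE\lbrack\prod_j\varsigma(0,j)^p\rbrack$ by $\prod_{\text{parity}}\BE\lbrack\prod_{j\equiv i}\varsigma(0,j)^{2p}\rbrack^{1/2}$. The cost is replacing $p$ by $2p$, which only helps, since $\varsigma\le1$ and larger exponents give larger $q$ in \Cref{lem:conformal-moduli-band}. Alternatively we keep $p$ and apply the next step to every other index.

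\textbf{Step 2: peel off factors via the Markov property.} Within one parity class we condition successively on the filled ball $B^\bullet_{t}(0;D_\Phi)$ at the inner radius of each band, going outward, or equivalently on the exterior going inward. The metric bands are conditionally independent given the boundary lengths, as in \Cref{subsection:BFS}. So it suffices to prove a uniform one-step bound: for every $\ell\ge0$,
\[
\BE\!\left\lbrack\varsigma(0,j)^{p}\ \middle\vert\ \SN_\Phi(\partial B^\bullet_{32\alpha^j}(0;D_\Phi))=\ell\right\rbrack\le \alpha^{q'}
\]
for some $q'>d_\gamma(1+\text{small})$. Iterating this gives $\alpha^{q' n/2}$ per parity class and hence $\alpha^{qn}$ overall. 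The precise statement we want is $\alpha^{2q}$ per pair of indices.

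\textbf{Step 3: one-step bound.} Use $\varsigma^p\le 3^p(\eta^p+128^p e^{-2\pi m_j p}+2^p\mathbf 1_{F(0,j)^c})$.
\begin{itemize}
\item The first term is $\alpha^{100p}$.
\item For the second term, the band has inner boundary length $\ell$. By scaling it is a cone-type metric band with inner boundary length one and width $T=(\alpha^{j-1+\zeta}/2-32\alpha^j)/\sqrt{\ell}$, measured in units of $\ell^{1/2}$. \Cref{lem:conformal-moduli-band} then gives $O(T^{-q(p)})$.
\item The trouble is large $\ell$. Here we use \Cref{lem:CSBP}, or the Gamma law of $Y$ and the fact that $Y_{-t}$ with $t=32\alpha^j$ has law Gamma with mean $t^2$. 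Together these control $\ell\le \alpha^{2j-\zeta}$ except on an event of superpolynomially small probability in $\alpha$. On that event $T\gtrsim\alpha^{-1+3\zeta/2}$, so the term is $O(\alpha^{q(p)(1-3\zeta/2)})$, which exceeds $\alpha^{d_\gamma}$ by a margin once $\zeta\le\zeta_\ast(p)$.
\item The third term is handled by \Cref{lem:F-probability}. Its conditional bound, uniform in the boundary length at radius $\alpha^{j-1}/8$, gives $O(\alpha^\infty)$. We pass from conditioning at radius $32\alpha^j$ to radius $\alpha^{j-1}/8$ by the tower property.
\end{itemize}

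\textbf{Main obstacle.} The main obstacle is making Step 2 rigorous when conditioning on a single boundary length. The one-step bound for the modulus term genuinely depends on $\ell$, so it is not uniform over all $\ell$.

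To handle this, we instead condition on the whole process up to the inner radius and absorb the large-$\ell$ event into an exceptional set. \Cref{lem:CSBP} shows that the outer boundary length, given the inner one, is unlikely to exceed its scale. We therefore use an indicator of $\{\ell\le\alpha^{2j-\zeta}\}$ within each factor and sum over the exceptional indices. Each exceptional factor still contributes at most $1$ but occurs with probability $O(\alpha^\infty)$ conditionally.

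This yields a product bound of the form $(\alpha^{q''}+C_N\alpha^N)^{n/2}$. Choosing $\alpha_\ast(\zeta)$ small then finishes the proof.
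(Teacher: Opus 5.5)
Your one-step estimate in Step 3 is correct and matches the paper: the $3^{p}$ splitting, scaling to a cone-type band of inner length one and width $\gtrsim\alpha^{2\zeta-1}$, \Cref{lem:conformal-moduli-band}, and \Cref{lem:F-probability} via the tower property. The gaps are in how you combine scales. Step 1 rests on a misreading, and it is not harmless. The factor $\varsigma(0,j)$ is determined by the surface between radii $32\alpha^j$ and $\alpha^{j-1}/4$. Since $\alpha^j/4<32\alpha^j$ (the inequality you noted), the regions for consecutive $j$ are disjoint; the paper simply uses the tiling bands $A^\bullet_{32\alpha^j,32\alpha^{j-1}}(0;D_\Phi)$. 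More seriously, the Cauchy--Schwarz split halves the exponent. It gives $\BE[\varpi(0,n)^p]\le\alpha^{q(2p)n/2}$, so you would need $q(2p)>2d_\gamma=8$. Nothing in \Cref{lem:conformal-moduli-band} provides that. The exponent produced by the proof of \Cref{lem:conformal-moduli-marginal} is essentially $\sqrt{1+12p}-1$, which is about $6$ at $2p\approx4$. The ``every other index'' variant only gives exponent $q(p)/2\approx2$. Because $q(p)$ exceeds $d_\gamma$ only by a small margin for $p$ near $2$, either version would only bound the conformal dimension by a number well above $2$.

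The treatment of large boundary lengths also does not work as stated. Going outward, the bad event $\{Y_{-32\alpha^j}>\alpha^{2j-2\zeta}\}$ is measurable with respect to $B^\bullet_{32\alpha^j}(0;D_\Phi)$, which is exactly what you condition on, so it cannot be averaged out at that step. Its probability given the inner part is also not $O(\alpha^\infty)$ uniformly: if the boundary length at radius $32\alpha^{j+1}$ is huge, the one at $32\alpha^j$ is huge too. \Cref{lem:CSBP} goes the other way; it bounds inner lengths given an outer length and the lifetime. A union bound with marginal Gamma tails gives an exceptional set of probability $n\cdot O(\alpha^\infty)$, which is not $\le\alpha^{qn}$ for large $n$. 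So the product bound $(\alpha^{q''}+C_N\alpha^N)^{n/2}$ does not follow.

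The missing ingredient is a large-deviation bound on the number of bad scales, \Cref{lem:CSBP-independence}, \eqref{it:CSBP-independence-1}. With probability at least $1-\exp(-\alpha^{-\zeta}n)$, at least $(1-\zeta)n$ indices $j\in[1,n]_\BZ$ satisfy $Y_{-32\alpha^j}\le\alpha^{2j-2\zeta}$. On that event the paper bounds $\varpi(0,n)^p$ by the product over the first $\lceil(1-\zeta)n\rceil$ good indices, counted outward from $n$. These random indices are stopping times for the outward filtration generated by $B^\bullet_{32\alpha^j}(0;D_\Phi)$, so your one-step bound iterates along them. This gives $\alpha^{q(1-\zeta)n}+\exp(-\alpha^{-\zeta}n)$.
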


\begin{proof}
    Fix $p > 2$. For each $t \ge 0$, write $Y_{-t} \defeq \SN_\Phi(\partial B_t^\bullet(0; D_\Phi))$. Write $E$ for the event that $\#\{j \in [1, n]_\BZ : Y_{-32\alpha^j} \le \alpha^{2j - 2\zeta}\} \ge (1 - \zeta)n$. By \Cref{lem:CSBP}, $\BP\lbrack E^c\rbrack \le \exp(-\alpha^{-\zeta}n)$ for all sufficiently small $\alpha \in (0, 1)$ and all $n \in \BN$. Recall that
    \begin{equation*}
        \varpi(0, n) = \prod_{j = 1}^n \left((\eta + 128\re^{-2\pi m_j} + 2 \cdot \mathbf 1_{F(0, j)^c}) \wedge 1\right),
    \end{equation*}
    where $m_j$ denotes the conformal modulus of $A_{32\alpha^j,\alpha^{j - 1 + \zeta}/2}^\bullet(0; D_\Phi)$.
    Note that 
    \begin{multline*}
        \BE\!\left\lbrack(\eta + 128\re^{-2\pi m_j} + 2 \cdot \mathbf 1_{F(0, j)^c})^p \ \middle\vert \ Y_{-32\alpha^j}\right\rbrack \\
        \le 3^{p - 1}\left(\eta^p + 2^{7p} \BE\lbrack\re^{-2\pi m_jp} \mid Y_{-32\alpha^j}\rbrack + 2^p \BP\lbrack F(0, j)^c \mid Y_{-32\alpha^j}\rbrack\right). 
    \end{multline*}
    By \Cref{lem:F-probability}, $\sup_{\ell \ge 0} \BP\lbrack F(0, j)^c \mid Y_{-32\alpha^j} = \ell\rbrack = O(\alpha^\infty)$ as $\alpha \to 0$. By the scaling property, the conditional law of $m_j$ given $Y_{-32\alpha^j}$ is identical to the conditional law of the conformal modulus of a metric band of cone type with inner boundary length one and width $(\alpha^{j - 1 + \zeta}/2 - 32\alpha^j)Y_{-32\alpha^j}^{-1/2}$. Write $m$ for the conformal modulus of a metric band of cone type with inner boundary length one and width $(\alpha^{j - 1 + \zeta}/2 - 32\alpha^j)\alpha^{-(j - \zeta)} \ge \alpha^{2\zeta - 1}/4$. Thus, almost surely on the event that $Y_{-32\alpha^j} \le \alpha^{2j - 2\zeta}$, we have $\BE\lbrack\re^{-2\pi m_jp} \mid Y_{-32\alpha^j}\rbrack \le \BE\lbrack\re^{-2\pi mp}\rbrack$. By \Cref{lem:conformal-moduli-band}, there exists $q = q(p) > d_\gamma$ and $\zeta_\ast = \zeta_\ast(p) \in (0, 1 - d_\gamma/q)$ such that $\BE\lbrack\re^{-2\pi mp}\rbrack = o(\alpha^q)$ as $\alpha \to 0$ whenever $\zeta \in (0, \zeta_\ast]$. Thus, we conclude that there exists $\alpha_\ast = \alpha_\ast(\zeta) \in (0, 1)$ such that for each $\alpha \in (0, \alpha_\ast]$, almost surely on the event that $Y_{-32\alpha^j} \le \alpha^{2j - 2\zeta}$, 
    \begin{equation*}
        \BE\!\left\lbrack(\eta + 128\re^{-2\pi m_j} + 2 \cdot \mathbf 1_{F(0, j)^c})^p \ \middle\vert \ Y_{-32\alpha^j}\right\rbrack \le \alpha^q. 
    \end{equation*}
    For each $k \in \BN$, write $\alpha^{j_k}$ for the $k$-th smallest number in $\{\alpha^j\}_{j \in (-\infty, n]_\BZ}$ for which $Y_{-32\alpha^j} \le \alpha^{2j - 2\zeta}$. By definition, $E = \{j_{\lceil(1 - \zeta)n\rceil} \ge 1\}$. Note that $\eta + 128\re^{-2\pi m_j} + 2 \cdot \mathbf 1_{F(0, j)^c}$ is almost surely determined by the $\gamma$-LQG surfaces parameterized by $A_{32\alpha^j,32\alpha^{j - 1}}^\bullet(0; D_\Phi)$ (cf.~\Cref{lem:F-measurability}). Since the $\gamma$-LQG surfaces parameterized by $B_{32\alpha^j}^\bullet(0; D_\Phi)$ and $\BC \setminus B_{32\alpha^j}^\bullet(0; D_\Phi)$ are conditionally independent given $Y_{-32\alpha^j}$, it follows that
    \begin{equation*}
        \BE\!\left\lbrack\prod_{k = 1}^n (\eta + 128\re^{-2\pi m_{j_k}} + 2 \cdot \mathbf 1_{F(0, j_k)^c})^p\right\rbrack \le \alpha^{qn}
    \end{equation*}
    for all $\alpha \in (0, \alpha_\ast]$ and all $n \in \BN$. Thus, we conclude that
    \begin{align*}
        \BE\lbrack\varpi(0, n)^p\rbrack &\le \BE\lbrack\varpi(0, n)^p\mathbf 1_E\rbrack + \BP\lbrack E^c\rbrack \\
        &\le \BE\!\left\lbrack\prod_{k = 1}^{\lceil(1 - \zeta)n\rceil} (\eta + 128\re^{-2\pi m_{j_k}} + 2 \cdot \mathbf 1_{F(0, j_k)^c})^p\right\rbrack + \exp(-\alpha^{-\zeta}n) \\
        &\le \alpha^{q(1 - \zeta)n} + \exp(-\alpha^{-\zeta}n)
    \end{align*}
    for all $\alpha \in (0, \alpha_\ast]$ and all $n \in \BN$. Since $q(1 - \zeta) \ge q(1 - \zeta_\ast) > d_\gamma$ (by assumption), this completes the proof of \Cref{lem:pi-expectation}. 
\end{proof}

\section{Proof of \texorpdfstring{\Cref{thm:main}}{Theorem~\ref{thm:main}}}\label{section:proof}

We continue to use the notation of \Cref{section:hyperbolic-filling,section:weight}. Set $\eta \defeq \alpha^{100}$. By the construction of \Cref{section:hyperbolic-filling} (especially \Cref{lem:corollary}), in order to show that the conformal dimension of the metric space $(X, D)$ is equal to two, it suffices to show that for each $p > 2$, we may choose the parameter $\alpha$, the finite subsets $A_0 \subset A_1 \subset A_2 \subset \cdots \subset X$, and the assignment $\sigma \colon V \setminus \{o\} \to \BR_{\ge 0}$ in such a way that~\eqref{eq:admissibility} is satisfied and $\sum_{u \in V_n} \pi(u)^p \to 0$ as $n \to \infty$. (We shall always implicitly assume that $A_n$ is a maximal $\alpha^n$-separated subset.)

Henceforth fix $p > 2$. Let $(\BC, \Phi; 0, \infty)$ be a $\gamma$-LQG cone. First, we consider the case where $(X, D) = (B_t(0; D_\Phi), D_\Phi)$ for some $t \in (0, 1/2)$. We \emph{claim} that 
\begin{equation}\label{eq:main-proof-0}
    \parbox{.85\linewidth}{there exists $\zeta > 0$ and $\alpha \in (0, 1)$ such that for each $t \in (0, 1/2)$, there almost surely exist finite subsets $A_0 \subset A_1 \subset A_2 \subset \cdots \subset B_t(0; D_\Phi)$ such that 
    \begin{equation*}\hspace{-.15\linewidth}
        \sum_{(x, n) \in V_n} \varpi(x, n; \Phi)^p \to 0 \quad \text{as } n \to \infty, 
    \end{equation*}
    where we use the notation $\varpi(x, n; \Phi) = \varpi(x, n)$ to emphasize the dependence of $\varpi(x, n)$ on $\Phi$.}
\end{equation}

Before proceeding, we complete the proof of \Cref{thm:main} assuming~\eqref{eq:main-proof-0}. Let $(\widehat\BC, \Phi^{\mathrm{sph}}; 0, \infty)$ be a $\gamma$-LQG sphere. By the scaling property, we may assume without loss of generality that $(\widehat\BC, \Phi^{\mathrm{sph}}; 0, \infty)$ is conditioned so that $\diam(\widehat\BC; D_{\Phi^{\mathrm{sph}}}) = 1$. Let $\{x_k\}_{k \in \BN}$ and $\{y_k\}_{k \in \BN}$ be conditionally independent samples from $\SM_{\Phi^{\mathrm{sph}}}$ (renormalized to be a probability measure). Let $\{t_k\}_{k \in \BN}$ be sampled independently and independently of everything else from the uniform probability measure on $(0, 1/2)$. Then for each $k \in \BN$, on the event that $D_{\Phi^{\mathrm{sph}}}(x_k, y_k) > 1/2$, the laws of the $\gamma$-LQG surfaces parameterized by $B_{1/2}^{y_k}(x_k; D_{\Phi^{\mathrm{sph}}})$ and $B_{1/2}^\bullet(0; D_{\Phi^{\mathrm{sph}}})$ are mutually absolutely continuous. (Indeed, the laws of the boundary lengths of the $\gamma$-LQG surfaces parameterized by $B_{1/2}^{y_k}(x_k; D_{\Phi^{\mathrm{sph}}})$ and $B_{1/2}^\bullet(0; D_{\Phi^{\mathrm{sph}}})$ are both mutually absolutely continuous with respect to the Lebesgue measure on $\BR_{\ge 0}$, and they have the same conditional law given their boundary lengths.) This, together with~\eqref{eq:main-proof-0}, implies that for each $k \in \BN$, almost surely on the event that 
\begin{itemize}
    \item $D_{\Phi^{\mathrm{sph}}}(x_k, y_k) > 1/2$, and 
    \item the restrictions of $D_{\Phi^{\mathrm{sph}}}$ and $D_{\Phi^{\mathrm{sph}}}(\bullet, \bullet; B_{1/2}^{y_k}(x_k; D_{\Phi^{\mathrm{sph}}}))$ on $B_{t_k}(x_k; D_{\Phi^{\mathrm{sph}}})$ are equal (in which case the restriction of $D_{\Phi^{\mathrm{sph}}}$ on $B_{t_k}(x_k; D_{\Phi^{\mathrm{sph}}})$ is almost surely determined by the $\gamma$-LQG surfaces parameterized by $B_{1/2}^{y_k}(x_k; D_{\Phi^{\mathrm{sph}}})$), 
\end{itemize} 
there exist finite subsets $A_0^k \subset A_1^k \subset A_2^k \subset \cdots \subset B_{t_k}(x_k; D_{\Phi^{\mathrm{sph}}})$ such that 
\begin{equation*}
    \sum_{(x, n) \in V_n^k} \varpi(x, n; \Phi^{\mathrm{sph}})^p \to 0 \quad \text{as } n \to \infty.
\end{equation*}
On the other hand, there almost surely exists a finite subset $\SK \subset \BN$ such that the above event occurs for all $k \in \SK$, and that $\widehat\BC = \bigcup_{k \in \SK} B_{t_k}(x_k; D_{\Phi^{\mathrm{sph}}})$. We may choose $A_0 \subset A_1 \subset A_2 \subset \cdots \subset \widehat\BC$ in such a way that $A_n \subset \bigcup_{k \in \SK} A_n^k$. Thus, 
\begin{align*}
    \sum_{(x, n) \in V_n} \pi(x, n; \Phi^{\mathrm{sph}})^p &\le \sum_{(x, n) \in V_n} \varpi(x, n; \Phi^{\mathrm{sph}})^p \quad \text{(by \Cref{lem:pi-upper-bound})}\\
    &\le \sum_{k \in \SK} \sum_{(x, n) \in V_n^k} \varpi(x, n; \Phi^{\mathrm{sph}})^p \to 0 \quad \text{as } n \to \infty. 
\end{align*}
Since condition~\eqref{eq:admissibility} is satisfied (cf.~\Cref{lem:admissibility}), by the discussion of the first paragraph of the present section, this completes the proof of \Cref{thm:main}. 

It remains to verify~\eqref{eq:main-proof-0}. Fix $t \in (0, 1/2)$. Let $\eta^\prime \colon (-\infty, \infty) \to \BC$ be an independent whole-plane space-filling SLE$_{\kappa^\prime}$ curve from $\infty$ to $\infty$ parameterized by $\SM_\Phi$. Fix $T > 0$. Let $\{x_k\}_{k \in \BN}$ be conditionally independent samples from $\SM_\Phi|_{\eta^\prime([-T, T])}$ (renormalized to be a probability measure). On the event that $B_t(0; D_\Phi) \subset \eta^\prime([-T, T])$, we may arrange that $A_n \subset \{x_k : k \in [1, k_n]_\BZ, \ x_k \in B_t(0; D_\Phi)\}$ for all $n \in \BN$, where 
\begin{equation*}
    k_n \defeq \inf\!\left\{K \in \BN : B_t(0; D_\Phi) \subset \bigcup\{B_{\alpha^n}(x_k; D_\Phi) : k \in [1, K]_\BZ, \ x_k \in B_t(0; D_\Phi)\}\right\}.
\end{equation*}
By \Cref{lem:net}, almost surely on the event that $B_t(0; D_\Phi) \subset \eta^\prime([-T, T])$, we have $k_n \le \alpha^{-(d_\gamma + \zeta)n}$ for all sufficiently large $n \in \BN$. Fix $n_\ast \in \BN$. Write $E(T, n_\ast)$ for the event that $B_t(0; D_\Phi) \subset \eta^\prime([-T, T])$ and $k_n \le \alpha^{-(d_\gamma + \zeta)n} \text{ for all } n \ge n_\ast$. Note that each $(\BC, \Phi; x_k, \infty)$ has the same law as $(\BC, \Phi; 0, \infty)$ (as $\gamma$-LQG surfaces). Thus, 
\begin{align*}
    \BE\!\left\lbrack\left(\sum_{(x, n) \in V_n} \varpi(x, n; \Phi)^p\right)\mathbf 1_{E(T, n_\ast)}\right\rbrack &\le \BE\!\left\lbrack\left(\sum_{k \in [1, k_n]_\BZ} \varpi(x_k, n; \Phi)^p\right)\mathbf 1_{E(T, n_\ast)}\right\rbrack \\
    &\le \alpha^{-(d_\gamma + \zeta)n} \BE\lbrack\varpi(0, n; \Phi)^p\rbrack 
\end{align*}
for all $n \ge n_\ast$. Thus, by \Cref{lem:pi-expectation}, we may choose $\zeta$ to be sufficiently small (specifically, $\zeta \le \zeta_\ast \wedge ((q - d_\gamma)/2)$ in the notation of that lemma) to ensure the existence of some $\alpha \in (0, 1)$ such that
\begin{equation*}
    \BE\!\left\lbrack\left(\sum_{(x, n) \in V_n} \varpi(x, n; \Phi)^p\right)\mathbf 1_{E(T, n_\ast)}\right\rbrack \le \alpha^{\zeta n}, \quad \forall n \ge n_\ast. 
\end{equation*}
(Here, we note that $\zeta$ and $\alpha$ do not depend on the choice of $t$, $T$, and $n_\ast$.) By the Borel--Cantelli lemma, this implies that, almost surely on the event $E(T, n_\ast)$, 
\begin{equation*}
    \sum_{(x, n) \in V_n} \varpi(x, n; \Phi)^p \to 0 \quad \text{as } n \to \infty.
\end{equation*}
Since there almost surely exists $T > 0$ and $n_\ast \in \BN$ such that $E(T, n_\ast)$ occurs, this completes the proof of~\eqref{eq:main-proof-0}, hence also the proof of \Cref{thm:main}. \qed

\bibliographystyle{alpha}
\bibliography{references}

\end{document}